\newtheorem{thmx}{Theorem}
\newtheorem{conjecture}{Conjecture}
\newtheorem{question}{Question}
\newtheorem{theorem}{Theorem}[section]
\newtheorem{cor}[theorem]{Corollary}
\newtheorem{proposition}[theorem]{Proposition}
\newtheorem{lemma}[theorem]{Lemma}
\newtheorem{remark}[theorem]{Remark}
\def\irr#1{{\rm Irr}(#1)}
\def\irrr#1#2 {\irr {#1 \mid #2}}
\newcommand{\R}{\mathbb R}
\newcommand{\sfe}{{{\mathbb S}^{n-1}}}
\newcommand{\E}{\mathbb E}
\begin{document}

\title[On a conjectural symmetric version of Ehrhard's inequality]{On a conjectural symmetric version of Ehrhard's inequality}
\author[Galyna V. Livshyts]{Galyna V. Livshyts}
\address{School of Mathematics, Georgia Institute of Technology, Atlanta, GA} \email{glivshyts6@math.gatech.edu}

\subjclass[2010]{Primary: 52} 
\keywords{Gaussian measure, Ehrhard inequality, Convex bodies, symmetry, Brunn-Minkowski inequality, Brascamp-Lieb inequality, Poincar{\'e} inequality, equality case characterization, torsional rigidity, energy minimization, log-concave measures}
\date{\today}
\begin{abstract} 

We formulate a plausible conjecture for the optimal Ehrhard-type inequality for convex symmetric sets with respect to the Gaussian measure. Namely, letting $J_{k-1}(s)=\int^s_0 t^{k-1} e^{-\frac{t^2}{2}}dt$ and $c_{k-1}=J_{k-1}(+\infty)$, we conjecture that the function $F:[0,1]\rightarrow\R,$ given by
$$F(a)= \sum_{k=1}^n 1_{a\in E_k}\cdot(\beta_k J_{k-1}^{-1}(c_{k-1} a)+\alpha_k)$$
(with an appropriate choice of a decomposition $[0,1]=\cup_{i} E_i$ and coefficients $\alpha_i, \beta_i$) satisfies, for all symmetric convex sets $K$ and $L,$ and any $\lambda\in[0,1]$,
$$
F\left(\gamma(\lambda K+(1-\lambda)L)\right)\geq \lambda F\left(\gamma(K)\right)+(1-\lambda) F\left(\gamma(L)\right).
$$
We explain that this conjecture is ``the most optimistic possible'', and is equivalent to the fact that for any symmetric convex set $K,$ its \emph{Gaussian concavity power} $p^s(K,\gamma)$ is greater than or equal to $p_s(RB^k_2\times \R^{n-k},\gamma),$ for some $k\in \{1,...,n\}$. We call the sets $RB^k_2\times \R^{n-k}$ \emph{round $k$-cylinders}; they also appear as the conjectured Gaussian isoperimetric minimizers for symmetric sets, see Heilman \cite{Heilman}. 

In this manuscript, we make progress towards this question, and show that for any symmetric convex set $K$ in $\R^n,$
$$p_s(K,\gamma)\geq \sup_{F\in L^2(K,\gamma)\cap Lip(K):\,\int F=1} \left(2T_{\gamma}^F(K)-Var(F)\right)+\frac{1}{n-\E X^2},$$
where $T_{\gamma}^F(K)$ is the $F-$torisional rigidity of $K$ with respect to the Gaussian measure. \emph{Moreover, the equality holds if and only if $K=RB^k_2\times \R^{n-k}$ for some $R>0$ and $k=1,...,n.$} As a consequence, we get
$$p_s(K,\gamma)\geq Q(\E |X|^2, \E \|X\|_K^4, \E \|X\|^2_K, r(K)),$$
where $Q$ is a certain rational function of degree $2$, the expectation is taken with respect to the restriction of the Gaussian measure onto $K$, and $r(K)$ is the in-radius of $K$. The result follows via a combination of some novel estimates, the $L2$ method (previously studied by several authors, notably Kolesnikov, Milman \cite{KM1}, \cite{KM2}, \cite{KolMil}, \cite{KolMilsupernew}, the author \cite{KolLiv}, Hosle \cite{HKL}, Colesanti \cite{Col1}, Marsiglietti \cite{CLM}, Eskenazis, Moschidis \cite{EM}), and the analysis of the Gaussian torsional rigidity. 

As an auxiliary result on the way to the equality case characterization, we characterize the equality cases in the ``convex set version'' of the Brascamp-Lieb inequality, and moreover, obtain a quantitative stability version in the case of the standard Gaussian measure; this may be of independent interest. All the equality case characterizations rely on the careful analysis of the smooth case, the stability versions via trace theory, and local approximation arguments.

In addition, we provide a non-sharp estimate for a function $F$ whose composition with $\gamma(K)$ is concave in the Minkowski sense for all symmetric convex sets.

\end{abstract}
\maketitle

\pagebreak

\begin{spacing}{0.9}
\tableofcontents
\end{spacing}

\section{Introduction}

Consider the standard Gaussian measure $\gamma$ on the $n$-dimensional euclidean space $\R^n$, that is, the measure with density $d\gamma(x)=\frac{1}{\sqrt{2\pi}^n} e^{-\frac{|x|^2}{2}}.$ We shall use notation 
$$\Phi(x)=\frac{1}{\sqrt{2\pi}}\int_{-\infty}^x e^{-\frac{s^2}{2}}ds,$$
for the cdf of the 1-dimensional Gaussian measure. 

One of the beautiful and useful geometric properties of the Gaussian measure is the Ehrhard inequality \cite{Ehr}, \cite{Ehr2}, extended by Borell \cite{bor-ehr} to general sets: for a pair of closed Borel-measurable sets $K$ and $L$ and for $\lambda\in [0,1]$,
\begin{equation}\label{ehrhard}
\Phi^{-1}\left(\gamma(\lambda K+(1-\lambda)L)\right)\geq \lambda\Phi^{-1}\left(\gamma(K)\right)+(1-\lambda) \Phi^{-1}\left(\gamma(L)\right).
\end{equation}
See also Bobkov \cite{bobkov} for a celebrated related inequality, as well as Lata\l{}a \cite{Lat-ehr}, van Handel \cite{vH}, Shenfeld \cite{ShvH} (for the equality cases), Neeman, Paouris \cite{NeePao}, Ivanisvili \cite{Paata}, Paouris, Valettas \cite{PaoVal} for alternative proofs, generalizations, applications, and further discussions.

The Gaussian isoperimetric inequality, proved by Borell \cite{Bor-isop} and Sudakov-Tsirelson \cite{ST}, states that for every $a\in [0,1],$ the half-space $H_a$ with $\gamma(H_a)=a$ has the smallest Gaussian perimeter among all measurable sets in $\R^n$ of Gaussian measure $a$. It is well-known that the Ehrhard inequality implies the Gaussian isoperimetric inequality (see e.g. Lata\l{}a \cite{Lat} or Section 3.3 below). 

The analogue of the isoperimetric problem for the Gaussian measure in the case of symmetric sets was asked by Barthe \cite{barthe} and O'Donnelll \cite{Odon}, conjectured by Morgan and studied by Heilman \cite{Heilman}: it is believed that for every $a\in [0,1]$ there is $k\in \{1,...,n\}$ such that for every Borel-measurable set $K$ with $\gamma(K)=a,$ one has either $\gamma^+(\partial K)\geq \gamma^+(\partial C_k(a))$ or $\gamma^+(\partial K)\geq \gamma^+(\partial C_k(1-a))$, where $C_k(a)=RB^k_2\times \R^{n-k}$ is a \emph{round $k$-cylinder} such that $\gamma(C_k(a))=a$. See Heilman \cite{Heilman}, \cite{Heilman-notes} for more details, and partial progress on this quesiton. Barchiesi and Julin \cite{italians} showed that when $\gamma(K)\in [\alpha,1]$ for a sufficiently large $\alpha>0,$ one has $\gamma^+(\partial K)\geq \gamma^+(\partial C_1(a))$; in other words, the Gaussian perimeter is minimized on symmetric strips whenever the Gaussian measure of a set is sufficiently large.

One might wonder what is an ``optimal'' function $F:[0,1]\rightarrow\R,$ for which the inequality
\begin{equation}\label{ehrhard-general}
F\left(\gamma(\lambda K+(1-\lambda)L)\right)\geq \lambda F\left(\gamma(K)\right)+(1-\lambda) F\left(\gamma(L)\right)
\end{equation}
holds for all symmetric convex sets $K$ and $L$? One should be able to improve upon $F=\Phi^{-1}$ since the Ehrhard inequality is never sharp for symmetric sets \cite{ShvH} unless they coincide. An initial naive guess may be that $F=\varphi^{-1}$ could work, where $\varphi(t)=\frac{1}{\sqrt{2\pi}}\int_{-t}^t e^{-\frac{s^2}{2}} ds,$ -- ``the symmetric version'' of the Gaussian cdf. However, it was pointed to the author by Liran Rotem (after a discussion) a few years ago, that a numerical computation shows that this naive conjecture fails. Later, the author have discussed this with several other experts and learned that others had also done a similar simulation, reaching the same conclusion. In fact, one may notice that $F=\varphi^{-1}$ certainly fails (\ref{ehrhard-general}) on the entire $[0,1]$, because if it didn't, that would imply that the strips are Gaussian isoperimetric minimizers among symmetric convex sets of any measure (more details of this implication shall be discussed in Section 3), and this is not the case. 

A less naive attempt to put forward a conjecture for an $F$ satisfying (\ref{ehrhard-general}) could be 
\begin{equation}\label{bad-func}
F(a)=\sum_{k=1}^n 1_{a\in I_k}\cdot( J_{k-1}^{-1}(c_{k-1} a)+a_k),
\end{equation}
where $I_k$ is the sub-interval of $[0,1]$ where the surface area of the $k$-round cylinder $C_k(a)$ is minimal (over all choices of $k$). Here
$$J_{k-1}(R)=\int_0^R s^{k-1}e^{-\frac{s^2}{2}} ds= \int_0^R g_{k-1}(s)ds,$$
$c_{k-1}=J_{k-1}(+\infty),$ and $a_k$ are chosen in such a way that $F$ is continuous (and then automatically continuously differentiable, due to the properties of the special functions involved). Note the geometric meaning of the function $J_{k-1}^{-1}(c_{k-1} a)$: it is the radius of the $k-$round cylinder of measure $a,$ or in other words, $\gamma\left( J_{k-1}^{-1}(c_{k-1} a) B^k_2\times \R^{n-k}\right)=a$. However, unfortunately, this function does not satisfy (\ref{ehrhard-general}) for symmetric convex sets, as shall be shown in Section 3.

On the bright side, one may explicitly conjecture ``the optimal'' plausible function satisfying (\ref{ehrhard-general}), which is somewhat similar to (\ref{bad-func}). 

\begin{conjecture}\label{theconj}
With the above notation for $c_p, g_p, J_p$, the function
$$F(a)=\int_0^a \exp\left(\int_{C_0}^{t} \min_{k=1,...,n} \left(\frac{c_{k-1} (-k+1+J^{-1}_{k-1}(c_{k-1}s)^2)}{g_{k}\circ J^{-1}_{k-1}(c_{k-1}s)}\right)ds\right) dt$$
satisfies (\ref{ehrhard-general}) for all symmetric convex sets. Here the choice of $C_0\in (0,1)$ is arbitrary.

Equivalently, there exists a collection of disjoint sets $E_k\subset [0,1],$  $k=1,...,n,$ such that $[0,1]=\cup_{k=1}^n E_k,$ and there exist coefficients $\alpha_k, \beta_k\in\R$, such that the function
$$F(a)= \sum_{k=1}^n 1_{a\in E_k}\cdot(\beta_k J_{k-1}^{-1}(c_{k-1} a)+\alpha_k)$$ 
satisfies (\ref{ehrhard-general}) for all symmetric convex sets, and is of class $C^2$ and increasing. 

Moreover, the equality in (\ref{ehrhard-general}) occurs if and only if there exists a $k\in\{1,...,n\}$ such that $K=C_k(a_1)$ and $L=C_k(a_2)$, for some $a_1, a_1\in E_k.$
\end{conjecture}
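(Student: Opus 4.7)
The plan is to reduce the global inequality (\ref{ehrhard-general}) to a sharp infinitesimal bound on the second Minkowski variation of the Gaussian measure on symmetric convex bodies, and then to exploit the $L^2$-method in combination with the Brascamp--Lieb inequality and its equality characterization, the latter being established as an auxiliary result in this paper.

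First, I would record that the two descriptions of $F$ are equivalent. Starting from the piecewise form $F(a)=\beta_k J_{k-1}^{-1}(c_{k-1}a)+\alpha_k$ on $E_k$, a direct computation using $J'_{k-1}=g_{k-1}$, $g'_{k-1}(s)/g_{k-1}(s)=(k-1)/s-s$, and $g_k(R)=Rg_{k-1}(R)$ yields, for $a$ in the interior of $E_k$,
\[
\frac{F''(a)}{F'(a)}=\frac{c_{k-1}\bigl(J_{k-1}^{-1}(c_{k-1}a)^2-k+1\bigr)}{g_k\bigl(J_{k-1}^{-1}(c_{k-1}a)\bigr)}.
\]
Antidifferentiating after taking the pointwise minimum over $k$ (as forced by the requirement that the inequality be sharp for the cylindrical extremizer at each measure level) recovers the integral representation, the choice of $C_0$ reflecting the scaling freedom in $\beta_k$. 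Next, by a standard concavity-on-segments argument and smooth approximation inside $\mathcal K_s$, the inequality (\ref{ehrhard-general}) is equivalent to its infinitesimal version: setting $\mu(\lambda)=\gamma((1-\lambda)K+\lambda L)$, one needs $F''(\mu(0))\mu'(0)^2+F'(\mu(0))\mu''(0)\le 0$ for all $K,L\in\mathcal K_s$, which since $F'>0$ is equivalent to
\[
\frac{F''(\gamma(K))}{F'(\gamma(K))}\;\le\;\inf_{L\in\mathcal K_s}\Bigl(-\frac{\mu''(0)}{\mu'(0)^2}\Bigr).
\]
On round $k$-cylinders, a direct calculation using $(1-\lambda)C_k(R)+\lambda C_k(R')=C_k((1-\lambda)R+\lambda R')$ gives $-\mu''(0)/\mu'(0)^2=c_{k-1}(R^2-k+1)/g_k(R)$ with $R=J_{k-1}^{-1}(c_{k-1}\gamma(K))$, matching $F''/F'$ on $E_k$ exactly. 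The conjecture therefore reduces to the sharp infinitesimal claim that for every symmetric convex $K\subset\R^n$,
\[
\inf_{L\in\mathcal K_s}\Bigl(-\frac{\mu''(0)}{\mu'(0)^2}\Bigr)\;\ge\;\min_{k=1,\dots,n}\frac{c_{k-1}(R_k^2-k+1)}{g_k(R_k)},\qquad R_k=J_{k-1}^{-1}(c_{k-1}\gamma(K)).
\]

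The main obstacle is proving this sharp infinitesimal inequality. Using Reilly-type identities and integration by parts on $(\partial K, d\gamma^+)$ in the spirit of Kolesnikov--Milman \cite{KM1,KM2,KolMil,KolMilsupernew} and the author \cite{KolLiv}, the ratio $-\mu''(0)/\mu'(0)^2$ can be rewritten as a constrained infimum of a weighted Rayleigh quotient over test functions on $K$ whose boundary trace is prescribed by $h_L$. A Brascamp--Lieb-type spectral lower bound, combined with restriction to \emph{even} test functions permitted by the symmetry of $K$ (which annihilates precisely the spectral modes extremized by half-spaces in the non-symmetric setting), should yield the desired cylindrical bound. The unrestricted $L^2$-bounds give only the non-sharp estimate $p_s(K,\gamma)\ge Q(\E|X|^2,\E|X|^4,\E\|X\|_K^2,r(K))$ quoted in the abstract; closing the gap to the exact cylindrical extremum will, I anticipate, require a dimension-descent argument along the splittings $\R^n=\R^k\times\R^{n-k}$ that are compatible with the symmetry of $K$, together with a rigidity step showing that any non-cylindrical symmetric $K$ admits a strictly better test function than the cylindrical one. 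This is where I expect the bulk of the technical difficulty to lie.

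Finally, the equality characterization follows from the equality case in the Brascamp--Lieb inequality for convex bodies established in the paper, together with the analysis of the smooth case: equality in the Rayleigh bound forces the optimizing test function to be an eigenfunction of the weighted Laplacian on $K$ satisfying a prescribed trace condition on $\partial K$, and a separation-of-variables argument (using that the eigenfunction depends only on the radial variable in $k$ of the coordinates and is affine in the remaining $n-k$) then forces $K=RB_2^k\times\R^{n-k}$ with $k$ dictated by the piece $E_k$ containing $\gamma(K)$; the analogous statement for $L$ follows from saturating the Rayleigh quotient at the same extremal, which pins down the splitting direction. A local approximation argument then extends the rigidity to non-smooth symmetric convex $K,L$.
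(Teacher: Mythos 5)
Your first two steps are fine and in fact reproduce what the paper itself establishes in Section 3: the identity $F''(a)/F'(a)=c_{k-1}\bigl(J_{k-1}^{-1}(c_{k-1}a)^2-k+1\bigr)/g_k\circ J_{k-1}^{-1}(c_{k-1}a)$ on $E_k$ is the paper's relation between the two forms of $F$ (via $R_k'=1/s_k$ and $\varphi_k=(\log(1/s_k))'$), and the reduction of (\ref{ehrhard-general}) to the infinitesimal inequality $F''/F'\le -\mu''(0)/\mu'(0)^2$ is Lemma \ref{step1-concavity} and Proposition \ref{prop-equiv}; combined, this is exactly Proposition \ref{key-connection}, the equivalence with Conjecture \ref{theconj1}. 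One caveat even here: your assertion that on a round $k$-cylinder a ``direct calculation'' gives $\inf_{L\in\mathcal K_s}\bigl(-\mu''(0)/\mu'(0)^2\bigr)=c_{k-1}(R^2-k+1)/g_k(R)$ is only half true. The radial perturbation you compute yields an \emph{upper} bound on the concavity coefficient (Lemma \ref{upperbnd}); the matching lower bound over \emph{all} symmetric perturbations is Proposition \ref{cyl}, whose proof in the paper requires the inequality part of Theorem \ref{Gauss-main-1}, i.e.\ the paper's main theorem, not a computation.

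The genuine gap is the step you yourself flag: the sharp infinitesimal inequality $\inf_{L\in\mathcal K_s}\bigl(-\mu''(0)/\mu'(0)^2\bigr)\ge\min_{k}\varphi_k(\gamma(K))$ for every symmetric convex $K$ \emph{is} the conjecture (equivalently, $p_s(K,\gamma)\ge\min_k p_s(C_k(\gamma(K)),\gamma)$), and neither the paper nor your proposal proves it. The statement you are addressing is posed as a conjecture precisely because the available $L^2$/Brascamp--Lieb machinery --- the Reilly identity, the even-function Poincar\'e improvement of Cordero-Erausquin--Fradelizi--Maurey, and the torsional-rigidity bounds --- delivers only the lower bound $p_s(K,\gamma)\ge Q(\E|X|^2,\E|X|^4,\E\|X\|_K^2,r(K))$ of Theorem \ref{Gauss-main-intro}, which coincides with the conjectured cylindrical value only when $K$ is itself a round $k$-cylinder and can fall strictly below $\min_k p_s(C_k(a),\gamma)$ for general $K$. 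Your proposed ``dimension-descent along splittings $\R^k\times\R^{n-k}$'' and ``rigidity step'' are named but not constructed, and no argument is given for why restricting to even test functions closes the gap between $Q$ and the cylindrical minimum. Consequently the final equality-case paragraph, which saturates a Rayleigh quotient in an inequality that has not been established, is also vacuous at this stage. What you have is a correct reformulation of the conjecture in its infinitesimal form, matching the paper's Section 3, together with an honest acknowledgment of the open core --- but not a proof.
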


The fact that the expressions above are equivalent shall be explained in Section 3.

We believe that the above conjecture is indeed plausible; we shall also explain in Section 3 that this conjecture is, in a sense, ``the most optimistic possible'', and, of course, is stronger then the Ehrhard inequality.

\begin{remark}
The sets $E_k$ in the conjecture above are such that for $a\in E_k,$ the function $\left(\log \frac{1}{s_k(a)}\right)'$	is minimal among all $k,$ where we denote $s_k(a)=\gamma^+(\partial C_k(a))$. It is worth emphasizing that the sets $E_k$ are not the same as the intervals $I_k$ (which were defined so that for each $a\in I_k,$ the function $s_k(a)$ is minimal, among all $k$.) 
\end{remark}

We shall now move to discussing how we managed to guess Conjecture \ref{theconj}, and state progress towards it. For a convex set $K$ and a measure $\mu,$ we define 
$$p_s(K,\mu):=\limsup_{\epsilon\rightarrow 0}\left\{p:\,\forall L\in \mathcal{K}_s,\, \mu((1-\epsilon) K+\epsilon L)^{p}\geq (1-\epsilon) \mu(K)^{p}+\epsilon\mu(L)^{p}\right\}.$$
Here $\mathcal{K}_s$ stands for the set of symmetric convex sets. We shall focus on the case when $\mu$ is the standard Gaussian measure $\gamma.$ We make the following

\begin{conjecture}\label{theconj1}
Pick $a\in [0,1]$ and suppose $K$ is a symmetric convex set with $\gamma(K)=a$. Then 
$$p_s(K,\gamma)\geq \min_{k=1,...,n} p_s(C_k(a),\gamma),$$
with equality if and only if $K=C_k(a)$ for some (appropriate) $k=1,...,n$ (recall that we use notation $C_k(a)$ for round $k$-cylinders $R B^k_2\times \R^{n-k}$ of measure $a$.)
\end{conjecture}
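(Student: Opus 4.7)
The plan is to translate $p_s(K,\gamma) \geq p$ into a variational trace inequality on $\partial K$ via the $L^2$ (Kolesnikov--Milman) method, and then to reduce the proof of the conjecture to a sharp spectral comparison between $K$ and the round cylinders $C_k(a)$. First, one parametrizes a smooth symmetric perturbation of $K$ through its support function and computes the second variation of $\gamma(K_\epsilon)^p$ at $\epsilon=0$; as in \cite{KolLiv} and \cite{CLM}, $p_s(K,\gamma)\geq p$ should be equivalent to a Brascamp--Lieb-type trace inequality of the schematic form
\[
p\int_{\partial K} f^2\, d\gamma \;\leq\; \mathcal{Q}_K(f) + \frac{1}{\gamma(K)}\left(\int_{\partial K} f\, d\gamma\right)^2
\]
holding for every even $f\in C^2(\sfe)$, where $\mathcal{Q}_K(f)$ is a positive quadratic form involving the second fundamental form of $\partial K$ and the gradient of the Neumann solution on $K$ with boundary data $f$. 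A direct computation of the trace inequality on $C_k(a)$ then identifies $p_s(C_k(a),\gamma)$ with a one-dimensional weighted eigenvalue expressible through the functions $J_{k-1}$; the conjecture asks that this ratio on an arbitrary symmetric convex $K$ be at least the minimum of those explicit numbers.

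Next, one would seek to reduce the trace inequality to the cylinder bound. The natural first step is dimension-reduction: if $K = K_1\times \R^{n-k}$ splits, the trace inequality factors, the Gaussian directions contribute trivially, and the problem collapses to the compact $k$-dimensional factor. For non-splitting $K$, one would need a symmetrization or rearrangement that preserves Gaussian measure, central symmetry and convexity, while moving $K$ towards a cylinder and decreasing the spectral ratio---in spirit analogous to Ehrhard symmetrization, but competing against the full family $\{C_k(a)\}_{k=1}^n$ rather than against half-spaces. The moment bound $p_s(K,\gamma)\geq Q(\E|X|^2,\E|X|^4,\E\|X\|^2_K,r(K))$ achieved elsewhere in this paper arises from inserting the specific test function $f=\langle x,\nu_K\rangle$ (and a few close relatives) into the trace inequality; the sharp cylinder bound would require optimizing over a much richer class of $f$ and exploiting the pointwise convexity of $\partial K$ together with the symmetry constraint.

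The equality case is conceptually the cleanest part. Equality in the conjectured inequality propagates to equality in the underlying trace Brascamp--Lieb inequality, and the equality characterization for that inequality (promised in the abstract) forces the extremizing $f$ to be linear along at most $k$ coordinate directions. Combined with central symmetry and convexity of $K$, this rigidity forces $\partial K$ to be translation-invariant in $n-k$ orthogonal directions and spherical in the remaining $k$, i.e.\ $K=C_k(a)$. The \emph{main obstacle} is clearly the reduction step: no sharp Brascamp--Lieb-type spectral comparison is known that simultaneously handles the Gaussian weight, central symmetry, and Minkowski addition in a way that selects round cylinders---the same combination of constraints that obstructs the symmetric Gaussian isoperimetric problem of Heilman. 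Any proof of the full conjecture would almost certainly yield, or follow from, progress on that isoperimetric conjecture as well.
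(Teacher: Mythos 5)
The statement you are addressing is Conjecture \ref{theconj1}, which the paper does not prove: it is posed as an open conjecture, and the paper's contribution around it consists of (i) the equivalence with Conjecture \ref{theconj} (Proposition \ref{key-connection}), proved by the routine chain Proposition \ref{prop-equiv} $\to$ Lemma \ref{G-p} $\to$ Proposition \ref{cyl}; (ii) the exact computation of $p_s(C_k(a),\gamma)$ (Proposition \ref{cyl}); and (iii) the non-sharp lower bound of Theorem \ref{Gauss-main-1}/Theorem \ref{Gauss-main}, whose equality cases are exactly the round cylinders. Your proposal correctly reproduces the paper's framework: the reduction of $p_s(K,\gamma)\geq p$ to a trace inequality of Brascamp--Lieb type on $\partial K$ is precisely the reformulation (\ref{conj-clear}) obtained from Propositions \ref{deriv} and \ref{prop-equiv}, and the identification of $p_s(C_k(a),\gamma)$ through the functions $J_{k-1}$ is Proposition \ref{cyl}. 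But from that point on your text is a research program, not a proof: you state yourself that the reduction from an arbitrary symmetric convex $K$ to the cylinder bound -- the sharp comparison selecting round cylinders -- is not known, and you offer no candidate symmetrization, no spectral comparison argument, and no mechanism replacing it. That missing step \emph{is} the conjecture; everything you supply around it (second variation, cylinder computation, choice of test functions) is already in the paper and does not close the gap.

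Two further points deserve correction. First, the moment bound $p_s(K,\gamma)\geq Q(\E|X|^2,\E|X|^4,\E\|X\|_K^2,r(K))$ is not obtained by plugging $f=\langle x,n_x\rangle$ into the trace inequality; in the paper it comes from solving $Lu=F$ with Neumann data, applying the key Hessian estimate (Proposition \ref{propGauss}) and lower-bounding the Gaussian torsional rigidity $T_\gamma^F(K)$ with the test function $1-\|x\|_K^2$ (Lemma \ref{last-touch}); the choice $f=\langle x,n_x\rangle$ is what drives the separate symmetric Brascamp--Lieb upgrades of Section 8. Second, your equality-case argument overreaches: equality in the \emph{conjectured} inequality does not automatically propagate to equality in the trace Brascamp--Lieb inequality for some admissible $f$, because $p_s(K,\gamma)$ is defined through an infimum over perturbations and the minimizing index $k$ varies with $a$; the rigidity statement you invoke (linearity of the extremizer, cylindrical splitting) is the equality analysis of Theorem \ref{eqchar-intro} and Theorem \ref{Gauss-main-1}, which characterizes equality only in the weaker, non-sharp bound, and does not by itself yield the equality case of Conjecture \ref{theconj1}. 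So the proposal should be read as a plausible strategy consistent with the paper's machinery, with the central comparison step -- and hence the conjecture itself -- left open.
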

In Section 3, we shall see that
\begin{proposition}\label{key-connection}
Conjecture \ref{theconj} and Conjecture \ref{theconj1} are equivalent.	
\end{proposition}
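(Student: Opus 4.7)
The plan is to translate the global Ehrhard-type inequality (\ref{ehrhard-general}) into an infinitesimal Minkowski concavity condition, which is exactly what $p_s(K,\gamma)$ measures. Fix $K,L\in\mathcal{K}_s$ and put $K_\lambda=(1-\lambda)K+\lambda L$. Since $K_\lambda\in\mathcal{K}_s$ for every $\lambda\in[0,1]$ and $F$ is assumed $C^2$ and increasing, (\ref{ehrhard-general}) for all symmetric $K,L$ is equivalent to the concavity of $\lambda\mapsto F(\gamma(K_\lambda))$ on $[0,1]$; by the reparametrization $K_{\lambda_0+s}=(1-\mu)K_{\lambda_0}+\mu L$ with $\mu=s/(1-\lambda_0)$, the second $\lambda$-derivative at any $\lambda_0\in(0,1)$ is a positive multiple of the second $\mu$-derivative at $\mu=0$. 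Hence the global inequality reduces to the pointwise infinitesimal condition
\[
F''(\gamma(K))\bigl[\gamma'_L(K)\bigr]^2+F'(\gamma(K))\,\gamma''_L(K)\leq 0,\qquad \forall\,K,L\in\mathcal{K}_s,
\]
where $\gamma'_L(K)$ and $\gamma''_L(K)$ denote the first two $\epsilon$-derivatives of $\gamma((1-\epsilon)K+\epsilon L)$ at $\epsilon=0$. Comparing with the Taylor expansion of $\gamma((1-\epsilon)K+\epsilon L)^p$ shows that this is equivalent to $p(a)\leq p_s(K,\gamma)$ for every symmetric $K$ with $\gamma(K)=a$, where $p(a):=1+aF''(a)/F'(a)$ is the \emph{effective Minkowski exponent} of $F$.

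Now Conjecture \ref{theconj1} identifies $\inf_{K\in\mathcal{K}_s,\,\gamma(K)=a}p_s(K,\gamma)=\min_k p_s(C_k(a),\gamma)$, with the infimum attained precisely at round $k$-cylinders. The ``most optimistic'' $F$ is therefore the one saturating the ODE
\[
1+\frac{aF''(a)}{F'(a)}=\min_{k=1,\dots,n} p_s(C_k(a),\gamma),
\]
which integrates to $F'(a)=\exp\bigl(\int_{C_0}^{a}H(s)\,ds\bigr)$ with $H(s)=(\min_k p_s(C_k(s),\gamma)-1)/s$. A direct second-order perturbation of $\gamma$ on the round $k$-cylinder $C_k(a)$, writing its Gaussian mass in polar coordinates on the $k$-dimensional ball factor, yields the closed form
\[
p_s(C_k(a),\gamma)-1=a\cdot\frac{c_{k-1}\bigl(-k+1+[J_{k-1}^{-1}(c_{k-1}a)]^{2}\bigr)}{g_{k}(J_{k-1}^{-1}(c_{k-1}a))},
\]
which, fed into the ODE, recovers precisely the integral representation of $F$ in the first form of Conjecture \ref{theconj}; on each interval $E_k$ where index $k$ is the minimizer, the ODE admits the closed-form solution $\beta_k J_{k-1}^{-1}(c_{k-1}a)+\alpha_k$, giving the second (piecewise) form. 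This proves both directions: assuming Conjecture \ref{theconj1}, the $F$ above satisfies the infinitesimal condition and hence (\ref{ehrhard-general}), so Conjecture \ref{theconj} holds; conversely, if the $F$ of Conjecture \ref{theconj} satisfies (\ref{ehrhard-general}), the pointwise bound $p(a)\leq p_s(K,\gamma)$ is precisely the inequality of Conjecture \ref{theconj1}.

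For the equality cases, concavity of $\lambda\mapsto F(\gamma(K_\lambda))$ implies that equality in (\ref{ehrhard-general}) at some $\lambda\in(0,1)$ propagates to all $\lambda\in[0,1]$, hence to pointwise equality in the infinitesimal bound at every $K_\lambda$. The equality statement of Conjecture \ref{theconj1} then forces $K_\lambda=C_{k(\lambda)}(\gamma(K_\lambda))$; continuity of $\lambda\mapsto K_\lambda$ together with the piecewise-constant nature of the minimizing index forces $k(\lambda)$ to be constant, so that $K$ and $L$ lie in a common family $E_k$ as round $k$-cylinders, as claimed. The main technical obstacle I anticipate is the $C^2$-matching of $F$ at the boundary points of the decomposition $[0,1]=\cup_k E_k$: one must verify that the branches $\beta_k J_{k-1}^{-1}(c_{k-1}a)+\alpha_k$ meet to second order at the shared boundary points of adjacent $E_k$'s, which reduces to an elementary identity involving $J_k$ and $g_k$ at the transition points once the closed form of $p_s(C_k(a),\gamma)$ is in hand.
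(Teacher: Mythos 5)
Your overall route is the same as the paper's: reduce (\ref{ehrhard-general}) to the infinitesimal concavity condition via Lemma \ref{step1-concavity} and Proposition \ref{prop-equiv}, translate $F''/F'$ into the concavity power through Lemma \ref{G-p} (your $p(a)=1+aF''(a)/F'(a)$), and then identify the explicit exponents $\varphi_k(a)$ defining $F$ with $p_s(C_k(a),\gamma)$. The one step that does not hold as you state it is this identification: a ``direct second-order perturbation \dots in polar coordinates on the ball factor'' only tests the infimum in the definition of $p_s(C_k(a),\gamma)$ against one particular direction $L$ (dilates of the round cylinder), and therefore yields only the upper bound $p_s(C_k(a),\gamma)\le 1+a\varphi_k(a)$ (this is Lemma \ref{upperbnd}). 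Since $p_s$ requires the infinitesimal $p$-concavity against \emph{all} symmetric convex $L$, the matching lower bound is genuinely nontrivial: in the paper it is Proposition \ref{cyl}, whose proof rests on the inequality part of Theorem \ref{Gauss-main-1} (through Lemma \ref{ibp}), deferred to Section 7.

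This matters for the logic of the equivalence. The direction Conjecture \ref{theconj} $\Rightarrow$ Conjecture \ref{theconj1} needs only the easy upper bound, but the converse needs $\min_k p_s(C_k(a),\gamma)\ge \min_k\left(1+a\varphi_k(a)\right)$, i.e.\ exactly the hard half of Proposition \ref{cyl}; without it your ODE $1+aF''/F'=\min_k p_s(C_k(a),\gamma)$ cannot be matched with the explicit integrand defining $F$ in Conjecture \ref{theconj}. The fix is simply to invoke Proposition \ref{cyl} (which is proved independently of Proposition \ref{key-connection}, so there is no circularity) rather than claiming the closed form by perturbation; with that substitution your argument coincides with the paper's proof. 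Your additional remarks on equality-case propagation and on the $C^2$-matching of the branches of $F$ are sensible and go beyond the paper's own proof (which treats only the inequality part), but they too use the exact cylinder value and hence inherit the same dependence on Proposition \ref{cyl}.
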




In view of the Proposition \ref{key-connection}, it is of interest to study lower estimates for $p_s(K,\gamma).$ The main result of this paper is the sharp estimate stated below. 

For a convex domain $K$ in $\R^n$, and a Lipschitz function $F: K\rightarrow\R,$ define the $F-$Gaussian torsional rigidity by
$$T^F_{\gamma}(K)=\sup_{v\in W^{1,2}(K,\gamma):\,v|_{\partial K}=0} \frac{\left(\int Fv\right)^2}{\int |\nabla v|^2}.$$
In Section 5 we discuss this object in more detail; it appears to be an important tool, relevant for the questions which we study here.

\begin{thmx}\label{Gauss-main-1}
For any symmetric convex set $K$ in $\R^n,$
$$p_s(K,\gamma)\geq \sup_{F\in L^2(K,\gamma)\cap Lip(K):\,\int F=1} \left(2T_{\gamma}^F(K)-Var(F)\right)+\frac{1}{n-\E X^2}.$$

Moreover, the equality holds if and only if $K=RB^k_2\times \R^{n-k}$ for some $R>0$ and $k=1,...,n.$

Here $\E$ and $Var$ stand for the expectation and the variance with respect to the restriction of the standard Gaussian measure onto $K.$
\end{thmx}

As a corollary, we will deduce (see Section 6 for the precise formulation):

\begin{cor}\label{Gauss-main-intro}
For any symmetric convex set $K$ in $\R^n,$
$$p_s(K,\gamma)\geq Q(\E |X|^2, \E \|X\|_K^4, \E \|X\|^2_K, r(K)),$$
where $Q$ is a certain rational function of degree $2$, the expectation is taken with respect to the restriction of the Gaussian measure onto $K$, and $r(K)$ is the in-radius of $K$.

Moreover, the equality holds if and only if $K=RB^k_2\times \R^{n-k}$ for some $R>0$ and $k=1,...,n.$
\end{cor}

The key feature of the Theorem \ref{Gauss-main-1} is \emph{the equality case characterization.} To further emphasize it, we outline separately the exact value of $p_s(K,\gamma)$ for the case when $K$ is a round $k-$cylinder.

\begin{proposition}[Case of cylinders]\label{cyl}
When $K=RB_2^{k}\times \R^{n-k}$, with $\gamma(K)=a,$ for some $a\in [0,1]$, and for $k=1,...,n,$ we have
$$p_s(K,\gamma)= 1-\frac{c_{k-1} a(k-1-J^{-1}_{k-1}(c_{k-1}a)^2)}{g_{k}\circ J^{-1}_{k-1}(c_{k-1}a)}.$$
\end{proposition}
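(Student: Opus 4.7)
The plan is to reduce the $n$-dimensional problem to a $k$-dimensional ball via Fubini, extract the matching upper bound from the infinitesimal radial perturbation $L=(R+t)B_2^k$, and obtain the lower bound from Theorem~\ref{Gauss-main-intro} via its equality case characterization.

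\emph{Reduction.} For $K=RB_2^k\times\R^{n-k}$ and any symmetric convex $L\subset\R^n$, the Minkowski identity $(1-\epsilon)K+\epsilon L=((1-\epsilon)RB_2^k+\epsilon P_kL)\times\R^{n-k}$, where $P_k$ is the coordinate projection onto the first $k$ variables, combined with the sharp Fubini bound $\gamma_n(L)\leq\gamma_k(P_kL)$ (attained on cylinders $L=P_kL\times\R^{n-k}$), reduces the defining inequality for $p_s(K,\gamma)$ to
$$\gamma_k\big((1-\epsilon)RB_2^k+\epsilon L'\big)^p\geq (1-\epsilon)\gamma_k(RB_2^k)^p+\epsilon\gamma_k(L')^p$$
for all symmetric convex $L'\subset\R^k$. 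Hence $p_s(RB_2^k\times\R^{n-k},\gamma_n)=p_s(RB_2^k,\gamma_k)$, and we work henceforth in $\R^k$.

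\emph{Upper bound.} Testing the reduced inequality against the radial dilation $L'=(R+t)B_2^k$ produces $(1-\epsilon)RB_2^k+\epsilon(R+t)B_2^k=(R+\epsilon t)B_2^k$. Writing $\Phi(r):=\gamma_k(rB_2^k)=J_{k-1}(r)/c_{k-1}$, the condition becomes
$$\Phi(R+\epsilon t)^p\geq (1-\epsilon)\Phi(R)^p+\epsilon\Phi(R+t)^p.$$
Expanding the difference of the two sides jointly to second order in $(\epsilon,t)$ at the origin, the leading term works out to $-\tfrac{1}{2}\epsilon(1-\epsilon)t^2(\Phi^p)''(R)$, so the inequality forces $(\Phi^p)''(R)\leq 0$; equivalently,
$$p\leq 1-\frac{\Phi(R)\Phi''(R)}{\Phi'(R)^2}.$$
Using $\Phi'(r)=g_{k-1}(r)/c_{k-1}$ and the identity $g'_{k-1}(r)=\tfrac{k-1-r^2}{r}g_{k-1}(r)$, the right-hand side simplifies to $1-J_{k-1}(R)(k-1-R^2)/g_k(R)=1-c_{k-1}a(k-1-R^2)/g_k(R)$, which is the claimed value after substituting $R=J_{k-1}^{-1}(c_{k-1}a)$.

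\emph{Lower bound.} Theorem~\ref{Gauss-main-intro} gives $p_s(K,\gamma)\geq Q(\E|X|^2,\E|X|^4,\E\|X\|_K^2,r(K))$ for every symmetric convex $K$, with equality precisely on round cylinders; applied to $K=RB_2^k\times\R^{n-k}$, this furnishes the matching lower bound. The main remaining computation, and the principal technical obstacle, is to verify that $Q$ evaluated at the cylinder data (namely $r(K)=R$, $\|(x_1,x_2)\|_K=|x_1|/R$, together with the truncated moments $\int_{RB_2^k}|\cdot|^{2j}d\gamma_k$ in the first $k$ coordinates and standard free Gaussian moments in the last $n-k$) collapses to $1-c_{k-1}a(k-1-R^2)/g_k(R)$. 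This is handled by reducing every relevant moment to a one-variable radial integral and then using Gaussian integration by parts and the recursion $g_j(r)=r\,g_{j-1}(r)$ to express all relevant quantities in terms of $J_{k-1}(R)$ and $g_k(R)$ only, matching the upper bound above.
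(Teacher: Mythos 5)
Your proposal is correct and follows essentially the same route as the paper: the upper bound by testing against dilates, i.e.\ requiring concavity of $r\mapsto \left(J_{k-1}(r)/c_{k-1}\right)^p$ at $r=R$ (this is the paper's Lemma \ref{upperbnd}), and the lower bound by evaluating the inequality of Theorem \ref{Gauss-main-intro} (in its precise form, Theorem \ref{Gauss-main}/\ref{Gauss-main-1}) on the cylinder and simplifying by one-dimensional Gaussian integration by parts; your asserted collapse of $Q$ does hold, and it is exactly the paper's Lemmas \ref{1-dim-by-parts} and \ref{ibp} -- the optimal choice corresponds to $F\propto k-\sum_{i\le k}x_i^2$ (equivalently $\alpha=k/R^2$ in Theorem \ref{Gauss-main}), and the identity $J_{k+1}(R)=kJ_{k-1}(R)-g_k(R)$ reduces all cylinder moments to $J_{k-1}(R)$ and $g_k(R)$, giving $1-c_{k-1}a(k-1-R^2)/g_k(R)$; the Fubini reduction to $\R^k$ is a harmless extra not used by the paper. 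One caution: you should invoke only the \emph{inequality} part of Theorem \ref{Gauss-main-intro}, not its equality-case characterization, since in the paper the fact that round cylinders achieve equality is itself deduced from this very proposition; your argument does not need the characterization anyway, because the sandwich between your evaluated lower bound and the dilation upper bound already forces the stated equality.
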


There is nothing special in searching for $F$ in the form $F(a)=a^{p(a)}$ for the function $F$ from (\ref{ehrhard-general}), of course. In Section 3, we shall see some other equivalent formulations of Conjecture \ref{theconj}. 

Most of this manuscript is dedicated to proving Theorem \ref{Gauss-main-1}. In addition to several novel estimates and ideas, our method is based on the reduction of the question to infinitesimal version, previously explored by Colesanti, Hug, Saorin-Gomez, \cite{Col1}, \cite{Col2}, \cite{Colesanti-Hug-Saorin2}, L, Marsiglietti, \cite{CLM}, \cite{CL}, and on the $L2$ method, studied by Kolesnikov, Milman, \cite{KM1}, \cite{KM2}, \cite{KolMil}, \cite{KolMilsupernew} (these works include an important advancement -- the new proof of the Brunn-Minkowski inequality via the L2-method), L \cite{KolLiv}, \cite{KolLiv1}, Hosle \cite{HKL}, Milman \cite{Milm-new}. Some of the estimates also involve optimizing quadratic inequalities, see a remarkable work of Eskenazis, Moschidis \cite{EM}, or Remark 2.2 from Hosle, Kolesnikov, L \cite{HKL}. Another key component of the proof involves the sharp lower estimate for the \emph{Gaussian torsional rigidity} (more details in Section 5). The equality case characterization is done via careful analysis of the smooth case in several inequalities, several qualitative stability estimates obtained via the trace inequalities for convex sets (see subsection 2.4), and delicate approximation arguments. The stability version for the torsional energy lower bound from Section 6 may be of independent interest. 

In addition, one of the steps on the way to the equality case characterization in Theorem \ref{Gauss-main-1} is the stability in the ``convex set version'' of the Brascamp-Lieb inequality \cite{BrLi}: this celebrated inequality states that for any convex set $K$ in $\R^n$, any locally Lipschitz function $f\in L^2(\R^n)$, and any strictly convex function $V:\R^n\rightarrow\R,$ letting $d\mu(x)=e^{-V(x)}dx,$ we have
$$
\mu(K)\int_K f^2 d\mu-\left(\int_K f d\mu\right)^2
\leq\mu(K)\int_K \langle (\nabla^2 V)^{-1}\nabla f,\nabla f\rangle d\mu.
$$
In Section 4, we will show

\begin{theorem}[Equality case characterization in the ``convex set version of'' the Brascamp-Lieb inequality, and the quantitative stability in the Gaussian case]\label{eqchar-intro}

We state two results:

\begin{enumerate}
	\item Let $\mu$ be a log-concave measure on $\R^n$ with $C^2$ positive density $e^{-V}$, and suppose $\nabla^2 V>0.$ Then for any convex set $K$ and any function $f\in W^{1,2}(K)\cap C^1(K),$ we have, as per Brascamp and Lieb \cite{BrLi}, 
$$\mu(K)\int_K f^2 d\mu-\left(\int_K f d\mu\right)^2\leq\mu(K)\int_K \langle (\nabla^2 V)^{-1}\nabla f,\nabla f\rangle d\mu;$$
Moreover, the equality occurs if one of the two things happen: 
\begin{itemize}
\item $f=C$ for some constant $C\in\R;$ 
\item there exists a rotation $U$ such that\\ 
a) $UK=L\times \R^{n-k}$ for some $k=1,...,n$ and a $k-$dimensional convex set $L;$\\ 
b) $f\circ U=\langle \nabla V,\theta\rangle+C$, for some vector $\theta\in\R^{n-k}$ and some constant $C\in\R.$
\end{itemize} 

\medskip

\item

Moreover, if $\mu=\gamma$ (the standard Gaussian measure), and a convex set $K$ contains $rB^n_2,$ then assuming that for $\epsilon>0,$
$$\gamma(K)\int_K f^2 d\gamma-\left(\int_K f d\gamma\right)^2\geq\gamma(K)\int_K \langle (\nabla^2 V)^{-1}\nabla f,\nabla f\rangle d\gamma-\epsilon,$$
we conclude that there exists a vector $\theta\in\R^n$ (possibly zero), which depends only on $K$ and $f$, such that
\begin{itemize}
\item $\int_{\partial K} \langle \theta, n_x\rangle^2 d \gamma_{\partial K}\leq \frac{(n+1)\epsilon}{r};$
\item $\|f-\langle x,\theta\rangle-\frac{1}{\gamma(K)}\int_{K} f d\gamma\|_{L^1(K,\gamma)}\leq \sqrt{\gamma(K)}\left(\sqrt{n\epsilon}+\sqrt[4]{n\epsilon}\right).$
\end{itemize}

\end{enumerate}

\end{theorem}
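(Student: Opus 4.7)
The plan is to prove both parts by quantitative tracking of the standard Reilly--Bochner ($L^2$) proof of the convex-set Brascamp--Lieb inequality: Part (1) follows by reading off the equality cases of each step, while Part (2) bounds the Gaussian-case slacks. Letting $\bar f := \mu(K)^{-1}\int_K f\, d\mu$, I take $u$ to solve the Neumann problem $\mathcal{L}u = f - \bar f$ in $K$ with $\partial_{n_x}u = 0$ on $\partial K$, where $\mathcal{L} = -\Delta + \nabla V \cdot \nabla$ is the generator of $\mu$. Integration by parts gives $\int_K (f-\bar f)^2 d\mu = \int_K \nabla u \cdot \nabla f\, d\mu$, and combining Cauchy--Schwarz with the Reilly--Bochner identity
$$\int_K (\mathcal{L}u)^2 d\mu = \int_K \|\nabla^2 u\|_F^2 d\mu + \int_K \langle \nabla^2 V \nabla u, \nabla u\rangle d\mu + \int_{\partial K}\mathrm{II}_{\partial K}(\nabla u, \nabla u) d\mu_{\partial K}$$
(whose boundary term is nonnegative by convexity of $K$) recovers the Brascamp--Lieb inequality.

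For Part (1), equality in the chain forces simultaneously: (a) $\nabla f = c\,\nabla^2 V \nabla u$ $\mu$-a.e.\ for some $c \in \R$ (Cauchy--Schwarz); (b) $\|\nabla^2 u\|_F \equiv 0$, so $\nabla u \equiv a$ for a constant $a \in \R^n$; (c) $\int_{\partial K}\mathrm{II}(\nabla u, \nabla u) d\mu_{\partial K} = 0$. The Neumann condition together with (b) gives $\langle a, n_x\rangle = 0$ for $\mathcal H^{n-1}$-a.e.\ $x\in \partial K$, which classically identifies $a$ with an element of the lineality subspace $W$ of $K$; in coordinates $U$ mapping $W$ to $\R^{n-k}$, this forces $UK = L \times \R^{n-k}$. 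Condition (a) then integrates to $f\circ U^{-1} = \langle \nabla V, \theta\rangle + \text{const}$, with $\theta = ca \in \R^{n-k}$, matching the claimed equality cases. The regularity needed ($u \in W^{2,2}(K) \cap C^1(\overline K)$) is obtained by approximating $K$ by smooth strictly convex bodies and $f$ by smooth functions, then passing to the limit using elliptic regularity for the Neumann problem and lower semicontinuity of the nonnegative boundary Reilly term.

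For Part (2), in the Gaussian case $\nabla^2 V = I$, write $A := \int (f - \bar f)^2 d\gamma$, $B := \int |\nabla u|^2 d\gamma$, $C := \int |\nabla f|^2 d\gamma$, and $D := \int_K \|\nabla^2 u\|_F^2 d\gamma + \int_{\partial K}\mathrm{II}(\nabla u, \nabla u) d\gamma_{\partial K}$. Reilly gives $A = B + D$ and Cauchy--Schwarz gives $A^2 \leq BC$; the assumption $\gamma(K)(C - A) \leq \epsilon$ therefore forces $D \leq A\epsilon/(C\gamma(K)) \leq \epsilon/\gamma(K)$, so that $\int_K \|\nabla^2 u\|_F^2 d\gamma$ and $\int_{\partial K}\mathrm{II}(\nabla u, \nabla u) d\gamma_{\partial K}$ are individually $O(\epsilon/\gamma(K))$. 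Setting $a := \gamma(K)^{-1}\int_K \nabla u\, d\gamma$ and applying Brascamp--Lieb coordinate-wise to $\nabla u$ yields $\int_K |\nabla u - a|^2 d\gamma \leq \int_K \|\nabla^2 u\|_F^2 d\gamma$; the Cauchy--Schwarz slack $C - A^2/B = \inf_{c\in\R}\int|\nabla f - c\nabla u|^2 d\gamma$, attained at $c_\ast = A/B$, is similarly small. Putting $\theta := c_\ast a$ and combining by the triangle inequality, $\int_K|\nabla f - \theta|^2 d\gamma$ is small, after which Brascamp--Lieb applied to $g := f - \langle x, \theta\rangle$ bounds $\|g - \bar g\|_{L^2(K,\gamma)}$; a Cauchy--Schwarz conversion produces the claimed $L^1$ estimate, with the $\sqrt[4]{n\epsilon}$ term absorbing the correction $\bar g - \bar f = -\langle \bar x, \theta\rangle$ and related lower-order errors. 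For the boundary estimate, I use the trace inequality from Subsection 2.4,
$$r\int_{\partial K} h \, d\gamma_{\partial K} \leq \int_K \left[(n - |x|^2) h + x \cdot \nabla h\right] d\gamma,$$
applied with $h = |\nabla u - a|^2$: the AM--GM bound $|x \cdot \nabla h| \leq |x|^2 |\nabla u - a|^2 + \|\nabla^2 u\|_F^2$ cancels the $|x|^2$ terms to give $\int_{\partial K}|\nabla u - a|^2 d\gamma_{\partial K} \leq \frac{n+1}{r}\int_K \|\nabla^2 u\|_F^2 d\gamma$. Since the Neumann condition forces $\langle a, n_x\rangle = \langle a - \nabla u, n_x\rangle$ on $\partial K$, this controls $\int_{\partial K}\langle \theta, n_x\rangle^2 d\gamma_{\partial K}$ with the desired $(n+1)/r$ factor.

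The hardest ingredient is the interplay between the regularity of $u$ and the roughness of $\partial K$: both the Reilly identity and the trace inequality require $u \in W^{2,2}(K)\cap C^1(\overline K)$, which cannot be assumed for general convex $K$. My plan is to approximate $K$ by smooth strictly convex bodies $K_m$ (e.g.\ $(K + \delta_m B_2^n)$ intersected with a large ball, then mollified), solve the Neumann problem on each with high regularity via Agmon--Douglis--Nirenberg theory, and verify that every deficit quantity --- in particular the boundary term $\int_{\partial K_m}\mathrm{II}_{\partial K_m}(\nabla u_m, \nabla u_m) d\gamma_{\partial K_m}$ --- is stable under $K_m \to K$. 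In Part (1), passing the conclusion $\langle a, n_x\rangle = 0$ through the limit is delicate and uses appropriate weak convergence of outward normals; the cylinder decomposition $UK = L \times \R^{n-k}$ then follows from the standard characterization of the lineality subspace of a closed convex set.
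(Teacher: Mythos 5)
Your skeleton coincides with the paper's: solve the Neumann problem $Lu=f-\bar f$, $\langle\nabla u,n_x\rangle=0$, apply the Kolesnikov--Milman generalized Reilly identity, discard the boundary term by convexity, deduce smallness of $\int_K\|\nabla^2u\|^2\,d\gamma$, apply the Brascamp--Lieb/Poincar\'e inequality coordinatewise to $\nabla u$, use the Gaussian trace inequality for convex sets containing $rB^n_2$ (your trace step is exactly Theorem \ref{GaussGarg} applied to the coordinates of $\nabla u-a$), and finish by smooth approximation. Replacing the paper's pointwise completing-the-square by the integrated Cauchy--Schwarz duality is harmless and yields the same equality structure, so your Part (1), modulo the approximation step, is in substance the paper's argument.

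There are, however, concrete gaps in your Part (2) derivation of the $L^1$ estimate. First, you set $\theta=c_*a$ with $c_*=A/B$; since $A=B+D$, one has $c_*=1+D/B$, which is not bounded when $B=\int_K|\nabla u|^2d\gamma$ is small (e.g.\ $f$ nearly constant). In that regime the term $c_*\|\nabla u-a\|_{L^2}$ in your triangle inequality, and the factor $c_*^2$ you silently drop in $\int_{\partial K}\langle\theta,n_x\rangle^2\,d\gamma_{\partial K}=c_*^2\int_{\partial K}\langle a,n_x\rangle^2\,d\gamma_{\partial K}$, can be arbitrarily large, and the prescribed $\theta$ can violate both conclusions; you need a case distinction (take $\theta=0$ when $B\lesssim D$), or simply the coefficient $1$. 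Second, the recentering claim is unjustified: you bound $\|g-\bar g\|$ for $g=f-\langle x,\theta\rangle$, but the theorem centers at $\bar f$, and the discrepancy $\langle\bar x,\theta\rangle$ is not automatically a lower-order error because $K$ is not assumed symmetric. Controlling it, say via $\int_K\langle x,\theta\rangle d\gamma=-\int_{\partial K}\langle\theta,n_x\rangle d\gamma_{\partial K}$ and Cauchy--Schwarz, invokes the boundary estimate and brings in $1/r$ and the Gaussian perimeter of $K$, which does not obviously fit under $\sqrt{\gamma(K)}\left(\sqrt{n\epsilon}+\sqrt[4]{n\epsilon}\right)$. The paper sidesteps both issues by taking $\theta$ to be exactly the linear part of $u$ (coefficient $1$), so that $f-\bar f-\langle\nabla V,\theta\rangle=Lv$ holds identically with $\int_K\nabla v\,d\gamma=0$, and then bounding $\|Lv\|_{L^1}$ directly via $(\Delta v)^2\le n\|\nabla^2v\|^2$ and an optimized Cauchy--Schwarz split of $\langle\nabla v,\nabla V\rangle$ (using $\E|X|^2\le n$ for convex $K$ containing the origin); that computation is precisely the source of $\sqrt{n\epsilon}+\sqrt[4]{n\epsilon}$, with no recentering correction and no $c_*$.

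Finally, for Part (1) the limit passage you defer to ``appropriate weak convergence of outward normals'' is where the paper spends most of its effort: for a general log-concave $\mu$ and a possibly unbounded $K$, it localizes to unit boxes $B_k$ (because the trace constant degenerates on unbounded domains in the non-Gaussian case), uses uniform Poincar\'e bounds for restrictions of $\mu$ to convex sets, and uses total-variation convergence of the boundary measures of the approximating bodies. Your plan of smoothing $K$ and invoking elliptic regularity covers the bounded smooth case but, as written, does not address existence/coercivity of the Neumann problem and the trace estimates on unbounded, non-Gaussian domains.
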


A related question was posed by Brandolini, Chiacchio, Henrot, Trombetti \cite{strips-conj}. See Section 5 for the history and important past results in the case of the Gaussian measure. Part (2) of Theorem \ref{eqchar-intro} will be used directly in the proof of Theorem \ref{Gauss-main-1}. The quantitative stability works especially well in the Gaussian case thanks to nicer trace-type estimates for convex sets (see subsection 2.4 and particularly Theorem \ref{GaussGarg}). In Section 4 we will also deduce stability for general log-concave measures, but it is weaker in general, comparing to the case of the standard Gaussian measure. 

In Section 7, we will discuss the equality case characterization in a ``symmetric upgrade'' of the Gaussian Brascamp-Lieb inequality (by a symmetric upgrade we mean something more involved than the classical result of Cordero-Erasquin, Fradelizi and Maurey \cite{CFM}, which itself would not have vast equality cases). In Section 7 we will also discuss some curious upgrades of the Gaussian Brascamp-Lieb inequality for non-symmetric sets, and draw some connections with the Ehrhard inequality.

\medskip

Next, we will show an estimate somewhat weaker than Conjecture \ref{theconj}.

\begin{theorem}\label{Gauss}
The function
$$
F(a)=\int_0^a \exp\left(\int_{C_0}^t\left(\frac{\varphi^{-1}(s)^2}{4e^2n^2s}+\frac{1}{ns-\frac{1}{c_{n-1}}J_{n+1}\circ J^{-1}_{n-1}(c_{n-1}s)}-\frac{1}{s}\right)ds\right) dt
$$
satisfies 
$$
F\left(\gamma(\lambda K+(1-\lambda)L)\right)\geq \lambda F\left(\gamma(K)\right)+(1-\lambda) F\left(\gamma(L)\right),
$$
for all convex {symmetric} sets $K$ and $L$ and every $\lambda\in [0,1]$.
\end{theorem}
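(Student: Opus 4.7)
The plan is to derive Theorem~\ref{Gauss} from Theorem~\ref{Gauss-main-intro} via the standard reduction of Minkowski-concavity for $F\circ\gamma$ to a pointwise lower bound on the Gaussian concavity power $p_s(\cdot,\gamma)$. Since $F$ as defined is smooth and increasing on $(0,1)$ (the integrand being locally integrable there), a computation with first and second variations of $\gamma$ along $K_t=(1-t)K+tL$ -- standard in this area and to be discussed in Section~3 -- shows that the inequality in the statement, holding for all symmetric convex $K,L$, is equivalent to the infinitesimal bound
$$
p_s(K,\gamma)\ \geq\ 1-\gamma(K)\cdot\frac{F''(\gamma(K))}{F'(\gamma(K))}.
$$
A direct differentiation gives $F'(a)=\exp\!\big(\int_{C_0}^{a}h(s)\,ds\big)>0$ and $F''(a)/F'(a)=h(a)$, where $h$ is the integrand appearing in the statement. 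Substituting and absorbing the $-1/s$ summand, the target inequality becomes
$$
p_s(K,\gamma)\ \geq\ 2-\frac{\varphi^{-1}(\gamma(K))^2}{2e^2n^2}-\frac{\gamma(K)}{n\gamma(K)-\frac{1}{c_{n-1}}J_{n+1}(J_{n-1}^{-1}(c_{n-1}\gamma(K)))}.
$$

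Next, I would invoke Theorem~\ref{Gauss-main-intro}, which gives $p_s(K,\gamma)\geq Q(\E|X|^2,\E|X|^4,\E\|X\|_K^2,r(K))$ for a certain rational function $Q$ of degree two, with $X$ distributed according to the restriction of $\gamma$ to $K$. The strategy is to bound each of the four arguments purely in terms of $a=\gamma(K)$ and $n$ using four elementary estimates. First, since $K$ is symmetric and convex, it lies inside the slab $\{|\langle x,\theta_0\rangle|\leq r(K)\}$ where $\theta_0$ realizes the inradius direction, so $\gamma(K)\leq\varphi(r(K))$ and hence $r(K)\geq\varphi^{-1}(\gamma(K))$. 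Second, $\E\|X\|_K^2\leq 1$ trivially, since $X\in K$ almost surely. Third and fourth, the probability measure $\gamma|_K/\gamma(K)$ is log-concave, so standard Borell-type reverse H\"older estimates (or a direct comparison with the ambient Gaussian combined with symmetry) yield $\E|X|^2\lesssim n$ and $\E|X|^4\lesssim n^2$ with explicit constants; the numerical factor $2e^2$ in the first term of $h$ should trace back to these. The key structural simplification is the integration-by-parts identity $J_{n+1}(R)=nJ_{n-1}(R)-R^{n}e^{-R^2/2}$, which reduces the denominator $n\gamma(K)-\frac{1}{c_{n-1}}J_{n+1}(J_{n-1}^{-1}(c_{n-1}\gamma(K)))$ to $g_n(J_{n-1}^{-1}(c_{n-1}\gamma(K)))/c_{n-1}$ -- matching the $k=n$ case of Proposition~\ref{cyl} and reflecting the fact that the middle term of $h$ encodes precisely the $p_s$-value of a Euclidean ball of the same Gaussian measure.

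The main obstacle is the final algebraic verification: once the bounds from the previous paragraph are substituted into $Q$, one must check pointwise that the resulting expression dominates the right-hand side of the infinitesimal inequality identified in the first paragraph. Because $Q$ is rational of degree two in four variables, monotonicity in each argument has to be tracked with care to apply each bound in the correct direction (some to the numerator of $Q$, some to the denominator), and the check is inevitably computational rather than conceptual. Once it is in place, $F$ is manifestly positive and increasing on $(0,1)$ (one verifies locally that $h$ is integrable near the endpoints), and integrating the infinitesimal inequality along straight-line Minkowski paths $K_t=(1-t)K+tL$ yields Theorem~\ref{Gauss}.
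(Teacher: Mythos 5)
Your reduction in the first paragraph has a sign error with real consequences. By Proposition \ref{prop-equiv} and Lemma \ref{G-p}, concavity of $F\circ\gamma$ along Minkowski interpolations of symmetric convex sets is equivalent to $\frac{F''(a)}{F'(a)}\leq \frac{p_s(\gamma,a)-1}{a}$, i.e.\ to $p_s(K,\gamma)\geq 1+\gamma(K)\frac{F''(\gamma(K))}{F'(\gamma(K))}$, not $1-\gamma(K)F''/F'$. Since $F''/F'=h$, the correct target is
$$p_s(K,\gamma)\ \geq\ \frac{\varphi^{-1}(a)^2}{2e^2n^2}+\frac{1}{\,n-\frac{1}{c_{n-1}a}J_{n+1}\circ J_{n-1}^{-1}(c_{n-1}a)\,},\qquad a=\gamma(K),$$
whereas the inequality you derived, $p_s\geq 2-\cdots$, is false in general: for a centered ball of small Gaussian measure, Proposition \ref{cyl} with $k=n$ gives $p_s$ of order $\frac1n$, while your right-hand side is close to $2$. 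So the proof as written cannot be completed; the sign must be corrected before anything else.

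Even after fixing the sign, the route through the rational function $Q$ of Theorem \ref{Gauss-main-intro} with the bounds you list does not produce the two terms of the target, and this is where you diverge from the paper. The second term needs a \emph{lower} bound on $\E X^2$, namely the rearrangement inequality $\E_K|X|^2\geq\E_{B_a}|X|^2$ (Lemma \ref{moments}: the centered ball minimizes second moments among sets of equal measure), whose right-hand side equals $\frac{1}{c_{n-1}a}J_{n+1}\circ J_{n-1}^{-1}(c_{n-1}a)$ by polar integration; your upper bounds $\E|X|^2\lesssim n$, $\E|X|^4\lesssim n^2$ point in the wrong direction and cannot yield it. Likewise $\E\|X\|_K^2\leq 1$ is useless inside $Q$: it only makes the factor $\E(1-\|X\|_K^2)$ nonnegative, so the positive part of the numerator degenerates and what remains is $-\mathrm{Var}(\|X\|_K^2)\leq 0$. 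The paper instead derives Theorem \ref{Gauss} from Corollary \ref{corT1}, i.e.\ Theorem \ref{Gauss-main-1} with $F=1$: $p_s(K,\gamma)\geq 2T_\gamma(K)+\frac{1}{n-\E X^2}$. The first term of the target then comes from Lemma \ref{bound2}, which bounds $T_\gamma(K)$ from below using the test function $1-\|x\|_K$, the inradius bound of Lemma \ref{ball}, and the estimate $1-\E\|X\|_K\geq\big(1-\frac1n\big)^n\frac1n$ (this is exactly where the factor $2e^2n^2$ originates), and the second term comes from Lemma \ref{moments}; Lemma \ref{lemma-min-p_s} and Proposition \ref{ODE} then finish the argument. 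Your inradius observation $r(K)\geq\varphi^{-1}(a)$ is correct (and slightly sharper than Lemma \ref{ball}), but the ``final algebraic verification'' you defer is precisely the missing content, and with the bounds you propose it cannot be carried out.
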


As a corollary of Theorem \ref{Gauss} we will show Proposition \ref{isoper} -- a Gaussian version of Minkowski's first inequality: for any pair of symmetric convex sets $K$ and $L$,
$$\gamma(K+tL)'|_{t=0}\geq \left(1-\frac{\E X^2}{n}\right)\gamma(K)^{1-\frac{1}{n-\E X^2}}\gamma(L)^{\frac{1}{n-\E X^2}},$$
where the expected value is taken with respect to the restriction of the Gaussian measure $\gamma$ onto $K.$

\medskip

In Section 2 we discuss preliminaries from PDE, Sobolev space theory, rearrangements, asymptotic analysis, and other relevant topics; notably, in subsection 2.4 we will derive useful trace inequalities for convex sets. In Section 3 we discuss Conjecture \ref{theconj}, its isoperimetric implications and relation to the S-inequality \cite{sconj}, connections to Conjecture \ref{theconj1}, prove Proposition \ref{cyl}, survey results of Kolesnikov and Milman \cite{KM1} regarding the $L2$ estimates, and perform other preparations. In Section 4 we prove Theorem \ref{eqchar-intro} (the equality case characterization of the Brascamp-Lieb inequality and the universal stability in the Gaussian case). In Section 5 we discuss energy minimization for the standard Gaussian measure, explore various bounds and properties of torsional rigidity. In Section 6 we prove Theorem \ref{Gauss-main-1}, Corollary \ref{Gauss-main-intro} as well as Theorem \ref{Gauss} and some of its consequences. In Section 7 we will discuss upgrades on the Brascamp-Lieb inequality, draw some connections with the Ehrhard inequality, and utilize our work from Section 6 in order to characterize the equality cases in the ``enchanted symmetric version'' of the Brascamp-Lieb inequality. 


\textbf{Acknowledgement.} The author is very grateful to Benjamin Jaye for many fruitful conversations and useful references to various PDE results concerning existence and convergence, not to mention his help with a computer simulation and creating the pictures from Section 3, all of which was in-disposable. The author is grateful to Alexander Kolesnikov for teaching her a lot of mathematics. The author also thanks Ramon van Handel for a nice discussion about Ehrhard's inequality, and for comments on the early version of this preprint. Thanks also to Steven Heilman, Francesco Chiacchio, Alexandros Eskenazis, Konstantin Tikhomirov, Marco Barchiesi for their comments and interest in the early version.

The author is supported by the NSF CAREER DMS-1753260. The author worked on this project while being a Research Fellow at the program in Probability, Geometry, and Computation in High Dimensions at the Simons Institute for the Theory of Computing. The paper was completed while the author was in residence at the Hausdorff Institute of Mathematics at the program in The Interplay between High-Dimensional Geometry and Probability.

\section{Preliminaries, and some new trace estimates for convex sets}

\subsection{The Brascamp-Lieb inequality} Given a measure $\mu$ on $\R^n$ and $p>0$, we shall consider $L^p(\mu, K)$, the space of functions $g$ such that $\int_K |g|^p d\mu<\infty.$ Recall that the Brascamp-Lieb inequality says that for any locally Lipschitz function $f\in L^2(\mu,\R^n)$ and any convex function $V:\R^n\rightarrow\R,$ we have
\begin{equation}\label{BrLi}
\int_{\R^n} f^2 d\mu-\left(\int_{\R^n} f d\mu\right)^2\leq\int_{\R^n} \langle (\nabla^2 V)^{-1}\nabla f,\nabla f\rangle d\mu,
\end{equation}
where $d\mu(x)=e^{-V(x)}dx$. Note that the integral on the right hand side makes sense in the almost everywhere sense. The function $e^{-V}$ is called log-concave when $V$ is convex. See Brascamp, Lieb \cite{BrLi}, or e.g. Bobkov, Ledoux \cite{BL1-BrLib}.

Recall that a set $K$ is called convex if together with every pair of points it contains the interval connecting them, and recall that the characteristic function of a convex set is log-concave. As a consequence of (\ref{BrLi}), for any convex body $K$, 
\begin{equation}\label{BrLi-convex}
\mu(K)\int_K f^2 d\mu-\left(\int_K f d\mu\right)^2\leq\mu(K)\int_K \langle (\nabla^2 V)^{-1}\nabla f,\nabla f\rangle d\mu.
\end{equation}
In the case of the standard Gaussian measure $\gamma$, this becomes, for any convex set $K,$
\begin{equation}\label{poinc-sect4}
\gamma(K)\int_K f^2 d\gamma-\left(\int_K f d\gamma\right)^2\leq\gamma(K)\int_K |\nabla f|^2 d\gamma.
\end{equation}
Furthermore, Cordero-Erasquin, Fradelizi and Maurey showed \cite{CFM} that for symmetric convex sets and even $f$,
\begin{equation}\label{poinc-sect4-sym}
\gamma(K)\int_K f^2 d\gamma-\left(\int_K f d\gamma\right)^2\leq\frac{1}{2}\gamma(K)\int_K |\nabla f|^2 d\gamma.
\end{equation}


\medskip

\subsection{Symmetrizations and Ehrhard's principle}

Let us recall Ehrhard's rearrangement \cite{Ehr}, the Gaussian analogue of the radial rearrangement. For a measurable set $K,$ denote by $H_K$ (sometimes also denoted $K^*$) the left half-space of the Gaussian measure equal to $\gamma(K)$. Namely,
$$H_K=\{x\in\R^n:\, x_1\leq \Phi^{-1}(\gamma(K))\},$$
where 
$$\Phi(t)=\frac{1}{\sqrt{2\pi}}\int^t_{-\infty} e^{-\frac{s^2}{2}}ds.$$
Next, for a measurable function $u:\R^n\rightarrow\R,$ consider its rearrangement $u^*$ to be the function whose level sets are half-spaces, and have the same Gaussian measures as the level sets of $u.$ Namely, $u^*(x)=t$ whenever $\Phi(x_1)=\gamma(\{u\leq t\})$. In other words, for all $t\in\R,$
$$\{u^*\leq t\}=H_{\{u\leq t\}}.$$
The Lebesgue analogue of Ehrhard's symmetrization, called Schwartz (radial) rearrangement, is an indispensable tool in PDE, see, for instance Burchard \cite{Almut}, Lieb, Loss \cite{LLbook}, Kesavan \cite{Kesavan}, or Vazquez \cite{Vasquez}; see also Carlen, Kerce \cite{CarKer} for a nice discussion related to Ehrhard's symmetrization; see also Bogachev \cite{bogachev}.

Let us recall Ehrhard's principle, which follows from the Gaussian isoperimetric inequality (for the proof, see, e.g. Carlen, Kerce \cite{CarKer} or Ehrhard \cite{Ehr}.) It is the Gaussian analogue of the Polya-Szeg\"o principle \cite{PolSz}.

\begin{lemma}[Ehrhard's principle]\label{EhrPr}
Let $K$ be a Borel measurable set in $\R^n$ and let $H_K$ be the left half-space of the same Gaussian measure as $K.$ Let $u: K\rightarrow\R$ be a locally Lipschitz function and let $u^*$ be its Ehrhard symmetral. Then for any convex increasing function $\varphi:\R^+\rightarrow\R^+$ we have
$$\int_K \varphi(|\nabla u|)d\gamma\geq \int_{H_K} \varphi(|\nabla u^*|)d\gamma.$$	
\end{lemma}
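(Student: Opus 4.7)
My plan is to reduce the functional inequality to a pointwise (in the level parameter $t$) inequality on coarea slices, and then combine Jensen's inequality with the Gaussian isoperimetric inequality, just as the classical Polya-Szeg\"o inequality is proved via the Euclidean coarea formula and the Euclidean isoperimetric inequality. First, I would assume $u$ is smooth with no critical level inside $K$ (in the general locally Lipschitz case this reduction uses Sard, truncation, and a standard mollification/approximation argument; the distribution and perimeter are continuous under such approximations, so it suffices to establish the inequality under this regularity hypothesis). Extend $u$ by $+\infty$ outside $K$ so both sides are unambiguous.

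Let $\mu_u(t)=\gamma(\{u\le t\})$. By construction $\mu_{u^*}=\mu_u$, hence $\mu_u'=\mu_{u^*}'$ a.e. Using the Gaussian coarea formula
$$\int_K \varphi(|\nabla u|)\,d\gamma=\int_{-\infty}^{\infty}\int_{\{u=t\}}\frac{\varphi(|\nabla u|)}{|\nabla u|}\,d\gamma_\partial \,dt,$$
and the identity $\mu_u'(t)=\int_{\{u=t\}}|\nabla u|^{-1}d\gamma_\partial$, it is enough to show that for a.e. $t$,
$$\int_{\{u=t\}}\frac{\varphi(|\nabla u|)}{|\nabla u|}\,d\gamma_\partial\;\ge\;\int_{\{u^*=t\}}\frac{\varphi(|\nabla u^*|)}{|\nabla u^*|}\,d\gamma_\partial.$$

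For the right-hand side, since $u^*$ depends only on $x_1$ and is monotone, $|\nabla u^*|$ is constant on the level set $\{u^*=t\}$, which is a hyperplane orthogonal to $e_1$; writing $\tau^*(t)=\gamma^+(\partial\{u^*\le t\})$ we get $|\nabla u^*|=\tau^*(t)/\mu_u'(t)$ and hence the right-hand side equals $\mu_u'(t)\,\varphi\!\left(\tau^*(t)/\mu_u'(t)\right)$. For the left-hand side, apply Jensen's inequality with the convex increasing $\varphi$ against the probability measure $\mu_u'(t)^{-1}|\nabla u|^{-1}d\gamma_\partial$ on $\{u=t\}$:
$$\int_{\{u=t\}}\frac{\varphi(|\nabla u|)}{|\nabla u|}\,d\gamma_\partial\;\ge\;\mu_u'(t)\,\varphi\!\left(\frac{\tau(t)}{\mu_u'(t)}\right),$$
where $\tau(t)=\gamma^+(\partial\{u\le t\})$. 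The Gaussian isoperimetric inequality gives $\tau(t)\ge\tau^*(t)$, and since $\varphi$ is increasing the desired slice-wise inequality follows. Integrating in $t$ concludes the proof.

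The main technical obstacle is the reduction to the smooth case with no critical levels: for a general locally Lipschitz $u$ the level set $\{u=t\}$ can fail to be rectifiable on a bad set of $t$'s, $|\nabla u|$ can vanish on a positive-measure set, and the map $t\mapsto \mu_u(t)$ may have jump parts (if $u$ is constant on sets of positive Gaussian measure). The standard remedies are: approximate $u$ by $u_\varepsilon=\eta_\varepsilon*u$ after extending $u$ off $K$, apply Sard's theorem to restrict to regular values (generic $t$), and pass to the limit using the lower semicontinuity of the Gaussian-weighted Dirichlet-type integral $\int\varphi(|\nabla\cdot|)d\gamma$ together with the continuity of Ehrhard's symmetrization in appropriate function-space topologies (the required continuity assertions can be justified by arguments in the spirit of Almut Burchard's exposition for the Schwarz case, invoked here with the Gaussian measure in place of Lebesgue). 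Once the smooth-case inequality is in hand, these approximation facts deliver Lemma~\ref{EhrPr} in full generality.
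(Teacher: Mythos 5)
Your proposal is correct and follows essentially the same route the paper indicates: the paper sketches exactly this coarea-plus-Gaussian-isoperimetry argument for $\varphi(t)=|t|$ and defers the "cleverer use of convexity" for general $\varphi$ to Carlen--Kerce, which is precisely the slice-wise Jensen step you supply (using $\mu_u'(t)=\int_{\{u=t\}}|\nabla u|^{-1}d\gamma_{\partial}$ and $\tau(t)\ge\tau^*(t)$). The remaining issues you flag (Sard, flat level sets, mollification, lower semicontinuity) are genuine but standard, and your treatment of them matches the references the paper relies on.
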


\begin{remark} The case when $\varphi(t)=|t|$ is particularly straight-forward, so we outline it for the reader's convenience. By the co-area formula,
$$\int_K |\nabla u| d\gamma=\int_{\R} \gamma^+(\partial \{u<t\}) dt.$$
By the Gaussian isoperimetric inequality \cite{ST}, \cite{Bor-isop}, 
$$\gamma^+(\partial \{u<t\}) dt\geq \gamma^+(\partial \{u^*<t\}) dt,$$ 
and thus, $\int_K |\nabla u| d\gamma$ is greater than or equal to $\int_K |\nabla u^*| d\gamma.$ The general statement also follows from this idea, but with a cleverer use of convexity, see \cite{CarKer}.
\end{remark}

\medskip

\subsection{Some background from Sobolev space theory}

Consider an absolutely continuous measure $\mu$ on $\R^n$ with a locally-Lipschitz density $e^{-V(x)}.$ We shall mostly consider the case when $\mu$ is log-concave, and $V$ is a twice-differentiable convex function. We call $V$ a potential of $\mu.$ By $C^p(\Omega)$ denote the space of $p$ times differentiable functions on a domain $\Omega$, whose $p$-th partial derivatives are continuous. By $Lip(K)$ denote the class of locally Lipschitz functions on $K.$ 

Given a convex set $K$ in $\R^n,$ for a point $x\in\partial K$, we denote by $n_x$ the outward unit normal at $x;$ the vector-field $n_x$ is uniquely defined almost everywhere on $\partial K$. We say that $K$ is of class $C^2$ if its boundary is locally twice differentiable; in this case, $n_x$ is well-defined for all $x\in\partial K$. For a $C^2$ convex set, consider the second fundamental form of $K$ to be the matrix $\rm{II}=\frac{dn_x}{dx}$ (with a ``plus'' because the normal is outer) acting on the tangent space at $x$. The Gauss curvature at $x$ is $det(\rm{II})$ and the mean curvature is $tr(\rm{II}).$ Given a measure $\mu$ with potential $V,$ define the $\mu-$associated mean curvature
$$H_{\mu}=tr(\rm{II})-\langle \nabla V,n_x\rangle.$$

We say that $K$ is strictly convex if $\rm{II}$ is non-singular everywhere on $K.$ We say that $K$ is uniformly strictly convex if $\det(\rm{II})>c>0$ for some $c>0.$  

Associated with $\mu,$ consider the operator $L: C^2(\R^n)\rightarrow L^2(\mu,\R^n),$
$$Lu=\Delta u-\langle \nabla u,\nabla V\rangle.$$
The operator $Lu$ satisfies the following integration by parts identity whenever it makes sense (as follows immediately from the classical Divergence theorem):
$$\int_{K} vLu \, d\mu=-\int_{K}\langle \nabla v,\nabla u\rangle d\mu+\int_{\partial K} v\langle \nabla u,n_x\rangle d\mu_{\partial K}.$$
Here by $\mu_{\partial K}$ we mean the measure $e^{-V(x)}dH_{n-1}(x),$ where $H_{n-1}$ is the $(n-1)$-dimensional Hausdorff measure. The term $\langle \nabla u,n_x\rangle$ is called the normal derivative of $u.$

Fix a function $u\in L^1(\mu, K).$ We say that $w_i$ is a weak $i$-the partial derivative of $u,$ and use notation $w_i=\frac{\partial u}{\partial x_i}$, if for every $v\in C^1(K)$ with $v|_{\partial K}=0,$ we have the following integration by parts identity
$$\int_K vw_i d\mu= -\int_K u \frac{\partial (ve^{-V})}{\partial x_i} dx.$$

Recall that the trace operator is a continuous linear operator 
$$TR: W^{1,2}(K,\mu)\rightarrow L^2(\partial K,\mu)$$
such that for every $u\in C^1(K),$ continuous up to the boundary, we have 
$$TR(u)=u|_{\partial K}.$$
We shall use informal notation $\int_{\partial K} u d\mu$ to mean $\int_{\partial K} TR(u) d\mu$. Similarly, we use notation $\int_{\partial K} \langle \nabla u,n_x\rangle d\mu$ to mean $\int_{\partial K} \langle TR(\nabla u),n_x\rangle d\mu$, where $TR(\nabla u)$ is the vector formed by the trace functions of the weak first partial derivatives of $u.$ We shall also use notation $\nabla,$ $\Delta$ and so on, to denote the appropriate quantities in the sense of weak derivatives.

Define the Sobolev space $W^{1,2}(K,\mu)$ to be the space of $L^2(K,\mu)$ functions whose all weak partial derivatives are in $L^2(K,\mu).$ We shall also consider the space 
$$W_0^{1,2}(K,\mu)= W^{1,2}(K,\mu)\cap\{w:\,TR(w)=0\}.$$
These spaces we consider equipped with the Sobolev norm
$$\|u\|_{W^{1,2}(K,\mu)}=\sqrt{c_1\|u\|^2_{L^2(K,\mu)}+c_2 \sum_{i=1}^n\|\frac{\partial u}{\partial x_i}\|^2_{L^2(K,\mu)}}.$$


\medskip

Next, let us recall the Lax-Milgram Lemma (see e.g. Evans \cite{evans}).

\begin{lemma}[The Lax-Milgram Lemma]\label{LaxM}
Let $H$ be a Hilbert space with norm $\|\cdot\|$. Let $Q$ be a symmetric bilinear form on $H$ and $l$ be a linear functional on $H.$ Suppose that
\begin{itemize}
\item $Q$ is continuous, i.e. $Q(f,g)\leq C_1\|f\|\cdot \|g\|.$
\item $Q$ is coercive, i.e. $Q(f,f)\geq C_2\|f\|^2.$
\item $l$ is continuous, i.e. $|l(f)|\leq C_3\|f\|.$
\end{itemize}
Then there exists a unique $h\in H$ such that for every $f\in H$ we have $Q(f,h)=l(f).$
\end{lemma}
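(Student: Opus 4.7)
The plan is to recognize that the symmetry hypothesis on $Q$ together with continuity and coercivity makes $Q$ itself into an inner product on $H$ which is topologically equivalent to the original one, and then to invoke the Riesz representation theorem for this new Hilbert structure. This turns a statement about bilinear forms into the classical fact that every continuous linear functional on a Hilbert space is represented by an inner product with a unique vector.

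First I would check that $Q$ is an inner product. Symmetry is given; bilinearity is given; and positive-definiteness is exactly the coercivity estimate $Q(f,f)\geq C_2 \|f\|^2$, which forces $Q(f,f)>0$ whenever $f\neq 0$. Thus $\|f\|_Q := \sqrt{Q(f,f)}$ is a norm on $H$. The two-sided bound
$$\sqrt{C_2}\,\|f\| \;\leq\; \|f\|_Q \;\leq\; \sqrt{C_1}\,\|f\|$$
(the right inequality coming from continuity, applied with $g=f$) shows that $\|\cdot\|_Q$ and the original norm are equivalent. In particular $(H,\|\cdot\|_Q)$ is complete, so it is again a Hilbert space, now with inner product $Q(\cdot,\cdot)$.

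Next I would transfer the functional $l$ into this new Hilbert setting. The hypothesis $|l(f)|\leq C_3\|f\|\leq (C_3/\sqrt{C_2})\,\|f\|_Q$ shows that $l$ is continuous with respect to $\|\cdot\|_Q$. By the Riesz representation theorem applied to $(H,Q)$, there exists a unique $h\in H$ such that
$$l(f)=Q(f,h) \quad\text{for every } f\in H,$$
which is precisely the conclusion of the lemma.

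There is no genuine obstacle here; the only step that merits care is verifying that the $Q$-topology agrees with the original one (so that completeness is preserved and continuity of $l$ transfers), but this is immediate from the continuity and coercivity constants. If one wished to avoid assuming symmetry (which this statement does assume), the standard alternative is to define $A\colon H\to H$ by $Q(f,h)=\langle f,Ah\rangle$ via Riesz applied to $f\mapsto Q(f,h)$ for each fixed $h$, then show $A$ is a bounded linear bijection with bounded inverse using coercivity, and solve $Ah=u$ where $u$ represents $l$; but here symmetry makes the one-line Riesz argument above preferable.
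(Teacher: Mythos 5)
Your proof is correct and complete. Note that the paper does not actually prove this lemma: it is stated as a classical preliminary with a citation to Evans, so there is no in-paper argument to compare against. Your route --- observing that symmetry, coercivity and continuity make $Q$ an inner product whose norm is equivalent to the original one, so that $(H,Q)$ is again a Hilbert space and the Riesz representation theorem applied to $l$ (which remains continuous in the $Q$-norm, via $|l(f)|\leq (C_3/\sqrt{C_2})\|f\|_Q$) yields the unique representer $h$ --- is the standard and cleanest argument in the symmetric case, and every step you give (positive-definiteness from coercivity, the two-sided norm bounds, preservation of completeness, transfer of continuity) is justified. The textbook proof for the general, possibly non-symmetric form is the operator-theoretic one you sketch at the end (define $A$ by Riesz on $h\mapsto Q(\cdot,h)$, show $A$ is injective with closed range and surjective using coercivity, then solve $Ah=u$); that argument buys generality the present statement does not need, whereas yours exploits the stated symmetry to reduce everything to Riesz in one step. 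One cosmetic remark: the continuity hypothesis is written as a one-sided bound $Q(f,g)\leq C_1\|f\|\,\|g\|$, and you only use it with $g=f$ to get $\|f\|_Q^2\leq C_1\|f\|^2$, which is exactly what the equivalence of norms requires, so no issue arises.
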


\medskip

Lastly, let us recall a few basic facts about Poincare-type inequalities. For any bounded Lipschitz domain $K$ and any finite absolutely-continuous measure $\mu$, there exists a constant $C_{poin}(K,\mu)$, which we call the Poincare constant, such that for every $F\in W^{1,2}(K,\mu)$ we have
\begin{equation}\label{poin-intro}
Var(F)\leq C^2_{poin}(K,\mu)\E |\nabla F|^2.
\end{equation}
Here and below, whenever $K$ and $\mu$ are fixed, we use notation 
$$Var(F)=\frac{1}{\mu(K)}\int_K F^2 d\mu-\left(\frac{1}{\mu(K)}\int_K F d\mu\right)^2,$$
and 
$$\E F=\frac{1}{\mu(K)}\int_K F d\mu.$$

Similarly, for any bounded Lipschitz domain $K$ and any finite absolutely-continuous measure $\mu$, there exists a constant $C_{D}(K,\mu)$, which we call the Dirichlet-Poincare (or Sobolev) constant, such that for every $F\in W_0^{1,2}(K,\mu)$ we have
\begin{equation}\label{dir-poin-intro}
\E F^2\leq C^2_{D}(K,\mu)\E |\nabla F|^2.
\end{equation}
We refer to (\ref{poin-intro}) as the Poincare inequality, and to (\ref{dir-poin-intro}) as the Dirichlet-Poincare or Sobolev inequality. 


Similarly, one has
\begin{equation}\label{C_D-eq}
C_{D}(K,\gamma)<\infty
\end{equation}
 for any Lipschitz domain $K$, see \cite{CarKer}. 

Moreover, for any log-concave probability measure supported on the entire space, and a convex set $K$ (possibly unbounded), we have
\begin{equation}\label{pointunbndlogc}
C_{poin}(K,\mu)<\infty;	
\end{equation}
this fact is outlined in \cite{Liv-paper2}.

\medskip

\subsection{Trace theory and convexity}

Recall that the diameter of a convex body is the radius of the smallest ball which contains it. Next, the in-radius is the radius of the largest ball that is contained in the body.

Let us recall Gagliardo's trace theorem \cite{Gagliado} (see also Grisvard \cite{Gris} Theorem 1.5.10 or Ding \cite{Ding}); we will rely on this result heavily throughout the paper. Recall that a domain $K\subset \R^n$ is called Lipschitz if its boundary is locally a graph of a Lipschitz function. 

\begin{theorem}[Gagliardo's trace theorem]\label{gagliardo-1}
Let $K$ be a Lipschitz domain. Let $g\in W^{1,2}(K,\lambda)$, where $\lambda$ stands for the Lebesgue measure. Then
$$\int_{\partial K} g^2(x)dH_{n-1}(x)\leq C\int_{K} (g^2(x)+|\nabla g|^2) dx,$$ 
where $C$ depends only on $K$. 

Here, as agreed, we use the notation which omits the trace operator, even though the integration is understood in the sense of the trace operator. 
\end{theorem}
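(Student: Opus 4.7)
The proof is classical and proceeds by reduction to the half-space model. The plan is to prove the bound first for smooth $g$, then localize at the boundary via a partition of unity subordinate to a finite cover of $\partial K$ by coordinate patches, flatten each patch via a bi-Lipschitz change of variables to the upper half-space, and finally establish the half-space estimate by the fundamental theorem of calculus combined with the elementary bound $2ab\leq a^2+b^2$.

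First, by the Meyers--Serrin theorem together with the existence of a bounded extension operator $W^{1,2}(K,\lambda)\to W^{1,2}(\R^n,\lambda)$ for Lipschitz domains, the space $C^\infty(\overline K)\cap W^{1,2}(K)$ is dense in $W^{1,2}(K,\lambda)$. Hence it suffices to prove the stated inequality for smooth $g$, interpreting the boundary integral pointwise; the bound then extends by continuity, and in doing so both defines the trace operator and yields the estimate for general $g\in W^{1,2}(K,\lambda)$. Next, cover $\partial K$ by a finite family of open sets $\{U_j\}_{j=1}^N$ such that, after a rigid motion, $K\cap U_j$ is the epigraph of some Lipschitz function $\phi_j$, append an interior neighbourhood $U_0$ compactly contained in $K$, and fix a smooth partition of unity $\{\eta_j\}_{j=0}^N$ subordinate to $\{U_j\}$. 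It suffices to bound $\int_{\partial K}(\eta_j g)^2\, dH_{n-1}$ for each $j\geq 1$, since the contribution of $\eta_0 g$ to the boundary integral vanishes.

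On a fixed boundary patch the map $\Phi_j(y',y_n)=(y',y_n+\phi_j(y'))$ is bi-Lipschitz with unit Jacobian and with surface Jacobian bounded by $\sqrt{1+\mathrm{Lip}(\phi_j)^2}$, and it sends $\partial H=\{y_n=0\}$ onto the graph of $\phi_j$. Writing $h:=(\eta_j g)\circ\Phi_j$, which is smooth and compactly supported in $\overline H$, the fundamental theorem of calculus yields
\[
h^2(y',0)=-\int_0^\infty \partial_n\bigl(h(y',y_n)^2\bigr)\, dy_n=-2\int_0^\infty h\,\partial_n h\, dy_n\leq \int_0^\infty \bigl(h^2+|\partial_n h|^2\bigr)\, dy_n.
\]
Integrating in $y'$, pulling back to $U_j\cap K$, and summing over $j$ produces the desired inequality with constant $C$ controlled by $\max_j\mathrm{Lip}(\phi_j)$ and $\max_j\|\eta_j\|_{C^1}$, i.e.\ by $K$ alone. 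The main obstacle is organizational rather than analytic: one must verify that bi-Lipschitz changes of variables preserve membership in $W^{1,2}$ via the chain rule for Sobolev functions composed with Lipschitz maps, track the constants through each flattening step, and confirm that the chart-by-chart definitions of the trace agree on overlaps so that $\int_{\partial K} g^2\, dH_{n-1}$ is well defined independently of the cover.
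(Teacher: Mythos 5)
Your argument is correct, and it is the classical proof of the Gagliardo trace theorem: density of functions smooth up to the boundary, a finite partition of unity subordinate to boundary graph patches, flattening by the shear $\Phi_j$, and the fundamental theorem of calculus with $2ab\le a^2+b^2$ on the half-space. Note, however, that the paper does not prove this statement at all: it is quoted from Gagliardo, Grisvard and Ding, so your proposal supplies a proof where the paper gives only a citation. Two points to tighten. First, extracting a finite cover of $\partial K$ (and hence a single constant $C$) tacitly requires the boundary to be compact, i.e.\ $K$ bounded with uniform Lipschitz charts; this matches how the theorem is actually invoked in the paper (e.g.\ in Theorem \ref{gagliardo} and Theorem \ref{exist-n}), but it is worth stating. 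Second, since $\phi_j$ is merely Lipschitz, $h=(\eta_j g)\circ\Phi_j$ is Lipschitz rather than smooth, so the identity $h^2(y',0)=-\int_0^\infty \partial_n\bigl(h^2\bigr)\,dy_n$ should be justified by absolute continuity of $h$ on almost every vertical line, which is all the estimate needs. It is also instructive to contrast your route with the proof the paper does carry out in the convex case (Theorem \ref{Leb-Garg}, and its Gaussian analogue Theorem \ref{GaussGarg}): there one avoids charts entirely by using $\langle x,n_x\rangle\ge r$ and integrating $\mathrm{div}(xg^2)$ by parts, which buys an explicit constant depending only on the in-radius and diameter (only on the in-radius in the Gaussian case, even for unbounded sets), at the price of convexity; your chart-based argument covers general Lipschitz domains but yields a constant that is not explicit.
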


When $K$ is convex, $C$ depends only on the ratio of the in-radius and the diameter of $K$. We shall outline a simple proof of this fact.

\begin{theorem}[explicit trace theorem for convex sets]\label{Leb-Garg}
Let $K$ be a convex domain with an in-radius $r>0$ and the diameter $D$. Let $g\in W^{1,2}(K)$. Then 
$$\int_{\partial K} g^2 dH_{n-1}\leq \frac{1}{r}\int_{K} (n+D)g^2+D|\nabla g|^2 dx.$$ 
\end{theorem}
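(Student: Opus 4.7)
The plan is to use a classical divergence-theorem trick, with the radial vector field adapted to the in-radius. After translating $K$, I may assume its inscribed ball of radius $r$ is centered at the origin, so that $r B_2^n \subset K$ and consequently $|x| \le D$ for every $x \in K$ (since $0 \in K$ and $K$ has diameter $D$). Then I set $X(x) = x/r$ and exploit the following geometric consequence of convexity: for $y \in \partial K$ with outer unit normal $n_y$, the supporting hyperplane at $y$ has the inscribed ball on its interior side, whence $\langle y, n_y\rangle \ge r$, i.e. $\langle X(y), n_y\rangle \ge 1$.

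With that inequality at the boundary, and the divergence theorem applied to the vector field $g^2 X$, I would write
\begin{equation*}
\int_{\partial K} g^2 \, dH_{n-1} \;\le\; \int_{\partial K} g^2 \langle X, n_x\rangle \, dH_{n-1} \;=\; \int_K \operatorname{div}(g^2 X)\,dx \;=\; \frac{1}{r}\int_K \bigl( n g^2 + 2 g \langle \nabla g, x\rangle\bigr)\,dx,
\end{equation*}
using $\operatorname{div}(x) = n$. The cross term is then controlled by AM-GM: $2 g \langle \nabla g, x\rangle \le |x|(g^2 + |\nabla g|^2) \le D(g^2 + |\nabla g|^2)$. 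Combining these bounds yields exactly
\begin{equation*}
\int_{\partial K} g^2\, dH_{n-1} \;\le\; \frac{1}{r}\int_K \bigl((n+D) g^2 + D |\nabla g|^2\bigr)\,dx,
\end{equation*}
which is the claim.

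The only subtlety is justifying the integration by parts for an arbitrary $g \in W^{1,2}(K)$ rather than a smooth function. Since $K$ is convex (hence Lipschitz) and bounded, smooth functions up to the boundary are dense in $W^{1,2}(K)$, and the trace operator and both sides of the inequality are continuous with respect to the $W^{1,2}$-norm, so one can prove the inequality first for $g \in C^1(\overline{K})$ and pass to the limit. I do not expect this step to be a genuine obstacle; the main content is the geometric observation $\langle y, n_y\rangle \ge r$ on $\partial K$, which is what makes the constant depend only on $r$ and $D$ and not on more refined features of $K$.
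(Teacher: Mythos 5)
Your proof is correct and is essentially the same argument as the paper's: the same vector field $x/r$ with the convexity fact $\langle x,n_x\rangle\geq r$, the divergence theorem, and an AM--GM bound on the cross term using $|x|\leq D$ (the paper introduces a parameter $t$ and sets $t=D$, which gives the identical estimate). The additional remark on density of smooth functions in $W^{1,2}(K)$ is a fine way to handle the regularity issue, which the paper leaves implicit.
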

\begin{proof} Suppose without loss of generality that the center of the ball inscribed into $K$ is at the origin. Then $\langle x,n_x\rangle\geq r,$ and we write, using a trick similar to the one that was used in Hosle, Kolesnikov, L \cite{HKL}, which in turn was inspired by the tricks in Kolesnikov, Milman \cite{KolMilsupernew}:
$$\int_{\partial K} g^2 dH_{n-1}\leq \frac{1}{r}\int_{\partial K} \langle xg^2,n_x\rangle dH_{n-1}=\frac{1}{r}\int_K div(xg^2) dx,$$
where in the last passage we integrated by parts. We write, for any $t>0,$
$$div(xg^2)=ng^2+2g\langle \nabla g, x\rangle\leq (n+t)g^2+\frac{1}{t}\langle \nabla g,x\rangle^2.$$
Combining the above with the fact that the diameter of $K$ is $D$, we have
$$\int_{\partial K} g^2 dH_{n-1}\leq \frac{1}{r}\int_{K} (n+t)g^2+\frac{D^2}{t}|\nabla g|^2 dx.$$ 
Selecting (arbitrarily) $t=D,$ we get the result.
\end{proof}

\begin{remark} In some particular situation one may prefer to select another value of $t,$ but in general, we are not seeking tight trace estimates.
\end{remark}

\begin{remark} More generally, one may obtain estimates for $\int_{\partial K} |g|^p dH_{n-1}$, for other values of $p>0.$
	
\end{remark}

As an immediate corollary, we get

\begin{theorem}[Trace theorem for measures]\label{gagliardo}
Let $K$ be a bounded Lipschitz domain. Let $g\in W^{1,2}(K,\mu)$, for a probability measure $\mu$ whose density is bounded and strictly positive on any compact set. Then
$$\int_{\partial K} g^2 d\mu_{\partial K}\leq C_1\int_{K} (g^2+|\nabla g|^2) d\mu,$$ 
where $C$ depends only on $K$ and $\mu.$ If $K$ is convex, the constant $C$ depends only on the diameter and the in-radius of $K$, the dimension $n,$ and $\mu$. 
\end{theorem}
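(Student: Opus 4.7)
The plan is to reduce the weighted trace inequality to the Lebesgue-measure statements already proved in Theorem \ref{gagliardo-1} and Theorem \ref{Leb-Garg}. Since the density $\rho = e^{-V}$ of $\mu$ is, by assumption, bounded and strictly positive on any compact set, on the compact closure $\overline{K}$ of our bounded domain there exist constants $0 < m \leq M < \infty$ such that $m \leq \rho(x) \leq M$ for every $x \in \overline{K}$.

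First I would observe that this two-sided bound yields the pointwise equivalence of the weighted and unweighted integrals:
$$m \int_K h \, dx \;\leq\; \int_K h \, d\mu \;\leq\; M \int_K h \, dx \quad \text{for every } h\geq 0 \text{ on } K,$$
and likewise $\int_{\partial K} h \, d\mu_{\partial K} \leq M \int_{\partial K} h \, dH_{n-1}$. In particular the Sobolev spaces $W^{1,2}(K,\mu)$ and $W^{1,2}(K,\lambda)$ coincide as sets with equivalent norms, so $g\in W^{1,2}(K,\mu)$ lies in $W^{1,2}(K,\lambda)$ with the same weak gradient and the same trace on $\partial K$.

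Then I would chain three inequalities. Using the upper bound on $\rho$ on $\partial K$, the appropriate Lebesgue trace theorem on $K$, and the lower bound on $\rho$ on $K$, I get
$$\int_{\partial K} g^2 \, d\mu_{\partial K} \;\leq\; M \int_{\partial K} g^2 \, dH_{n-1} \;\leq\; M C \int_K (g^2 + |\nabla g|^2) \, dx \;\leq\; \frac{MC}{m} \int_K (g^2 + |\nabla g|^2) \, d\mu,$$
which is the desired bound with $C_1 = MC/m$. In the general bounded Lipschitz case, Theorem \ref{gagliardo-1} provides $C$ depending only on $K$, while $M, m$ depend only on $K$ and $\mu$, so $C_1$ depends only on $K$ and $\mu$. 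In the convex case, Theorem \ref{Leb-Garg} gives $C$ explicitly in terms of $n$, the in-radius $r$, and the diameter $D$; since $M, m$ can be taken as the sup and inf of $\rho$ on the bounded set $K$, which is determined by $\mu$ together with the geometric data of $K$, the resulting $C_1$ depends only on $n$, $r$, $D$, and $\mu$.

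There is no real obstacle here; the only mild point to verify is the identification of weak derivatives and traces across the two Sobolev spaces, which is automatic because $\rho$ is continuous and bounded away from $0$ and $\infty$ on $\overline{K}$. Thus the theorem follows as an immediate corollary, and essentially all the analytic content is already contained in the Lebesgue trace bound of Theorem \ref{Leb-Garg}.
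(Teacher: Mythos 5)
Your proposal is correct and is essentially the paper's own argument: the paper likewise takes $C_1=\frac{n+D}{r}\cdot\frac{\max_{x\in\partial K} F(x)}{\min_{x\in K} F(x)}$, i.e.\ it sandwiches the density between its max on $\partial K$ and its min on $K$ and invokes Theorem \ref{Leb-Garg} (resp.\ Theorem \ref{gagliardo-1} in the non-convex Lipschitz case). Nothing further is needed.
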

\begin{proof} Indeed, by Theorem \ref{Leb-Garg}, we may let $C_1=\frac{n+D}{r}\frac{max_{x\in\partial K} F(x)}{min_{x\in K} F(x)},$ where $F$ is the density of $\mu;$ the expression is finite by our assumptions.
\end{proof}

\begin{remark} Of course, one may adapt the proof of Theorem \ref{Leb-Garg} to any measure $\mu$ with density $e^{-V}$, and get that for any convex body $K$ containing the ball $y_0+rB^n_2$, and any $g\in W^{1,2}(K)$,
$$\int_{\partial K} g^2 d\mu_{\partial K}\leq \frac{1}{r}\int_K ng^2+2g\langle \nabla g,x-y_0\rangle-g^2 \langle x-y_0,\nabla V\rangle d\mu\leq C\int_K g^2+|\nabla g|^2 d\mu.$$
However, even in the case of log-concave probability measures, the constant will in general depend on the diameter, the in-radius of $K$, and $\mu$. Assuming, say, that $\mu$ is even, and $K$ is symmetric, we get 
$$\int_{\partial K} g^2 d\mu_{\partial K}\leq \frac{1}{r}\int_K (n+x^2)g^2+|\nabla g|^2 d\mu.$$
But still, even if $\mu$ is isotropic, this is difficult to estimate in general for an unbounded convex set, unless we know some further information about the growth of our function $g$.
	
\end{remark}

In some particular cases, however, the dependance may be nicer; in Theorem \ref{GaussGarg} we shall outline one such important case.

Federer \cite{Fed} introduced a notion of a \emph{reach} of a domain $K$ to be 
$$reach(K)=\inf_{y\in\R^n}\sup\{r>0: rB^n_2+y\subset U_K\},$$
where $U_K$ is the set of points in $\R^n$ such that the distance from them to $K$ is attained at a unique point. A version of Gagliardo's theorem for the Gaussian measure was obtained by Harrington, Raich \cite{HaRa}, in which the boundedness assumption is replaced by the assumption that $\partial K$ is $C^2$ and has positive reach.

Here, we show that in the case when $K$ is convex and contains the origin, and $\gamma$ is the standard Gaussian measure, the constant in the trace theorem does not depend on the diameter, and the estimate is valid for unbounded sets. This fact will be helpful in Sections 5 and 7, and leads to part (2) of Theorem \ref{eqchar-intro}.

\begin{theorem}[Gaussian Trace Theorem for convex sets containing the origin]\label{GaussGarg}
Let $K$ be a convex domain such that $rB^n_2\subset K$, for some $r>0$. Let $g\in W^{1,2}(K,\gamma)$, where $\gamma$ is the Gaussian measure. Then
$$\int_{\partial K} g^2 d\gamma_{\partial K}\leq \frac{1}{r}\int_{K} (ng^2+|\nabla g|^2) d\gamma.$$ 
\end{theorem}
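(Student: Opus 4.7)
The plan is to imitate the argument of Theorem \ref{Leb-Garg}, but to exploit the fact that for the Gaussian density $e^{-V}$ with $V(x)=\tfrac{|x|^2}{2}$, the potential $V$ grows quadratically and its gradient $\nabla V=x$ exactly matches the vector field $x$ used in the divergence trick. This will let us absorb the unbounded-domain issue and remove any diameter dependence.

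First, I would assume temporarily that $K$ is bounded and $g$ is smooth up to the boundary, so that the classical divergence theorem applies without concern. Since $rB^n_2\subset K$, standard convex geometry gives $\langle x,n_x\rangle\geq r$ for a.e.\ $x\in\partial K$. Therefore
\begin{equation*}
\int_{\partial K} g^2\, d\gamma_{\partial K}\;\leq\; \frac{1}{r}\int_{\partial K}g^2\langle x,n_x\rangle\, d\gamma_{\partial K}
\;=\;\frac{1}{r}\int_{K}\mathrm{div}\!\left(x\,g^2\,(2\pi)^{-n/2}e^{-|x|^2/2}\right)\,dx.
\end{equation*}
Next, I would expand the divergence using $\nabla e^{-|x|^2/2}=-x\,e^{-|x|^2/2}$:
\begin{equation*}
\mathrm{div}\!\left(x\,g^2\,e^{-|x|^2/2}\right)=\bigl((n-|x|^2)g^2+2g\langle x,\nabla g\rangle\bigr)e^{-|x|^2/2}.
\end{equation*}
The crucial cancellation is now visible: the $-|x|^2g^2$ term kills the quadratic growth coming from $\mathrm{div}(xg^2)$, and by the elementary inequality $2g\langle x,\nabla g\rangle\leq |x|^2g^2+|\nabla g|^2$, the cross term is bounded by $|x|^2g^2+|\nabla g|^2$. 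Combining these,
\begin{equation*}
\int_{\partial K}g^2\, d\gamma_{\partial K}\;\leq\;\frac{1}{r}\int_K\bigl(ng^2+|\nabla g|^2\bigr)\,d\gamma,
\end{equation*}
which is exactly the stated bound.

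Finally, I would remove the smoothness and boundedness assumptions. For smoothness, one approximates $g\in W^{1,2}(K,\gamma)$ by $C^1(\overline K)$ functions in the $W^{1,2}(K,\gamma)$-norm and uses the continuity of the trace operator on bounded Lipschitz pieces. For unboundedness, one truncates by $K_R:=K\cap RB^n_2$, applies the estimate on the truncation (noting that $rB^n_2\subset K_R$ once $R>r$), and passes $R\to\infty$; the contribution to $\int_{\partial K_R}g^2\,d\gamma_{\partial K_R}$ coming from the spherical cap $K\cap\partial(RB^n_2)$ tends to zero along a suitable subsequence of $R\to\infty$ by Gaussian decay applied to $g^2$ (which is integrable against $\gamma$ together with $|\nabla g|^2$), while the boundary integral on $\partial K\cap RB^n_2$ is monotone increasing in $R$ and converges to the desired one. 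The step requiring the most care is this final passage to the limit, since one must ensure that such a sequence of radii exists along which the spherical-cap term vanishes; this is handled by the standard observation that if $\int_K g^2\,d\gamma<\infty$ then $\int_{\{|x|=R\}\cap K}g^2\,d\gamma_{\partial}$ cannot remain bounded away from zero on a set of radii of infinite measure, so a good subsequence exists.
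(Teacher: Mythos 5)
Your argument is correct and is essentially the paper's own proof: the same use of $\langle x,n_x\rangle\geq r$, the same divergence/integration-by-parts trick with the vector field $xg^2$ against the Gaussian weight, and the same Cauchy–Schwarz step $2g\langle x,\nabla g\rangle\leq |x|^2g^2+|\nabla g|^2$ producing the cancellation of the $|x|^2g^2$ terms. The only difference is that you spell out the truncation and approximation details for unbounded $K$, which the paper leaves implicit (and which in fact simplify, since the nonnegative spherical-cap contribution can simply be dropped before letting $R\to\infty$).
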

\begin{proof} Again, we use the estimate $\langle x,n_x\rangle\geq r,$ incorporate the trick similar to the ones from \cite{HKL}, \cite{KolMilsupernew}, and integrate by parts, to get
$$\int_{\partial K} g^2 d\gamma_{\partial K}\leq \frac{1}{r}\int_{\partial K} \langle g^2x,n_x\rangle d\gamma_{\partial K}=\frac{1}{r}\int_K (div(g^2 x)-g^2x^2) d\gamma.$$
Note that
$$div(g^2 x)=ng^2+2g\langle \nabla g,x\rangle\leq ng^2+|\nabla g|^2+g^2x^2.$$
Combining the above yields the result. \end{proof}

\medskip

\subsection{Crash course on classical existence theorems with emphasis on our specific needs}

In this subsection, we formulate several existence results. Although they are classical and well known, we sketch the proofs, since for us, the specifics of the assumptions will matter, as we will be dealing with some delicate approximation arguments, and with \emph{unbounded domains}.

We say that $u\in W^{1,2}(K,\mu)$ is a weak solution of the Poisson system with Neumann boundary condition, if
$$
\begin{cases}  Lu=F &\, on \,\,\,K\\ \langle \nabla u,n_x\rangle=f &\, on \,\,\,\partial K,\end{cases}$$
whenever the identity
$$
\int_{K}\langle \nabla v,\nabla u\rangle d\mu= \int_{K} vF \, d\mu +\int_{\partial K} vf d\mu_{\partial K}$$
holds for every $v\in W^{1,2}(K,\mu)$ (here the boundary integral is understood in the sense of the trace operator). 

We say that $u\in W^{1,2}(K,\mu)$ is a weak solution of the Poisson system with Dirichlet boundary condition, if
$$
\begin{cases}  Lu=F &\, on \,\,\,K\\ u=f & on \,\,\,\partial K,\end{cases}$$
whenever the identity
$$
\int_{K}\langle \nabla v,\nabla u\rangle d\mu= \int_{K} vF \, d\mu$$
holds for every $v\in W_0^{1,2}(K,\mu)$, and $TR(u)=f.$

\begin{theorem}[Neumann boundary condition]\label{exist-n}
Let $K$ be a bounded Lipschitz domain and let $\mu$ be an absolutely-continuous probability measure with a locally-Lipschitz density. Let $F\in L^2(K,\mu)\cap Lip(K)$ and $f\in L^2(\mu,\partial K)$, such that
$$\int_K F d\mu=\int_{\partial K} f d\mu_{\partial K}.$$
Then there exists a unique function $u\in W^{1,2}(K,\mu)$ which is a solution of 
$$
\begin{cases}  Lu=F &\, on \,\,\,K\\ \langle \nabla u,n_x\rangle=f &\, on \,\,\,\partial K.\end{cases}$$

Furthermore, \\
a) if $\mu$ is the standard Gaussian measure and $K$ is convex, then the boundedness assumption is not required;\\
b) if $f=0$, $\mu$ is a log-concave measure supported on the whole space, and $K$ is a convex set, then the boundedness assumption is not required.

Moreover, if $\partial K$ is $C^{2}$ and $f\in C^{1}(\partial K)$, then $u\in C^{2}(int(K))\cap C^1(\bar{K}).$
\end{theorem}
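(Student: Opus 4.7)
The plan is to apply the Lax-Milgram lemma (Lemma \ref{LaxM}) on an appropriate Hilbert quotient of $W^{1,2}(K,\mu)$. I first treat the bounded case. Let $H \subset W^{1,2}(K,\mu)$ be the closed subspace of $\mu$-mean-zero functions, and set $Q(u,v) := \int_K \langle \nabla u,\nabla v\rangle \, d\mu$. Under the standing hypotheses (bounded Lipschitz $K$, locally Lipschitz density bounded above and below on $K$), the Poincar\'e inequality (\ref{poin-intro}) shows that $\sqrt{Q(u,u)}$ is an equivalent norm on $H$, making it a Hilbert space; coercivity and continuity of $Q$ on $H$ are then immediate. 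Define the linear functional
$$\ell(v) \;:=\; -\!\int_K vF \, d\mu \;+\; \int_{\partial K} v f \, d\mu_{\partial K}$$
(the sign being dictated by the integration-by-parts identity for $L$ recorded in Subsection 2.3); the compatibility hypothesis $\int_K F\, d\mu = \int_{\partial K} f\, d\mu_{\partial K}$ ensures that $\ell$ annihilates constants and is therefore well-defined on $H$. Continuity of $\ell$ follows from Cauchy-Schwarz for the volume term and from Theorem \ref{gagliardo} combined with Poincar\'e for the boundary term. Lax-Milgram then furnishes a unique $u \in H$ satisfying $Q(v,u) = \ell(v)$ for every $v\in H$; unwinding, this $u$ is the desired weak solution, and any other weak solution differs from $u$ by an additive constant.

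For the unbounded extensions (a) and (b), the same scheme applies once the two ingredients --- Poincar\'e on $K$, and (when $f \neq 0$) trace control on $\partial K$ --- are supplied. In case (a), $\gamma(K)<\infty$ so constants lie in $W^{1,2}(K,\gamma)$; Ehrhard's principle (as recorded just before (\ref{C_D-eq})) gives the Poincar\'e constant at most $1$; and Theorem \ref{GaussGarg} supplies the trace bound needed to estimate the boundary term in $\ell$, provided $K$ contains a ball around the origin. For a general convex $K$, I would translate so that a ball of maximal in-radius is centered at the origin, absorbing the resulting shift of the Gaussian into a smooth, locally bounded log-affine factor that does not affect the functional-analytic setup. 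In case (b), with $f = 0$, the boundary term vanishes, so no trace inequality is required; Poincar\'e for log-concave measures on convex sets ((\ref{pointunbndlogc})) supplies the coercivity of $Q$ needed by Lax-Milgram.

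The regularity assertion is then purely local. In the interior of $K$, the weak formulation shows that $u$ is a weak solution of the uniformly elliptic equation $\Delta u - \langle \nabla u,\nabla V\rangle = F$ with $C^1$ coefficients (since $V \in C^2$) and $\mathrm{Lip}$ right-hand side; standard interior Schauder and Calder\'on-Zygmund theory (e.g.\ Gilbarg-Trudinger) then yields $u \in C^{2}(\mathrm{int}(K))$. Under the additional assumptions $\partial K \in C^{2}$ and $f\in C^{1}(\partial K)$, the boundary Schauder estimate for oblique-derivative problems (again Gilbarg-Trudinger, Ch.~6, after local flattening of $\partial K$) upgrades this to $u \in C^{1}(\bar K)$.

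The step I expect to be most delicate is extension (a) on an unbounded convex $K$: one must guarantee that the $H^1$-seminorm remains a genuine norm on the quotient \emph{and} that the trace term in $\ell$ is controlled uniformly in the diameter of $K$. The cleanest route I see is an exhaustion $K_R := K \cap RB^n_2$, solving each bounded Neumann problem by the argument of the first paragraph, and passing to the limit $R\to\infty$ via the diameter-free Poincar\'e bound of Subsection 2.3 and the Gaussian trace bound of Theorem \ref{GaussGarg}, using weak compactness in $W^{1,2}(K,\gamma)$ to extract the limiting solution.
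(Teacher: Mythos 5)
Your proposal is correct and follows essentially the same route as the paper: Lax--Milgram on the mean-zero subspace of $W^{1,2}(K,\mu)$ with coercivity from the Poincar\'e inequality, continuity of the boundary functional from the trace theorem (Theorem \ref{gagliardo}), the Gaussian trace bound of Theorem \ref{GaussGarg} for case (a), the vanishing boundary term plus (\ref{pointunbndlogc}) for case (b), and standard elliptic regularity for the final smoothness claim. Your extra care about centering the ball and exhausting an unbounded convex $K$ in case (a) is a reasonable elaboration of a point the paper passes over quickly, but it does not change the argument.
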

\begin{proof} Consider the bilinear form
$$Q(u,v)=\int_K \langle \nabla u,\nabla v\rangle d\mu,$$
and the linear functional
$$l(v)=-\int_K Fv d\mu+\int_{\partial K} f v d\mu_{\partial K}.$$
By Cauchy's inequality, $Q(u,v)\leq C \|u\|_{W^{1,2}(K,\mu)}\cdot \|u\|_{W^{1,2}(K,\mu)},$ therefore $Q$ is continuous. Without loss of generality, we may work in the space $W^{1,2}(K,\mu)\cap \{\int_K u d\mu=0\}$ (if we pick $u+C$ in place of $u$, nothing changes). Then, by Poincare inequality (\ref{poin-intro}), $Q$ is coercive (where the coercivity constant depends on the Poincare constant). Lastly, we note that $l$ is continuous by the Cauchy's inequality and the trace theorem \ref{gagliardo-1}. The existence follows by the Lax-Milgram lemma \ref{LaxM}.

The ``moreover'' part follows by the standard regularity estimates, see e.g. Evans \cite{evans} or Kolesnikov, Milman \cite{KM1}. 

The ``furthermore a)'' part follows by using Theorem \ref{GaussGarg} in place of Theorem \ref{gagliardo-1}. The ``furthermore b)'' part follows since one does not need to use Gagliardo's theorem if $f=0,$ and (\ref{pointunbndlogc}) guarantees that $Q$ is coercive, without needing boundedness of the domain.

In order to show uniqueness, suppose by contradiction that there is two functions $u$ and $v$ which satisfy the system, and consider $h=u-v.$ Then $Lh=0$ on $K$ and $\langle \nabla h,n_x\rangle=0$ on $\partial K$ which implies that $\int_K |\nabla h|^2 d\mu=0$, and thus $h=const$ in the sense of the space $W^{1,2},$ yielding that $u=v,$ up to a constant addition.
\end{proof}

In a similar fashion, one may show the following existence result. 

\begin{theorem}[The $L+Id$ operator, Neumann boundary condition]\label{existsLu=-u}
Let $K$ be a bounded convex domain. Let $\gamma$ be the standard Gaussian measure. Let $F\in L^2(K,\gamma)$ and $f\in L^2(\partial K,\gamma)$. Then for any $s\leq 1,$ there exists a function $u\in W^{1,2}(K, \gamma)$ which is a solution of 
$$
\begin{cases}  Lu+su=F &\, on \,\,\,K\\ \langle \nabla u,n_x\rangle=f &\, on \,\,\,\partial K.\end{cases}$$
In case, additionally, $K$ is symmetric and $F$ and $f$ are even, then the solution exists whenever $s\leq 2.$

Moreover, if $\partial K$ is $C^{2}$ and $f\in C^{1}(\partial K)$, then $u\in C^{2}(int(K))\cap C^1(\bar{K}).$
\end{theorem}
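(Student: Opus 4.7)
The plan is to adapt the Lax--Milgram argument from the proof of Theorem \ref{exist-n}, with the bilinear form
$$Q(u,v)=\int_K \langle \nabla u,\nabla v\rangle\,d\gamma - s\int_K uv\,d\gamma$$
and linear functional
$$l(v)=-\int_K Fv\,d\gamma+\int_{\partial K} fv\,d\gamma_{\partial K}.$$
After integration by parts using the Neumann BC, the identity $Q(u,v)=l(v)$ for every $v\in W^{1,2}(K,\gamma)$ is precisely the weak formulation of $Lu+su=F$ with $\langle \nabla u,n_x\rangle=f$. Continuity of $Q$ follows from Cauchy--Schwarz, and continuity of $l$ uses Cauchy--Schwarz together with the trace estimate of Theorem \ref{gagliardo-1}.

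Coercivity is the delicate point. For $s\le 0$ the form is already coercive on all of $W^{1,2}(K,\gamma)$ (the case $s=0$ recovering Theorem \ref{exist-n}), so I would focus on $0<s\le 1$, where $Q$ is indefinite on constants since $Q(c,c)=-sc^{2}\gamma(K)<0$. My approach is to decompose $u=c+\tilde u$ with $\tilde u$ mean-zero: testing the weak formulation against $v\equiv 1$ uniquely pins down
$$c=\frac{1}{s\gamma(K)}\Bigl(\int_{\partial K} f\,d\gamma_{\partial K}-\int_K F\,d\gamma\Bigr),$$
and it remains to solve for $\tilde u$ on the Hilbert subspace $H=W^{1,2}(K,\gamma)\cap\{\int_K u\,d\gamma=0\}$. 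On $H$, Brascamp--Lieb (\ref{poinc-sect4}) gives $\int \tilde u^{2}\,d\gamma\le \frac{1}{\lambda_1}\int|\nabla\tilde u|^{2}\,d\gamma$, where $\lambda_1\ge 1$ is the first non-trivial Neumann eigenvalue of $-L$. Since $K$ is a bounded convex domain, it cannot split as $L'\times\R^{n-k}$, so the equality characterization in Theorem \ref{eqchar-intro}(1), combined with discreteness of the Neumann spectrum on the bounded domain (compactness of the embedding $W^{1,2}(K,\gamma)\hookrightarrow L^2(K,\gamma)$), upgrades this to $\lambda_1>1$ strictly. Consequently
$$Q(\tilde u,\tilde u)\ge \Bigl(1-\tfrac{s}{\lambda_1}\Bigr)\int_K|\nabla\tilde u|^{2}\,d\gamma\ge c_0\|\tilde u\|_{W^{1,2}}^{2}$$
for some $c_0>0$ whenever $s\le 1$, and Lemma \ref{LaxM} delivers the required $\tilde u$.

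For the symmetric case, I would run exactly the same scheme on the closed subspace of even functions in $W^{1,2}(K,\gamma)$: since $K$ is symmetric and $F,f$ are even, the constant $c$ is determined as before, and coercivity on the even mean-zero subspace is controlled by the first non-trivial even Neumann eigenvalue, which satisfies $\lambda_1^{\mathrm{even}}\ge 2$ by the Cordero-Erasquin--Fradelizi--Maurey improvement (\ref{poinc-sect4-sym}); the same rigidity argument (a bounded symmetric convex set is not a cylinder) makes this strict, so Lax--Milgram applies for $s\le 2$. Finally, the regularity claim $u\in C^{2}(\mathrm{int}\,K)\cap C^{1}(\bar K)$ under a $C^{2}$ boundary and $C^{1}$ data follows from standard elliptic regularity applied to $L+sI$, exactly as at the end of Theorem \ref{exist-n}. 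The main obstacle will be the sharp coercivity at the endpoints $s=1$ and $s=2$: it crucially uses the equality case characterization of Brascamp--Lieb from Theorem \ref{eqchar-intro}(1), together with the boundedness of $K$, to upgrade $\lambda_1\ge 1$ (respectively $\lambda_1^{\mathrm{even}}\ge 2$) to strict inequality.
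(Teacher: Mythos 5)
Your proposal is correct and follows essentially the same route as the paper: the same form $Q$ and functional $l$, continuity via Cauchy--Schwarz and the trace theorem, reduction to the mean-zero (resp.\ even) subspace, coercivity from the strict bounds $C_{poin}^2(K,\gamma)<1$ (resp.\ $<\tfrac12$) for bounded convex sets --- which the paper likewise extracts from the equality-case characterization --- and then Lax--Milgram plus standard elliptic regularity. Two cosmetic remarks: with your sign conventions, testing against $v\equiv 1$ gives $c=\frac{1}{s\gamma(K)}\left(\int_K F\,d\gamma-\int_{\partial K} f\,d\gamma_{\partial K}\right)$ (opposite sign to what you wrote), and the strictness $\lambda_1^{\mathrm{even}}>2$ requires an equality-case statement for the even (Cordero-Erausquin--Fradelizi--Maurey) inequality rather than Theorem \ref{eqchar-intro}(1) itself, though the paper is just as terse on that point.
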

\begin{proof} Consider the bilinear form
$$Q(u,v)=\int_K \langle \nabla u,\nabla v\rangle -suv \,\, d\gamma,$$
and the linear functional
$$l(v)=-\int_K Fv d\gamma +\int_{\partial K} f v d \gamma|_{\partial K}.$$
As before, $l$ is continuous by the Cauchy's inequality and the Gagliardo's theorem \ref{gagliardo-1}. By Cauchy's inequality, $Q$ is continuous. Without loss of generality, we may work in the space $W^{1,2}(K,\mu)\cap \{\int_K u d\mu=0\}$: indeed, by switching $F$ with $F+C_0,$ where $C_0$ is fully determined by $F$ and $f,$ we get that $\int_K u d\gamma=0.$ Namely, for this we select
$$C_0=\int_{\partial K} f d\gamma-\int_K F d\gamma.$$

Whenever $K$ is not a cylinder (and, in particular, whenever $K$ is bounded), $C_{poin}(K,\gamma)<1$, as follows from Theorem \ref{eqchar-intro} (the fact that $C_{poin}(K,\gamma)\leq 1$ we already explained above, but here the strict inequality matters). Therefore, by (\ref{poin-intro}), 
$$Q(u,u)= \int_K |\nabla u|^2 - su^2 \, d\gamma \geq (1-sC^2_{poin}(K,\gamma)) \int_K |\nabla u|^2 \, d\gamma,$$
and thus $Q$ is coercive, whenever $s\leq 1$. The existence follows by the Lax-Milgram lemma \ref{LaxM}.

In the symmetric case, $C^2_{poin}(K,\gamma)<\frac{1}{2},$ whenever $K$ is bounded. Therefore, the symmetric version follows in the same manner for any $s\leq 2.$ The ``moreover'' part follows by the standard regularity estimates, see e.g. Evans \cite{evans} or Kolesnikov, Milman \cite{KM1}.
\end{proof}

Lastly, we state existence and uniqueness results for the Dirichlet boundary condition, which will be relevant for us in Section 5 and onwards.

\begin{theorem}[Dirichlet boundary condition]\label{exist-dir}
Let $K$ be a bounded Lipschitz domain and let $\mu$ be an absolutely-continuous probability measure with a locally-Lipschitz density. Let $F\in L^2(K,\mu)$ and $f\in TR(W^{1,2}(K,\mu)\cap C^{2}(K))$. Then there exists a unique function $u\in W^{1,2}(K,\mu)$ which is a solution of 
$$
\begin{cases}  Lu=F &\, on \,\,\,K\\ u=f &\, on \,\,\,\partial K.\end{cases}$$
Furthermore, in the case $f=0$ and the Gaussian measure $\gamma,$ the domain $K$ may be unbounded.

Moreover, if $\partial K$ is $C^{2}$ and $f\in C^{2}(\partial K)$, then $u\in C^{2}(int(K))\cap C^1(\bar{K}).$
\end{theorem}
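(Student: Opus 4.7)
The plan is to mimic the Lax--Milgram scheme that was already used for the two Neumann existence theorems above, with the key change that we now work on the closed subspace $W_0^{1,2}(K,\mu) \subset W^{1,2}(K,\mu)$ and exploit the Dirichlet--Poincar\'e/Sobolev inequality \eqref{dir-poin-intro} in place of the ordinary Poincar\'e inequality. The boundary data will be removed by a standard lifting argument, after which the problem becomes a homogeneous Dirichlet problem that admits a clean variational formulation.

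First I would reduce to the case $f=0$. By hypothesis, $f \in TR(W^{1,2}(K,\mu) \cap C^2(K))$, so we may pick an extension $\tilde f \in W^{1,2}(K,\mu) \cap C^2(K)$ with $TR(\tilde f) = f$. Setting $w = u - \tilde f$, the original Dirichlet problem for $u$ is equivalent to
$$
\begin{cases} Lw = F - L\tilde f =: G & \text{on } K, \\ w = 0 & \text{on } \partial K, \end{cases}
$$
and $G \in L^2(K,\mu)$ since $\tilde f \in C^2(K)$ on the bounded Lipschitz domain $K$ (and $F \in L^2(K,\mu)$ by assumption). So it suffices to construct a unique weak solution $w \in W_0^{1,2}(K,\mu)$ satisfying
$$
\int_K \langle \nabla v, \nabla w \rangle \, d\mu = -\int_K v\, G \, d\mu \quad \text{for all } v \in W_0^{1,2}(K,\mu).
$$

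Next I would apply Lemma \ref{LaxM} on the Hilbert space $H = W_0^{1,2}(K,\mu)$, with the bilinear form $Q(v,w) = \int_K \langle \nabla v, \nabla w\rangle \, d\mu$ and the linear functional $\ell(v) = -\int_K G v \, d\mu$. Continuity of $Q$ follows from Cauchy--Schwarz, and continuity of $\ell$ follows from Cauchy--Schwarz together with the embedding $W_0^{1,2} \hookrightarrow L^2$; note that, unlike in the Neumann theorems, no trace theorem is required here because test functions vanish on $\partial K$. Coercivity is precisely where \eqref{dir-poin-intro} enters: for $v \in W_0^{1,2}(K,\mu)$,
$$
Q(v,v) = \int_K |\nabla v|^2 \, d\mu \geq \frac{1}{1 + C_D^2(K,\mu)} \, \|v\|_{W^{1,2}(K,\mu)}^2,
$$
so $Q$ is coercive as soon as $C_D(K,\mu) < \infty$, which holds for any bounded Lipschitz domain equipped with a density bounded above and below on compacts. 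Lax--Milgram then produces a unique $w \in W_0^{1,2}(K,\mu)$ with $Q(v,w) = \ell(v)$ for every test $v$, and undoing the lift gives $u = w + \tilde f$.

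For the ``furthermore'' statement (Gaussian case, $f = 0$, possibly unbounded $K$), the reduction step is unnecessary and we work directly on $W_0^{1,2}(K,\gamma)$. The only ingredient that needs rechecking is coercivity of $Q$, i.e.\ finiteness of $C_D(K,\gamma)$; this is exactly \eqref{C_D-eq}, which the paper records as valid for any Lipschitz domain distinct from the whole space (the boundary assumption for the trace to be meaningful is vacuous since we impose $u = 0$ and use $W_0^{1,2}$). Everything else proceeds verbatim, and this is the step I expect to be the main subtlety: the existence of a genuine Dirichlet--Poincar\'e constant on an \emph{unbounded} convex $K$ depends on having enough boundary to prevent constants from lying in $W_0^{1,2}$, and justifying \eqref{C_D-eq} carefully (e.g.\ by symmetrization via Lemma \ref{EhrPr} to reduce to a half-space, where the constant is explicit) is the one nontrivial technical point.

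Uniqueness follows by linearity: if $u_1, u_2$ both solve the system, then $h := u_1 - u_2 \in W_0^{1,2}(K,\mu)$ is a weak solution of $Lh = 0$ with zero trace; testing against $h$ yields $\int_K |\nabla h|^2 \, d\mu = 0$, and then the Dirichlet--Poincar\'e inequality forces $h \equiv 0$. Finally, the $C^2(\mathrm{int}(K)) \cap C^1(\bar K)$ regularity under smooth data is obtained, as in Theorems \ref{exist-n} and \ref{existsLu=-u}, from classical interior and boundary Schauder estimates for the uniformly elliptic operator $L$ with locally Lipschitz coefficients (cf.\ Evans \cite{evans} or Kolesnikov--Milman \cite{KM1}), once $C^2$ boundary regularity and $C^2$ boundary data guarantee compatibility at $\partial K$.
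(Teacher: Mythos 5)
Your proposal is correct and follows essentially the same route as the paper's proof: lift the boundary data by an extension $\tilde f$ (the paper's $v_0$), solve the resulting homogeneous Dirichlet problem by Lax--Milgram on $W_0^{1,2}(K,\mu)$ with coercivity from the Dirichlet--Poincar\'e inequality \eqref{dir-poin-intro}, invoke \eqref{C_D-eq} (proved later via Ehrhard symmetrization in Corollary \ref{cor-ehr-nonsym-1}) for the unbounded Gaussian case, and conclude uniqueness and regularity exactly as the paper does.
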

\begin{proof} Let $v_0\in TR(W^{1,2}(K,\mu)\cap C^{2}(K))$ be any function such that $TR(v_0)=f$. Then $u$ is the desired solution if $u=w+v_0,$ and $w$ is the solution of the system
$$
\begin{cases}  Lw=G &\, on \,\,\,K\\ w=0 &\, on \,\,\,\partial K,\end{cases}$$
with $G=F-Lv_0.$ Consider the bilinear form
$$Q(w,v)=\int_K \langle \nabla w,\nabla v\rangle d\mu$$
and the linear functional
$$l(v)=-\int_K  Gv d\mu,$$
both acting on the space $W^{1,2}_0(K).$ By the Dirichlet-Poincare inequality (\ref{dir-poin-intro}), $Q$ is coercive. Both $Q$ and $l$ are continuous by the choice of our space. The existence follows by the Lax-Milgram lemma \ref{LaxM}. The ``moreover'' part follows by the standard regularity estimates, see e.g. Evans \cite{evans} or Kolesnikov, Milman \cite{KM1}.

For the ``furthermore'' part, recall (\ref{C_D-eq}), i.e. that $C_{D}(K,\gamma)<\infty$ for any Lipschitz domain $K$ such that $K$ is not the whole space (as shall be proven in Corollary \ref{cor-ehr-nonsym-1}).

In order to show uniqueness, suppose by contradiction that there is two functions $u$ and $v$ which satisfy the system, and consider $h=u-v.$ Then $Lh=0$ on $K$ and $h|_{\partial K}=0$, which implies that $\int_K |\nabla h|^2 d\mu=0$, and thus $h=0$ in the sense of the space $W^{1,2},$ yielding that $u=v.$
\end{proof}

\medskip
\medskip

\subsection{Preliminaries from Asymptotic Analysis}

We outline the following result which is well-known to experts.

\begin{lemma}\label{moments}
Let $\mu$ be any rotation-invariant probability measure with an absolutely continuous density. Then 
\begin{itemize}
\item For any $q>0,$ and any convex body $K$ containing the origin,
$$\int_K |x|^q d\mu(x)\geq \int_{B(K)} |x|^q d\mu(x),$$
\item For any $q<0,$ and any convex body $K$ containing the origin,
$$\int_K |x|^q d\mu(x)\leq \int_{B(K)} |x|^q d\mu(x),$$
\end{itemize}
where $B(K)$ is the Euclidean ball centered at the origin such that $\mu(B(K))=\mu(K).$
\end{lemma}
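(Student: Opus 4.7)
The plan is a direct bathtub-style rearrangement argument. Since $\mu$ is rotation-invariant and absolutely continuous, its density depends only on $|x|$, so the function $r \mapsto \mu(rB_2^n)$ is continuous and nondecreasing, and there is a unique $R \geq 0$ with $B(K) = R B_2^n$. The strategy is to compare $K$ with $B(K)$ via the symmetric difference decomposition
$$\int_K |x|^q \, d\mu - \int_{B(K)} |x|^q \, d\mu = \int_{K \setminus B(K)} |x|^q \, d\mu - \int_{B(K) \setminus K} |x|^q \, d\mu,$$
and to exploit the pointwise information about $|x|$ on each of the two symmetric-difference pieces.

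The key observation is that on $K \setminus B(K)$ we have $|x| \geq R$, and on $B(K) \setminus K$ we have $|x| \leq R$. For $q > 0$ the map $t \mapsto t^q$ is increasing, so the first integrand is pointwise at least $R^q$ while the second is pointwise at most $R^q$. Combined with the equal-measure identity $\mu(K \setminus B(K)) = \mu(B(K) \setminus K)$, which is immediate from $\mu(K) = \mu(B(K))$, this forces the right-hand side of the display to be nonnegative, proving the first bullet. For $q < 0$, the monotonicity of $t \mapsto t^q$ reverses and the same chain of inequalities yields the opposite sign, proving the second bullet.

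I do not anticipate any real obstacle: the argument is a clean application of the bathtub principle and uses neither convexity of $K$ nor that the origin lies in $K$. These hypotheses are presumably present only because they match the setting in which the lemma will be invoked later in the paper. The only small technical point is integrability of $|x|^q$ for $q < 0$, which is not an issue as long as the singularity at the origin is absorbed by the radial factor $r^{n-1}$ coming from polar coordinates; this will hold in any regime where the lemma is meaningfully applied.
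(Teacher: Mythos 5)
Your argument is correct, but it takes a genuinely different route from the paper's. The paper proves the lemma in polar coordinates: for each direction $\theta\in\sfe$ it compares the radial integrals $\int_0^{\|\theta\|_K^{-1}} t^{\,n+q-1}e^{-\varphi(t)}\,dt$ and $\int_0^{R} t^{\,n+q-1}e^{-\varphi(t)}\,dt$, whose difference lives on the interval between $R$ and $\|\theta\|_K^{-1}$, where $t^q\gtrless R^q$; hence (for $q>0$) the difference is at least $R^q$ times the corresponding difference of the weights $t^{\,n-1}e^{-\varphi(t)}$, and integrating over the sphere the latter sums to a multiple of $\mu(K)-\mu(B(K))=0$. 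That fiber-wise argument writes $K$ through its radial function, so it implicitly uses that $K$ is star-shaped about the origin, which is where the hypotheses ``convex body containing the origin'' enter. Your symmetric-difference decomposition applies the same pointwise comparison $|x|^q\gtrless R^q$ globally rather than direction by direction, together with $\mu(K\setminus B(K))=\mu(B(K)\setminus K)$; it is more elementary and strictly more general, needing neither convexity nor $0\in K$ (and, as you note, rotation invariance only insofar as absolute continuity guarantees a centered ball of equal measure). Your integrability caveat for $q<0$ is harmless: if $\int_{B(K)}|x|^q\,d\mu=\infty$ the claimed inequality is trivial, and otherwise every piece of your decomposition is finite, since the piece over $K\setminus B(K)$ is bounded by $R^q\,\mu(K\setminus B(K))$, so the subtraction is legitimate; the case $q>0$ is handled symmetrically by reducing to the case $\int_K|x|^q\,d\mu<\infty$.
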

\begin{proof} Let $d\mu(x)=e^{-\varphi(|x|)}$, for some convex function $\varphi$. Suppose $\mu(K)=\mu(RB_2^n).$ We write, by polar coordinates,
$$\int_K |x|^q d\mu(x)-\int_{RB^n_{2}} |x|^q d\mu(x)=$$$$\int_{\sfe}\left(\int_0^{\|\theta\|^{-1}_K} t^{n+q-1} e^{-\varphi(t)}dt-\int_0^{R} t^{n+q-1} e^{-\varphi(t)}dt\right)d\theta\geq $$
$$R^q\int_{\sfe}\left(\int_0^{\|\theta\|^{-1}_K} t^{n-1} e^{-\varphi(t)}dt-\int_0^{R} t^{n-1} e^{-\varphi(t)}dt\right)d\theta=0,$$
since $\mu(K)=\mu(RB_2^n).$ The second assertion follows in an analogous manner.
\end{proof}

	

\medskip

In the case $p=2$, recall the notation
$$J_k(R)=\int_0^R t^k e^{-\frac{t^2}{2}}dt$$
and
$$g_k(t)=t^k e^{-\frac{t^2}{2}}.$$
We shall need the following

\begin{lemma}\label{1-dim-by-parts}
For every $R>0,$ and any integer $n\geq 1,$ we have
$$J_{n+3}(R)=(n+2)J_{n+1}(R)-g_{n+2}(R).$$
\end{lemma}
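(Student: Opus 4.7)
The plan is to prove the identity by a single integration by parts, exploiting the fact that $t\, e^{-t^2/2}$ is the derivative of $-e^{-t^2/2}$, which makes the integrand $t^{n+3}e^{-t^2/2}$ very amenable to the substitution $u = t^{n+2}$, $dv = t\,e^{-t^2/2}\,dt$.

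Concretely, I would rewrite
$$J_{n+3}(R) = \int_0^R t^{n+3} e^{-t^2/2}\, dt = \int_0^R t^{n+2} \cdot \bigl(t\, e^{-t^2/2}\bigr)\, dt,$$
and integrate by parts with $u = t^{n+2}$ and $v = -e^{-t^2/2}$. The boundary term at $t=0$ vanishes (since $n+2 \geq 3$), the boundary term at $t=R$ contributes $-R^{n+2}e^{-R^2/2} = -g_{n+2}(R)$, and the remaining integral is $(n+2)\int_0^R t^{n+1} e^{-t^2/2}\,dt = (n+2)J_{n+1}(R)$. Collecting the terms yields exactly the claimed identity.

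There is no real obstacle here; the lemma is a one-line consequence of integration by parts, and the only mild point to note is that the identity is stated for integer $n\geq 1$, but the same argument works for any real exponent with $n+2>0$, so the hypothesis is used only in the sense that the boundary term at zero vanishes automatically.
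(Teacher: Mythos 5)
Your proof is correct and is essentially the same argument as the paper's: a single integration by parts relating $J_{n+1}$ and $J_{n+3}$, with your choice $u=t^{n+2}$, $dv=t\,e^{-t^2/2}\,dt$ simply reading the same identity from the other side (the paper starts from $J_{n+1}$ and integrates the power factor instead). Your version also makes the boundary terms and signs transparent, which is a nice touch since the paper's displayed intermediate formula has a sign slip even though its conclusion is the stated identity.
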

\begin{proof}
We integrate by parts
$$J_k(R)=\int_0^R t^k e^{-\frac{t^2}{2}}dt=-\frac{1}{k+1} R^{k+1} e^{-\frac{R^2}{2}}+\frac{1}{k+1}\int^R_0 t^{k+2} e^{-\frac{R^2}{2}},$$
and arrive to the conclusion, letting $k=n+1$.	
\end{proof}

We shall also need

\begin{lemma}\label{lastlemma}
Fix $\theta\in\sfe$. Let $K$ be a convex set in $\R^n$ such that it is either barycentered, or for each $y\in \theta^{\perp}$, the interval $K\cap (y+span(\theta))$ contains $y$. Then
$$\frac{1}{\gamma(K)}\int_K \langle x,\theta\rangle^2 d\gamma(x)\leq 1.$$
Moreover, the equality is attained if and only if $K\in Cyl_n.$
\end{lemma}
\begin{proof} If $K$ is barycentered, the inequality follows from the Brascamp-Leib inequality (\ref{BrLi-convex}).

Suppose for each $y\in \theta^{\perp}$, the interval $K\cap (y+span(\theta))$ contains $y$. Consider the operator $T_{\theta}^t$ of dilation by a factor of $t$ in the direction $\theta.$ Then, by our assumption, for any $t\geq 1$ we have $\gamma(T_{\theta}^t K)\geq \gamma(K),$ and therefore, 
	$$\frac{d}{dt}\gamma(T_{\theta}^t K)|_{t=1}\geq 0.$$
	We claim that this amounts to the statement of the Lemma. Consider without loss of generality $\theta=e_1.$ Since $det T^t_{e_1}=t$, we have
	$$\gamma(T^t_{e_1}(K))=\frac{1}{\sqrt{2\pi}^n}\int_{T^t_{e_1}(K)} e^{-\frac{y^2}{2}} dy= \frac{1}{\sqrt{2\pi}^n}\int_{K} e^{-\frac{(x+(t-1)x_1)^2}{2}} t dx,$$
	and thus 
	$$\frac{d}{dt}\gamma(T_{e_1}^t K)|_{t=1}=\int_K (1-x_1^2) d\gamma\geq 0.$$
	The inequality follows. For the ``moreover'' part, note that the function $\gamma(T_{\theta}^t K)$ is strictly increasing in $t$ unless $K\in Cyl_n.$
\end{proof}

\section{The concavity coefficient, isoperimetry and the related discussion.}

\subsection{The concavity coefficient of a convex set} Fix a convex set $K$ in $\R^n$ with $C^2$ boundary, and consider a twice-differentiable function $\varphi:\sfe\rightarrow\R$. Let $g:\partial K\rightarrow\R$ be given by $g(x)=\varphi(n_x).$ Recall that the support function $h_K:\sfe\rightarrow\R$ of a convex set $K$ is the function 
$$h_K(u)=\sup_{x\in K}\langle x,u\rangle.$$
Consider a collection of convex sets $\{K^g_t\}_{t\in [0,\epsilon]}$ with support functions 
$$h_t(u):=h_{K^g_t}(u)=h_K(u)+t\varphi(u).$$
Note that the system $K^g_t$ is well-defined for any strictly-convex set $K$, for a sufficiently small $\epsilon>0.$ In the case that $K$ is not strictly convex, the system $K^g_t$ may still be well defined for many ``admissible'' functions $g.$ For instance, it is well defined for $\epsilon\in [0,1]$ and any function $g$ of the form $g(x)=\varphi(n_x),$ where
$$\varphi(u)=h_L(u)-h_K(u),$$
for any convex set $L,$ since in this case, $K^g_t=(1-t)K+tL,$ which is a convex set for all $t\in [0,1].$ 

One may show the following Lemma, previously discussed in Colesanti \cite{Col1}, Livshyts, Marsiglietti \cite{CLM}, Kolesnikov \cite{KolLiv}, Milman \cite{KolMil-1}, and other related works.
\begin{lemma}\label{step1-concavity}
Let $\mu$ be a probability measure on $\R^n.$ Fix convex sets $K$ and $L$ with $C^2$ boundary, and denote $K_t=(1-t)K+tL$. The twice-differentiable strictly increasing function $F:[0,1]\rightarrow\R$ satisfies
\begin{equation}\label{eq-111} 
F\left((1-t) \mu(K)+t\mu(L)\right)\geq (1-t) F(\mu(K))+tF(\mu(L))
\end{equation}
for all $t\in [0,1]$, if and only if for any $a\in [0,1]$, letting $t_a$ be such that $\mu(K_{t_a})=a,$
\begin{equation}\label{eq-222}
\frac{F''(a)}{F'(a)}\leq -\frac{M_a''(0)}{M_a'(0)^2},
\end{equation}
where $M_a(s)=\mu(K_{t_a+s})$.
\end{lemma}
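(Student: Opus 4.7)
The plan is to recognize (\ref{eq-111}) as the concavity of a single-variable function and then apply the standard $C^2$ characterization of concavity. Define
$$G(t) := F(\mu(K_t)),\qquad t \in [0,1],$$
so that (\ref{eq-111}), read in its natural form, asserts $G(t) \ge (1-t)G(0) + t\,G(1)$ for every $t\in[0,1]$. Because $K$ and $L$ have $C^2$ boundary, the map $t \mapsto \mu(K_t)$ is of class $C^2$ on $[0,1]$ by the standard first and second variational formulas for $\mu(K_t)$ (which will be used repeatedly elsewhere in the paper), and since $F$ is $C^2$ the function $G$ is $C^2([0,1])$. The pointwise midpoint-type inequality above is then equivalent to concavity of $G$ on $[0,1]$, which for a $C^2$ function is equivalent to $G''(t) \le 0$ for all $t\in[0,1]$.

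Next I would compute $G''$ explicitly using the chain rule:
$$G''(t) = F''(\mu(K_t))\,\bigl(\mu(K_t)'\bigr)^2 + F'(\mu(K_t))\,\mu(K_t)''.$$
Because $F$ is strictly increasing we have $F'>0$, so $G''(t) \le 0$ is equivalent to
$$\frac{F''(\mu(K_t))}{F'(\mu(K_t))} \le -\frac{\mu(K_t)''}{\bigl(\mu(K_t)'\bigr)^2},$$
with the convention that if $\mu(K_t)'=0$ the inequality degenerates to $\mu(K_t)''\le 0$, corresponding to reading the right-hand side of (\ref{eq-222}) as $+\infty$. Setting $a := \mu(K_t)$ and $t_a := t$, the identities $M_a'(0) = \mu(K_t)'$ and $M_a''(0) = \mu(K_t)''$ are immediate from $M_a(s)=\mu(K_{t_a+s})$, so the above is precisely (\ref{eq-222}).

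The quantification ``for all $a \in [0,1]$'' in the statement is to be interpreted as ranging over those $a$ that actually arise as $\mu(K_t)$ for some $t \in [0,1]$. Since the equivalence obtained above is pointwise in $t$, one direction is immediate, and for the converse one observes that (\ref{eq-222}) for every admissible $a$ forces $G''(t)\le 0$ at every $t\in[0,1]$, giving concavity of $G$ and hence (\ref{eq-111}). The only genuinely nontrivial ingredient, and the place that needs most care, is the $C^2$ regularity of $t \mapsto \mu(K_t)$ and the matching form of $M_a'(0), M_a''(0)$; once those variational formulas are granted the lemma reduces to a routine chain-rule computation.
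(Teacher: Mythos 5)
Your proof is correct and follows essentially the same route as the paper's: identify (\ref{eq-111}) with concavity of $G(t)=F(\mu(K_t))$, use the $C^2$ characterization $G''\le 0$ (justified by the $C^2$ boundary assumption), and translate via the chain rule into (\ref{eq-222}). The paper's own proof is a condensed version of exactly this argument; your added remarks on the degenerate case $M_a'(0)=0$ and on which $a$ actually arise as $\mu(K_t)$ merely make explicit what the paper leaves implicit.
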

\begin{proof} In the case when $K$ and $L$ have $C^2$ boundary, $M_a$ is twice differentiable for all $a\in[0,1].$ The inequality (\ref{eq-111}) amounts to the concavity of the function $F(\mu(K_t))$ on $[0,1]$, which is equivalent to the fact that $F(\mu(K_t))|_{t}''\leq 0.$ The latter amounts to (\ref{eq-222}).
\end{proof}

For a convex set $K,$ we say that a function $g:\partial K\rightarrow\R$ is admissible if there exists an $\epsilon>0$ (possibly depending on $g$) such that $K^g_{\epsilon}$ is a well-defined convex set, and the function $M^g(t)=\mu(K^g_t)$ is continuously twice differentiable at $t=0.$ We use notation $Adm(K)$ for the set of such admissible $g.$

Motivated by Lemma \ref{step1-concavity}, and following the notation of Kolesnikov and Milman \cite{KolMil-1}, we denote, for a $C^2-$smooth convex set $K$ and a measure $\mu,$
$$G_{\mu}(K):=\inf_{g\in C^2(\sfe)\cap Adm(K)} \left( -\frac{\mu(K^g_t)|_{t=0}''}{(\mu(K^g_t)|_{t=0}')^2} \right).$$
One may note that $G_{\mu}(K)$ is continuous on the set of convex sets, and thus for a non $C^2$ set $K,$ we may define $G_{\mu}(K)$ by approximation. We shall call $G_{\mu}(K)$ \emph{a concavity coefficient of $K$.} 

Further, fix a class $\mathcal{F}$ of convex sets which is closed under Minkowski interpolations, and for each $K\in\mathcal{F}$ restrict the admissible class of functions $Adm(K)$ to those which interpolate within the class $\mathcal{F}$. Consider a function $G_{\mu,\mathcal{F}}:[0,1]\rightarrow\R$ given by
$$G_{\mu,\mathcal{F}}(a)=\inf_{K\in \mathcal{F}: \mu(K)=a} G_{\mu}(K).$$

We shall especially focus our attention on the particular case when $\mathcal{F}$ is the class of symmetric convex sets, and $g:\sfe\rightarrow\R$ are even functions. We denote

$$G^s_{\mu}(K):=\inf_{g\in C^2(\sfe)\cap Adm(K):\, g(u)=g(-u)} \left( -\frac{\mu(K^g_t)|_{t=0}''}{(\mu(K^g_t)|_{t=0}')^2} \right),$$
and
$$G^s_{\mu}(a)=\inf_{K\in \mathcal{K}^{sym}: \mu(K)=a} G^s_{\mu}(K),$$
where $\mathcal{K}^{sym}$ stands for the collection of all symmetric convex sets in $\R^n.$

As an immediate corollary of Lemma \ref{step1-concavity} and an approximation argument, we get the following Proposition, which has appeared in various forms in \cite{Col1}, \cite{CLM}, \cite{CL}, \cite{KM1}, \cite{KM2}, \cite{KolMil}, \cite{KolMilsupernew}, and others.

\begin{proposition}\label{prop-equiv} 
Fix a class $\mathcal{F}$ of convex sets $K$ which is closed under Minkowski interpolations. Let $\mu$ be a log-concave measure on $\R^n$. Suppose $F:[0,1]\rightarrow\R$ is a strictly increasing continuously twice differentiable function. Then the inequality
$$F(\mu\left((1-t)K+tL\right))\geq (1-t) F(\mu(K))+tF(\mu(L))$$
holds for all $t\in [0,1]$ and every pair $K, L\in \mathcal{F}$, if and only if for every $a\in [0,1],$
$$\frac{F''(a)}{F'(a)}\leq G_{\mu, \mathcal{F}}(a).$$
\end{proposition}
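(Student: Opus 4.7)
The plan is to reduce the inequality to the pointwise second-order condition by means of Lemma \ref{step1-concavity}, and then match that condition with the definition of $G_{\mu,\mathcal{F}}(a)$ by taking infima over admissible Minkowski perturbations. Throughout, I would first reduce to the case of $C^2$-smooth sets in $\mathcal{F}$ by approximation (for instance, Minkowski-summing with a small ball, which preserves classes such as symmetric sets or sets containing the origin), then use the continuity of $G_{\mu}$ on the space of convex sets to pass to the general case.

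For the forward direction, I would fix $a\in[0,1]$ and $K\in\mathcal{F}$ with $\mu(K)=a$ and $C^2$ boundary, and consider any restricted admissible perturbation $g$. By the scaling invariance of $-M_g''(0)/M_g'(0)^2$, and the closure of $\mathcal{F}$ under Minkowski interpolation, it is enough to take $g=h_L-h_K$ for some $L\in\mathcal{F}$, in which case $K^g_t=(1-t)K+tL\in\mathcal{F}$ for $t\in[0,1]$. Applying the assumed inequality along this path yields concavity of $t\mapsto F(\mu(K^g_t))$ on $[0,1]$. By Lemma \ref{step1-concavity} this is equivalent to the pointwise bound
$$\frac{F''(a)}{F'(a)}\leq -\frac{M_g''(0)}{M_g'(0)^2}.$$
Taking the infimum over admissible $g$ for fixed $K$ gives $F''(a)/F'(a)\leq G_{\mu}(K)$, and a further infimum over $K\in\mathcal{F}$ with $\mu(K)=a$ yields the asserted pointwise inequality $F''(a)/F'(a)\leq G_{\mu,\mathcal{F}}(a)$.

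For the converse, assume $F''(a)/F'(a)\leq G_{\mu,\mathcal{F}}(a)$ for every $a\in[0,1]$. Fix $K,L\in\mathcal{F}$ with $C^2$ boundary and let $K_t=(1-t)K+tL$, $a_t=\mu(K_t)$. Since $K_t\in\mathcal{F}$ and $g=h_L-h_K$ is a restricted admissible perturbation at every base point along the path, the definition of the concavity coefficient gives, at each $t$,
$$-\frac{\mu(K_{t+s})''|_{s=0}}{(\mu(K_{t+s})'|_{s=0})^2}\;\geq\; G_{\mu}(K_t)\;\geq\; G_{\mu,\mathcal{F}}(a_t)\;\geq\;\frac{F''(a_t)}{F'(a_t)}.$$
Multiplying through by $F'(a_t)(\mu(K_t)')^2>0$ and rearranging yields $\frac{d^2}{dt^2}F(\mu(K_t))\leq 0$, so $t\mapsto F(\mu(K_t))$ is concave on $[0,1]$; comparing endpoints then delivers the stated inequality.

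The main obstacle I foresee is not the algebraic equivalence itself but the approximation and regularity step: $G_{\mu}(K)$ is defined intrinsically only for $C^2$-smooth $K$, and $\mu(K_t)$ need not be twice differentiable for general $K,L\in\mathcal{F}$. The standard remedy is to smooth $K$ and $L$ within $\mathcal{F}$, execute the argument in the smooth case, and then take limits, invoking the continuity of $G_{\mu}$ noted after its definition together with the log-concavity of $\mu$ (which guarantees that $\mu(K_t)$ is continuous in $t$ via Borell's theorem). One should also verify that the Minkowski directions $g=h_L-h_K$ indeed exhaust the restricted $Adm(K)$ up to taking infima, a point that follows from the scaling relation $M_{\epsilon g}(t)=M_g(\epsilon t)$ and the hypothesis that $\mathcal{F}$ is closed under Minkowski interpolation.
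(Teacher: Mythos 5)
Your proposal is correct and follows essentially the same route as the paper: reduce to the second-order condition via Lemma \ref{step1-concavity} along Minkowski segments (noting that scaling invariance of $-M_g''(0)/M_g'(0)^2$ lets the directions $h_L-h_K$, $L\in\mathcal{F}$, exhaust the restricted admissible class), match the resulting pointwise bound with the infima defining $G_{\mu,\mathcal{F}}$, and handle non-smooth sets by approximation within $\mathcal{F}$ using the continuity of $G_\mu$ — which is exactly the "Lemma \ref{step1-concavity} plus approximation" argument the paper invokes.
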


\begin{remark}\label{remlogconc}
Letting $F(a)=\log a$ in Proposition \ref{prop-equiv} we see that $$G_{\mu,\mathcal{F}}(a)\geq -\frac{1}{a}$$
for any class $\mathcal{F}$ of convex sets, whenever $\mu$ is log-concave.
\end{remark}

In other words, Proposition \ref{prop-equiv} implies that ``the optimal'' function $F$ which satisfies (\ref{ehrhard-general}) for some class $\mathcal{F}$ of convex sets, is given by
$$F(a)=\int_0^a exp\left(\int_{c_0}^t G_{\mu,\mathcal{F}}(s)ds\right)dt.$$
Here, the choice of the constant $c_0\in (0,1)$ is arbitrary; we must select $c_0>0$ so that the corresponding integral converges. Note that the function given above is necessarily twice differentiable and strictly increasing.

\medskip

\subsection{Concavity powers}

Recall that for a log-concave measure $\mu$ and $a\in [0,1]$ we let $p(\mu,a)$ be the largest real number such that for all convex sets $K$ and $L$ with $\mu(K)\geq a$ and $\mu(L)\geq a,$ and every $\lambda\in [0,1]$ one has 
$$\mu(\lambda K+(1-\lambda)L)^{p(\mu,a)}\geq \lambda \mu(K)^{p(\mu,a)}+(1-\lambda)\mu(L)^{p(\mu,a)}.$$
Analogously, we define $p_s(\mu,a)$ as the largest number such that for all convex \emph{symmetric} sets $K$ and $L$ with $\mu(K)\geq a$ and $\mu(L)\geq a,$ and every $\lambda\in [0,1]$ one has 
$$\mu(\lambda K+(1-\lambda)L)^{p_s(\mu,a)}\geq \lambda \mu(K)^{p_s(\mu,a)}+(1-\lambda)\mu(L)^{p_s(\mu,a)}.$$
Further, for a convex set $K$ we define 
$$p_s(K,\mu)=\limsup_{\epsilon\rightarrow 0}\left\{p:\,\forall L\in \mathcal{K}^n_s,\,\, \mu((1-\epsilon) K+\epsilon L)^{p}\geq (1-\epsilon) \mu(K)^{p}+\epsilon\mu(L)^{p}\right\},$$
where $\mathcal{K}^n_s $ stands for the set of symmetric convex sets in $\R^n.$ By log-concavity, we get
\begin{lemma}\label{lemma-min-p_s}
$$p_s(a,\mu)=\inf_{K:\,\mu(K)\geq a}p_s(K,\mu).$$
\end{lemma}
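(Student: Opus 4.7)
The plan is to identify both sides of the claimed equality with the same infimum of a pointwise quantity — the infinitesimal $p$-concavity exponent of $\mu$ at $K$ — and to invoke Proposition \ref{prop-equiv} to pass between the global concavity inequality defining $p_s(\mu,a)$ and its pointwise infinitesimal counterpart.

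Denote $\mathcal{F}_a:=\{K\in\mathcal{K}^n_s:\mu(K)\geq a\}$. By the log-concavity of $\mu$, any Minkowski combination of sets in $\mathcal{F}_a$ again belongs to $\mathcal{F}_a$: indeed $\mu((1-\lambda)K+\lambda L)\geq\mu(K)^{1-\lambda}\mu(L)^{\lambda}\geq a$. Hence $\mathcal{F}_a$ is closed under Minkowski interpolations and Proposition \ref{prop-equiv} applies to it. Taking $F(x)=x^p$, so $F''(x)/F'(x)=(p-1)/x$, the pointwise condition in Proposition \ref{prop-equiv} becomes $p\leq 1+\mu(K)\,G^s_{\mu,\mathcal{F}_a}(K)$ for every $K\in\mathcal{F}_a$. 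Taking the supremum of such $p$ yields
\[
p_s(\mu,a)=\inf_{K\in\mathcal{F}_a}\bigl(1+\mu(K)\,G^s_{\mu,\mathcal{F}_a}(K)\bigr).
\]

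Next, I argue that restricting the admissible class $Adm(K)$ to directions interpolating within $\mathcal{F}_a$ does not alter the relevant infimum, i.e. $G^s_{\mu,\mathcal{F}_a}(K)=G^s_{\mu}(K)$ for every $K\in\mathcal{F}_a$. The key is the rescaling invariance of the ratio $-(M^g)''(0)/((M^g)'(0))^2$ appearing in the definition of $G^s_\mu(K)$: for any admissible symmetric $g$ and $\epsilon>0$, one has $K^{\epsilon g}_t=K^g_{\epsilon t}$, so $M^{\epsilon g}(t)=M^g(\epsilon t)$, and consequently
\[
-\frac{(M^{\epsilon g})''(0)}{((M^{\epsilon g})'(0))^2}=-\frac{(M^g)''(0)}{((M^g)'(0))^2}.
\]
For $\epsilon>0$ small enough, $\mu(K^{\epsilon g}_t)$ is arbitrarily close to $\mu(K)\geq a$ uniformly in $t\in[0,1]$, so the rescaled interpolation stays inside $\mathcal{F}_a$. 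Thus any $g\in Adm(K)$ can be replaced by a suitable rescaling without changing the ratio, and the two infima coincide.

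Finally, I identify $1+\mu(K)\,G^s_\mu(K)$ with $p_s(K,\mu)$. By its very definition via a limsup as $\epsilon\to 0$, $p_s(K,\mu)$ is the largest $p$ for which the $p$-concavity inequality holds infinitesimally at $K$ in every symmetric direction; applying Lemma \ref{step1-concavity} with $F(x)=x^p$ at the single set $K$, this is exactly the condition $p\leq 1+\mu(K)\,G^s_\mu(K)$. Combining the previous displays,
\[
p_s(\mu,a)=\inf_{K\in\mathcal{F}_a}\bigl(1+\mu(K)\,G^s_\mu(K)\bigr)=\inf_{K:\mu(K)\geq a}p_s(K,\mu),
\]
which is the desired identity. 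The delicate point, and the one I would scrutinize most carefully, is the rescaling step showing $G^s_{\mu,\mathcal{F}_a}=G^s_\mu$: one must ensure that for small $\epsilon$ the rescaled direction $\epsilon g$ remains admissible (so $K^g_{\epsilon t}$ is genuinely convex and $M^g$ twice differentiable at $t=0$), and that the bookkeeping of ``direction'' versus ``endpoint'' $L$ does not smuggle in an unwanted constraint on $\mu(L)$.
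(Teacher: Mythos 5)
Your overall framework is sound and is essentially the paper's machinery repackaged: closure of $\mathcal{F}_a$ under Minkowski interpolation via log-concavity, Proposition \ref{prop-equiv} to pass between the global power-concavity and the coefficient $G^s$, and Lemma \ref{G-p}-type identification $p_s(K,\mu)=1+\mu(K)G^s_{\mu}(K)$. Note, moreover, that the trivial inequality $G^s_{\mu,\mathcal{F}_a}(K)\geq G^s_{\mu}(K)$ already gives $p_s(\mu,a)\geq \inf_{K\in\mathcal{F}_a}p_s(K,\mu)$, which is the direction the paper proves directly via the segment-wise local-to-global statement (\ref{equiv-p}) together with log-concavity; so everything hinges on the reverse direction, i.e.\ on your claimed identity $G^s_{\mu,\mathcal{F}_a}(K)=G^s_{\mu}(K)$.

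That is exactly where your argument has a genuine gap, and it is the point you yourself flagged. Rescaling $g\mapsto \epsilon g$ only reparametrizes the path: $\mu(K^{\epsilon g}_t)=M_g(\epsilon t)$, so if $\mu(K)=a$ exactly and the direction strictly decreases the measure at $t=0$, then $\mu(K^{\epsilon g}_t)<a$ for every $t>0$ and every $\epsilon>0$; ``arbitrarily close to $a$'' from below is not $\geq a$, and $\epsilon g$ is never $\mathcal{F}_a$-admissible. These boundary sets cannot be discarded: sets with $\mu(K)=a$ are included in the infimum on the right-hand side and are precisely the expected minimizers (round cylinders of measure $a$), so for them the inequality $p_s(\mu,a)\leq p_s(K,\mu)$ is left unproven. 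The gap is fixable without rescaling: since $K^{-g}_t=K^{g}_{-t}$, one has $M_{-g}(t)=M_g(-t)$, hence $M'_{-g}(0)=-M'_g(0)$, $M''_{-g}(0)=M''_g(0)$, and the ratio $-M''(0)/M'(0)^2$ is invariant under $g\mapsto -g$. Therefore in computing $G^s_{\mu}(K)$ one may restrict to even admissible directions with $M'(0)\geq 0$ (directions with $M'(0)=0$ contribute no competitive value, the ratio being bounded below by $-1/\mu(K)$ by log-concavity), and such directions keep $\mu(K^g_t)\geq a$ for small $t$, hence are $\mathcal{F}_a$-admissible even when $\mu(K)=a$. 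With this observation your identity, and hence your proof, goes through; alternatively you could follow the paper and prove the $\geq$ direction by the concavity-along-segments argument, treating the $\leq$ direction at the infinitesimal level as above.
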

\begin{proof} Note that for any pair of symmetric convex sets $K$ and $L,$
\begin{equation}\label{equiv-p}
\mu(\lambda K+(1-\lambda)L)^p\geq \lambda \mu(K)^p+(1-\lambda) \mu(L)^p,	
\end{equation}
with $p=inf_{\lambda} p_s(K_{\lambda},\mu),$ where we use notation $K_{\lambda}=\lambda K+(1-\lambda)L.$ 

In case $\mu(L)\geq a$ and $\mu(L)\geq a,$ we have 
$$\mu(\lambda K+(1-\lambda)L)\geq \mu(K)^{\lambda}\mu(L)^{1-\lambda}\geq a.$$
Additionally, a convex combination of convex symmetric sets is a convex symmetric set. In other words, the class of convex symmetric sets with measure exceeding $a\in[0,1]$ is closed under Minkowski interpolation. Therefore, by (\ref{equiv-p}), the Lemma follows.
\end{proof}

\begin{remark}
The assumption of log-concavity of $\mu$ in the previous Lemma could be replaced with the assumption of quasi-concavity, that is, 
$$\mu\left((1-t)K+tL\right)\geq \min(\mu(K),\mu(L)).$$
\end{remark}

Lastly, we notice the relation between the concavity powers and the concavity coefficient, which follows immediately from Proposition \ref{prop-equiv}:

\begin{lemma}\label{G-p}
For a convex symmetric set $K$,
$$G^s_{\mu}(K)=\frac{p_s(K,\mu)-1}{\mu(K)}.$$
Consequently, for any $a\in[0,1],$
$$G^s_{\mu}(a)=\frac{p_s(\mu,a)-1}{a}.$$
\end{lemma}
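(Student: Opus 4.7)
The plan is a direct application of Proposition \ref{prop-equiv} (equivalently, Lemma \ref{step1-concavity}) with the test function $F(u) = u^p$, for which a brief computation gives $F''(u)/F'(u) = (p-1)/u$. For the first identity, I would fix a symmetric convex $K$ and unpack the infinitesimal definition of $p_s(K,\mu)$. Taylor expanding
$$
\mu((1-\epsilon)K+\epsilon L)^p \;\geq\; (1-\epsilon)\,\mu(K)^p + \epsilon\,\mu(L)^p
$$
to second order in $\epsilon$, and noting that the zeroth and first order terms cancel automatically at $\epsilon=0$, the asymptotic validity of this inequality as $\epsilon \to 0^+$ uniformly over symmetric convex $L$ is equivalent to the infinitesimal concavity condition (\ref{eq-222}) of Lemma \ref{step1-concavity} at $t_a=0$, $a=\mu(K)$. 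For $F(u)=u^p$ this reads
$$
\frac{p-1}{\mu(K)} \;\leq\; -\frac{M''(0)}{(M'(0))^2}
$$
for every admissible symmetric perturbation $g$, where $M(t)=\mu(K^g_t)$. Taking the infimum of the right-hand side over such $g$ yields exactly $G^s_\mu(K)$ by its definition, so the supremum of admissible $p$ equals $p_s(K,\mu) = 1 + \mu(K)\,G^s_\mu(K)$, which rearranges to the first identity.

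For the ``Consequently'' part, I would take the infimum over symmetric convex $K$ with $\mu(K)=a$ on both sides of the first identity:
$$
G^s_\mu(a) \;=\; \inf_{K\in \mathcal{K}^{sym}:\,\mu(K)=a} G^s_\mu(K) \;=\; \frac{1}{a}\Bigl(\inf_{K\in\mathcal{K}^{sym}:\,\mu(K)=a} p_s(K,\mu)-1\Bigr),
$$
and then invoke Lemma \ref{lemma-min-p_s} to identify the inner infimum with $p_s(\mu,a)$.

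The main (mild) obstacle I foresee is this last identification: Lemma \ref{lemma-min-p_s} takes the infimum over $\{K:\mu(K)\geq a\}$, while the preceding chain uses $\{K:\mu(K)=a\}$. One direction is free from the inclusion of classes; for the reverse, I would combine the first identity with the observation that $u\mapsto u\,G^s_\mu(u) = \inf_{K:\,\mu(K)=u}(p_s(K,\mu)-1)$ is non-decreasing on $[a,1]$, a fact that itself follows from the log-concavity of $\mu$ via an interpolation argument in the spirit of the proof of Lemma \ref{lemma-min-p_s}. With this monotonicity, the infimum over $\{\mu(K)\geq a\}$ is attained at the boundary $\mu(K)=a$, so the two infima coincide, completing the proof.
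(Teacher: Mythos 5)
Your proof of the first identity is the paper's own route: Lemma \ref{G-p} is presented there as an immediate consequence of Proposition \ref{prop-equiv} (equivalently Lemma \ref{step1-concavity}) applied with $F(u)=u^p$, so that $F''/F'=(p-1)/u$, the infimum over even admissible perturbations being the same as the infimum over Minkowski directions $h_L-h_K$ with $L$ symmetric. One small inaccuracy: the first-order terms in your Taylor expansion do not ``cancel automatically'' --- at order $\epsilon$ the inequality requires $p\,\mu(K)^{p-1}M'(0)\ge \mu(L)^p-\mu(K)^p$, which is a genuine condition, not an identity. It is harmless here because the normalized second-order quantity $-M''(0)/M'(0)^2$ is unchanged when $L$ is replaced by $(1-s)K+sL$, and along such rescaled directions the first-order condition degenerates to equality, so the binding constraint as $\epsilon\to0$ is indeed the second-order one; this is exactly what Proposition \ref{prop-equiv} packages, and the paper leaves it equally implicit.

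The ``Consequently'' step is where your proposal is incomplete. You are right that there is a mismatch: $G^s_\mu(a)$ is an infimum over $\{\mu(K)=a\}$ while Lemma \ref{lemma-min-p_s} identifies $p_s(\mu,a)$ with an infimum over $\{\mu(K)\ge a\}$, and the inclusion of classes only gives $p_s(\mu,a)\le 1+aG^s_\mu(a)$ for free. But your repair --- that $u\mapsto uG^s_\mu(u)$ is non-decreasing ``by log-concavity via an interpolation argument in the spirit of Lemma \ref{lemma-min-p_s}'' --- is asserted, not proved, and it does not follow from that lemma's argument: the interpolation there only shows that the class $\{\mu\ge a\}$ is closed under Minkowski combination (which is what yields the easy inequality), and it compares nothing across different measure levels; log-concavity by itself gives only $uG^s_\mu(u)\ge -1$ (Remark \ref{remlogconc}), not monotonicity. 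To be fair, the paper glosses over the same point: the introduction states $p_s(\mu,a)=\inf_{\mu(K)=a}p_s(K,\mu)$, Lemma \ref{lemma-min-p_s} is proved with $\mu(K)\ge a$, and Lemma \ref{G-p} is then asserted without comment. So either supply a genuine proof of your monotonicity claim (I do not see a quick one), or prove the lemma in the form in which it is actually used later, namely with $p_s(\mu,a)$ read as $\inf_{\mu(K)=a}p_s(K,\mu)$; in that reading the second identity follows from the first simply by taking infima at fixed measure $a$, which is, in substance, what the paper does.
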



We note also

\begin{proposition}\label{ODE}
Let $$F(a)=\int_0^a \exp\left(\int_{c_0}^s \frac{-1+p_s(\mu,t)}{t} dt\right) ds=\int_0^a \exp\left(\int_{c_0}^s G^s_{\mu}(t) dt\right) ds,$$
for an arbitrary $c_0\in (0,1).$ Then for any pair of symmetric convex sets $K,L$ in $\R^n,$ 	$$F(\mu(\lambda K+(1-\lambda)L))\geq \lambda F(\gamma(K))+(1-\lambda)F(\gamma(L)).$$
\end{proposition}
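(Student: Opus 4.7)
The strategy is a direct application of Proposition \ref{prop-equiv} with $\mathcal{F} = \mathcal{K}^{sym}$, the class of symmetric convex sets in $\R^n$. This class is closed under Minkowski interpolation, and the admissible support-function variations $\varphi = h_L - h_K$ with $K,L\in\mathcal{K}^{sym}$ are precisely the even functions on $\sfe$, so that within this class the concavity coefficient $G_{\mu,\mathcal{K}^{sym}}(a)$ from Proposition \ref{prop-equiv} coincides with $G^s_\mu(a)$. The function $F$ in the statement is engineered so that $F''/F'=G^s_\mu$, realizing the equality case of the criterion in Proposition \ref{prop-equiv}.

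I would first verify the key differential identity. By the fundamental theorem of calculus,
$$F'(a)=\exp\!\left(\int_{c_0}^a G^s_\mu(t)\,dt\right)>0,$$
so $F$ is strictly increasing. Differentiating once more gives $F''(a)=G^s_\mu(a)\,F'(a)$, hence
$$\frac{F''(a)}{F'(a)}=G^s_\mu(a)=G_{\mu,\mathcal{K}^{sym}}(a),$$
where the second equality uses Lemma \ref{G-p} together with the identification of admissible variations noted above. Plugging this into Proposition \ref{prop-equiv} immediately yields the desired concavity inequality for $F(\mu(\cdot))$ along Minkowski interpolations of symmetric convex sets.

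The one subtlety, which I expect to be the main technical obstacle, is regularity: Proposition \ref{prop-equiv} requires $F\in C^2$, whereas $G^s_\mu$, being defined as an infimum over $\mathcal{K}^{sym}$, need not be continuous in $a$ a priori. To handle this I would approximate $G^s_\mu$ from above by continuous functions $\tilde G_\varepsilon\ge G^s_\mu$, apply Proposition \ref{prop-equiv} to the corresponding $C^2$ functions $\tilde F_\varepsilon$, and then send $\varepsilon\to 0$, using the pointwise convergence $\tilde F_\varepsilon\to F$ on the domain of interest. Alternatively, one can bypass the $C^2$ hypothesis by proving concavity of $t\mapsto F(\mu(K_t))$ directly: its second derivative (classical where $\mu(K_t)$ is smooth, and distributional otherwise) is nonpositive by the very definition of $G^s_\mu$, and since $t\mapsto\mu(K_t)$ is continuous on $[0,1]$ for any Minkowski interpolation of convex bodies, this suffices for concavity. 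Convergence of the outer integral defining $F$ near $0$ is a separate matter depending on $\mu$ and is not needed for the concavity statement itself, which holds on $(0,1]$ under the standard convention $F(0^+)=\lim_{a\to 0^+}F(a)$ even when this limit is $-\infty$.
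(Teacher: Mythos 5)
Your proposal is correct and follows essentially the same route as the paper: observe that $F$ solves the ODE $F''/F'=G^s_{\mu}(a)=\frac{p_s(\mu,a)-1}{a}$ and then invoke the equivalence between this pointwise differential criterion and the global concavity inequality (Proposition \ref{prop-equiv} together with Lemma \ref{G-p}), which is exactly the mechanism the paper uses. Your added care about the possible lack of smoothness of $G^s_{\mu}$ is a reasonable refinement of a point the paper passes over by simply asserting that $F$ is twice differentiable, but it does not change the argument.
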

\begin{proof}
By Proposition \ref{key_prop-general}, the function $F(a)$ which satisfies
$$\frac{F''(a)}{F'(a)}=\frac{p^s(\mu,a)-1}{a}=G^s_{\mu}(a),$$
will also satisfy the conclusion. Solving the elementary ODE, we note that the function in the statement of the proposition does satisfy this equation. 	
\end{proof}

\medskip

\subsection{Analyzing the Conjecture \ref{theconj} and its isoperimetric implications}

We shall introduce some \textbf{Notation.} 

\begin{itemize}
\item $\Phi(t)=\frac{1}{\sqrt{2\pi}} \int_{-\infty}^t e^{-\frac{s^2}{2}}ds,\,\,t\in[0,\infty]; $
\item $g_p(t)=t^p e^{-\frac{t^2}{2}},\,\,t\in[0,\infty];$
\item $J_p(R)=\int_0^R g_p(t)dt,\,\,R\in[0,\infty];$
\item $c_p=J_p(+\infty);$
\item $R_k(a)=J_{k-1}^{-1}(c_{k-1}a),\,\,a\in[0,1];$
\item $\varphi(t)=2\Phi(t)-1=R^{-1}_1(t)=\frac{1}{\sqrt{2\pi}}\int_{-t}^t e^{-\frac{s^2}{2}}ds,\,\,t\in[0,\infty];$
\item $s_k(a)=c^{-1}_{k-1} g_{k-1}\circ R_k(a) ,\,\,a\in[0,1];$
\item $\varphi_k(a)=\frac{c_{k-1}(R_k(a)^2-k+1)}{g_k\circ R_k(a)} ,\,\,a\in[0,1].$	
\end{itemize}

We consider also the convex set 
$$C_k(a)=R_k(a) B^k_2\times \R^{n-k}\subset \R^n,$$
which we call \emph{$k$-round cylinder.} The geometric meaning of the function $R_k(a)$ is the radius of the $k$-round cylinder $C_k(a)$ of measure $a,$ i.e. $\gamma(R_k(a)B^n_2\times \R^{n-k})=a.$ Also, note that $s_k(a)=\gamma^+(\partial C_k(a))$. We notice the following relations:

\begin{equation}\label{relation1}
R'_k(a)=\frac{c_{k-1}}{g_{k-1}\circ R_k(a)}=\frac{1}{s_k(a)},\end{equation}

and 

\begin{equation}\label{relation2}
\varphi_k(a)=\left(\log \frac{1}{s_k(a)}\right)'.
\end{equation}

By (\ref{relation1}) and (\ref{relation2}), and the change of variables $x=R_k(s)$, followed by the change of variables $y=R_k(t)$, the function 
$$
F(a)= \int_0^a \exp\left(\int_{C_0}^{t} \min_{k=1,...,n} \varphi_k(s)ds\right) dt=$$
\begin{equation}\label{defF}
\int_0^a \exp\left(\int_{C_0}^{t} \min_{k=1,...,n} \left(\frac{c_{k-1} (-k+1+J^{-1}_{k-1}(c_{k-1}s)^2)}{g_{k}\circ J^{-1}_{k-1}(c_{k-1}s)}\right)ds\right) dt
\end{equation}
from the Conjecture \ref{theconj}, rewrites as
$$F(a)= \sum_{k=1}^n 1_{a\in E_k}\cdot(\beta_k R_k(a)+\alpha_k),$$
for some disjoint sets $E_k\subset [0,1],$ such that $[0,1]=\cup_{k=1}^n E_k,$ and some coefficients $\alpha_k, \beta_k\in\R$. Namely, $E_k\subset [0,1]$ is chosen to be the set where $\varphi_k(a)=\min_{j=1,...,n} \varphi_j(a).$ 

We shall also need the following 
\begin{lemma}[``the strip wins eventually'']\label{varphi1}
There exists a constant $\alpha_1\in [0,1]$ such that for every $a\geq \alpha_1,$ one has
$$\varphi_1(a)=\min_{k=1,...,n} \varphi_k(a).$$
\end{lemma}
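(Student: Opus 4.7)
It suffices to show that for each fixed $k \in \{2,\ldots,n\}$ there is a threshold $\alpha_1^{(k)} < 1$ with $\varphi_1(a) \leq \varphi_k(a)$ for all $a \geq \alpha_1^{(k)}$; the lemma then follows upon taking $\alpha_1 = \max_k \alpha_1^{(k)}$. By relation (\ref{relation2}),
\[
\varphi_1(a) - \varphi_k(a) = (\log s_k)'(a) - (\log s_1)'(a) \cdot (-1) \cdot (-1) = (\log(s_k/s_1))'(a),
\]
so the claim is equivalent to showing that $a \mapsto s_k(a)/s_1(a)$ is eventually decreasing as $a \to 1^-$.

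I plan to derive this by an asymptotic analysis of $R_k(a)$ and $s_k(a)$ as $a \to 1$. The substitution $t = R_k(a) + v/R_k(a)$ in $c_{k-1}(1-a) = \int_{R_k(a)}^\infty t^{k-1} e^{-t^2/2}\,dt$ produces the useful representation
\[
s_k(a) = \frac{R_k(a)(1-a)}{I_k(R_k(a))}, \qquad I_k(R) := \int_0^\infty \left(1 + \frac{v}{R^2}\right)^{k-1} e^{-v - v^2/(2R^2)}\,dv,
\]
so that $s_k(a)/s_1(a) = (R_k(a)/R_1(a)) \cdot (I_1(R_1(a))/I_k(R_k(a)))$. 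Dominated convergence and a Taylor expansion inside the integral give $\log I_k(R) = (k-2) R^{-2} + O_k(R^{-4})$ as $R \to \infty$. Taking logarithms of the defining integral equations for $R_k$ and $R_1$ and subtracting yields $R_k(a)^2 - R_1(a)^2 = 2(k-1)\log R_1(a) + O(1)$, which diverges to $+\infty$; in particular $R_k(a) > R_1(a)$ near $a=1$ and $\log(R_k/R_1) = (k-1) \log R_1(a)/R_1(a)^2 + O(R_1(a)^{-2})$. Combining the three expansions yields
\[
\log \frac{s_k(a)}{s_1(a)} = \frac{(k-1)(\log R_1(a) - 1)}{R_1(a)^2} + O(R_1(a)^{-2}).
\]

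The principal function $R \mapsto (\log R - 1)/R^2$ has derivative $(3 - 2\log R)/R^3 < 0$ for $R > e^{3/2}$, of asymptotic order $-\log R / R^3$, which dominates the $O(R^{-3})$ derivative of the error term. Since $R_1(a)$ is strictly increasing in $a$ with $R_1(a) \to \infty$ as $a \to 1$, the chain rule gives $(\log(s_k/s_1))'(a) < 0$, equivalently $\varphi_1(a) < \varphi_k(a)$, for all $a$ sufficiently close to $1$, as required. The main technical hurdle is careful bookkeeping of error terms: although the principal contribution carries an extra $\log R$ factor that guarantees it dominates the $O(R^{-2})$ remainder, one must expand both $I_k(R)$ and $R_k(a)^2 - R_1(a)^2$ to sufficient precision to confirm this dominance rigorously and uniformly in $k \in \{2,\ldots,n\}$.
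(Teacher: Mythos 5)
Your reduction and the value-level asymptotics are essentially sound: the identity $\varphi_1-\varphi_k=\bigl(\log (s_k/s_1)\bigr)'$ is correct, the substitution $t=R+v/R$ does give $s_k(a)=R_k(a)(1-a)/I_k(R_k(a))$ with $\log I_k(R)=(k-2)R^{-2}+O_k(R^{-4})$, and $R_k(a)^2-R_1(a)^2=2(k-1)\log R_1(a)+O(1)$ is right. But there is a genuine gap at the decisive step. The statement you must prove is a \emph{pointwise inequality between derivatives}: $\bigl(\log(s_k/s_1)\bigr)'(a)\le 0$ near $a=1$. An asymptotic expansion of the \emph{value} $\log(s_k/s_1)$ with an unquantified $O(R_1^{-2})$ remainder gives no control whatsoever on the remainder's derivative; your assertion that the error term has an ``$O(R^{-3})$ derivative'' is exactly what needs proof and is nowhere justified. (Note also that derivatives here are in $a$, and $d/da$ carries a factor $R_j'(a)=1/s_j(a)\asymp 1/\bigl(R_1(1-a)\bigr)$, with \emph{different} chain-rule factors for $R_1$ and $R_k$; one must check these are comparable and that the differentiated remainders carry no extra $\log R$ factor.) As written, the proof is incomplete at its crux, not merely in bookkeeping.

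For comparison, the paper argues differently: it treats $k$ as a continuous parameter, computes $\frac{d}{dk}\varphi_k(a)$ (via $\frac{d}{dk}R_k(a)$), and shows this is nonnegative for all $k\in(1,n]$ once $R_k(a)$ exceeds a threshold, by reducing to an inequality of the form $aR^3+c_1R^2\log R+\dots\ge 0$ with coefficients bounded in terms of $n$; this yields monotonicity of $k\mapsto\varphi_k(a)$ for large $a$, hence the minimum at $k=1$. If you want to salvage your route without differentiating asymptotic expansions, multiply the target inequality by $(1-a)>0$ and use your own representation to get the exact identity
$$(1-a)\,\varphi_k(a)=\Bigl(1-\frac{k-1}{R_k(a)^2}\Bigr)I_k\bigl(R_k(a)\bigr),$$
so that $\varphi_1(a)\le\varphi_k(a)$ becomes a comparison of \emph{values}: $I_1(R_1)\le\bigl(1-\tfrac{k-1}{R_k^2}\bigr)I_k(R_k)$. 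Expanding both sides as $1-R_j^{-2}+O_k(R_j^{-4})$ (with log-free, explicitly bounded errors for $k\le n$), the difference is $R_1^{-2}-R_k^{-2}\sim 2(k-1)\log R_1/R_1^4$, whose logarithmic factor dominates the $O(R^{-4})$ errors; this closes the argument with only the estimates you already have.
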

\begin{proof} For $a\in [0,1]$, we shall analyze $\frac{d}{dk} \varphi_k(a),$ where $k\in (0,n].$ Note that nothing is stopping us from considering $k$ in the interval, rather than just taking integer values. The Lemma will follow if we show that there exists a constant $\alpha_1\in [0,1]$ such that for every $a\geq \alpha_1,$ one has $\frac{d}{dk} \varphi_k(a)\geq 0$ for all $k\in (1,n].$ 

Indeed, letting 
$$I_{k-1}(R)=\int_0^{\infty} t^{k-1}e^{-\frac{t^2}{2}}\log t \,dt,$$
we note that
$$ \frac{d}{dk} R_k(a)=\frac{ac'_{k-1}}{I_{k-1}\circ R_k (a)},$$
and hence 
$$\frac{d}{dk} \varphi_k(a)=\left(c_{k-1}\frac{R^2_k(a)-k+1}{g_k\circ R_k(a)}\right)'_k=$$$$c'_{k-1}\frac{R^2_k(a)-k+1}{g_k\circ R_k(a)}+ c_{k-1}\frac{2 R_k(a)}{g_k\circ R_k(a)}\cdot \frac{ac'_{k-1}}{I_{k-1}\circ R_k (a)}-c_{k-1}\frac{1}{g_k\circ R_k(a)}-$$$$c_{k-1}\frac{R^2_k(a)-k+1}{g_k\circ R_k(a)} \cdot \left(-\log R_k(a)+\left(R_k(a)-\frac{k-1}{R_k(a)}\right)\frac{ac'_{k-1}}{I_{k-1}\circ R_k (a)}\right).$$
Note that 
$$c'_{k-1}=\int^{\infty}_0 t^{k-1}e^{-\frac{t^2}{2}}\log t \,dt,$$
and $I_{k-1}(R)\rightarrow_{R\rightarrow\infty} c'_{k-1}$. Therefore, there exists an $\alpha_0\in [0,1]$ such that for any $a\geq \alpha_0,$ and any $k\in [1,n]$, we have $\frac{c'_{k-1}}{I_{k-1}\circ R_k (a)}\geq 0,$ and thus multiplying by this factor does not change the sign of the above expression. Denote $R=R_k(a)$. The inequality $\frac{d}{dk} \varphi_k(a)\geq 0$ holds whenever 
$$\frac{(R^2-k+1)I_{k-1}(R)}{c_{k-1}}-\frac{I_{k-1}(R)}{c'_{k-1}} +2aR-$$
$$\frac{I_{k-1}(R)}{c'_{k-1}}(R^2-k+1)\log R+a R \left(R-\frac{k-1}{R}\right)^2\geq 0.$$
In other words, when $a\geq \alpha_0$ and $k\in (1,n],$ and $R=R_k(a),$ there are $c_i(k)\in (A_n, B_n]$, $i\in \{1,...,5\},$ where the interval $(A_n,B_n]$ depends only on the dimension, such that the inequality $\frac{d}{dk} \varphi_k(a)\geq 0$ is equivalent to 
$$aR^3+c_1(k)R^2\log R+c_2(k)R^2+c_3(k)R+c_4(k)\log R+c_5(k)\geq 0.$$
Let $R_0$ be the smallest value of $R$ which makes the above expression non-negative for all $k\in (1,n]$, provided that $a\geq \alpha_0.$ Let $\alpha_1\geq \alpha_0$ be such a number in $[0,1]$ that $R_1(a)\geq R_0$; then $R_k(a)\geq R_0$ for all $k\geq 1$. We conclude that for all $a\in [\alpha_1,1]$ and all $k\in (1,n]$ one has $\frac{d}{dk} \varphi_k(a)\geq 0$. This implies the Lemma.\end{proof}

Here is a few pictures to illustrate Lemma \ref{varphi1}.

\medskip

The graphs of the functions $\varphi_1, \varphi_2, s_1, s_2$:
\begin{center}
\includegraphics{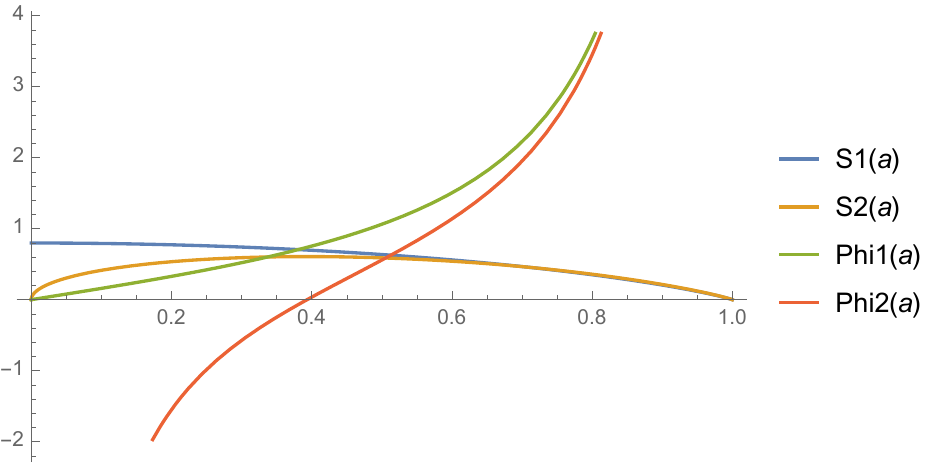}
\end{center}
The situation with $\varphi_1$ and $\varphi_2$ at first:
\begin{center}
\includegraphics{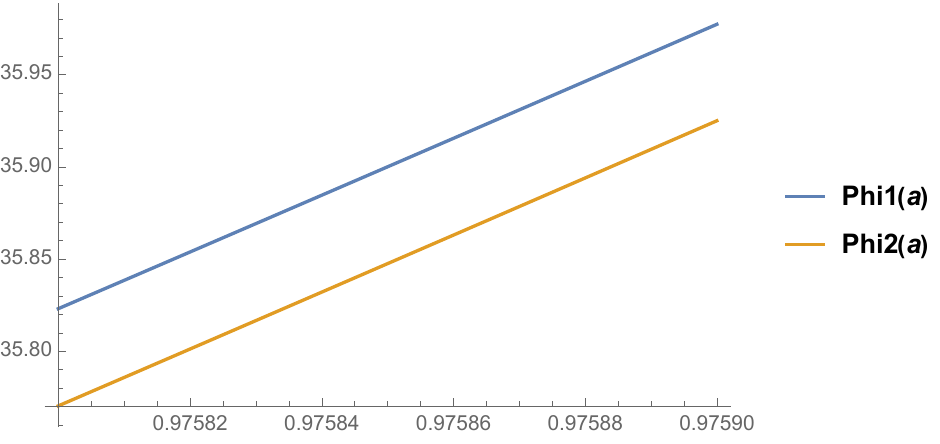}
\end{center}
The situation with $\varphi_1$ and $\varphi_2$ eventually:
\begin{center}
\includegraphics{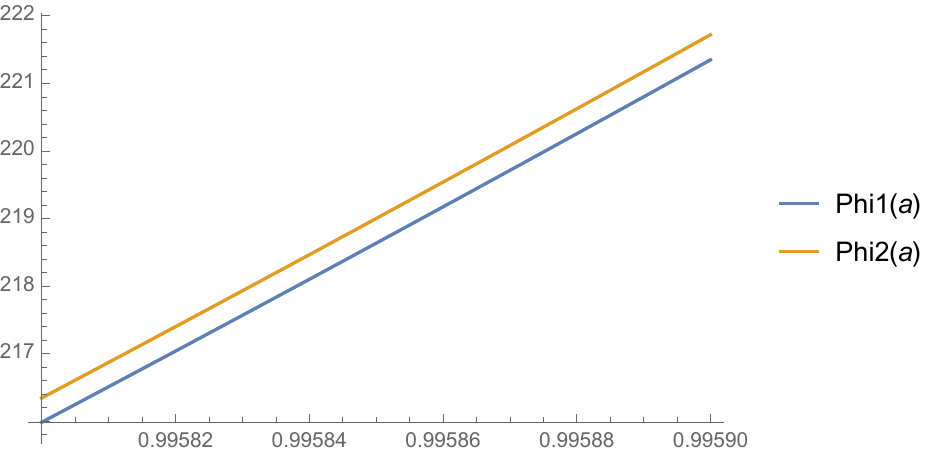}
\end{center}
The graph of $\varphi_1-\varphi_2$:
\begin{center}
\includegraphics{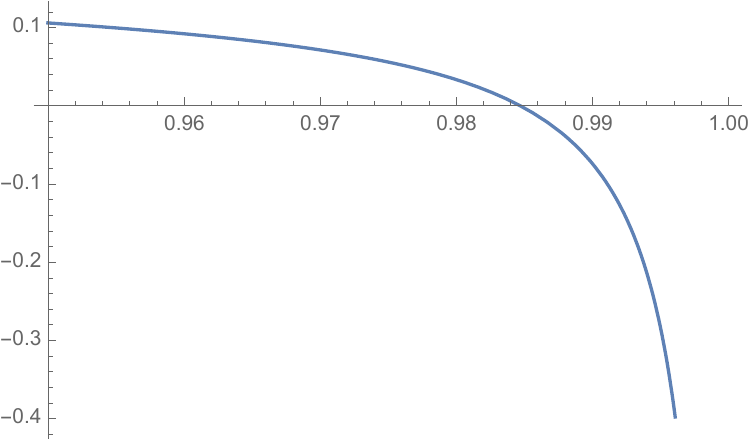}
\end{center}

\medskip

The Ehrhard inequality implies the Gaussian isoperimetric inequality of Borell \cite{Bor-isop} and Sudakov-Tsirelson \cite{ST}, which states that for every $a\in [0,1],$ among all measurable sets in $\R^n$ of Gaussian measure $a$, a half-space has the smallest Gaussian perimeter. Indeed, here is the argument that one may also find in Lata\l{}a \cite{Lat}: for every Borel-measurable set $K,$ and any $\lambda\in [0,1],$
$$\gamma^+(\partial K)=\liminf_{\epsilon\rightarrow 0} \frac{\gamma(K+\epsilon B^n_2)-\gamma(K)}{\epsilon}\geq $$$$\liminf_{\epsilon\rightarrow 0} \frac{\Phi\left((1-\lambda)\Phi^{-1}(\gamma(\frac{K}{1-\lambda}))+\lambda \Phi^{-1}( \gamma(\frac{\epsilon}{\lambda} B^n_2))\right)-\gamma(K)}{\epsilon}=$$
$$\Phi'(\Phi^{-1}(\gamma(K)))\cdot\lim_{t\rightarrow\infty}\frac{\Phi^{-1}(\gamma(tK))}{t}=\frac{1}{\sqrt{2\pi}}e^{-\frac{\Phi^{-1}(\gamma(K))^2}{2}}= \gamma^+(\partial H_K),$$
where $H_K$, as before, is the half-space of Gaussian measure $\gamma(K).$ We used here that 
$$\lim_{t\rightarrow\infty}\frac{\Phi^{-1}(\gamma(tK))}{t}=1,$$
which follows from the facts that
$$\Phi(t)=1-e^{-\frac{(t+o(t))^2}{2}};$$
$$\gamma(t B^n_2)=1-e^{-\frac{(t+o(t))^2}{2}},$$
where $o(t)$ is something that tends to infinity strictly slower than $t$.

In a similar fashion, we show that Conjecture \ref{theconj} implies the sharp isoperimetric statement for symmetric convex sets of sufficiently large Gaussian measure. This fact is, actually, known unconditionally, and was shown by Barchiesi and Julin \cite{italians}; we state the next proposition just for completeness.

\begin{proposition}\label{implic}
Suppose the Conjecture \ref{theconj} holds, at least for some interval $a\in [c',1]$. Then there exists a constant $\alpha\in [0.94,1]$ such that for any $a\geq \alpha,$ and any symmetric convex set $K$ with $\gamma(K)=a,$ one has $\gamma^+(\partial K)\geq \gamma^+(\partial C_1(a)),$ where $C_1(a)$ is a symmetric strip of measure $a.$
\end{proposition}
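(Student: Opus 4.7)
The plan is to adapt Lata\l{}a's derivation of the Gaussian isoperimetric inequality from Ehrhard's inequality (recalled in the paragraph preceding the proposition), substituting $\varphi^{-1}$ for $\psi^{-1}$. The essential preliminary is Lemma~\ref{varphi1}: there is a threshold $a_0\in[0,1]$ such that $\varphi_1(a)=\min_k\varphi_k(a)$ for every $a\geq a_0$. By the remark following Conjecture~\ref{theconj}, this means $[a_0,1]\subset E_1$, and on this interval the conjectured function reduces, using $R_1=\varphi^{-1}$, to
$$
F(a)=\beta_1 R_1(a)+\alpha_1=\beta_1\varphi^{-1}(a)+\alpha_1.
$$
Setting $\alpha:=\max(c',a_0)$, both the conjecture's inequality and the reduced form of $F$ are available for every $a\in[\alpha,1]$.

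Fix a symmetric convex set $K$ with $\gamma(K)=a\in[\alpha,1]$. For $\lambda\in(0,1)$ and $\epsilon>0$, I apply the conjectured $F$-concavity to the pair of symmetric convex sets $K/(1-\lambda)$ and $(\epsilon/\lambda)B_2^n$ with interpolation parameter $\lambda$, using the identity $(1-\lambda)\cdot K/(1-\lambda)+\lambda\cdot(\epsilon/\lambda)B_2^n=K+\epsilon B_2^n$:
\begin{equation}\label{Fconc-step}
F(\gamma(K+\epsilon B_2^n))\geq (1-\lambda)F(\gamma(K/(1-\lambda)))+\lambda F(\gamma((\epsilon/\lambda)B_2^n)).
\end{equation}
All three Gaussian measures here exceed $a\geq a_0$ (the last one being close to $1$ once $\epsilon/\lambda$ is large), hence lie in $E_1$, so the reduced formula for $F$ applies to each.

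Now parametrize $\epsilon=\lambda t$ with $t>0$ fixed and let $\lambda\to 0$. A Taylor expansion gives $\gamma(K+\lambda tB_2^n)=a+\lambda t\gamma^+(\partial K)+o(\lambda)$, while a divergence-theorem computation of $\frac{d}{ds}\gamma(sK)|_{s=1}$ yields $\gamma(K/(1-\lambda))=a+\lambda m(K)+O(\lambda^2)$, where $m(K):=\int_{\partial K}\langle x,n_x\rangle\,d\gamma_{\partial K}$ is finite. Substituting into \eqref{Fconc-step}, dividing by $\lambda$, and passing to the limit yields
$$
tF'(a)\gamma^+(\partial K)\geq F'(a)m(K)+F(\gamma(tB_2^n))-F(a).
$$
Dividing by $tF'(a)$ and sending $t\to\infty$, the $m(K)/t$ term vanishes and the conclusion reduces to computing $\lim_{t\to\infty}F(\gamma(tB_2^n))/t$. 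From the tail asymptotics $1-\gamma(tB_2^n)\sim c_n t^{n-2}e^{-t^2/2}$ and $1-\varphi(s)\sim\sqrt{2/\pi}\,s^{-1}e^{-s^2/2}$, taking logarithms and equating gives $\varphi^{-1}(\gamma(tB_2^n))=t+O((\log t)/t)$, so the limit equals $\beta_1$. Finally, $F'(a)=\beta_1/\varphi'(\varphi^{-1}(a))=(\beta_1\sqrt{2\pi}/2)\,e^{\varphi^{-1}(a)^2/2}$, whence
$$
\frac{\beta_1}{F'(a)}=\sqrt{2/\pi}\,e^{-\varphi^{-1}(a)^2/2}=\gamma^+(\partial C_1(a)),
$$
the last equality being the Gaussian perimeter of the strip $C_1(a)=[-\varphi^{-1}(a),\varphi^{-1}(a)]\times\R^{n-1}$ (two faces, each of measure $\frac{1}{\sqrt{2\pi}}e^{-\varphi^{-1}(a)^2/2}$).

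I do not anticipate a serious obstacle: the argument is essentially a transcription of the Ehrhard $\Rightarrow$ Gaussian-isoperimetry derivation with $\varphi^{-1}$ playing the role of $\psi^{-1}$. The mildly technical points are verifying that all measures in \eqref{Fconc-step} remain in $E_1$ (handled by choosing $a\geq a_0$ and $\epsilon/\lambda$ large) and justifying the quantitative range $\alpha\in[0.94,1]$, which requires running the proof of Lemma~\ref{varphi1} numerically to locate $a_0$ explicitly by tracking the coefficients $c_i(k)$ that appeared there. This last step is routine but tedious, and it is where most of the computational effort would go.
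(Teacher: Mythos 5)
Your proposal is correct and follows essentially the same route as the paper: invoke Lemma \ref{varphi1} to reduce $F$ to an affine function of $\varphi^{-1}$ on a terminal interval $[\alpha,1]$ with $\alpha\geq 0.94$ (numerically), then run the Lata\l{}a-type argument with the pair $K/(1-\lambda)$ and a large ball dilate, and finish with the tail asymptotics identifying $\lim_{t\to\infty}F(\gamma(tB_2^n))/t$ and $F'(a)$ so that the ratio equals $\gamma^+(\partial C_1(a))=s_1(a)$. The only difference is bookkeeping: you fix $t$, Taylor-expand in $\lambda$ (introducing the finite term $m(K)=\int_{\partial K}\langle x,n_x\rangle\,d\gamma_{\partial K}$) and then send $t\to\infty$, whereas the paper takes the joint limit inside a single $\liminf$ — the substance is identical.
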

\begin{proof} Let $\alpha_1\in [0,1]$ be such that for every $a\geq \alpha_1,$ we have $\varphi_1(a)=\min_{k=1,...,n} \varphi_k(a)$ (it exists by Lemma \ref{varphi1}, and the numerical computation shows that it exceeds $0.94$). Let us pick $C_0=\alpha_1$ in the definition of $F$ (\ref{defF}). Let $\alpha=\max(c',\alpha_1).$ By (\ref{relation1}) and (\ref{relation2}), we have, for all $a\geq \alpha,$ that
$$F'(a)=\frac{s_1(C_0)}{s_1(a)}.$$
Further, we have, for a sufficiently large $t\in \R,$
$$F(\gamma(t B^n_2))=\frac{\varphi^{-1}(\gamma(t B^n_2))}{2s_1(C_0)}.$$
Note also that
$$\lim_{t\rightarrow\infty}\frac{\varphi^{-1}(\gamma(tK))}{2t}=1,$$
which follows from the facts that
$$\varphi(t)=1-2e^{-\frac{(t+o(t))^2}{2}};$$
$$\gamma(t B^n_2)=1-e^{-\frac{(t+o(t))^2}{2}},$$
where $o(t)$ is something that tends to infinity strictly slower than $t$. To summarize, we conclude that for $a\geq \alpha,$
$$\frac{1}{F'(a)}\lim_{t\rightarrow\infty} \frac{F(\gamma(t B^n_2))}{t}=s_1(a)=\gamma^+(\partial C_1(a)).$$

\medskip

Suppose that the Conjecture \ref{theconj} holds, at least for some interval $[c',1].$ Then for every convex symmetric set $K$ with $\gamma(K)=a\in [\alpha,1],$ we have
$$\gamma^+(\partial K)=\liminf_{\epsilon\rightarrow 0} \frac{\gamma(K+\epsilon B^n_2)-\gamma(K)}{\epsilon}\geq $$$$\liminf_{\epsilon\rightarrow 0} \frac{F^{-1}\left((1-\lambda)F(\gamma(\frac{K}{1-\lambda}))+\lambda F( \gamma(\frac{\epsilon}{\lambda} B^n_2))\right)-\gamma(K)}{\epsilon}=$$
$$\frac{1}{F'(a)}\cdot\lim_{t\rightarrow\infty}\frac{F(\gamma(tK))}{t}=\gamma^+(\partial C_K(a)),$$
and the proposition follows.
	
\end{proof}

\begin{remark}
One may not adapt the argument of Proposition \ref{implic} to obtain that the Conjecture \ref{theconj} implies the Gaussian isoperimetry for symmetric convex sets on the entire $[0,1]$. The fundamental reason for this is that the equality in the Conjecture \ref{theconj} holds only for pairs of cylinders whose measures belong to certain intervals, and one may not let $t$ tend to infinity, as in the proof of Proposition \ref{implic}, and hope to have any sharp estimate. But one might hope that some local argument should help deduce the Gaussian isoperimetry for symmetric convex sets from the Conjecture \ref{theconj}.	

\end{remark}

\begin{remark}[One-dimensional case]
	We would like to add that Conjecture \ref{theconj} holds in dimension 1, and is, in fact, always an equality. Indeed, when $n=1,$ we simply have $F=\varphi^{-1}.$ For any pair of symmetric intervals $I_a$ and $I_b$ of Gaussian measures $a$ and $b$ respectively, we write $I_a=[-\varphi^{-1}(a), \varphi^{-1}(a)]$ and $I_b=[-\varphi^{-1}(b), \varphi^{-1}(b)]$. Then
	$$F(\gamma(\lambda I_a+(1-\lambda)I_b))=F\circ \varphi (\lambda \varphi^{-1}(a)+(1-\lambda) \varphi^{-1}(b))=$$$$ \lambda \varphi^{-1}(a)+(1-\lambda) \varphi^{-1}(b) = \lambda F(\gamma(I_a))+(1-\lambda) F(\gamma(I_b)).$$
\end{remark}

\medskip

\subsection{The concavity powers of round $k$-cylinders}

In this subsection, we shall prove Proposition \ref{cyl}. It partly follows from \emph{the inequality part} of our main result Corollary \ref{Gauss-main-intro}, whose proof we defer until Section 7. Note that the \emph{equality case characterization} in Corollary \ref{Gauss-main-intro} follows, in part, from Proposition \ref{cyl}, but there is no vicious cycle. First, we outline

\begin{lemma}\label{upperbnd}
$$p_s\left(RB_2^{k}\times \R^{n-k}\right)\leq 1-\frac{J_{k-1}(R)}{g_k(R)}\left(k-1-R^2\right).$$
\end{lemma}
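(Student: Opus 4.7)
The plan is to exploit the upper-bound characterization of $p_s(K,\gamma)$ provided by Lemma \ref{G-p} together with Lemma \ref{step1-concavity}: since
$$p_s(K,\gamma) = 1 + \gamma(K)\cdot G^s_\gamma(K) = 1 + \gamma(K)\cdot \inf_{g\in C^2(\sfe)\cap Adm(K),\,g \text{ even}}\left(-\frac{M_g''(0)}{M_g'(0)^2}\right),$$
exhibiting \emph{any} admissible even test function $g$ gives a valid upper bound on $p_s(K,\gamma)$. So the whole lemma reduces to a single computation with a single well-chosen test set.

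The natural test for $K=RB_2^k\times \R^{n-k}$ is to perturb $K$ along itself: take $L=(1+\delta)K$ for some $\delta>0$ (this is symmetric convex and corresponds to an even admissible $g(u)=\delta\, h_K(u)$). The Minkowski interpolation collapses to a dilation: $(1-t)K+tL=(1+t\delta)K$. Using the product structure $\gamma(sK)=\gamma_k(sRB_2^k)\cdot\gamma_{n-k}(\R^{n-k})=\gamma_k(sRB_2^k)$, and passing to polar coordinates in $\R^k$, we get
$$M(t):=\gamma\bigl((1+t\delta)K\bigr)=\frac{J_{k-1}\bigl((1+t\delta)R\bigr)}{c_{k-1}}.$$

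Now I would just differentiate. Since $J_{k-1}'(s)=g_{k-1}(s)=s^{k-1}e^{-s^2/2}$, one computes
$$M'(0)=\frac{\delta R\, g_{k-1}(R)}{c_{k-1}}=\frac{\delta\, g_k(R)}{c_{k-1}},$$
and differentiating $g_{k-1}(s)=s^{k-1}e^{-s^2/2}$ gives $g_{k-1}'(s)=s^{k-2}e^{-s^2/2}(k-1-s^2)$, so that
$$M''(0)=\frac{\delta^2 R^2\, g_{k-1}'(R)}{c_{k-1}}=\frac{\delta^2\, g_k(R)(k-1-R^2)}{c_{k-1}}.$$
Plugging in,
$$-\frac{M''(0)}{M'(0)^2}=-\frac{c_{k-1}(k-1-R^2)}{g_k(R)},$$
which is an admissible upper bound on $G^s_\gamma(K)$. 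Multiplying by $\gamma(K)=J_{k-1}(R)/c_{k-1}$ and adding $1$ yields exactly
$$p_s(K,\gamma)\leq 1-\frac{J_{k-1}(R)}{g_k(R)}(k-1-R^2),$$
which is the claim.

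There is no real obstacle: the only things to check are that $L=(1+\delta)K$ is indeed admissible (i.e.\ that $M_g$ is twice continuously differentiable at $0$, which is immediate from the explicit expression above), and that this test function is even (which it is, since $K$ is symmetric). The argument is essentially just the ``try $L=(1+\delta)K$'' trick applied inside the infimum defining $G^s_\gamma(K)$.
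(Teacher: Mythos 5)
Your proof is correct and is essentially the paper's argument: the paper also tests only against dilates of $K$, noting that $p_s\left(RB_2^k\times\R^{n-k}\right)$ is bounded above by the largest $p$ for which $t\mapsto\gamma\left(tB_2^k\times\R^{n-k}\right)^p\propto J_{k-1}(t)^p$ is concave at $t=R$, and the second-derivative computation of $J_{k-1}(t)^p$ there is the same calculation you perform for $-M''(0)/M'(0)^2$ with $L=(1+\delta)K$. Routing the bound through $G^s_\gamma(K)$ and Lemma \ref{G-p} rather than directly through the definition of $p_s$ is only a difference in bookkeeping.
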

\begin{proof} From the definition of $p_s$ we deduce that $p_s\left(RB_2^{k}\times \R^{n-k}\right)$ is smaller than the smallest non-negative number $p$ such that the function $F(t)=\gamma(t B_2^{k}\times \R^{n-k} )^p$ is concave at $t=R.$ Note that $F(t)=C_{n,k,p} J^p_{k-1}(t)$. Taking the second derivative of this function, we deduce the bound.
\end{proof}

Next, Lemma \ref{1-dim-by-parts} yields immediately 

\begin{lemma}\label{ibp}
When $K=RB_2^{k}\times \R^{n-k}$, for $k=1,...,n,$ we have
$$1-\frac{J_{k-1}(R)}{g_k(R)}\left(k-1-R^2\right)=\frac{\E Y^2-\E Y^4+(\E Y^2)^2+k}{(k-\E Y^2)^2},$$
where $Y=\sum_{i=1}^{k} X_i,$ and the expectation is with respect to the Gaussian measure restricted onto $K.$
\end{lemma}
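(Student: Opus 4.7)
The plan is a direct computation: express $\E Y^2$ and $\E Y^4$ as ratios of the one-dimensional integrals $J_p(R)$, reduce everything via Lemma \ref{1-dim-by-parts} to polynomials in $R$ and $g_k(R)/J_{k-1}(R)$, and match with the left-hand side.

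First, since $K=RB_2^k\times\R^{n-k}$ is a product, the $\R^{n-k}$-factors integrate out, and for $Y=|(X_1,\dots,X_k)|$ polar coordinates on the first $k$ variables give
\[
\E Y^{2}=\frac{J_{k+1}(R)}{J_{k-1}(R)},\qquad \E Y^{4}=\frac{J_{k+3}(R)}{J_{k-1}(R)}.
\]
Applying Lemma \ref{1-dim-by-parts} once (with the shift $n=k-2$) gives $J_{k+1}(R)=k\,J_{k-1}(R)-g_k(R)$, hence the key identity
\[
k-\E Y^{2}=\frac{g_k(R)}{J_{k-1}(R)}.
\]
Applying the same Lemma again (with $n=k$), together with the identity $g_{k+2}(R)=R^2 g_k(R)$, gives $J_{k+3}(R)=(k+2)J_{k+1}(R)-R^{2}g_k(R)$, so
\[
\E Y^{4}=(k+2)\,\E Y^{2}-R^{2}\bigl(k-\E Y^{2}\bigr).
\]

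With $s:=\E Y^{2}$, these two relations let us simplify the numerator on the right-hand side:
\[
\E Y^{2}-\E Y^{4}+(\E Y^{2})^{2}+k
= s^{2}-(k+1)s+k+R^{2}(k-s)
= (s-1)(s-k)+R^{2}(k-s).
\]
Factoring out $(k-s)$ yields
\[
\frac{\E Y^{2}-\E Y^{4}+(\E Y^{2})^{2}+k}{(k-\E Y^{2})^{2}}
=\frac{(k-s)(R^{2}-s+1)}{(k-s)^{2}}
=\frac{R^{2}-s+1}{k-s}.
\]
On the other hand, using $k-s=g_k(R)/J_{k-1}(R)$, the left-hand side is
\[
1-\frac{J_{k-1}(R)}{g_k(R)}(k-1-R^{2})
=1-\frac{k-1-R^{2}}{k-s}
=\frac{R^{2}-s+1}{k-s}.
\]
Both sides agree, proving the lemma. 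There is no genuine obstacle here; the only care needed is the correct use of integration by parts in the form of Lemma \ref{1-dim-by-parts}, applied at the two shifts $k+1$ and $k+3$, and the observation $g_{k+2}(R)=R^{2}g_k(R)$.
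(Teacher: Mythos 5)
Your computation is correct and is exactly the route the paper takes: the paper states that Lemma \ref{1-dim-by-parts} "yields immediately" Lemma \ref{ibp}, and your proof simply fills in that integration-by-parts calculation (computing $\E Y^2$ and $\E Y^4$ via $J_{k+1}/J_{k-1}$ and $J_{k+3}/J_{k-1}$, using $k-\E Y^2=g_k(R)/J_{k-1}(R)$, and matching both sides as $(R^2-\E Y^2+1)/(k-\E Y^2)$). No gaps; your reading of $Y$ as the projection onto the first $k$ coordinates (so $Y^2=\sum_{i=1}^k X_i^2$) is the intended one, consistent with the paper's convention $x^2=|x|^2$.
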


\medskip

\textbf{Proof of the Proposition \ref{cyl}.} The inequality of Corollary \ref{Gauss-main-intro} (or Theorem \ref{Gauss-main-1}) shows that
$$p_s(K,\gamma)\geq \frac{\E Y^2-\E Y^4+(\E Y^2)^2+k}{(k-\E Y^2)^2},$$ 
where $Y=\sum_{i=1}^{k} X_i$ (as before). By Lemma \ref{ibp}, we get the lower bound
$$p_s(K,\gamma)\geq 1-\frac{J_{k-1}(R)}{g_k(R)}\left(k-1-R^2\right),$$ 
which matches the corresponding upper bound from Lemma \ref{upperbnd}. $\square$ 

\medskip

\subsection{The equivalence of the Conjectures \ref{theconj} and \ref{theconj1}} 

Finally, we outline  

\textbf{Proof of the Proposition \ref{key-connection} (the equivalence of Conjectures \ref{theconj} and \ref{theconj1}):} Suppose first that Conjecture \ref{theconj} holds. Then, by Proposition \ref{prop-equiv}, 
$$G^s_{\gamma}(a)\geq \min_{k=1,...,n} \frac{c_{k-1}(R_k(a)^2-k+1)}{g_k\circ R_k(a)} ,\,\,a\in[0,1],$$ 
which is equivalent, by Lemma \ref{G-p} and Proposition \ref{cyl}, to the fact that
$$p_s(\gamma,a)\geq \min_{k=1,...,n} \left(\frac{ac_{k-1}(R_k(a)^2-k+1)}{g_k\circ R_k(a)}+1\right)= \min_{k=1,...,n} p_s(C_k(a),\gamma).$$
Therefore, Conjecture \ref{theconj1} holds. The converse implication follows in exact same manner. $\square$

We are now finally ready to comment as to why the following function cannot satisfy (\ref{ehrhard-general}) for all symmetric convex sets:
\begin{equation}\label{funcagain}
F(a)=\sum_{k=1}^n 1_{a\in I_k}\cdot( J_{k-1}^{-1}(c_{k-1} a)+a_k),
\end{equation}
where $I_k$ is the sub-interval of $[0,1]$ where the surface area of the $k$-round cylinder $C_k(a)$ is minimal (over all choices of $k$), where $a_k$ are chosen in such a way that $F$ is continuous. Indeed, if this function was to satisfy (\ref{ehrhard-general}) for all symmetric convex sets, then, by Proposition \ref{prop-equiv} and Lemma \ref{G-p}, we would have
$$p_s(\gamma,a)\geq \sum_{k=1}^n 1_{a\in I_k} p_s(C_k(a),\gamma).$$
However, by definition of $p_s(\gamma,a),$
$$p_s(\gamma,a)\leq \min_{k=1,...,n} p_s(C_k(a),\gamma).$$
The two inequalities above are, in fact, incompatible: one may check using Mathematica (see the figures above), that for some $a\in [0,1],$ the strict inequality holds
$$\sum_{k=1}^n 1_{a\in I_k} p_s(C_k(a),\gamma)> \min_{k=1,...,} p_s(C_k(a),\gamma).$$
In other words, unfortunately, the intervals where $p_s(C_k(a),\gamma)$ are minimal, are not the same as the intervals where $s_k(a)$ are minimal.


\medskip

\subsection{Estimates via the L2-method}

In this section we survey the method of obtaining convexity inequalities, previously studied by Kolesnikov and Milman \cite{KM1}, \cite{KM2}, \cite{KM}, \cite{KolMil-1}, \cite{KolMilsupernew}, as well as Livshyts \cite{KolLiv}, Hosle \cite{HKL}, and others. 

Consider a log-concave measure $\mu$ on $\R^n$ with an even twice-differentiable density $e^{-V}.$ Consider also the associated operator
$$Lu=\Delta u-\langle \nabla u,\nabla V\rangle.$$

We recall the notation $H_{\mu}=tr(\rm{II})-\langle \nabla V,n_x\rangle.$ The following Bochner-type identity was obtained by Kolesnikov and Milman. It is a particular case of Theorem 1.1 in \cite{KM1}
(note that $\rm{Ric}_{\mu}  = \nabla^2 \it{V}$ in our case).  This is a generalization of a classical result of R.C.~Reilly.

\begin{proposition}[Kolesnikov-Milman]\label{raileyprop}
Let $\mu$ be a log-concave measure on $\R^n$ with density $e^{-V},$ for some $C^2$ convex function $V.$ Let $u \in C^2(K)$ and $\langle \nabla u, n_x \rangle \in C^1(\partial K)$. Then  
\begin{align}
\label{railey}
\int_{K} (L u)^2 d \mu & =\int_{K} \left(||\nabla^2 u||^2+\langle\nabla^2 V \nabla u, \nabla u\rangle\right) d \mu+
\\&  \nonumber\int_{\partial K}  (H_{\mu} \langle \nabla u, n_x \rangle ^2 -2\langle \nabla_{\partial K} u, \nabla_{\partial K}  \langle \nabla u, n_x \rangle \rangle +\langle \mbox{\rm{II}} \nabla_{\partial K} u, \nabla_{\partial K}  u\rangle )  \,d\mu_{\partial K} (x).
\end{align}
\end{proposition}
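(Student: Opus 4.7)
The plan is to combine two classical integration-by-parts identities (one in the bulk, one along the boundary) with the Bakry--Emery refinement of Bochner's formula, and then massage the resulting boundary integrand using the second fundamental form. First I would apply the weighted divergence theorem with test function $v = Lu$:
\begin{equation*}
\int_K (Lu)^2 d\mu = -\int_K \langle \nabla u, \nabla Lu\rangle d\mu + \int_{\partial K} (Lu)\langle \nabla u, n_x\rangle d\mu_{\partial K}.
\end{equation*}
In parallel, starting from the classical Bochner identity $\Delta(|\nabla u|^2/2) = \|\nabla^2 u\|^2 + \langle \nabla u, \nabla \Delta u\rangle$ and the formula $\nabla Lu = \nabla\Delta u - (\nabla^2 u)\nabla V - (\nabla^2 V)\nabla u$, one derives
\begin{equation*}
L\bigl(|\nabla u|^2/2\bigr) = \|\nabla^2 u\|^2 + \langle \nabla u, \nabla Lu\rangle + \langle (\nabla^2 V)\nabla u, \nabla u\rangle.
\end{equation*}
Integrating this against $\mu$ and applying the divergence theorem once more to the left-hand side produces a second relation in which $\int_K \langle \nabla u, \nabla Lu\rangle d\mu$ appears linearly. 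Eliminating that quantity between the two identities yields the desired interior terms and a boundary integrand of the form $(Lu)\eta - \langle (\nabla^2 u)\nabla u, n_x\rangle$, where $\eta := \langle \nabla u, n_x\rangle$.

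Next I would rewrite this boundary integrand using the orthogonal decomposition $\nabla u = \nabla_{\partial K} u + \eta\, n_x$ and two differential-geometric identities valid along a $C^2$ boundary: for any tangent vector $e$, $\partial_e \eta = \langle (\nabla^2 u)e, n_x\rangle + \langle \nabla u, \mathrm{II}(e)\rangle$, and the Laplace decomposition $\Delta u = \Delta_{\partial K} u + H\,\eta + \partial_n^2 u$ with $H = \mathrm{tr}(\mathrm{II})$. The first gives
\begin{equation*}
\langle (\nabla^2 u)\nabla u, n_x\rangle = \langle \nabla_{\partial K}u, \nabla_{\partial K}\eta\rangle - \langle \mathrm{II}\,\nabla_{\partial K} u, \nabla_{\partial K} u\rangle + \eta\,\partial_n^2 u,
\end{equation*}
and the second, together with the definition $H_\mu = H - \langle \nabla V, n_x\rangle$, lets me solve for $\eta\,\partial_n^2 u$ in terms of $\eta\, Lu$, $\eta\,\Delta_{\partial K}u$, $\eta^2 H_\mu$, and $\eta \langle \nabla_{\partial K} u, \nabla V\rangle$. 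Substituting cancels the $\eta\, Lu$ term and leaves a residual $\eta\,\Delta_{\partial K}u - \eta\,\langle \nabla_{\partial K}u, \nabla V\rangle$.

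The final step is tangential integration by parts on the closed manifold $\partial K$ (using the measure $d\mu_{\partial K} = e^{-V}dH_{n-1}$), which converts $\int_{\partial K} \eta\,\Delta_{\partial K}u\, d\mu_{\partial K}$ into $-\int_{\partial K}\langle \nabla_{\partial K}\eta, \nabla_{\partial K}u\rangle d\mu_{\partial K} + \int_{\partial K} \eta \langle \nabla_{\partial K}V, \nabla_{\partial K}u\rangle d\mu_{\partial K}$. The second term precisely cancels $\eta\langle \nabla_{\partial K}u, \nabla V\rangle$ (since the normal component of $\nabla V$ pairs trivially with a tangent vector), and the first supplies the missing $-\langle \nabla_{\partial K}u, \nabla_{\partial K}\eta\rangle$ that doubles the cross-term to the stated $-2\langle \nabla_{\partial K}u, \nabla_{\partial K}\eta\rangle$. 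Collecting everything yields the claimed identity.

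The main obstacle is the careful accounting on $\partial K$: keeping sign conventions consistent for $\mathrm{II}$ and $H$ with the outward normal convention used in the paper, and justifying the tangential integration by parts (for bounded convex $K$ this is immediate because $\partial K$ is a closed manifold; the regularity assumptions $u\in C^2(K)$ and $\langle \nabla u, n_x\rangle \in C^1(\partial K)$ are exactly what is needed so that both $\eta\Delta_{\partial K}u$ and $\langle \nabla_{\partial K}\eta, \nabla_{\partial K}u\rangle$ are integrable and the boundary identities above hold pointwise). For non-smooth convex bodies one concludes by an approximation argument, approximating $K$ from inside by $C^2$ strictly convex bodies and using the continuity of all quantities involved.
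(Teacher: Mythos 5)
Your derivation is correct, and it is essentially the proof of the cited result: the paper does not prove this identity itself but quotes Theorem 1.1 of Kolesnikov--Milman, whose argument is exactly your route --- integrate the weighted Bochner formula $L(|\nabla u|^2/2)=\|\nabla^2 u\|^2+\langle\nabla u,\nabla Lu\rangle+\langle(\nabla^2 V)\nabla u,\nabla u\rangle$, eliminate $\int_K\langle\nabla u,\nabla Lu\rangle\,d\mu$ against the identity $\int_K(Lu)^2\,d\mu=-\int_K\langle\nabla u,\nabla Lu\rangle\,d\mu+\int_{\partial K}(Lu)\langle\nabla u,n_x\rangle\,d\mu_{\partial K}$, and then rework the boundary integrand via the tangential/normal splitting, the decomposition $\Delta u=\Delta_{\partial K}u+H\langle\nabla u,n_x\rangle+\partial^2_n u$, and tangential integration by parts on $\partial K$. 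The only point to make explicit is that the pointwise Bochner step uses third derivatives of $u$ (and a $C^2$ boundary for $\mathrm{II}$), so strictly one first proves the identity for smoother data and passes to the stated regularity by approximation --- the kind of technical step you already acknowledge at the end.
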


Next, we shall use the following identities, building up on the work of Kolesnikov and Milman \cite{KM1}, as well as \cite{CLM}, \cite{KolLiv}.

\begin{proposition}\label{deriv}
Let $K$ be a strictly convex body in $\R^n$ with $C^2-$smooth boundary. Consider a family of convex bodies $K_t,$ with support functions $h_t(y)=h_K(y)+\varphi(y)$, for a fixed 1-homogeneous function $\varphi\in C^2(\R^n)$. Let $u\in C^2(K)\cap W^{1,2}(K)$ be such that $\langle \nabla u,n_x\rangle=\varphi(n_x)$, for all $x\in\partial K.$ Let $M(t)=\mu(K_t).$ Then 	
$$M'(0)=\int_{\partial K} f d\mu_{\partial K}=\int_K Lu d\mu;$$
$$M''(0)= \int_{\partial K} H_{\mu} f^2 -\langle \rm{II}^{-1}\nabla_{\partial K} f,\nabla_{\partial K} f\rangle \, d\mu_{\partial K}=$$$$\int_K (Lu)^2-\|\nabla^2 u\|^2-\langle \nabla^2 V\nabla u,\nabla u\rangle d\mu-Q(u),$$
where
$$Q(u)= \int_{\partial K}  \langle (\mbox{\rm{II}})^{-1} \nabla_{\partial K}  \langle \nabla u, n_x \rangle , \ \nabla_{\partial K}  \langle \nabla u, n_x \rangle \rangle -$$$$2\langle \nabla_{\partial K} u, \nabla_{\partial K}  \langle \nabla u, n_x \rangle \rangle +\langle \mbox{\rm{II}} \nabla_{\partial K} u, \nabla_{\partial K}  u\rangle   \,d\mu_{\partial K} (x)\geq 0.$$
\end{proposition}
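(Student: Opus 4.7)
The plan is to derive both formulas for $M'(0)$ and $M''(0)$ from Hadamard-type variational formulas for Minkowski perturbations, and then convert the boundary expression for $M''(0)$ into the asserted bulk-plus-boundary form by invoking the Kolesnikov--Milman Reilly identity (Proposition~3.6), with nonnegativity of $Q(u)$ obtained by completing the square pointwise on $\partial K$.

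For the first derivative, the classical first-order variational formula for $\mu(K_t)$ under $h_t=h_K+t\varphi$ gives $M'(0)=\int_{\partial K}\varphi(n_x)\,d\mu_{\partial K}=\int_{\partial K}f\,d\mu_{\partial K}$, since by $1$-homogeneity of $\varphi$ the outward normal speed of $\partial K_t$ at $t=0$ equals $\varphi(n_x)$. The equality with $\int_K Lu\,d\mu$ is then just the divergence theorem: $\int_K Lu\,d\mu=\int_K\mathrm{div}(e^{-V}\nabla u)\,dx=\int_{\partial K}\langle \nabla u,n_x\rangle\,d\mu_{\partial K}$, which equals $\int_{\partial K}f\,d\mu_{\partial K}$ by the boundary condition.

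For the second derivative, I would invoke the second-order Hadamard variational formula for the weighted measure of a Minkowski family $K_t$. Parameterizing $\partial K_t$ through the inverse Gauss map $\nu\mapsto\nabla h_t(\nu)$, boundary points move at velocity $\nabla\varphi(n_x)$, whose normal component is $\varphi(n_x)=f$ (by Euler's identity for the $1$-homogeneous $\varphi$) and whose tangential component is governed by $\mathrm{II}^{-1}\nabla_{\partial K}f$; strict convexity and $C^2$-smoothness of $K$ ensure that $\mathrm{II}$ is invertible on each $T_x\partial K$. Tracking jointly the normal displacement (which contributes $H_\mu f^2$ through the first variation of the weighted area element) and the correction from the tangential drift (which contributes $-\langle\mathrm{II}^{-1}\nabla_{\partial K}f,\nabla_{\partial K}f\rangle$) yields
$$M''(0)=\int_{\partial K}\bigl(H_\mu f^2-\langle\mathrm{II}^{-1}\nabla_{\partial K}f,\nabla_{\partial K}f\rangle\bigr)\,d\mu_{\partial K}.$$
This is the geometric heart of the statement and the main technical step; it is where the delicate bookkeeping between normal and tangential components of the variation must be carried out carefully.

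Finally, I would rewrite the right-hand side in bulk form by solving the Reilly-type identity of Proposition~3.6 for $\int_{\partial K}H_\mu f^2\,d\mu_{\partial K}$ (using $\langle\nabla u,n_x\rangle=f$) and substituting back. The resulting boundary terms collect into
$$Q(u)=\int_{\partial K}\bigl(\langle\mathrm{II}^{-1}\nabla_{\partial K}f,\nabla_{\partial K}f\rangle-2\langle\nabla_{\partial K}u,\nabla_{\partial K}f\rangle+\langle\mathrm{II}\nabla_{\partial K}u,\nabla_{\partial K}u\rangle\bigr)\,d\mu_{\partial K},$$
which matches the claimed expression. Nonnegativity is pointwise: with $A=\mathrm{II}\succ 0$, $w=\nabla_{\partial K}f$, $v=\nabla_{\partial K}u$,
$$\langle A^{-1}w,w\rangle-2\langle v,w\rangle+\langle Av,v\rangle=\bigl\lVert A^{-1/2}w-A^{1/2}v\bigr\rVert^2\geq 0,$$
and integration against $\mu_{\partial K}$ gives $Q(u)\geq 0$. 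The only serious obstacle is the rigorous justification of the second-order variational formula; once it is established, everything else is algebraic manipulation of Proposition~3.6 and a completion of squares.
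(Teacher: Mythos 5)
Your proposal is correct and follows essentially the same route the paper relies on: the first and second Hadamard-type variation formulas for the Minkowski family $h_K+t\varphi$ (normal speed $f(x)=\varphi(n_x)$, tangential drift $\mathrm{II}^{-1}\nabla_{\partial K}f$, as in the Kolesnikov--Milman works the paper cites), then the Reilly-type identity of Proposition \ref{raileyprop} applied with $\langle\nabla u,n_x\rangle=f$ to trade $\int_{\partial K}H_\mu f^2\,d\mu_{\partial K}$ for the bulk terms, and pointwise completion of the square in $\mathrm{II}$ to get $Q(u)\geq 0$. Since the paper itself states this proposition without proof, attributing the second-order variational formula to those references, your leaving that formula as the one step requiring the cited justification is consistent with the paper's own treatment.
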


Propositions \ref{prop-equiv} and \ref{deriv} yield

\begin{proposition}\label{key_prop-general}
Let $\mathcal{F}$ be a class of convex sets closed under Minkowski interpolation. Let $F:[0,1]\rightarrow\R$ be a strictly increasing twice differentiable function such that for any $a\in[0,1]$, for any $C^2$-smooth $K\in\mathcal{F}$ with $\mu(K)\geq a$, for every $f\in C^1(\partial K)\cap Adm(K)$ there exists a $u\in C^2(K)\cap W^{1,2}(K)$ with $\langle \nabla u,n_x\rangle=f$, and such that,
$$\frac{F''(a)a}{F'(a)}\leq \frac{\int -(Lu)^2+\|\nabla^2 u\|^2+ \langle \nabla^2 V\nabla u,\nabla u\rangle +Q(u)}{\int(Lu)^2}.$$
Then for any pair of sets $K,L\in \mathcal{F},$ one has
$$F(\mu(\lambda K+(1-\lambda)L))\geq \lambda F(\gamma(K))+(1-\lambda)F(\gamma(L)).$$
	
\end{proposition}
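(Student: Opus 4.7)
The plan is to combine Propositions \ref{prop-equiv} and \ref{deriv}, linked by a Cauchy--Schwarz inequality. By Proposition \ref{prop-equiv}, it suffices to verify the pointwise bound
$$\frac{F''(a)}{F'(a)} \leq G_{\mu, \mathcal{F}}(a)$$
for every $a \in [0,1]$. Unpacking the definition of $G_{\mu,\mathcal{F}}$, this amounts to showing
$$\frac{F''(a)}{F'(a)} \leq -\frac{M''(0)}{(M'(0))^2}$$
for every admissible one-parameter deformation $\{K^g_t\}_t$ of an arbitrary $C^2$-smooth $K \in \mathcal{F}$ with $\mu(K) = a$, where $M(t) = \mu(K^g_t)$; a standard approximation argument then covers the non-smooth case.

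Fix such $K$, such a deformation, and the corresponding Neumann datum $f = g(n_\cdot) \in C^1(\partial K) \cap Adm(K)$. The hypothesis provides a function $u \in C^2(K) \cap W^{1,2}(K)$ with $\langle \nabla u, n_x\rangle = f$ satisfying
$$\frac{F''(a)\, a}{F'(a)} \leq \frac{\int_K \left[-(Lu)^2 + \|\nabla^2 u\|^2 + \langle \nabla^2 V \nabla u, \nabla u\rangle\right] d\mu + Q(u)}{\int_K (Lu)^2 \, d\mu}.$$
By Proposition \ref{deriv}, the numerator of the right-hand side is precisely $-M''(0)$, while $M'(0) = \int_K Lu\, d\mu$. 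The Cauchy--Schwarz inequality yields
$$(M'(0))^2 = \left(\int_K Lu\, d\mu\right)^2 \leq \mu(K) \int_K (Lu)^2 d\mu = a \int_K (Lu)^2 d\mu,$$
so whenever $-M''(0) \geq 0$, dividing by $\int_K (Lu)^2 d\mu$ gives
$$\frac{-M''(0)}{\int_K (Lu)^2 d\mu} \leq \frac{-M''(0)\cdot a}{(M'(0))^2}.$$
Combining with the hypothesis and dividing by $a>0$ delivers $\frac{F''(a)}{F'(a)} \leq -\frac{M''(0)}{(M'(0))^2}$, as required.

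The main subtlety is that the sign of $-M''(0)$ is not a priori nonnegative, in which case the Cauchy--Schwarz step above points the wrong way. This is resolved by making the natural $L^2$-method choice (in the spirit of Kolesnikov--Milman): apply the hypothesis with the specific $u$ obtained by solving the Neumann problem
$$Lu = c \text{ on } K, \qquad \langle \nabla u, n_x\rangle = f \text{ on } \partial K, \qquad c = \frac{1}{\mu(K)}\int_{\partial K} f\, d\mu_{\partial K},$$
whose existence is guaranteed by Theorem \ref{exist-n} (the value of $c$ is forced by the compatibility condition). For this $u$, $Lu$ is constant, so $\int_K (Lu)^2 d\mu = (M'(0))^2/a$ exactly, and Cauchy--Schwarz is an equality. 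The hypothesis inequality then reads identically as the desired concavity inequality $\frac{F''(a)}{F'(a)} \leq -\frac{M''(0)}{(M'(0))^2}$, with no sign assumption on $M''(0)$. Invoking Proposition \ref{prop-equiv} completes the proof.
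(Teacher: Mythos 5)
Your reduction is the same as the paper's (implicit) argument: Proposition \ref{prop-equiv} reduces the claim to the pointwise condition $\frac{F''(a)}{F'(a)}\leq -M''(0)/M'(0)^2$ along admissible deformations, and Proposition \ref{deriv} identifies the numerator of the hypothesis with $-M''(0)$ and $\int_K Lu\,d\mu$ with $M'(0)$; up to that point, and including the Cauchy--Schwarz step in the case $-M''(0)\geq 0$, your argument is fine and matches what the paper intends by ``Propositions \ref{prop-equiv} and \ref{deriv} yield''.

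The gap is in your resolution of the case $-M''(0)<0$. The hypothesis of the proposition is existential in $u$: for each boundary datum $f$ it guarantees that \emph{some} $u$ with $\langle \nabla u,n_x\rangle=f$ satisfies the stated inequality. You cannot ``apply the hypothesis with the specific $u$'' solving $Lu=c$ with the compatible constant $c$; Theorem \ref{exist-n} gives you that this $u$ exists, but nothing in the hypothesis asserts the inequality \emph{for that} $u$. This matters precisely in the problematic case: by Proposition \ref{deriv} the numerator $-M''(0)$ is determined by $f$ alone and does not change with $u$, while $\int (Lu)^2$ does change, so when $-M''(0)<0$ the inequality may hold for the $u$ furnished by the hypothesis (where $\int(Lu)^2$ is large) and fail for the constant-$Lu$ solution, and your Cauchy--Schwarz then points the wrong way. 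The clean repair is to note that the inequality one actually needs is the one with denominator $\left(\int Lu\right)^2=M'(0)^2$ rather than $\int (Lu)^2$: with that denominator, Propositions \ref{prop-equiv} and \ref{deriv} give the conclusion immediately, with no sign analysis and no Cauchy--Schwarz (this is exactly how the discrepancy is handled in Proposition \ref{key_prop}, where the term $Var(Lu)$ accounts for the difference between $\int(Lu)^2$ and $(\int Lu)^2$). Either strengthen the hypothesis to hold for the Neumann solution with constant $Lu$ (which is how the proposition is used downstream), or restate/interpret the hypothesis with $(\int Lu)^2$ in the denominator; as written, your final step does not follow from the stated assumption.
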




\medskip

\subsection{Connections to the Lata\l{}a-Oleszkiewicz S-inequality and a new related question}

Summarizing the previous sub-sections, we point out that 
$$G^s_{\gamma}(C_k(a))=\varphi_k(a),$$
and the Conjecture \ref{theconj} is equivalent to showing that for every symmetric convex set $K$ with $\gamma(K)=a,$
$$G^s_{\gamma}(K)\geq \min_{k=1,...,n} \varphi_k(a).$$
In other words, in view of the Propositions \ref{deriv} and \ref{prop-equiv}, Conjecture \ref{theconj} states that for every strictly-smooth symmetric convex $K$ with $\gamma(K)=a$, and for every even $C^2-$smooth function $f:\partial K\rightarrow \R$, we have
\begin{equation}\label{conj-clear}
\int_{\partial K} H_{\gamma} f^2 -\langle \rm{II}^{-1}\nabla_{\partial K} f,\nabla_{\partial K} f\rangle \, d\gamma_{\partial K} \leq -\min_{k=1,...,n} \varphi_k(a) \left(\int_{\partial K} f d\gamma_{\partial K} \right)^2.
\end{equation}

We obtain

\begin{proposition}\label{f=1}
The Conjecture \ref{theconj} entails that for every strictly-convex symmetric $C^2$-smooth body $K$ with $\gamma(K)=a$,
$$\int_{\partial K} tr(\rm{II}) d\gamma_{\partial K} +\int_K x^2 d\gamma \leq na-\min_{k=1,...,n} \varphi_k(a) \gamma^+(\partial K)^2,$$
and the equality is attained if and only if $K=RB^k_2\times \R^{n-k}$ for some $k=1,...,n$ and $R>0.$
\end{proposition}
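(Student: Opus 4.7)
The plan is to apply inequality (\ref{conj-clear})---the infinitesimal reformulation of Conjecture \ref{theconj}---to the constant test function $f\equiv 1$ on $\partial K$. This $f$ is even and admissible, as it corresponds to the Minkowski perturbation $K_t=K+tB^n_2$ (with support function $h_{K_t}=h_K+t$). Since $\nabla_{\partial K}f\equiv 0$, the $\mathrm{II}^{-1}$ term on the left side of (\ref{conj-clear}) drops out, and on the right side $\int_{\partial K}f\,d\gamma_{\partial K}=\gamma^+(\partial K)$.

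Unwinding $H_\gamma=\mathrm{tr}(\mathrm{II})-\langle x,n_x\rangle$ (since $V(x)=|x|^2/2$ for the standard Gaussian), the left side becomes $\int_{\partial K}\mathrm{tr}(\mathrm{II})\,d\gamma_{\partial K}-\int_{\partial K}\langle x,n_x\rangle\,d\gamma_{\partial K}$. The second term is handled by the divergence theorem applied to the vector field $x\mapsto xe^{-|x|^2/2}$: from $\mathrm{div}(xe^{-|x|^2/2})=(n-|x|^2)e^{-|x|^2/2}$, dividing by $(2\pi)^{n/2}$ gives
$$\int_{\partial K}\langle x,n_x\rangle\,d\gamma_{\partial K}=n\gamma(K)-\int_K|x|^2\,d\gamma=na-\int_K x^2\,d\gamma.$$
Substituting and rearranging yields the claimed inequality directly.

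For the forward direction of the equality case, if $K=R_k(a)B^k_2\times\R^{n-k}$ one verifies equality by polar-coordinate computation using $\mathrm{tr}(\mathrm{II})\equiv(k-1)/R$ on the curved part of $\partial K$, $\gamma^+(\partial K)=g_{k-1}(R)/c_{k-1}$, $\int_K x^2\,d\gamma=J_{k+1}(R)/c_{k-1}+(n-k)a$, the identity $\varphi_k(a)=c_{k-1}(R^2-k+1)/g_k(R)$, and the integration-by-parts identity $J_{k+1}(R)=kJ_{k-1}(R)-g_k(R)$ (a relabelling of Lemma \ref{1-dim-by-parts}) together with $J_{k-1}(R)=ac_{k-1}$; all terms cancel. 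Conversely, equality in the proposition is equivalent to equality in (\ref{conj-clear}) at $f\equiv 1$, which forces $G^s_\gamma(K)=\min_j\varphi_j(a)$. The equality clause of Conjecture \ref{theconj1}---equivalent by Proposition \ref{key-connection} to that of Conjecture \ref{theconj}---then forces $K$ to be a round $k$-cylinder.

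The only mildly delicate point is the converse equality case, which depends on the full equality-case clause of Conjecture \ref{theconj} (with the implicit understanding that the relevant $k$ is the one achieving the minimum in $\min_j\varphi_j(a)$), rather than just the inequality. The rest is a clean specialization, a single divergence-theorem computation, and routine bookkeeping with the auxiliary special functions $J_p$, $g_p$, $c_p$; no serious obstacle is expected.
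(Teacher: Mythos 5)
Your proof is correct and follows the same route as the paper: plug $f\equiv 1$ into the infinitesimal form (\ref{conj-clear}) of the conjecture, use $H_\gamma=\mathrm{tr}(\mathrm{II})-\langle x,n_x\rangle$, and integrate by parts to get $\int_{\partial K}\langle x,n_x\rangle\,d\gamma_{\partial K}=n\gamma(K)-\int_K x^2\,d\gamma$. Your explicit verification of the equality case for round $k$-cylinders (via $J_{k+1}=kJ_{k-1}-g_k$) and the appeal to the conjecture's equality clause for the converse go beyond what the paper writes out, and your caveat that the relevant $k$ must realize $\min_j\varphi_j(a)$ is an accurate reading of a point the paper leaves implicit.
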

\begin{proof} We plug $f=1$ into (\ref{conj-clear}), and recall that $H_{\gamma}=tr(\rm{II})-\langle x,n_x\rangle.$ It remains to integrate by parts
$$\int_{\partial K} \langle x,n_x\rangle d\gamma_{\partial K} =n\gamma(K)-\int_K x^2 d\gamma.$$
\end{proof}

\begin{remark}\label{secondmom} We recall also the geometric meaning
$$\int_{\partial K} \langle x,n_x\rangle d\gamma_{\partial K}=\gamma(tK)'_{t=1},$$
as may be verified directly.	
\end{remark}

Recall that the celebrated Lata\l{}a-Oleszkiewicz S-inequality \cite{sconj} states that for any convex symmetric set $K$ in $\R^n$ and any $t\geq 1,$ $\gamma(tK)\geq \gamma(tS_K)$, where $S_K$ is the symmetric strip such that $\gamma(K)=\gamma(S_K).$ Equivalently, in view of Remark \ref{secondmom}, $\int_K x^2 d\gamma \leq \int_{S_K} x^2 d\gamma.$

\medskip

As a consequence of Propositions \ref{implic} and \ref{f=1}, we get 

\begin{proposition}\label{Sineq}
There exists an $\alpha\in [0,1]$ such that for every $a\in [\alpha,1],$ the validity of the Conjecture \ref{theconj} implies the S-inequality for all symmetric convex sets $K$ with $\gamma(K)=a.$
\end{proposition}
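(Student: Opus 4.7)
The plan is to chain together Proposition \ref{f=1} (which gives the inequality in terms of second moments, curvature, and Gaussian perimeter), Lemma \ref{varphi1} (which identifies the minimizer as $\varphi_1$ in the high-measure regime), and Proposition \ref{implic} (which provides the isoperimetric lower bound by the strip). The equivalent form of the S-inequality highlighted right before the proposition, namely $\int_K x^2\,d\gamma \leq \int_{S_K} x^2\,d\gamma$, is precisely the shape that our chain will produce.

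Fix $\alpha_1 \in [0.94,1]$ as in Lemma \ref{varphi1} and $\alpha$ as in Proposition \ref{implic}, and set $\alpha^\star = \max(\alpha_1,\alpha)$. Let $a \in [\alpha^\star, 1]$ and let $K$ be a strictly-convex $C^2$-smooth symmetric set with $\gamma(K)=a$. First I would apply Proposition \ref{f=1} to $K$ to obtain
\begin{equation*}
\int_{\partial K}\operatorname{tr}(\mathrm{II})\,d\gamma_{\partial K}+\int_K x^2\,d\gamma \;\leq\; na-\min_{k=1,\dots,n}\varphi_k(a)\,\gamma^+(\partial K)^2.
\end{equation*}
By Lemma \ref{varphi1}, for $a\geq \alpha^\star$ the minimum is attained at $k=1$, so the right-hand side equals $na-\varphi_1(a)\gamma^+(\partial K)^2$. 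A direct inspection of the definition shows $\varphi_1(a)=c_0 R_1(a)\,e^{R_1(a)^2/2}>0$. Since $K$ is convex we have $\operatorname{tr}(\mathrm{II})\geq 0$ on $\partial K$, and by Proposition \ref{implic} we have $\gamma^+(\partial K)\geq \gamma^+(\partial S_K)$, where $S_K=C_1(a)$ is the symmetric strip of the same Gaussian measure. Combining these and using positivity of $\varphi_1(a)$,
\begin{equation*}
\int_K x^2\,d\gamma \;\leq\; na-\varphi_1(a)\,\gamma^+(\partial K)^2 \;\leq\; na-\varphi_1(a)\,\gamma^+(\partial S_K)^2.
\end{equation*}

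Next, I would apply the equality case of Proposition \ref{f=1} to the strip $S_K = C_1(a)$: its boundary consists of two hyperplanes and therefore $\operatorname{tr}(\mathrm{II})\equiv 0$ on $\partial S_K$. The equality case in Proposition \ref{f=1} then yields exactly $\int_{S_K}x^2\,d\gamma = na-\varphi_1(a)\,\gamma^+(\partial S_K)^2$ (which one can also verify by a direct one-dimensional computation). Substituting back gives the desired
\begin{equation*}
\int_K x^2\,d\gamma \;\leq\; \int_{S_K} x^2\,d\gamma,
\end{equation*}
which, via Remark \ref{secondmom} (expressing $\gamma(tK)'|_{t=1}$ in terms of the second moment), is the infinitesimal form of the S-inequality stated in the paper. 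For general (non-smooth) symmetric convex $K$ with $\gamma(K)=a\geq \alpha^\star$ we pass to the limit using the standard approximation of convex bodies by $C^2$-smooth strictly-convex symmetric ones, which is justified by continuity of both sides of the second-moment inequality under Hausdorff convergence.

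The argument is essentially a packaging exercise, so I do not expect a serious obstacle beyond keeping track of the conditions. The main subtlety is the sign of $\min_k \varphi_k(a)$: the proof crucially uses that this minimum equals $\varphi_1(a)$ and is \emph{positive}, which is why the inequality $\gamma^+(\partial K)\geq \gamma^+(\partial S_K)$ flows in the right direction. If one tried to carry out the same reasoning for $a$ outside the regime of Lemma \ref{varphi1} or Proposition \ref{implic}, the minimum over $k$ could point to a different cylinder (or $\varphi_k$ could fail to dominate in the expected direction), and the resulting lower bound on $\gamma^+(\partial K)$ would no longer match what the Gaussian second-moment comparison requires. This is exactly the reason the conclusion is only a local S-inequality near $a=1$, consistent with the remark following Proposition \ref{implic}.
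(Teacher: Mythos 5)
Your proposal is correct and follows essentially the same route as the paper's own proof: apply Proposition \ref{f=1}, identify the minimum as $\varphi_1(a)\geq 0$ via Lemma \ref{varphi1}, drop the $\operatorname{tr}(\mathrm{II})$ term by convexity, use Proposition \ref{implic} to lower-bound $\gamma^+(\partial K)$ by $\gamma^+(\partial S_K)$, and conclude via the equality of the strip in Proposition \ref{f=1}, i.e. $\int_{S_K}x^2\,d\gamma = na-\varphi_1(a)\gamma^+(\partial S_K)^2$. Your explicit formula $\varphi_1(a)=c_0R_1(a)e^{R_1(a)^2/2}>0$ and the smoothing/approximation remark are harmless additions to what the paper does.
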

\begin{proof} Suppose Conjecture \ref{theconj} holds. By the Proposition \ref{implic}, there exists $\alpha\in [0,1]$ such that for every $a\in [\alpha,1],$ and every symmetric convex set $K$ with $\gamma(K)=a,$ one has
\begin{equation}\label{implicimplic}
	\gamma^+(\partial K)\geq \gamma^+(\partial S_K).
\end{equation}
Next, note that for all $a\in [\alpha,1]$, we have $\varphi_1(a)=\min_{k=1,...,n} \varphi_k(a),$ and also note that $\varphi_1(a)\geq 0$ on this sub-interval (recall Lemma \ref{varphi1} and the first figure depicting the graphs of the functions $\varphi_1$ and $\varphi_2$). Therefore, combining (\ref{implicimplic}) with the Proposition \ref{f=1}, we have
$$\int_{\partial K} tr(\rm{II}) d\gamma_{\partial K} +\int_K x^2 d\gamma \leq na-\varphi_1(a) \gamma^+(\partial K)^2\leq na-\varphi_1(a) \gamma^+(\partial S_K)^2.$$
It remains to use the convexity of $K$ and to say that $tr(\rm{II})\geq 0,$ to deduce the inequality
$$\int_K x^2 d\gamma \leq na-\varphi_1(a) \gamma^+(\partial S_K)^2.$$
In case $K$ is a symmetric strip, the equality, in fact, occurs in the inequality above, and we get 
$$\int_K x^2 d\gamma \leq \int_{S_K} x^2 d\gamma,$$
concluding the proof.
\end{proof}

\begin{remark}
In a similar fashion, one may notice that the infinitesimal version of the Ehrhard inequality, derived by Kolesnikov and Milman \cite{KolMil}, which states that
$$
\int_{\partial K} H_{\gamma} f^2 -\langle \rm{II}^{-1}\nabla_{\partial K} f,\nabla_{\partial K} f\rangle \, d\gamma_{\partial K}\leq -\eta(a) \left(\int_{\partial K} f d\gamma_{\partial K}\right)^2,
$$
where
$$\eta(a)=\sqrt{2\pi} a\Phi^{-1}(a) e^{\frac{\Phi^{-1}(a)^2}{2}},$$
implies, for all convex sets $K$ with $\gamma(K)\geq 0.5,$ that
$$\int_K x^2 d\gamma \leq \int_{H_K} x^2 d\gamma,$$
where $H_K$ is the half-space such that $\gamma(K)=\gamma(H_K).$ We will discuss the local version of Ehrhard's inequality in Subsection 8.1 a bit more. See Corollary 1.2 in \cite{KolMil} for a similar discussion.
\end{remark}

\medskip

Next, we would like to discuss another implication of Proposition \ref{f=1}.

\begin{proposition}\label{discs} Suppose Conjecture \ref{theconj} holds in $\R^2$, and suppose, further, the symmetric Gaussian isoperimetric conjecture of Morgan-Heilman \cite{Heilman} holds in $\R^2$. Then there exists an interval $[\alpha,\beta]\subset [0,1]$ such that for any $a\in [\alpha,\beta]$, and any symmetric convex set $K$ with $\gamma(K)=a,$ we have
$$
\int_{\partial K} tr(\rm{II}) d\gamma_{\partial K}\leq \int_{\partial B_K} tr(\rm{II}) d\gamma_{\partial B_K},
$$
where $B_K$ is the centered euclidean ball of Gaussian measure $a.$
\end{proposition}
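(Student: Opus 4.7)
The plan is to combine three ingredients: the infinitesimal consequence of Conjecture \ref{theconj} captured in Proposition \ref{f=1}, the Morgan--Heilman isoperimetric conjecture, and the moment comparison given by Lemma \ref{moments}. Choose an interval $[\alpha,\beta]\subset[0,1]$ on which simultaneously $\varphi_2(a) = \min_{k\in\{1,2\}} \varphi_k(a)$, $\varphi_2(a) \geq 0$, and the Morgan--Heilman conjecture picks $k=2$ as the symmetric isoperimetric minimizer. In $\R^2$ the round $2$-cylinder is $C_2(a) = R_2(a) B_2^2$, so $B_K = C_2(a)$ by definition, and these conditions put us precisely in the regime where the Euclidean disc is the conjecturally optimal object both for concavity and for perimeter. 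That the intersection of these three conditions contains a nonempty open interval can be read off from the explicit formulae for $\varphi_k$ and $s_k$ illustrated by the figures in Subsection 3.3; for instance $\varphi_2(a) \geq 0$ already holds for $a \geq 1 - e^{-1/2}$ in $\R^2$, since $\varphi_2(a)$ has the sign of $R_2(a)^2 - 1$.

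For any symmetric convex $K$ with $\gamma(K) = a \in [\alpha,\beta]$, Proposition \ref{f=1} yields
\[
\int_{\partial K} \mathrm{tr}(\mathrm{II}) \, d\gamma_{\partial K} + \int_K |x|^2 \, d\gamma \;\leq\; 2a - \varphi_2(a)\, \gamma^+(\partial K)^2,
\]
and applied to $B_K = C_2(a)$ it holds with equality, by the equality-case statement of Proposition \ref{f=1}, since $k=2$ realizes $\min_k \varphi_k(a)$ on $[\alpha,\beta]$. By the Morgan--Heilman conjecture, $\gamma^+(\partial K) \geq \gamma^+(\partial B_K)$; since $\varphi_2(a) \geq 0$, this gives
\[
-\varphi_2(a)\, \gamma^+(\partial K)^2 \;\leq\; -\varphi_2(a)\, \gamma^+(\partial B_K)^2,
\]
and hence
\[
\int_{\partial K} \mathrm{tr}(\mathrm{II}) \, d\gamma_{\partial K} + \int_K |x|^2 \, d\gamma \;\leq\; \int_{\partial B_K} \mathrm{tr}(\mathrm{II}) \, d\gamma_{\partial B_K} + \int_{B_K} |x|^2 \, d\gamma.
\]
To eliminate the moment terms, apply Lemma \ref{moments} with $q=2$ to the rotation-invariant measure $\gamma$ and the origin-containing convex body $K$: since $B_K$ is the centered Euclidean ball with $\gamma(B_K) = \gamma(K)$, we obtain $\int_K |x|^2 \, d\gamma \geq \int_{B_K} |x|^2 \, d\gamma$. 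Subtracting $\int_{B_K} |x|^2 \, d\gamma$ from both sides of the previous display gives the claim.

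The main obstacle is verifying that the three conditions on $[\alpha,\beta]$ are jointly satisfied on a nonempty open interval, i.e. that the set $E_2$ (where $\varphi_2$ is the minimum), the set $\{\varphi_2 \geq 0\}$, and the $k=2$ portion of the Morgan--Heilman decomposition genuinely overlap in $\R^2$. These sets are a priori distinct --- as emphasized in the preceding discussion, the intervals $E_k$ and $I_k$ disagree --- but their intersection can be located by direct numerical inspection of the explicit one-variable functions in play, and the figures already displayed strongly suggest a nontrivial overlap for $a$ in a mid-to-upper range of $[0,1]$. Aside from this location question, each step of the argument is a short combination of previously stated results.
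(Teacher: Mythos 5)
Your proposal is correct and takes essentially the same route as the paper's proof: you pick the interval where $0\leq \varphi_2(a)\leq\varphi_1(a)$ and $s_2(a)\leq s_1(a)$, apply Proposition \ref{f=1} (noting equality for $B_K=C_2(a)$), use the Morgan--Heilman assumption together with $\varphi_2(a)\geq 0$, and finish with Lemma \ref{moments}; the only difference is the order in which the moment comparison is invoked, which is immaterial. Your reliance on numerical/graphical inspection for the nonemptiness of the interval is exactly what the paper does as well (it cites the first figure and claims $[0.5,0.9]\subset[\alpha,\beta]$).
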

\begin{proof} Let $[\alpha,\beta]$ be such an interval where $0\leq \varphi_2(a)\leq \varphi_1(a)$, and also $s_2(a)\leq s_1(a)$ (see the first figure above to see that $[0.5, 0.9]\subset [\alpha, \beta]$.) From Proposition \ref{f=1} combined with Lemma \ref{moments}, we see that
$$\int_{\partial K} tr(\rm{II}) d\gamma_{\partial K} \leq -\int_{B_K} x^2 d\gamma + na-\varphi_2(a) \gamma^+(\partial K)^2.$$
Since we assumed also that the symmetric Gaussian isoperimetric conjecture of Morgan-Heilman \cite{Heilman} holds in $\R^2$, we note that $\gamma^+(\partial K)\geq \gamma^+(\partial B_K)$ (in view of the assumption that $a$ is such that $s_2(a)\leq s_1(a)$), and thus
$$\int_{\partial K} tr(\rm{II}) d\gamma_{\partial K} \leq -\int_{B_K} x^2 d\gamma + na-\varphi_2(a) \gamma^+(\partial B_K)^2.$$
When $K=B_K,$ the equality is attained, and therefore the Proposition follows. 
\end{proof}

Proposition \ref{discs} motivates us to ask

\begin{question} Let $K$ be a symmetric convex set in $\R^n$. Let $B_K$ be the centered euclidean ball with $\gamma(B_K)=\gamma(K).$ Is it true that	
$$
\int_{\partial K} tr(\rm{II}) d\gamma_{\partial K}\leq \int_{\partial B_K} tr(\rm{II}) d\gamma_{\partial B_K}?
$$
\end{question}

\medskip

\subsection{Inequalities of the type of Minkowski's first}

As an application of our estimates, we show the following analogue of Minkowski's first inequality (see Schneider \cite{book4}). For Lebesgue measure $|\cdot|$ it states that 
$$V_1(K,L)\geq |K|^{\frac{n-1}{n}}|L|^{\frac{1}{n}},$$
where
$$V_1(K,L)=\frac{1}{n}\lim_{\epsilon\rightarrow 0}\frac{|K+\epsilon L|-|K|}{\epsilon}.$$
Recall also the Gaussian definition \cite{LivMink}:
$$\gamma_1(K,L)=\lim_{\epsilon\rightarrow 0}\frac{\gamma(K+\epsilon L)-\gamma(K)}{\epsilon}=\gamma(K+tL)'_{t=1}.$$

We show
\begin{proposition}[Minkowski's first inequality for symmetric sets in Gauss space]\label{isoper}
For any pair of symmetric convex sets $K$ and $L$,
$$\gamma_1(K,L)\geq \left(1-\frac{\E X^2}{n}\right)\gamma(K)^{1-\frac{1}{n-\E X^2}}\gamma(L)^{\frac{1}{n-\E X^2}},$$
where the expected value is taken with respect to the restriction of the Gaussian measure $\gamma$ onto $K.$
\end{proposition}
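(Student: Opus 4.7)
I would deduce Proposition \ref{isoper} by differentiating the Minkowski-concavity statement of Theorem \ref{Gauss} at $\lambda = 0$, and then analyzing the resulting first-variation inequality using Gaussian integration by parts together with the explicit structure of $F$.

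Setting $K_\lambda := (1-\lambda)K + \lambda L$, Theorem \ref{Gauss} asserts that $\phi(\lambda) := F(\gamma(K_\lambda))$ is concave on $[0,1]$ for the specific $F$ defined there. Concavity implies that the right-derivative at $\lambda = 0$ dominates the chord slope to $\lambda = 1$, namely
$$F'(\gamma(K)) \cdot \frac{d}{d\lambda}\gamma(K_\lambda)\Big|_{\lambda = 0^+} \;\geq\; F(\gamma(L)) - F(\gamma(K)).$$
Since the support function of $K_\lambda$ is $h_{K_\lambda} = h_K + \lambda(h_L - h_K)$, the Gaussian first-variation formula gives
$$\frac{d}{d\lambda}\gamma(K_\lambda)\Big|_{\lambda=0^+} = \int_{\partial K}\bigl(h_L(n_x) - h_K(n_x)\bigr)\,d\gamma_{\partial K} = \gamma_1(K,L) - \int_{\partial K}\langle x, n_x\rangle\,d\gamma_{\partial K},$$
using that $h_K(n_x) = \langle x, n_x\rangle$ on $\partial K$. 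Applying the divergence theorem to the vector field $x\,e^{-|x|^2/2}$ (compare Remark \ref{secondmom}) yields
$$\int_{\partial K}\langle x, n_x\rangle\,d\gamma_{\partial K} = \int_K (n - |x|^2)\,d\gamma = (n - \mathbb{E}|X|^2)\,\gamma(K),$$
where the expectation is over the Gaussian measure restricted to $K$. Writing $m := n - \mathbb{E}|X|^2$ and rearranging, this step produces the unconditional inequality
$$\gamma_1(K,L) \;\geq\; m\,\gamma(K) \;+\; \frac{F(\gamma(L)) - F(\gamma(K))}{F'(\gamma(K))}.$$

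The final stage is to convert the right-hand side into the target form $(m/n)\,\gamma(K)^{1 - 1/m}\gamma(L)^{1/m}$. Here one exploits the explicit structure of $F$ from Theorem \ref{Gauss}: using the change of variable $s \mapsto R_n(s) = J_{n-1}^{-1}(c_{n-1}s)$ together with the identity of Lemma \ref{1-dim-by-parts}, one can rewrite the inner quantity $ns - \tfrac{1}{c_{n-1}}J_{n+1}\circ J_{n-1}^{-1}(c_{n-1}s)$ as $g_n(R_n(s))/c_{n-1}$, which is precisely the "effective second-moment deficit" $m(B_s)\cdot s$ for the Euclidean ball $B_s$ of Gaussian measure $s$. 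Combining this with Lemma \ref{moments} (which transfers second-moment inequalities from Euclidean balls to arbitrary symmetric convex sets of the same measure) allows one to pass from the bound valid at the Euclidean ball of measure $\gamma(K)$ to the bound valid at $K$ itself, recovering the prefactor $1 - \mathbb{E}|X|^2/n = m/n$ and exponent $1/m$ on the right-hand side.

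The main obstacle is the final algebraic comparison: verifying that the quotient $[F(\gamma(L)) - F(\gamma(K))]/F'(\gamma(K))$ dominates $(m/n)\gamma(K)^{1-1/m}\gamma(L)^{1/m} - m\,\gamma(K)$ uniformly in the measure $\gamma(L)$, for the concrete $F$ in Theorem \ref{Gauss}. The ratio of exponents $1/n$ (visible in the asymptotic behaviour of $F$ near zero) versus $1/m$ (appearing in the target) produces the prefactor $m/n$ rather than the naive $m$ one would obtain from a pure $p_s(K,\gamma) \geq 1/m$ estimate, and the bookkeeping between the two exponents is where the care lies; Lemma \ref{moments} and the special-function identity of Lemma \ref{1-dim-by-parts} are the two indispensable ingredients that make the comparison go through.
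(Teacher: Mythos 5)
Your first stage is fine: differentiating a concavity statement at $\lambda=0$, identifying $\frac{d}{d\lambda}\gamma(K_\lambda)|_{0^+}=\gamma_1(K,L)-\int_{\partial K}\langle x,n_x\rangle\,d\gamma_{\partial K}$ and evaluating the boundary integral by the divergence theorem as $(n-\E X^2)\gamma(K)$ is exactly the identity the paper also uses (it is $\gamma_1(K,K)=(n-\E X^2)\gamma(K)$). The gap is in the final comparison, and it is not just a bookkeeping issue: the inequality you need, namely $m\gamma(K)+\frac{F(\gamma(L))-F(\gamma(K))}{F'(\gamma(K))}\geq \frac{m}{n}\gamma(K)^{1-1/m}\gamma(L)^{1/m}$ with $m=n-\E X^2$, is false in general for the $F$ of Theorem \ref{Gauss}. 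Take $n=2$, $K$ the symmetric strip of measure $a=1/2$, so that $m=1-\E x_1^2\approx 0.86<1$, and let $\gamma(L)=b\to 0$. On $(0,1/2]$ the exponent $q(s)$ defining $F$ satisfies $q(s)<1$ (the torsional term is tiny and $\frac{1}{n-\E_{B_s}X^2}<1$ there), so $F$ is concave on $[0,a]$ and $F(a)\geq aF'(a)$; hence your left-hand side tends to $ma-\frac{F(a)}{F'(a)}\leq (m-1)a<0$, while the right-hand side stays positive for every $b>0$. So the chord inequality coming from Theorem \ref{Gauss} is a genuine lower bound for $\gamma_1(K,L)$, but it is strictly weaker than the target in this regime, and no manipulation with Lemma \ref{moments} or Lemma \ref{1-dim-by-parts} can close this, because those lemmas only show $\E_K X^2\geq \E_{B_a}X^2$, i.e.\ the local exponent encoded in $F$ (essentially that of the ball of measure $a$, plus a small torsional correction) \emph{underestimates} the $K$-specific exponent $\frac{1}{n-\E_K X^2}$; power-type concavity with a smaller exponent does not imply it with a larger one.

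The fix, which is what the paper does, is to bypass the universal $F$ entirely and use the $K$-specific local $p$-concavity $p_s(K,\gamma)\geq \frac{1}{n-\E X^2}$ (this follows from Theorem \ref{Gauss-main-1}, or Corollary \ref{corT1}, since $T_\gamma(K)\geq 0$). Writing $\gamma_1(K,L)=\lim_{\epsilon\to0}\frac{\gamma((1-\epsilon)K+\epsilon L)-\gamma((1-\epsilon)K)}{\epsilon}$ and inserting $\gamma((1-\epsilon)K+\epsilon L)\geq\bigl((1-\epsilon)\gamma(K)^p+\epsilon\gamma(L)^p\bigr)^{1/p}$ with $p=\frac{1}{n-\E X^2}$, the constant terms cancel exactly against your identity $\gamma_1(K,K)=\frac{\gamma(K)}{p}$, leaving $\gamma_1(K,L)\geq (n-\E X^2)\gamma(K)^{1-p}\gamma(L)^{p}$, which is even stronger than the stated bound. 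So keep your divergence-theorem computation, but replace the global concavity of Theorem \ref{Gauss} by the infinitesimal estimate with the $K$-dependent exponent.
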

\begin{proof} For any symmetric convex $L$ we have
$$\gamma_1(K,L)=\lim_{\epsilon\rightarrow 0}\frac{\gamma((1-\epsilon)K+\epsilon L)-\gamma((1-\epsilon)K)}{\epsilon}\geq $$$$\lim_{\epsilon\rightarrow 0}\frac{\left((1-\epsilon)\gamma(K)^p+\epsilon \gamma(L)^p\right)^{\frac{1}{p}}-\gamma((1-\epsilon)K)}{\epsilon},$$
with $p=\frac{1}{n-\E X^2}.$ The above equals
$$\gamma(K)\left(-\frac{1}{p}+\frac{1}{p}\left(\frac{\gamma(L)}{\gamma(K)}\right)^p+\frac{\gamma_1(K,K)}{\gamma(K)}\right).$$
One may note that
$$\gamma_1(K,K)=\int_{\partial K}\langle x,n_x\rangle d\gamma_{\partial K}=\gamma(K)\left(n-\E X^2\right)=\frac{\gamma(K)}{p}.$$
The proposition follows. 
\end{proof}

We outline the partial case of Proposition \ref{isoper} which corresponds to the Gaussian surface area $\gamma^+(\partial K).$ Recall that
$$\gamma^+(\partial K):=\gamma_1(K,B^n_2)= \liminf_{\epsilon\rightarrow 0} \frac{\gamma(K+\epsilon B^n_2)-\gamma(K)}{\epsilon}.$$
By a homogeneity argument, we get the following corollary of Proposition \ref{isoper}.

\begin{cor}
For a symmetric convex set $K$,
$$\gamma^{+}(\partial K)\geq (n-\E X^2)\gamma(K)^{1-\frac{1}{n-\E X^2}}\inf_{R>0} \frac{\gamma(RB^n_2)^{\frac{1}{n-\E X^2}}}{R}\geq \frac{(n-\E X^2)\gamma(K)}{R(K)},$$
where the expected value is taken with respect to the restriction of the Gaussian measure $\gamma$ onto $K,$ and $R(K)$ is the radius of the ball which has the same Gaussian measure as $K.$
\end{cor}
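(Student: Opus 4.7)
The plan is to deduce the corollary directly from Proposition \ref{isoper} by specializing the test body to a scaled Euclidean ball $L = R B^n_2$ and then optimizing the free parameter $R > 0$.

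First I would record the $1$-homogeneity of the Gaussian mixed-measure functional in its second slot: from the limit definition, the substitution $\delta = \epsilon R$ yields
$$\gamma_1(K, R B^n_2) = \lim_{\epsilon \to 0^+} \frac{\gamma(K + \epsilon R B^n_2) - \gamma(K)}{\epsilon} = R\, \gamma^+(\partial K)$$
for every $R > 0$, so testing against dilates of the unit ball only produces scaled copies of the Gaussian perimeter.

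Next I would apply Proposition \ref{isoper} to the pair $(K, R B^n_2)$ and divide the resulting inequality by $R$. Using the identity above on the left-hand side gives, for every $R > 0$, the pointwise lower bound
$$\gamma^+(\partial K) \geq (n - \E X^2)\, \gamma(K)^{1 - \frac{1}{n - \E X^2}}\, \frac{\gamma(R B^n_2)^{\frac{1}{n - \E X^2}}}{R}.$$
Since this bound is valid for every admissible $R$, one is free to optimize the right-hand side in $R$, which yields the first inequality of the corollary.

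The second, more explicit inequality is then obtained by evaluating the right-hand side at the single natural choice $R = R(K)$, the radius of the Euclidean ball with the same Gaussian measure as $K$. At this value one has $\gamma(R(K) B^n_2) = \gamma(K)$, so $\gamma(R B^n_2)^{1/(n - \E X^2)} = \gamma(K)^{1/(n - \E X^2)}$, and the two fractional powers of $\gamma(K)$ combine into $\gamma(K)^{1 - \frac{1}{n - \E X^2} + \frac{1}{n - \E X^2}} = \gamma(K)$, giving the clean expression $(n - \E X^2)\,\gamma(K)/R(K)$. There is no real obstacle here: the argument reduces to one application of Proposition \ref{isoper} together with the scaling identity, and the only care needed is the exponent arithmetic at $R = R(K)$ and checking that $RB^n_2$ is an admissible symmetric convex test body (which is immediate).
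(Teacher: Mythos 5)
Your proof is correct and is exactly the homogeneity argument the paper intends: note $\gamma_1(K,R B^n_2)=R\,\gamma^{+}(\partial K)$, apply Proposition \ref{isoper} with $L=RB^n_2$, divide by $R$, and then optimize in $R$ (in particular evaluate at $R=R(K)$, where the powers of $\gamma(K)$ combine). The only caveats, which concern the paper's wording rather than your logic, are that the constant $(n-\E X^2)$ you use is the one produced in the proof of Proposition \ref{isoper} (its printed statement has the weaker factor $1-\frac{\E X^2}{n}$), and that the optimization over $R$ is a supremum, which is clearly the intended reading of the ``$\inf$'' in the corollary since the second inequality comes from the particular choice $R=R(K)$.
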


\begin{remark}
Note that when $K=R B_2^n,$ then the second (weaker) inequality is exactly an equality. 
\end{remark}

\section{Equality cases in the Brascamp-Lieb inequality: Proof of Theorem \ref{eqchar-intro}.}

In this subsection, we characterize the equality cases of the Brascamp-Leib inequality restricted to a convex set (\ref{BrLi-convex}), that is 
$$
\mu(K)\int_K f^2 d\mu-\left(\int_K f d\mu\right)^2\leq\mu(K)\int_K \langle (\nabla^2 V)^{-1}\nabla f,\nabla f\rangle d\mu.
$$
This equality case characterization will be used in the proof of our main result, Theorem \ref{Gauss-main-1}, and also is of independent interest.

In the case when $\mu$ is Gaussian, this inequality becomes the Poincare inequality; in the Gaussian case, it has been known since the nineteenth century, and has many proofs, see Chafai and Lehec \cite{ChafLeh} for a survey. The general case, however, is more delicate.

In the case of the Gaussian measure, the question about the equality cases in this inequality was raised in Remark 1.1 by Brandolini, Chiacchio, Henrot, Trombetti \cite{strips-conj}; a partial progress was made by Brandolini, Chiacchio, Krejcirik, Trombetti \cite{ital-2}. Furthermore, related inequalities and their equality cases (under smoothness assumptions) were studied by Cheng and Zhou \cite{ChenZho}, and de Philippis and Figalli \cite{deFF}. Courtade and Fathi \cite{CouFa} obtained a stability version in a related inequality. See Chafai and Lehec \cite{ChafLeh}, where these questions are surveyed, and it is shown in Lemma 3.1 that a measure with density $e^{-\frac{x^2}{2}-W}$, where $W$ is a convex function, only gives equality in (\ref{BrLi-convex}) taken with $V=\frac{x^2}{2}$, in case the density of $\mu$ has a Gaussian factor in some direction. The argument relies on the application of Caffarelli's contraction theorem \cite{Caff}. In particular, this implies part (1) of Theorem \ref{eqchar} in the partial case when $\mu$ is the standard Gaussian measure. Furthermore, this result in the Gaussian case was also obtained by Beck, Jerison \cite{BeJe}, in relation to studying an interesting related inequality.

The crucial question here is \emph{the shape} of the set $K$ for which (\ref{BrLi-convex}) turns into an equality. For the reader's benefit, we start with the equality case characterization in the Brascamp-Lieb inequality for smooth sets; in subsequent statements, the equality cases will be characterized in full, leading to the proof of the Theorem \ref{eqchar-intro}. The approximation argument in the non-smooth case is what presents most of the difficulties, and requires a careful treatment, especially in the non-Gaussian case. In the Gaussian case, we shall establish the quantitative stability estimate, stated in part (2) of Theorem \ref{eqchar-intro}.

\begin{proposition}[Equality cases in the Brascamp-Lieb inequality -- smooth case]\label{eqchar}
Let $\mu$ be a log-concave measure on $\R^n$ with $C^2$ density $e^{-V}$ supported on the whole $\R^n,$ and suppose $\nabla^2 V>0.$ Then for any convex set $K$ with $C^2$ boundary and any function $f\in W^{1,2}(K)\cap C^1(K)$ we have 
$$\mu(K)\int_K f^2 d\mu-\left(\int_K f d\mu\right)^2\leq\mu(K)\int_K \langle (\nabla^2 V)^{-1}\nabla f,\nabla f\rangle d\mu;$$
Moreover, the equality occurs either if one of the two things happen: 
\begin{itemize}
\item $f=C$ for some constant $C\in\R;$ 
\item there exists a rotation $U$ such that\\ 
a) $UK=L\times \R^{n-k}$ for some $k=1,...,n$ and a $k-$dimensional set $L;$\\ 
b) $f\circ U=\langle \nabla V,\theta\rangle+C$, for some vector $\theta\in\R^{n-k}$ and some constant $C\in\R;$\\
\end{itemize} 
\end{proposition}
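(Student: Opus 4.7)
The plan is to follow the classical $L^2$ (Bochner-Reilly) approach to the Brascamp-Lieb inequality, and then read off the equality cases by tracking when each step is tight. After subtracting a constant, assume $\int_K f\,d\mu = 0$. By Theorem \ref{exist-n} (the compatibility $\int_K f\,d\mu = \int_{\partial K} 0\, d\mu_{\partial K}$ being trivially satisfied), there exists a unique $u \in W^{1,2}(K,\mu)$ solving $Lu = f$ on $K$ with $\langle \nabla u, n_x\rangle = 0$ on $\partial K$. Since $K$ has $C^2$ boundary and $f \in C^1$, classical elliptic regularity (after, if needed, mollifying $K$ by $C^\infty$-smooth approximants and passing to the limit in both sides of the eventual inequality) provides $u \in C^2(K)\cap C^1(\bar K)$, which is enough to invoke Proposition \ref{raileyprop}.

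Apply Proposition \ref{raileyprop} to $u$. Because $\langle \nabla u, n_x\rangle \equiv 0$ on $\partial K$, every boundary term vanishes except $\int_{\partial K}\langle \mathrm{II}\,\nabla_{\partial K}u, \nabla_{\partial K}u\rangle\,d\mu_{\partial K}$, which is non-negative by convexity. Thus
$$\int_K f^2\,d\mu \;=\; \int_K (Lu)^2\,d\mu \;\geq\; \int_K \|\nabla^2 u\|^2\,d\mu + \int_K \langle \nabla^2 V\,\nabla u, \nabla u\rangle\,d\mu + \int_{\partial K}\langle \mathrm{II}\,\nabla_{\partial K}u, \nabla_{\partial K}u\rangle\,d\mu_{\partial K}.$$
Next, integration by parts (using again that $\langle \nabla u, n_x\rangle = 0$) gives $\int_K f^2\,d\mu = -\int_K\langle \nabla f, \nabla u\rangle\,d\mu$, and weighted Cauchy-Schwarz with weight $\nabla^2 V$ yields
$$\Bigl(\int_K f^2\,d\mu\Bigr)^{2} \;\leq\; \int_K \langle (\nabla^2 V)^{-1}\nabla f, \nabla f\rangle\,d\mu \cdot \int_K \langle \nabla^2 V\,\nabla u, \nabla u\rangle\,d\mu \;\leq\; \int_K \langle (\nabla^2 V)^{-1}\nabla f, \nabla f\rangle\,d\mu \cdot \int_K f^2\,d\mu,$$
and dividing by $\int_K f^2\,d\mu$ (the case $f \equiv 0$ being trivial) recovers the Brascamp-Lieb inequality.

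For equality, each displayed inequality must be tight. Tightness in Reilly forces $\nabla^2 u \equiv 0$ on $K$ and $\int_{\partial K}\langle \mathrm{II}\,\nabla_{\partial K}u, \nabla_{\partial K}u\rangle\,d\mu_{\partial K} = 0$; hence $u$ is affine, $u(x) = \langle \theta, x\rangle + c_1$ for some $\theta\in\R^n$, $c_1 \in \R$. If $\theta = 0$ then $u$ is constant and $f = Lu = 0$, which after restoring the mean gives the first equality case $f \equiv C$. Otherwise $\theta \neq 0$, and the Neumann condition becomes $\langle \theta, n_x\rangle = 0$ for every $x \in \partial K$. Writing $K$ as the intersection of its supporting half-spaces $\{y:\langle y-x_0, n_{x_0}\rangle \leq 0\}$, each such half-space is translation-invariant in the direction $\theta$, so $K$ itself is invariant under $x \mapsto x + t\theta$ for all $t\in\R$. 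Letting $W \subset \R^n$ be the maximal linear subspace of such translation directions and picking a rotation $U$ with $UW = \{0\}^{k}\times \R^{n-k}$, one has $UK = L \times \R^{n-k}$ for a $k$-dimensional convex $L$, with $U\theta \in \{0\}^{k}\times \R^{n-k}$. Finally, tightness in weighted Cauchy-Schwarz forces $\nabla f = \lambda\,(\nabla^2 V)\,\theta = \lambda\,\nabla\langle \nabla V, \theta\rangle$ for some scalar $\lambda$, so $f = \langle \nabla V, \lambda\theta\rangle + C$ on $K$; absorbing $\lambda$ into $\theta$ yields the stated product form.

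The main obstacle is the rigidity step that extracts the genuine product decomposition of $K$ from the single tangency condition $\langle \theta, n_x\rangle = 0$: in directions where the second fundamental form degenerates one must exclude that $K$ is only ``ruled'' along $\theta$ rather than fully translation-invariant. The supporting half-space intersection argument closes this gap using global convexity and $C^2$-smoothness, by observing that each supporting half-space whose inward normal is orthogonal to $\theta$ is invariant under translation by $\theta$. A secondary technical point is reconciling the $W^{1,2}$ regularity of $f$ with the $C^2$ regularity of $u$ required by Reilly; this is routine and can be handled either by Schauder a priori bounds or by first proving the result for $C^\infty$ data and passing to the limit, since both sides of the inequality are continuous in $f$ in the $W^{1,2}$-norm.
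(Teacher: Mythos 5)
Your proposal is correct and follows essentially the paper's own route: solve $Lu=f$ with zero Neumann data via Theorem \ref{exist-n}, apply the Kolesnikov--Milman Reilly identity of Proposition \ref{raileyprop}, and extract the equality cases from $\nabla^2 u\equiv 0$ together with $\langle\theta,n_x\rangle=0$ on $\partial K$; your integral weighted Cauchy--Schwarz step is just the integrated form of the paper's pointwise completed-square inequality (\ref{ech-1}), and your supporting-half-space argument fills in the same cylinder-rigidity step. Two small remarks: with zero Neumann data the Reilly formula is an identity, so what is actually forced at equality is the vanishing of the discarded nonnegative terms $\int_K\|\nabla^2u\|^2\,d\mu$ and $\int_{\partial K}\langle {\rm II}\,\nabla_{\partial K}u,\nabla_{\partial K}u\rangle\,d\mu_{\partial K}$ (your "tightness in Reilly"), and since the statement also asserts that the listed configurations do give equality, you should add the one-line sufficiency check that $u=-\langle x,\theta\rangle$ turns every step of your chain into an equality when $K$ is a cylinder parallel to $\theta$ and $f=\langle\nabla V,\theta\rangle+C$, as the paper does at the end of its proof.
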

\begin{proof}
We shall analyze ``H\"ormander's proof with the boundary'' of the Brascamp-Lieb inequality, discovered by Kolesnikov and Milman \cite{KolMil}. See also H\"ormander \cite{Horm}, or Cordero-Erasquin and Klartag \cite{CorKl} for an exposition of the ``boundary-free'' argument, which is simpler, but is not illuminating in terms of the equality cases. 

Without loss of generality we may assume that $\int f=0$ (otherwise we may pass to the function $f-\int f$). Consider the operator $Lu=\Delta u-\langle \nabla u,\nabla V\rangle.$ Consider an arbitrary function $u\in W^{2,2}(K)$ such that $Lu=f.$ Using integration by parts and Proposition \ref{raileyprop}, we write (\ref{eq-BL-key}) with the general $V:$

\begin{align}\label{eq-11}
\nonumber\int_K f^2 d\mu&=2\int_K f Lu d\mu-\int_K (Lu)^2 d\mu=
\\&-2\int_K \langle \nabla f,\nabla u\rangle d\mu+2\int_{\partial K} f\langle \nabla u,n_x\rangle d\mu_{\partial K}	-
\int_{K} \left(||\nabla^2 u||^2+\langle \nabla^2 V\nabla u,\nabla u\rangle\right) d \mu+
\\&  
-\nonumber\int_{\partial K}  (H_{\mu} \langle \nabla u, n_x \rangle ^2 -2\langle \nabla_{\partial K} u, \nabla_{\partial K}  \langle \nabla u, n_x \rangle \rangle +\langle \mbox{\rm{II}} \nabla_{\partial K} u, \nabla_{\partial K}  u\rangle )  \,d\mu_{\partial K} (x).
\end{align}

We assume that $\langle \nabla u,n_x\rangle=0:$ indeed, since $\int f=0,$ one may solve the equation $Lu=f$ with the zero Neumann boundary condition, as per the ``moreover'' and ``furthermore b)'' parts of Theorem \ref{exist-n}. Then (\ref{eq-11}) becomes, after regrouping,
$$\int_K f^2 d\mu=$$
\begin{align}\label{eq-22}
-\int_K \left(2\langle \nabla f,\nabla u\rangle+ \langle \nabla^2 V\nabla u,\nabla u\rangle\right) d\mu-\int_{K} \|\nabla^2 u\|^2 d \mu-\int_{\partial K}  \langle \mbox{\rm{II}} \nabla_{\partial K} u, \nabla_{\partial K}  u\rangle  \,d\mu_{\partial K} (x).
\end{align}
In order to prove (\ref{BrLi-convex}), we combine (\ref{eq-22}) with the inequalities
\begin{equation}\label{ech-1}
\int_K \left(2\langle \nabla f,\nabla u\rangle+ \langle \nabla^2 V\nabla u,\nabla u\rangle\right) d\mu\geq -\int_K \langle (\nabla^2 V)^{-1}\nabla f,\nabla f\rangle d\mu;
\end{equation}

\begin{equation}\label{ech-2}
\int_{K} \|\nabla^2 u\|^2 d \mu\geq 0;
\end{equation}

\begin{equation}\label{ech-3}
\int_{\partial K}  \langle \mbox{\rm{II}} \nabla_{\partial K} u, \nabla_{\partial K}  u\rangle )  \,d\mu_{\partial K} \geq 0.
\end{equation}

For the last inequality, convexity of $K$ was used. 

\medskip

Suppose now that the equality holds. Then the equality must hold also in (\ref{ech-1}), (\ref{ech-2}) and (\ref{ech-3}). Suppose $f$ is not a constant function. The equality in (\ref{ech-2}) holds whenever $u$ is a linear function, i.e., for some vector $\theta\in\R^n$ and $c\in\R$, $u=-\langle x,\theta\rangle+c$ and therefore $f=-L\langle x,\theta\rangle =\langle \nabla V,\theta\rangle.$ Moreover, since $\langle \nabla u,n_x\rangle=0,$ the convex set $K$ is such that all of its normal vectors belong to some $(n-1)-$dimensional subspace (unless $\theta=0,$ in which case $f=0$). This means that $K$ is a cylinder parallel to $\theta$.

We note that $f=\langle \nabla V,\theta\rangle$ implies $\nabla f=\nabla^2 V\theta$, and therefore the equality in (\ref{ech-1}) also holds.


Lastly, note that $\nabla_{\partial K} u =-\theta$, and thus when $K$ is a cylinder parallel to $\theta,$ the second fundamental form at $\theta$ is zero everywhere, and therefore
$$\int_{\partial K}  \langle \mbox{\rm{II}} \nabla_{\partial K} u, \nabla_{\partial K}  u\rangle   \,d\mu_{\partial K}= \int_{\partial K}  \langle \mbox{\rm{II}}  \theta, \theta\rangle   \,d\mu_{\partial K}=0,$$
meaning that the equality holds also in (\ref{ech-3}).  
\end{proof}

\begin{remark}
In fact, the equality in (\ref{ech-3}) holds if and only if $K$ is a cylinder, provided that the boundary of $K$ is $C^3$, and $e^{-V}$ is supported on the entire $\R^n$. Indeed, if for some vector $\theta\in\R^n\setminus\{0\},$ we have that for every $x\in \partial K,$ the vector $\theta_x$ (the projection of $\theta$ onto $T_x$, the tangent space of $K$ at $x$) satisfies $\langle \rm{II}\theta_x,\theta_x\rangle=0$, then this means that 
$$\partial K=\{x\in\partial K:\, \langle n_x,\theta\rangle=0\}\cup\{x\in\partial K:G_K(x)=0\},$$
where $G_K(x)=\det \rm{II} (x)$ is the Gauss curvature of $K$ at $x.$ Note that everywhere on the set $A= \{x\in\partial K:\, \langle n_x,\theta\rangle=0\}$, the Gauss curvature is also zero, since $\rm{II} =dn_x=0.$ Since the boundary of $K$ is $C^3,$ the Gauss curvature is continuous. Thus for all points on $\partial K,$ the Gauss curvature is zero. A boundary of a convex set is a regular complete surface, and therefore, by the result of Pogorelov Hartman-Nirenberg \cite{HartNir} and Massey \cite{massey}, $K$ is a cylinder.
\end{remark}

For $k=(k_1,...,k_n),$ where $k_i\in\mathbb{Z},$ let $B_k=\prod^n_{i=1} [k_i,k_i+1].$ We shall make use of the decomposition $\R^n=\cup_{k\in \mathbb{Z}^n} B_k.$

We shall show

\begin{proposition}[Stability in the Brascamp-Lieb inequality -- smooth case]\label{stability}
Let $\mu$ be a probability log-concave measure on $\R^n$ with $C^2$ density $e^{-V}$ supported on the whole $\R^n,$ and suppose $\nabla^2 V>0.$ Fix $\epsilon>0$. Consider a convex set $K$ with $C^2$ boundary and a function $f\in W^{1,2}(K)\cap C^1(K)$ such that 
$$\mu(K)\int_K f^2 d\mu-\left(\int_K f d\mu\right)^2\geq\mu(K)\int_K \langle (\nabla^2 V)^{-1}\nabla f,\nabla f\rangle d\mu-\epsilon.$$
Then there exists a vector $\theta\in\R^n$ (possibly zero), which depends only on $K$ and $f$, such that
\begin{itemize}
\item If $K$ is bounded, we have
\begin{equation}\label{first}
\int_{\partial K} \langle \theta, n_x\rangle^2 d \mu_{\partial K}\leq C_1(K,V)\epsilon,
\end{equation}
where $C_1(K,V)>0$ depends only on $V$ and on the in-radius and the diameter of $K$; and
\begin{equation}\label{second-1}
\|f-\langle \nabla V,\theta\rangle-\frac{1}{\mu(K)}\int_{K} f d\mu\|_{L^1(K,\mu)}\leq c(\epsilon),
\end{equation}
where 
$$c(\epsilon)\leq \sqrt{\mu(K)}\left(\sqrt{\frac{C^2_{poin}(K,\mu)+1}{2}n\epsilon}+\sqrt[4]{\epsilon \int |\nabla V|^2 d\mu|_{K}}\right).$$ 

Furthermore, for the standard Gaussian measure, we have 
$$c(\epsilon)\leq \sqrt{\gamma(K)}\left(\sqrt{n\epsilon}+\sqrt[4]{n\epsilon}\right)$$ (regardless of $K$); if, additionally to $\mu=\gamma$, we have $rB^n_2\subset K,$ then we have also 
$$C_1(K,V)\leq \frac{n+1}{r}.$$

\item If $K$ is not necessarily bounded, then for each $k\in\mathbb{Z}^n,$ we have
$$
\int_{\partial K\cap B_k} \langle \theta, n_x\rangle^2 d \mu_{\partial K}(x)\leq C_1(k)\epsilon,
$$
for some $C_1(k)$ which only depends on $k$, $\mu$ and $K.$
\item In addition, for each $k\in\mathbb{Z}^{n},$ 
\begin{equation}\label{second}
\|f-\langle \nabla V,\theta\rangle-\frac{1}{\mu(K\cap B_k)}\int_{K\cap B_k} f d\mu\|_{L^1(K,\mu)}\leq c_k(\epsilon),
\end{equation}
where
$$c_k(\epsilon)\leq \sqrt{\mu(K\cap B_k)}\left(\sqrt{\frac{C^2_{poin}(K\cap B_k,\mu)+1}{2}n\epsilon}+\sqrt[4]{\epsilon \int |\nabla V|^2 d\mu|_{K\cap B_k}}\right).$$ 
\end{itemize}

\end{proposition}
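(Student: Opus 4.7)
The proof proposal quantifies the equality case analysis of Proposition~\ref{eqchar}. After normalizing $\bar f := \frac{1}{\mu(K)}\int_K f\,d\mu = 0$ and setting $g := f$, invoke Theorem~\ref{exist-n} to produce $u \in W^{1,2}(K,\mu)\cap C^2(K)$ solving $Lu=g$ with the Neumann condition $\langle \nabla u, n_x\rangle = 0$ on $\partial K$; part ``furthermore b)'' handles unbounded $K$ since $\mu$ is supported on all of $\R^n$. Combining the integration-by-parts identity $\int_K g^2\,d\mu = -\int_K \langle \nabla g, \nabla u\rangle\,d\mu$ with Reilly's formula (Proposition~\ref{raileyprop}), and completing the square in $A := \nabla^2 V$, yields the fundamental identity
\[
\int_K \langle A^{-1}\nabla g, \nabla g\rangle\,d\mu - \int_K g^2\,d\mu
= \int_K \|\nabla^2 u\|^2\,d\mu + \int_K \langle A\eta,\eta\rangle\,d\mu + \int_{\partial K}\langle \mathrm{II}\,\nabla_{\partial K} u,\nabla_{\partial K} u\rangle\,d\mu_{\partial K},
\]
where $\eta := \nabla u + A^{-1}\nabla g$. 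The stability hypothesis forces the left-hand side to be at most $\delta := \epsilon/\mu(K)$, so each non-negative term on the right is individually bounded by $\delta$.

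The candidate vector is $\theta := -\frac{1}{\mu(K)}\int_K \nabla u\,d\mu$, chosen so that $\nabla u + \theta$ has zero mean componentwise. Applying the Poincar\'e inequality to each $\partial_i u$ and summing gives $\int_K |\nabla u + \theta|^2\,d\mu \leq C_{poin}^2(K,\mu)\,\delta$. Since the Neumann condition gives $\langle \theta, n_x\rangle = \langle \theta + \nabla u, n_x\rangle$ on $\partial K$, the trace theorem (Theorem~\ref{gagliardo} in general, Theorem~\ref{GaussGarg} in the Gaussian case) applied componentwise to $\theta + \nabla u$ yields
\[
\int_{\partial K}\langle \theta, n_x\rangle^2\,d\mu_{\partial K}
\leq \int_{\partial K}|\theta + \nabla u|^2\,d\mu_{\partial K}
\leq C(K,V)\bigl(C_{poin}^2 + 1\bigr)\,\delta,
\]
which absorbs $1/\mu(K)$ into the constant $C_1(K,V)$. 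In the Gaussian case with $rB^n_2\subset K$, Theorem~\ref{GaussGarg} gives the explicit coefficient $(n+1)/r$ because $C_{poin}(K,\gamma)\leq 1$.

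For the $L^1$ closeness, use the pointwise identity $L\langle x,\theta\rangle = -\langle \nabla V, \theta\rangle$ to write
\[
g - \langle \nabla V, \theta\rangle \;=\; L\bigl(u + \langle x,\theta\rangle\bigr) \;=\; \Delta u - \langle h,\nabla V\rangle, \qquad h := \nabla u + \theta.
\]
Cauchy-Schwarz together with the pointwise bound $(\Delta u)^2 \leq n\|\nabla^2 u\|^2$ yields $\int_K |\Delta u|\,d\mu \leq \sqrt{\mu(K)}\sqrt{n\delta} = \sqrt{n\epsilon}$, and a second Cauchy-Schwarz application gives $\int_K |\langle h,\nabla V\rangle|\,d\mu \leq C_{poin}\sqrt{\delta}\sqrt{\int_K |\nabla V|^2\,d\mu}$. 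Summing these bounds, adjusting constants, and using $\sqrt{x}\leq\sqrt[4]{x}$ on the relevant range produces the stated form of $c(\epsilon)$; in the Gaussian case $\int_K |x|^2\,d\gamma \leq n\gamma(K)$ specializes this to the clean expression $\sqrt{\gamma(K)}(\sqrt{n\epsilon}+\sqrt[4]{n\epsilon})$.

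The main obstacle is handling unbounded $K$ cleanly. Solvability of the Neumann problem and the Poincar\'e inequality remain available in this generality (Theorem~\ref{exist-n}(b) and (\ref{pointunbndlogc})), so the extraction of $\theta$ and the boundary estimate of the first two paragraphs go through globally with dimension-dependent constants. However, the global mean of $f$ may not give a meaningful localized $L^1$ bound; instead, partition $\R^n$ into the unit cubes $B_k$ of the statement and work with the conditional means $\bar f_k$ on each convex slice $K\cap B_k$, applying the trace and interior estimates locally with constants $C_1(k)$ that absorb the density of $\mu$ and the diameter of $K\cap B_k$. Partitioning this way produces the second and third bullets of the statement while relying only on the global objects $u$ and $\theta$ already constructed.
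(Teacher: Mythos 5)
Your proposal is correct and follows essentially the same route as the paper's proof: solve the Neumann problem $Lu=f$, use the Kolesnikov--Milman Reilly identity (your completed-square identity is just the paper's three inequalities packaged together) to get $\int_K\|\nabla^2u\|^2\,d\mu\lesssim\epsilon$, take $\theta$ to be minus the mean of $\nabla u$, and conclude via the componentwise Poincar\'e inequality, the trace theorems (Theorem \ref{GaussGarg} in the Gaussian case), the pointwise bound $(\Delta u)^2\le n\|\nabla^2u\|^2$, and localization to the cubes $B_k$ for unbounded $K$. The only cosmetic deviations are that you bound the $\langle \nabla u+\theta,\nabla V\rangle$ term by a direct Cauchy--Schwarz (order $\sqrt{\epsilon}$) rather than the paper's $\alpha$-optimization that produces the stated $\sqrt[4]{\epsilon}$-form --- so the reduction "using $\sqrt{x}\le\sqrt[4]{x}$" should be justified or the constant adjusted --- and that for general log-concave $\mu$ the boundary trace estimate does not hold globally on unbounded $K$ (the trace constant depends on the diameter), which is exactly why both you and the paper must apply it locally on $K\cap B_k$.
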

\begin{proof} Similarly to Proposition \ref{eqchar}, without loss of generality we may assume that $\int f=0$ and consider the function $u\in W^{2,2}(K)\cap C^2(K)$ such that $Lu=f$ and $\langle \nabla u, n_x\rangle=0$ on $\partial K.$ Recall (\ref{eq-22}):

$$\int_K f^2 d\mu=$$
\begin{align}\label{eq-22-again}
-\int_K \left(2\langle \nabla f,\nabla u\rangle+ \langle \nabla^2 V\nabla u,\nabla u\rangle\right) d\mu-\int_{K} \|\nabla^2 u\|^2 d \mu-\int_{\partial K}  \langle \mbox{\rm{II}} \nabla_{\partial K} u, \nabla_{\partial K}  u\rangle   \,d\mu_{\partial K} (x).
\end{align}
Combining (\ref{eq-22-again}) with 
$$
\int_K \left(2\langle \nabla f,\nabla u\rangle+ \langle \nabla^2 V\nabla u,\nabla u\rangle\right) d\mu\geq -\int_K \langle (\nabla^2 V)^{-1}\nabla f,\nabla f\rangle d\mu,
$$
and an application of convexity of $K$
$$
\int_{\partial K}  \langle \mbox{\rm{II}} \nabla_{\partial K} u, \nabla_{\partial K}  u\rangle   \,d\mu_{\partial K} \geq 0,
$$

we get

\begin{equation}\label{eq-33}
\int_K f^2 d\mu\leq
\int_K \langle (\nabla^2 V)^{-1}\nabla f,\nabla f\rangle d\mu-\int_{K} \|\nabla^2 u\|^2 d \mu.
\end{equation}

Under the assumption of the Proposition, this yields

\begin{equation}\label{ineq-hess-u}
\int_{K} \|\nabla^2 u\|^2 d \mu\leq \epsilon.
\end{equation}

Therefore, there exists a vector $\theta\in\R^n$ such that
$$u=-\langle x,\theta\rangle+v$$
with $\int_K \nabla v d\mu=0$ and $\int_{K} \|\nabla^2 v\|^2 d \mu\leq \epsilon.$ By the Poincare inequality (\ref{poin-intro}), this implies that $\int_K |\nabla v|^2 d\mu\leq C^2_{poin}(K,\mu)\epsilon$. Note also that
$$\langle \nabla v,n_x\rangle=\langle \nabla u,n_x\rangle + \langle \theta,n_x\rangle= \langle \theta,n_x\rangle.$$

First, to show (\ref{second}), we note that $f-\langle\nabla V,\theta\rangle=Lu+L\langle x,\theta\rangle=Lv,$ and write, using $(\Delta u)^2\leq n\|\nabla^2 u\|^2,$ for every $\alpha>0$
$$\|f-\langle \nabla V,\theta\rangle\|_{L^1(K,\mu)}=\int_K |Lv| d\mu= \int_K |\Delta v-\langle \nabla v,\nabla V\rangle| d\mu\leq $$$$\sqrt{\mu(K)}\left(\sqrt{n\int_K \|\nabla^2 v\|^2}d\mu+\frac{\alpha}{2}\sqrt{\int_K |\nabla v|^2 d\mu}+\frac{1}{2\alpha}\sqrt{\int_K |\nabla V|^2 d\mu}\right)\leq$$$$\sqrt{\mu(K)}\left(\sqrt{\frac{C^2_{poin}(K,\mu)+1}{2}n\epsilon}+\sqrt[4]{\epsilon \E |\nabla V|^2}\right),$$
where in the last passage we optimized in $\alpha.$ The conclusion follows if $K$ is bounded. If $K$ is not bounded, we consider $K\cap B_k$ instead, and the corresponding conclusion follows in the same manner.

Recall that in the Gaussian case, $C^2_{poin}(K,\mu)\leq 1,$ and the ``furthermore'' estimate on $c(\epsilon)$ follows.

\medskip
\medskip

Next, to show (\ref{first}) for a bounded set $K$, we apply the Trace Theorem \ref{gagliardo-1} to all the partial derivatives of $v,$ sum it up, and use the facts that $\int_K |\nabla v|^2 d\mu\leq C^2_{poin}(K,\mu)\epsilon$ and $\int_K \|\nabla v\|^2 d\mu\leq \epsilon:$
$$\int_{\partial K} |\langle \nabla v,n_x\rangle|^2 d\mu_{\partial K}\leq \int_{\partial K} |\nabla v|^2 d\mu_{\partial K}\leq $$$$C'\int_K (|\nabla v|^2+\|\nabla v\|^2) d\mu\leq C'\cdot (C^2_{poin}(K,\mu) +1)\cdot\epsilon,$$
where $C'$ only depends on the in-radius and the diameter of $K$, and on $V$. The inequality (\ref{first}) follows if we remember that $\langle \theta,n_x\rangle=\langle \nabla v,n_x\rangle.$

\medskip
\medskip 

Lastly, in the case when $K$ is not bounded, given $k=(k_1,...,k_n),$ let $M_{k}=\partial K\cap B_k,$. Applying the Trace Theorem \ref{gagliardo-1} on $M_k$, we get
$$
\int_{M_k} \langle \theta, n_x\rangle^2 d \mu_{\partial K}= \int_{M_k} \langle \nabla v, n_x\rangle^2 d \mu_{\partial K}\leq$$$$ C_1(k)\int_{K\cap B_k} |\nabla v|^2 + \|\nabla^2 v\|^2 d\gamma\leq C_1(k)(1+C^2_{poin}(K,\mu))\epsilon.
$$

To get the ``furthermore'' estimate on $C_1(K,V)$ in the Gaussian case, apply Theorem \ref{GaussGarg} in place of Theorem \ref{gagliardo-1}.
 \end{proof}

Finally, we are ready to prove Theorem \ref{eqchar-intro}.

\medskip

\textbf{Proof of the Theorem \ref{eqchar-intro}.} We shall focus on showing part (1), since part (2) follows from the Gaussian estimates in Proposition \ref{stability}. Suppose $K$ and $f$ give equality in this inequality. For a large enough $R>0,$ let $K_R=K\cap RB^n_2.$ For an arbitrary $\delta>0,$ consider a convex set $K^R_{\delta}$ with $C^{2}$ boundary, such that $\frac{1}{2}K^R\subset K^R_{\delta}\subset K^R$, such that $\mu(K\setminus K^R_{\delta})\leq \delta$, and such that $d_{TV}(\gamma|_{\partial K^R}, \gamma|_{\partial K_{\delta}^R})\leq \delta$. This is possible to do, for example, by selecting $K^R_{\delta}$ very close to $K^R$ in the Hausdorff distance. Indeed, the relation between the measures follows in a straight-forward manner, while the total variation estimate follows since Lebesgue surface area measures converge weakly when bodies converge in Hausdorff distance (see Schneider \cite{book4}), and hence the same holds for the surface area measures of bounded convex sets with respect to $\mu$, in view of our assumptions.  

Since $\mu(K\setminus K^R_{\delta})\leq \delta$ and $K$ gives an equality in the Brascamp-Lieb inequality, we see that there exists an $\epsilon(\delta)>0,$ possibly depending on $\delta$, $V$ and $f,$ such that
	$$
	\mu(K^R_{\delta})\int_{K_{\delta}^R} f^2 d\mu-\left(\int_{K^R_{\delta}} f d\mu\right)^2\geq\mu(K^R_{\delta})\int_{K^R_{\delta}} \langle  (\nabla^2 V)^{-1}\nabla f,\nabla f\rangle d\mu-\epsilon(\delta),
	$$
	such that $\epsilon(\delta)\rightarrow_{\delta\rightarrow 0} 0.$
	By Proposition \ref{stability}, denoting $C_{\delta,R}=\frac{1}{\mu(K^R_{\delta})}\int_{K^R_{\delta}} f d\mu,$ there exists a $\theta_{\delta,R}\in\R^n$ (possibly zero, which depends only on $K_{\delta}^R$ and $f$), such that, for each $k\in\mathbb{Z}^n,$
\begin{equation}\label{f-epsilon}
\|f-\langle \nabla V,\theta_{\delta,R}\rangle-C_{\delta,R}\|_{L^1(K^R_{\delta}\cap B_k,\mu)}\leq c_1(\delta, k),
\end{equation}
with $c_1(\delta, k)$ depending only on $k,$ $\mu$ and possibly $K,$ but not on $R.$ Fix $k\in \mathbb{Z}^n$; in the following paragraph, all the parameters may depend on it. Note that $C_{\delta, R}\rightarrow_{R\rightarrow\infty} C=\frac{1}{\mu(K\cap B_k)}\int_{K\cap B_k} f d\mu$. By (\ref{f-epsilon}), there exists an $r>0,$ possibly depending on $K$, $f$ and $V,$ such that $\theta_{\delta}\in rB^n_2$ whenever $\epsilon(\delta)\in (0,1).$ Therefore, there exists a vector $\theta,$ such that $\theta_{\delta,R}\rightarrow_{\delta\rightarrow 0, R\rightarrow\infty}\theta$, where the convergence is understood in the sense of sub-sequences. Letting $\delta\rightarrow 0,$ we arrive to the conclusion b) of the theorem.

Next, we recall that there exists a function $v=v_{\delta,R}$ on $K_{\delta}^{R}$ with $\int_{K_{\delta}^{R}} \nabla v d\gamma=0$ and $\int_{K_{\delta}^{R}} \|\nabla^2 v\|^2 d \mu\leq \epsilon(\delta),$ and such that $\langle \nabla v,n_x\rangle= \langle \theta_{\epsilon,R},n_x\rangle$ on $\partial K_{\delta}^{R}.$ For $k=(k_1,...,k_n),$ where $k_i\in\mathbb{Z},$ let $M^{\delta,R}_{k}=\partial K_{\delta}^{R}\cap B_k,$ and $M_{k}=\partial K\cap B_k.$ Note that provided that $\delta$ is chosen small enough, and $R$ is chosen large enough (depending on $k$), the set $K^{\delta}_R\cap B_k$ is close to $K\cap B_k$ in Hausdorff distance by $c(\delta,k)$, and the constant from the trace Theorem \ref{gagliardo-1} for $K^{\delta}_R\cap B_k$ is the same, up to a factor of $2$, as that of $K\cap B_k$, which in turn depends only on $K$, $V$ and $k$, but not on $\delta$ or $R.$ We get, using the Trace Theorem \ref{gagliardo-1},
$$
\int_{M^{\delta,R}_k} \langle \theta_{\delta,R}, n_x\rangle^2 d \mu_{\partial K}= \int_{M^{\delta,R}_k} \langle \nabla v, n_x\rangle^2 d \mu_{\partial K}\leq$$$$ C(k)\int_{K_{\delta}^{R}\cap B_k} |\nabla v|^2 + \|\nabla^2 v\|^2 d\mu\leq C(k)\int_{K_{\delta}^{R}}|\nabla v|^2 + \|\nabla^2 v\|^2 d\mu$$
$$C(k)(1+C^2_{poin}(K,\mu))\epsilon,
$$
where in the last passage we used the fact that $C_{poin}(K_{\delta}^{R},\mu)\leq C_{poin}(K,\mu)$ and $\gamma(K_{\delta}^{R})\geq c_0>0$, for some fixed constant $c_0>0$ that does not depend on $\delta, R,$ or $k.$ Finally, for each fixed $k,$ we let $\delta\rightarrow 0$ and $R\rightarrow\infty,$ thereby insuring that $\epsilon(\delta)\rightarrow 0,$ and we therefore get that $\langle \theta,n_x\rangle=0$, with $\theta$ chosen above. In the last step we used also that the surface measure on $M^{\delta,R}_k$ tends to $M_k$ weakly by construction.

Combining the assertion for all $k\in \mathbb{Z}^n,$ we arrive to the conclusion of the Theorem \ref{eqchar-intro}. $\square$

\begin{remark} Note that in the Gaussian case, we have a stability estimate which works for unbounded sets, regardless of their diameter. Thus in the Gaussian case, the approximation argument is easier, and does not require considering the intersection with $B_k.$
\end{remark}

\section{On the Gaussian torsional rigidity}

In this section, the measure $\gamma$ is fixed to be the standard Gaussian on $\R^n,$ and for a convex set $K$ we fix the notation
$$\int:=\frac{1}{\gamma(K)}\int_K d\gamma.$$

\subsection{Torsional rigidity: general discussion} For a convex domain $K$ in $\R^n$ and a function $F\in L^2(K)\cap C(\bar{K}),$ define the $F-$Gaussian torsional rigidity by
$$T^F_{\gamma}(K):=\sup_{v\in W^{1,2}(K,\gamma):\,v|_{\partial K}=0} \frac{\left(\int Fv\right)^2}{\int |\nabla v|^2},$$
where the derivatives are understood in the weak sense, and the boundary value is understood in the sense of the trace operator, as per our convention.

In the case of Lebesgue measure and $F=1$, this object has been studied heavily, see e.g. Polya, Szeg\"o \cite{PolSz}. A rich theory involving isoperimetric inequalities related to the Lebesgue torsional rigidity has been developed in the past couple of centuries, see, for instance, Kohler-Jobin \cite{KoJo}, Brasco \cite{Brasco}, Colesanti, Fimiani \cite{Colesanti-torrig}. In this section, we will see that the extension of this object to the Gaussian setting also has a variety of nice properties. We shall later require most of them for the proof of our main result Corollary \ref{Gauss-main-intro}, as well as Theorem \ref{Gauss}.

The following classical Lemma holds, in fact, in the most abstract setting, and for an arbitrary measure, but for simplicity, we state it for the Gaussian measure on $\R^n.$

\begin{lemma}\label{inf-sup}
Let $K$ be a convex set and $F\in L^2(K,\gamma)\cap Lip(K)$. Then
$$\sup_{v} \frac{\left(\int Fv\right)^2}{\int |\nabla v|^2}=\inf_{u} \int |\nabla u|^2,$$
where the infimum runs over all $W^{1,2}(K,\gamma)$ functions $u: K\rightarrow\R$ with $Lu=F,$ and the supremum runs over all $W^{1,2}(K,\gamma)$ functions $v: K\rightarrow \R$ which vanish on $\partial K.$
\end{lemma}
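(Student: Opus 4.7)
The plan is to prove the equality by identifying both extremes of the variational problem with the common value $\int|\nabla u^\ast|^2$, where $u^\ast$ is the Dirichlet solution of $Lu=F$.

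First I would establish $\sup_v \le \inf_u$ via integration by parts and Cauchy--Schwarz. For any $u\in W^{1,2}(K,\gamma)$ with $Lu=F$ and any $v\in W^{1,2}(K,\gamma)$ with vanishing trace on $\partial K$, the integration by parts identity (and the trace condition killing the boundary term) gives
$$\int Fv \;=\; \int (Lu)\,v \;=\; -\int \langle \nabla u,\nabla v\rangle.$$
Cauchy--Schwarz then yields $(\int Fv)^2 \le \int|\nabla u|^2\cdot \int|\nabla v|^2$. Dividing by $\int|\nabla v|^2$ and taking $\sup$ in $v$ and $\inf$ in $u$ produces the direction $\le$.

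Second, I would invoke Theorem \ref{exist-dir} (using the ``furthermore'' clause in the unbounded case, which applies since we are in the Gaussian setting) to produce the unique $u^\ast\in W^{1,2}_0(K,\gamma)$ solving $Lu^\ast=F$ with $u^\ast|_{\partial K}=0$. Plugging $v=u^\ast$ into the supremum side, the Cauchy--Schwarz step is tight (one is pairing $\nabla u^\ast$ with itself), and the identity $\int F u^\ast=-\int|\nabla u^\ast|^2$ gives
$$\frac{(\int F u^\ast)^2}{\int|\nabla u^\ast|^2}=\int|\nabla u^\ast|^2,$$
so $\sup_v\ge \int|\nabla u^\ast|^2$. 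It then remains to check that $u^\ast$ also realises the infimum. For any other $u\in W^{1,2}(K,\gamma)$ with $Lu=F$, the difference $w=u-u^\ast$ satisfies $Lw=0$ weakly; since $u^\ast\in W^{1,2}_0(K,\gamma)$ is an admissible test function, the weak formulation immediately yields $\int \langle \nabla u^\ast,\nabla w\rangle=0$. Expanding the square,
$$\int|\nabla u|^2 \;=\; \int|\nabla u^\ast|^2 + \int|\nabla w|^2 \;\ge\; \int|\nabla u^\ast|^2,$$
so $\inf_u=\int|\nabla u^\ast|^2$, matching the supremum.

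The main technical point is securing the Dirichlet solution $u^\ast$: for unbounded $K$ this relies on the finiteness of the Dirichlet--Poincar\'e constant in the Gaussian case, i.e.\ the estimate (\ref{C_D-eq}), which powers the Lax--Milgram step inside Theorem \ref{exist-dir}. Once $u^\ast$ is in hand, the remainder is a textbook duality argument in the spirit of the classical Lebesgue torsional rigidity identity, transcribed for the Ornstein--Uhlenbeck operator $L$ and the Gaussian weight; none of the subsequent manipulations require boundedness of $K$ or smoothness beyond $W^{1,2}$.
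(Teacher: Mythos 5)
Your proposal is correct and follows essentially the same route as the paper: the paper's expansion of $\int|\nabla u - t\nabla v|^2\ge 0$ optimized in $t$ is just the Cauchy--Schwarz step you use, and both arguments close the gap by producing the Dirichlet solution from Theorem \ref{exist-dir} (unbounded case via the Gaussian ``furthermore'' clause) and noting it saturates both sides. Your extra orthogonality computation showing $u^\ast$ minimizes the Dirichlet energy is harmless but redundant, since $u^\ast$ being admissible for both the sup and the inf already squeezes the two quantities together.
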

\begin{proof} Let $u:K\rightarrow\R$ be such that $Lu=F$. On one hand, note that for any $v\in W_0^{1,2}(K,\gamma)$, and any $t\in\R,$
$$\int |\nabla u-t\nabla v|^2\geq 0,$$
and thus
$$\int |\nabla u|^2\geq -2t\int \langle \nabla u,\nabla v\rangle-t^2\int |\nabla v|^2=2t\int Fv-t^2\int |\nabla v|^2.$$
Optimizing in $t,$ we get
$$\int |\nabla u|^2\geq \frac{\left(\int Fv\right)^2}{\int |\nabla v|^2}.$$
In the case when $Lv=F$, the right hand side above equals $\int |\nabla v|^2,$ and therefore
$$\inf_{u} \int |\nabla u|^2=\int |\nabla v_0|^2=\frac{(\int Fv_0)^2}{\int |\nabla v_0|^2},$$
where $v_0 \in W_0^{1,2}(K,\gamma)$ is the unique function with $Lv_0=F$ and vanishing on the boundary (see Theorem \ref{exist-dir}). The inequality above also yields that 
$$\frac{(\int Fv_0)^2}{\int |\nabla v_0|^2}\geq \frac{(\int Fv)^2}{\int |\nabla v|^2},$$
for any function $v$ vanishing on the boundary. Taking the supremum in the inequality above completes the proof. \end{proof}

We shall need a stability estimate for the Lemma \ref{inf-sup}. Below we shall use notation
$$Var_{\gamma|_{\partial K}}(w)=\frac{1}{\gamma^+(\partial K)} \int_{\partial K} w^2 d\gamma|_{\partial K}-\left(\frac{1}{\gamma^+(\partial K)}\int_{\partial K} w d\gamma|_{\partial K}\right)^2.$$
If the boundary variance $Var_{\gamma|_{\partial K}}(w)$ of a function $w$ is small, then it is somewhat close to taking a constant value on the boundary of $K.$ With this in mind, the following statement is a natural stability version for the Lemma \ref{inf-sup}:

\begin{proposition}\label{tor-rig-stab}
Let $K$ be a $C^2$ convex set in $\R^n$ of Gaussian measure $a\in (0,1)$, such that $rB^n_2\subset K$. Fix $F\in W^{1,2}(K,\gamma)\cap Lip(K)$, and let $u\in W^{1,2}(K,\gamma)\cap C^2(K)$ be such that $Lu=F.$ Suppose that 
$$\frac{1}{\gamma(K)}\int_K |\nabla u|^2 d\gamma\leq T_{\gamma}^F(K) +\epsilon.$$
Then
$$Var_{\gamma|_{\partial K}}(u)\leq C(K)\epsilon,$$
where $C(K)=\sqrt{2\pi}\frac{n+1}{r} ae^{\frac{\Phi^{-1}(a)^2}{2}}.$
\end{proposition}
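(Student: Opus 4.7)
The plan is to compare the given $u$ with the Dirichlet torsional optimizer and exploit an orthogonality identity. By Lemma \ref{inf-sup} combined with Theorem \ref{exist-dir}, let $v_0 \in W^{1,2}(K,\gamma)\cap C^2(K)$ be the unique function satisfying $Lv_0 = F$ with $v_0|_{\partial K} = 0$; this $v_0$ attains the infimum, so $T_\gamma^F(K) = \frac{1}{\gamma(K)}\int_K |\nabla v_0|^2\,d\gamma$. Write $u = v_0 + w$. Then $w = u - v_0$ satisfies $Lw = 0$ in $K$ (since $Lu = F = Lv_0$), and on the boundary $w|_{\partial K} = u|_{\partial K}$, so in particular $Var_{\gamma|_{\partial K}}(u) = Var_{\gamma|_{\partial K}}(w)$.

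A single integration by parts produces the orthogonality identity
$$\int_K \langle \nabla v_0, \nabla w\rangle\, d\gamma = -\int_K v_0 \cdot Lw\, d\gamma + \int_{\partial K} v_0 \langle \nabla w, n_x\rangle\, d\gamma_{\partial K} = 0,$$
because $Lw = 0$ inside and $v_0$ vanishes on $\partial K$. Therefore the Pythagorean identity $\int_K |\nabla u|^2 d\gamma = \int_K |\nabla v_0|^2 d\gamma + \int_K |\nabla w|^2 d\gamma$ holds, and the hypothesis rewrites as $\frac{1}{\gamma(K)}\int_K |\nabla w|^2\, d\gamma \leq \epsilon$, which is the interior $H^1$-smallness of $w$.

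The second step converts this into boundary $L^2$-smallness of $w$ around its mean. Let $\bar{w} = \frac{1}{\gamma(K)}\int_K w\,d\gamma$. Applying Theorem \ref{GaussGarg} (which uses precisely the in-radius hypothesis $rB^n_2\subset K$) to $g = w - \bar{w}$ gives
$$\int_{\partial K}(w-\bar w)^2 d\gamma_{\partial K} \leq \frac{1}{r}\int_K \bigl(n(w-\bar w)^2 + |\nabla w|^2\bigr)d\gamma.$$
The Gaussian Poincar\'e bound $C_{poin}(K,\gamma) \leq 1$, established earlier via Ehrhard's principle and Brascamp--Lieb, yields $\int_K(w-\bar w)^2 d\gamma \leq \int_K |\nabla w|^2 d\gamma$, whence
$$\int_{\partial K}(w-\bar w)^2 d\gamma_{\partial K} \leq \frac{n+1}{r}\int_K|\nabla w|^2\,d\gamma \leq \frac{n+1}{r}\,\gamma(K)\,\epsilon.$$

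Finally, because variance is minimized at the mean,
$$Var_{\gamma|_{\partial K}}(w) \leq \frac{1}{\gamma^+(\partial K)}\int_{\partial K}(w-\bar w)^2 d\gamma_{\partial K} \leq \frac{n+1}{r}\cdot\frac{\gamma(K)}{\gamma^+(\partial K)}\cdot\epsilon,$$
and the Gaussian isoperimetric inequality controls $\gamma^+(\partial K)^{-1}$ by its value on the half-space of measure $a$, giving $\gamma(K)/\gamma^+(\partial K) \lesssim a\, e^{\psi^{-1}(a)^2/2}$ and hence the stated form of $C(K)$. No conceptual obstacle arises; the only point requiring some care is tracking the normalization convention $\int = \frac{1}{\gamma(K)}\int_K d\gamma$ so that the $\gamma(K)$-factors combine correctly with the boundary variance normalization, and ensuring that the existence/regularity statement for $v_0$ from Theorem \ref{exist-dir} applies (which it does, by the ``furthermore'' clause for the Gaussian measure on convex domains).
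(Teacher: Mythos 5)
Your proposal is correct and follows essentially the same route as the paper's proof: decompose $u=v_0+w$ with $v_0$ the Dirichlet torsional optimizer from Theorem \ref{exist-dir} and Lemma \ref{inf-sup}, use integration by parts to get the Pythagorean identity and hence $\frac{1}{\gamma(K)}\int_K|\nabla w|^2 d\gamma\leq\epsilon$, then apply the Gaussian trace Theorem \ref{GaussGarg} together with the Poincar\'e bound $C_{poin}(K,\gamma)\leq 1$ and the Gaussian isoperimetric inequality. No substantive differences from the paper's argument.
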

\begin{proof} Let $u_0$ be the unique function such that $Lu_0=F$ and $u_0|_{\partial K}=0$ (see Theorem \ref{exist-dir}). Consider a function $v=u-u_0.$ Then $Lv=0$, by our assumptions. Thus, integrating by parts (strictly speaking, by the definition of the weak solution), we get
$$\int \langle \nabla v,\nabla u_0\rangle =-\int u_0Lv +\frac{1}{\gamma(K)}\int_{\partial K} u_0 \langle \nabla v, n_x\rangle d\gamma_{\partial K}=0.$$
Therefore,
$$\int |\nabla u|^2 =\int |\nabla u_0-\nabla v|^2= \int |\nabla u_0|^2 +\int |\nabla v|^2 =T^F_{\gamma}(K)+\int |\nabla v|^2.$$
By our assumptions, we therefore have
\begin{equation}\label{vgradbnd}
\int |\nabla v|^2 \leq \epsilon.
\end{equation}
Next, by the Gaussian trace theorem \ref{GaussGarg}, for any constant $c_0\in\R,$ we have  
$$\int_{\partial K} (v-c_0)^2 d\gamma|_{\partial K}\leq \frac{1}{r}\int_K (n(v-c_0)^2 +|\nabla v|^2)d\gamma(x).$$
Letting $c_0=\frac{1}{\gamma(K)}\int_K v d\gamma,$ applying the Poincare inequality (\ref{poinc-sect4}) to $v-c_0$, and using (\ref{vgradbnd}), we see that the above is bounded from above by $\gamma(K)\frac{(n+1)\epsilon}{r}.$

Note also that for any $c_0\in\R,$
$$Var_{\gamma|_{\partial K}}(u)\leq \frac{1}{\gamma^+(\partial K)} \int_{\partial K} (v-c_0)^2 d\gamma|_{\partial K}.$$
We conclude that
\begin{equation}\label{conclude-v}
Var_{\gamma|_{\partial K}}(u)\leq \frac{n+1}{r}\frac{\gamma(K)}{\gamma^+(\partial K)} \epsilon.
\end{equation}
It remains to note that by the Gaussian isoperimetric inequality \cite{Bor-isop}, \cite{ST}, 
$$\frac{\gamma(K)}{\gamma^+(\partial K)}\leq \sqrt{2\pi} \gamma(K)e^{\frac{\Phi^{-1}(\gamma(K))^2}{2}}.$$
\end{proof}

\medskip

Next, we show the continuity property of the torsional rigidity.

\begin{proposition}\label{tr-cont}
Suppose $K$ and $L$ are bounded convex domains in $\R^n$, such that $\gamma(K\triangle L)\leq \epsilon$. Let $F\in L^2(K\cup L,\gamma)\cap C(\overline{K\cup L})$. Then 
$$|T^F(K)-T^F(L)|\leq c(\epsilon)\rightarrow_{\epsilon\rightarrow 0} 0,$$
where $C(\epsilon) $ depends only on $K$ and $L$, and $F.$
\end{proposition}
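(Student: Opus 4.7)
The plan is to argue via weak compactness on the level of Dirichlet solutions, using the variational identity from Lemma \ref{inf-sup}. I will prove the equivalent sequential statement: if $\{L_n\}$ is any sequence of bounded convex domains with $\gamma(L_n\triangle K)\to 0$ (with $F$ continuous on a sufficiently large fixed compact), then $T^F(L_n)\to T^F(K)$; this yields the modulus of continuity $c(\epsilon)$ claimed. By Lemma \ref{inf-sup} and Theorem \ref{exist-dir}, for each bounded convex domain $D$ there is a unique $u_D\in W^{1,2}_0(D,\gamma)\cap C^2(D)$ solving $Lu_D=F$ with $u_D|_{\partial D}=0$, and
$$\gamma(D)T^F(D)=\int_D|\nabla u_D|^2\,d\gamma=-\int_D F u_D\,d\gamma.$$

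First I would verify that Gaussian-measure convergence of bounded convex sets implies Hausdorff convergence (via Blaschke selection and continuity of $\gamma$ on the class of compact convex bodies); this gives a common ball $B_R\supset L_n\cup K$ for all $n$ large, with uniformly controlled Dirichlet-Poincare constants $C_D(L_n,\gamma)\leq C_D(B_R,\gamma)<\infty$. Extending $u_n:=u_{L_n}$ by zero yields $\bar u_n\in W^{1,2}_0(B_R,\gamma)\subset W^{1,2}(\R^n,\gamma)$, and the identity $\|\nabla \bar u_n\|_{L^2(\gamma)}^2=-\int F u_n\,d\gamma$ combined with Cauchy-Schwarz and (\ref{dir-poin-intro}) gives a uniform bound on $\|\bar u_n\|_{W^{1,2}(\R^n,\gamma)}$. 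Since the Gaussian density is bounded above and below on $\overline{B_R}$, Rellich-Kondrachov provides, along a subsequence, weak $W^{1,2}(\R^n,\gamma)$-convergence $\bar u_n\rightharpoonup u^*$ together with strong $L^2(\R^n,\gamma)$-convergence.

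Next I would identify the limit $u^*=\bar u_K$. From $\bar u_n\equiv 0$ on $\R^n\setminus L_n$, convergence $\mathbf{1}_{L_n}\to \mathbf{1}_K$ in $L^1(\gamma)$, and strong $L^2$-convergence, we conclude $u^*\equiv 0$ on $\R^n\setminus K$. For any $\phi\in C_c^\infty(K^\circ)$, Hausdorff convergence ensures $\mathrm{supp}(\phi)\subset L_n$ for $n$ large, so Gaussian integration by parts gives $-\int_{\R^n}\langle\nabla\bar u_n,\nabla\phi\rangle\,d\gamma=\int_{\R^n}\phi F\,d\gamma$, and passing to the limit using weak $W^{1,2}$-convergence yields the same identity with $u^*$, i.e.\ $Lu^*=F$ weakly on $K^\circ$. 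Since $u^*\equiv 0$ on $\R^n\setminus K$ gives the zero boundary trace on $\partial K$, the uniqueness in Theorem \ref{exist-dir} identifies $u^*|_K=u_K$. Writing $\gamma(L_n)T^F(L_n)=-\int_{\R^n}F\mathbf{1}_{L_n}\bar u_n\,d\gamma$, the decomposition
$$\int F\mathbf{1}_{L_n}\bar u_n\,d\gamma-\int F\mathbf{1}_K\bar u_K\,d\gamma=\int F(\mathbf{1}_{L_n}-\mathbf{1}_K)\bar u_n\,d\gamma+\int F\mathbf{1}_K(\bar u_n-\bar u_K)\,d\gamma$$
tends to zero, using $\|F\|_{L^\infty(\overline{B_R})}<\infty$, $\gamma(L_n\triangle K)\to 0$, the uniform $L^2$-bound on $\bar u_n$, and strong $L^2$-convergence $\bar u_n\to\bar u_K$. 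Since $\gamma(L_n)\to\gamma(K)$, we obtain $T^F(L_n)\to T^F(K)$, and as this holds along every subsequence, the full sequence converges.

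The main technical obstacle is justifying that Gaussian-measure convergence of bounded convex sets implies Hausdorff convergence; this underpins both the common ambient ball $B_R$ (used for uniform Dirichlet-Poincare constants and Rellich-Kondrachov) and the localization of test functions in the limit identification. This is standard but requires invoking Blaschke selection together with the continuity of $\gamma$ on the space of bounded convex sets in a compactness argument.
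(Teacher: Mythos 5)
Your argument is correct in substance, but it takes a genuinely different route from the paper. The paper's proof is a direct estimate: it first reduces to nested domains $A\subset B$ via $K\cap L$ and the monotonicity bound $T^F(A)\le\frac{\gamma(B)}{\gamma(A)}T^F(B)$, then writes the energy difference as $\int_{B\setminus A}|\nabla v|^2+\int_A(|\nabla v|^2-|\nabla u|^2)$, kills the first term by dominated convergence, and converts the second by integration by parts into the boundary integral $\int_{\partial A}v\langle\nabla(v-u),n_x\rangle\,d\gamma|_{\partial A}$, which it controls using the boundary gradient bound of Theorem 14.2 in \cite{GilTru} together with the trace inequality (Theorem \ref{gagliardo-1}) applied on $B\setminus A$. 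You instead run a soft compactness argument at the level of the Dirichlet solutions: uniform energy bounds from the variational identity of Lemma \ref{inf-sup} and a common Dirichlet--Poincar\'e constant, weak $W^{1,2}$ plus strong $L^2$ compactness on a fixed ball, identification of the limit through the weak formulation and the uniqueness in Theorem \ref{exist-dir}, and passage to the limit in $\gamma(L_n)T^F(L_n)=-\int F u_n\,d\gamma$. What your route buys is that it avoids the elliptic boundary-regularity input and the boundary integration by parts entirely; what it gives up is any explicit modulus $c(\epsilon)$ -- note that the paper's later use of this proposition (Corollary \ref{continuityTRapp}) leans on knowing that the constant depends only on the in-radius and on $\sup|u|$ (via the maximum principle), so with your version that corollary would have to be rephrased qualitatively, which is still enough for the equality-case analysis but is a real difference. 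Two steps you assert deserve more care: (i) deducing Hausdorff convergence from $\gamma(L_n\triangle K)\to 0$ cannot start with Blaschke selection, since that already requires the $L_n$ to lie in a common ball; you first need a small geometric argument (eventually $L_n$ contains a fixed ball $B(y_0,r_0)\subset K$ by a separation-plus-measure argument, and then any point of $L_n$ at distance $\ge\delta$ from $K$ forces the convex hull with $B(y_0,r_0)$ to contribute a ball of definite size and bounded location to $L_n\setminus K$, contradicting $\gamma(L_n\setminus K)\to 0$), after which Blaschke selection and continuity of $\gamma$ on convex bodies finish the job; and (ii) the claim that $u^*\in W^{1,2}$ vanishing a.e.\ outside $K$ has zero trace on $\partial K$, i.e.\ $u^*|_K\in W^{1,2}_0(K,\gamma)$, is the standard characterization of $W^{1,2}_0$ for Lipschitz (here convex) domains and should be quoted explicitly, as should the density of $C_c^\infty(K^\circ)$ in $W^{1,2}_0(K)$ used before invoking uniqueness. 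With those points filled in, your proof is complete.
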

\begin{proof} Note first that it is enough to show that for any convex domains $A$ and $B$ such that $A\subset B$ and $\gamma(B\setminus A)\leq \epsilon,$ we have $|T^F(A)-T^F(B)|\leq c(\epsilon).$ 

Indeed, applying this fact with $A=K$ and $B=K\cap L$, we would get $|T^F(K)-T^F(K\cap L)|\leq c(\epsilon),$ and then, similarly, we could get $|T^F(L)-T^F(K\cap L)|\leq c(\epsilon),$ which would together yield $|T^F(K)-T^F(L)|\leq 2 c(\epsilon).$

Next, by the definition of torsional rigidity, if $A\subset B,$ we have
$$T^F(A)\leq \frac{\gamma(B)}{\gamma(A)}T^F(B)=(1+C^1\epsilon) T^F(B).$$

Therefore, it suffices to focus on the case when $A\subset B,$ $\gamma(B\setminus A)\leq \epsilon,$ and to show that $T^F(B)-T^F(A)\leq c(\epsilon).$ Let $u$ be the function on $A$ with $u|_{\partial A}=0$ and $Lu=F,$ and $v$ be the function on $B$ with $v|_{\partial B}=0$ and $Lv=F.$ Our aim is to show that
$$\frac{1}{\gamma(B)}\int_B |\nabla v|^2 d\gamma-\frac{1}{\gamma(A)}\int_A |\nabla u|^2 d\gamma \leq c(\epsilon).$$
In view of our assumption that $\gamma(B\setminus A)\leq \epsilon,$ it suffices to show
\begin{equation}\label{suffices}	
\int_B |\nabla v|^2 d\gamma-\int_A |\nabla u|^2 d\gamma=\int_{B\setminus A} |\nabla v|^2 d\gamma+\int_A (|\nabla v|^2-|\nabla u|^2)d\gamma \leq 
c(\epsilon).
\end{equation}

Firstly, $\nabla v\in L^2(B)$, and therefore, by the Dominated Convergence Theorem,
$$
\int_{B\setminus A} |\nabla v|^2 d\gamma\leq 
c'(\epsilon)\rightarrow_{\epsilon\rightarrow 0} 0.
$$
Next, integrating by parts, and using $u|_{\partial A}=0$, as well as $Lu=Lv=F,$ we see
\begin{equation}\label{v-u}
\int_A (|\nabla v|^2-|\nabla u|^2)d\gamma =\int_{\partial A} v\langle \nabla (v-u),n_x\rangle d \gamma|_{\partial K}. 
\end{equation}

By the boundary regularity estimate Theorem 14.2 from Gilbarg, Trudinger \cite{GilTru}, since $A$ is convex, we have that $|\nabla u|$ is bounded on $\bar{A}$, and also $|\nabla v|$ is bounded on $\bar{B},$ and the bounds depend only on $n$ and $\sup_A |u|$ (as well as $sup_B |v|$). Thus
\begin{equation}\label{v-u-next}
\int_{\partial A} v\langle \nabla v,n_x\rangle d \gamma|_{\partial K}\leq C_2 \int_{\partial A} v^2 d \gamma|_{\partial K}\leq c(\epsilon).
\end{equation}

By the trace theorem \ref{gagliardo-1}, applied with $K=B\setminus A,$ using compactness, we see that
$$\int_{\partial A} v^2 d \gamma|_{\partial K}\leq  C'\int_{B\setminus A} (v^2+|\nabla v|^2)d\gamma\leq  c_1(\epsilon),$$
where in the last passage we used that $\gamma(B\setminus A)\leq \epsilon$, and by the dominated convergence theorem. Thus (\ref{suffices}) follows.
\end{proof}

\begin{cor}\label{continuityTRapp}
For any convex body $K$, any continuous $F: K\rightarrow\R,$ and any $\epsilon>0$ there exists a convex body $K_{\epsilon}\subset K$ with $C^2$ boundary, such that $K_{\epsilon}$ is $\epsilon-$close to $K$ in Hausdorff distance, and $|T^F(K)-T^F(K_{\epsilon})|\leq c(\epsilon),$ for some $c(\epsilon)\rightarrow_{\epsilon\rightarrow 0} 0,$ which depends only on $K.$
\end{cor}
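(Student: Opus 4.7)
The plan is to combine a classical smooth-approximation result for convex bodies with the continuity statement just established in Proposition \ref{tr-cont}. Given $K$ and $\epsilon>0$, I would first produce a convex body $K_{\epsilon}\subset K$ with $C^{2}$ boundary and Hausdorff distance $d_{H}(K,K_{\epsilon})<\epsilon$, then observe that the symmetric difference $K\triangle K_{\epsilon}=K\setminus K_{\epsilon}$ lies in the $\epsilon$-neighborhood of $\partial K$, whose Gaussian measure is bounded above by $C_{1}(K)\epsilon$ (since the Gaussian density is uniformly bounded on the compact set $K$, and $\partial K$ has finite $(n-1)$-Hausdorff measure by convexity). Proposition \ref{tr-cont} would then yield a bound of the form $|T^{F}(K)-T^{F}(K_{\epsilon})|\leq c(\epsilon)$, with $c(\epsilon)$ depending only on $K$ and $F$ and tending to zero with $\epsilon$.

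The construction of $K_{\epsilon}$ is the delicate step: naive smoothings of $K$ (e.g.\ Minkowski sum with a small ball) typically produce $C^{1,1}$ bodies that \emph{contain} $K$ rather than sit inside it, so I would first shrink $K$ slightly and only then smooth. Fix an interior point $x_{0}$ with $x_{0}+rB_{2}^{n}\subset K$ for some $r>0$, and for $\delta\in(0,\epsilon/(2\,\mathrm{diam}(K)))$ set $K^{\delta}:=(1-\delta)(K-x_{0})+x_{0}$. Then $K^{\delta}$ is a convex body with $d_{H}(K,K^{\delta})\leq \delta\,\mathrm{diam}(K)<\epsilon/2$, and the inward shift of the boundary ensures $K^{\delta}+\delta r B_{2}^{n}\subset K$. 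Next I would invoke the classical density theorem (see Schneider \cite{book4}) that convex bodies with $C^{\infty}$ boundary and strictly positive Gauss curvature are dense in the space of all convex bodies under the Hausdorff metric; applied to $K^{\delta}$, this produces a $C^{\infty}$ convex body $K_{\epsilon}$ with $d_{H}(K^{\delta},K_{\epsilon})<\min(\epsilon/2,\delta r)$. The second constraint forces $K_{\epsilon}\subset K^{\delta}+\delta rB_{2}^{n}\subset K$, while the triangle inequality gives $d_{H}(K,K_{\epsilon})<\epsilon$.

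With this $K_{\epsilon}$ in hand, the proof concludes by applying Proposition \ref{tr-cont} directly to the pair $(K,K_{\epsilon})$; the hypothesis $F\in L^{2}(K,\gamma)\cap C(\overline{K})$ is automatic since $F$ is assumed continuous on the compact set $\overline{K}$. The main obstacle is the simultaneous requirement that $K_{\epsilon}$ be both smooth \emph{and} contained in $K$: Schneider's theorem provides Hausdorff proximity but no intrinsic control on containment, so the preliminary contraction $K\mapsto K^{\delta}$ — together with the positive inner margin $\delta r$ it creates — is precisely what allows the smooth approximant to be absorbed inside $K$. Once this absorption trick is in place, the remainder of the argument is a routine application of the continuity already proved.
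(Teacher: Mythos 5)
Your construction of the inner smooth approximant is fine (and more detailed than what the paper records: the contraction $K^{\delta}=(1-\delta)(K-x_{0})+x_{0}$ creates the inner margin $\delta r$, Schneider's density theorem then gives a $C^{2}$ body inside $K$ and $\epsilon$-close to it), and the passage from Hausdorff distance to $\gamma(K\triangle K_{\epsilon})\leq C_{1}(K)\epsilon$ is also correct. The gap is in the last step, which you dismiss as ``a routine application of the continuity already proved.'' Proposition \ref{tr-cont} gives $|T^{F}(K)-T^{F}(L)|\leq c(\epsilon)$ with a modulus $c$ that depends on \emph{both} bodies $K$ and $L$ (through the boundary gradient bound of Theorem 14.2 in \cite{GilTru} for the Dirichlet solution on the smaller body, hence through $\sup|u_{\epsilon}|$ on $K_{\epsilon}$, and through the trace constant for the region between the two bodies). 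In the corollary the approximant $K_{\epsilon}$ changes with $\epsilon$, so applying Proposition \ref{tr-cont} to each pair $(K,K_{\epsilon})$ only yields, for each $\epsilon$, a bound with a modulus depending on $K_{\epsilon}$; it does not by itself give a single modulus $c(\epsilon)\to 0$ depending only on $K$, which is exactly what the corollary asserts. Your assertion that the resulting $c(\epsilon)$ ``depends only on $K$ and $F$'' is stated without justification, and this is the whole content of the corollary beyond Proposition \ref{tr-cont}.

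The paper closes precisely this gap: it observes that the constants entering Proposition \ref{tr-cont} are controlled by the in-radius of $K_{\epsilon}$ (comparable to that of $K$ once $\delta$ is small) and by $\sup_{K_{\epsilon}}|u_{\epsilon}|$, where $u_{\epsilon}$ solves $Lu_{\epsilon}=F$ on $K_{\epsilon}$ with zero Dirichlet data; writing $v=u-u_{\epsilon}$ with $u$ the Dirichlet solution on $K$, one has $Lv=0$ and $v$ small on $\partial K_{\epsilon}$ by continuity of $u$, so the maximum principle bounds $|v|$, hence $\sup|u_{\epsilon}|$, by a constant depending only on $K$, uniformly over the family of approximants. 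To repair your argument you would need to add this (or an equivalent) uniformity step; without it the claimed dependence of $c(\epsilon)$ on $K$ alone does not follow.
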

\begin{proof} We select $K_{\epsilon}$ to be a $C^2$ $\delta$-approximation of $K$ from inside in Hausdorff distance, where $\delta<\epsilon$ is chosen sufficiently small so that $|T^F(K)-T^F(L)|\leq \epsilon,$ which is possible to do by the previous proposition. The crucial observation here is that the constant $C$ from Proposition \ref{tr-cont} is uniformly bounded once $\delta$ is chosen small enough. 

Indeed, it only depends on the in-radius of $K_{\epsilon}$ (which is the same as that of $K$, up to a factor of $2$), and on the supremum of the solution $u_{\epsilon}$ of the equation $Lu_{\epsilon}=F$ on $K_{\epsilon}$ with the Dirichlet boundary condition (as per Theorem \ref{GaussGarg} and Theorem 14.2 from \cite{GilTru}). Letting $v=u-u_{\epsilon},$ where $u$ is the solution of $Lu=F$ on $K$ with the Dirichlet boundary condition, we see that $Lv=0$ and $|v|_{\partial K_{\epsilon}}|\leq C(\epsilon)\rightarrow 0,$ by continuity of $v.$ This yields that $|v_{\epsilon}|<C(\epsilon)$ on $K_{\epsilon},$ by the maximum principle. Thus indeed $|v_{\epsilon}|<C'$ on $K_{\epsilon}$, with $C'$ that only depends on $K$ (and not $K_\epsilon$). We conclude that the constant $C$ from Proposition \ref{tr-cont} is uniformly bounded once $\delta$ is chosen small enough, and the corollary follows. \end{proof}

Lastly, we shall need

\begin{lemma}\label{infiniteapproximation}
For any convex set $K$ with non-empty interior, any continuous $F:K\rightarrow\R,$ and any $\epsilon>0,$ there exists an $R>0$ such that 
$$|T^F(K)-T^F(K\cap R B^n_2)|\leq \epsilon.$$
\end{lemma}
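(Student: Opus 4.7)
The plan is to exploit Lemma~\ref{inf-sup} together with the ``furthermore'' part of Theorem~\ref{exist-dir} (which permits unbounded $K$ in the Gaussian Dirichlet case) to rewrite
$$T^F(K) = \frac{1}{\gamma(K)}\int_K |\nabla u|^2\, d\gamma, \qquad T^F(K_R) = \frac{1}{\gamma(K_R)}\int_{K_R} |\nabla u_R|^2\, d\gamma,$$
where $K_R := K \cap RB^n_2$ and $u \in W^{1,2}(K,\gamma)$, $u_R \in W^{1,2}(K_R,\gamma)$ are the unique weak solutions of $Lu = F$, $Lu_R = F$ vanishing on $\partial K$ and $\partial K_R$ respectively. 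Since $\gamma(K \setminus K_R) \to 0$, so that $\gamma(K_R) \uparrow \gamma(K) > 0$, it suffices to prove $T^F(K_R) \to T^F(K)$ as $R \to \infty$, and I would do this by establishing matching $\limsup$ and $\liminf$ bounds via the variational (sup) formulation.

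For the upper bound, any $v \in W^{1,2}_0(K_R,\gamma)$ extends by zero to an element of $W^{1,2}_0(K,\gamma)$, and testing the sup form of $T^F(K)$ against this extension yields
$$T^F(K_R) \leq \frac{\gamma(K)}{\gamma(K_R)}\, T^F(K) \longrightarrow T^F(K).$$
For the lower bound, I would fix a smooth radial cut-off $\eta_R$ with $\eta_R \equiv 1$ on $(R-1)B^n_2$, $\eta_R \equiv 0$ outside $RB^n_2$, and $|\nabla \eta_R| \leq 2$, and test the sup form of $T^F(K_R)$ against $v_R := (\eta_R u)|_{K_R} \in W^{1,2}_0(K_R,\gamma)$, obtaining
$$T^F(K_R) \geq \frac{1}{\gamma(K_R)} \cdot \frac{\bigl(\int_{K_R} F v_R\, d\gamma\bigr)^2}{\int_{K_R} |\nabla v_R|^2\, d\gamma}.$$
Using $F, u, |\nabla u| \in L^2(K,\gamma)$, the dominated convergence theorem gives $\int_{K_R} F v_R\, d\gamma \to \int_K Fu\, d\gamma$, while expanding
$$|\nabla v_R|^2 = \eta_R^2 |\nabla u|^2 + 2\eta_R u\langle \nabla u, \nabla \eta_R\rangle + u^2|\nabla \eta_R|^2$$
and absorbing the last two terms (supported on the shrinking annulus $K \cap (RB^n_2 \setminus (R-1)B^n_2)$) via Cauchy--Schwarz and absolute continuity of the $L^2$-integrals yields $\int_{K_R}|\nabla v_R|^2 \to \int_K |\nabla u|^2$. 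Finally, the weak formulation of $Lu=F$ tested against $u$ itself gives $\int_K Fu\, d\gamma = -\int_K |\nabla u|^2\, d\gamma$, so the ratio converges to $\frac{1}{\gamma(K)} \int_K |\nabla u|^2\, d\gamma = T^F(K)$, closing the lower bound.

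The main obstacle is controlling the two annulus terms $\int_{K_R} \eta_R u \langle \nabla u, \nabla \eta_R\rangle\, d\gamma$ and $\int_{K_R} u^2|\nabla \eta_R|^2\, d\gamma$ in the gradient expansion: both must vanish as $R \to \infty$, and this crucially relies on $u \in W^{1,2}(K, \gamma)$ \emph{globally} --- a property granted by the special feature of the Gaussian case that allows unbounded domains in Theorem~\ref{exist-dir}, but which would be delicate (or impossible) to arrange for more general log-concave measures. Once this global $W^{1,2}$ property is in hand, the bounds $|\eta_R|, |\nabla \eta_R| \leq 2$ together with Cauchy--Schwarz reduce the problem to the fact that an $L^1$ function has vanishing integral on sets whose measure tends to zero, which is immediate.
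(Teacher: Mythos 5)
Your argument is correct, but it proves the hard direction by a genuinely different route than the paper. Your easy direction (extend a competitor on $K\cap RB^n_2$ by zero to get $T^F(K\cap RB^n_2)\leq \frac{\gamma(K)}{\gamma(K\cap RB^n_2)}T^F(K)$) is exactly the paper's first step. For the reverse inequality, the paper compares the two Dirichlet solutions $v$ on $K$ and $v_R$ on $K\cap RB^n_2$ directly: it splits $\gamma(K)\bigl(T^F(K)-T^F(K\cap RB^n_2)\bigr)$ into a far tail $\int_{K\setminus R_2B^n_2}|\nabla v|^2d\gamma$ plus the bulk difference $\int_{K\cap R_2B^n_2}\bigl(|\nabla v|^2-|\nabla v_R|^2\bigr)d\gamma$, and then integrates by parts, invoking the Gilbarg--Trudinger boundary gradient estimate to kill the integral over $\partial K\cap R_2B^n_2$ and Cauchy--Schwarz on the spherical interface $K\cap R_2\sfe$. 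You instead never touch $v_R$: you feed the truncation $\eta_R u$ of the single global solution $u$ (available on the unbounded $K$ by the Gaussian case of Theorem \ref{exist-dir}) into the sup-characterization of $T^F(K\cap RB^n_2)$, and the annulus error terms vanish by dominated convergence because $u,\nabla u\in L^2(K,\gamma)$; the identity $\int_K Fu\,d\gamma=\pm\int_K|\nabla u|^2\,d\gamma$ from the weak formulation then closes the limit via Lemma \ref{inf-sup}. What your route buys is the avoidance of elliptic boundary regularity and of surface integrals of $\nabla v_R$ over spheres (the most delicate point in the paper's sketch, where $v_R$ itself depends on $R$ and the interface term requires choosing $R_2$ ``appropriately''); what the paper's route buys is a two-sided comparison of the energies of the actual minimizers, in the same spirit as its Proposition \ref{tr-cont}, rather than an argument tailored to the variational (one-sided) structure. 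Two small points to pin down in your write-up: the degenerate case $\int_K|\nabla u|^2\,d\gamma=0$ (then $T^F(K)=0$ and your easy direction already forces $T^F(K\cap RB^n_2)=0$, so the ratio never needs to be formed), and the routine verification that $\eta_R u$ has zero trace on all of $\partial(K\cap RB^n_2)$, i.e. on $\partial K\cap RB^n_2$ because $\mathrm{TR}(u)=0$ and on $K\cap R\sfe$ because $\eta_R$ vanishes there.
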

\begin{proof} Firstly, we note that
$$T^F(K\cap R B^n_2)\leq \frac{\gamma(K)}{\gamma(K\cap R B^n_2)}T^F(K),$$
and select $R_1$ to be large enough so that the above yields 
$$T^F(K\cap R_1 B^n_2)-T^F(K)\leq \epsilon.$$

Next, let $v:K\rightarrow \R$ be the solution of $Lv=F$ and $v|_{\partial K}=0$ and $v_R:K\rightarrow \R$ be the solution of $Lv_R=F$ and $v|_{\partial K_R}=0$. Suppose $R>R_2$ for some $R_2>0.$ Then
$$\gamma(K)\left(T^F(K)-T^F(K\cap R B^n_2)\right)\leq\int_{K\setminus R_2B^n_2} |\nabla v|^2 d\gamma+\int_{K\cap R_2 B^n_2} |\nabla v|^2-|\nabla v_R|^2 d\gamma.$$
Select $R_2$ to be large enough so that $\int_{K\setminus R_2B^n_2} |\nabla v|^2 d\gamma\leq \epsilon.$

Finally, integrating by parts, we see
$$\int_{K\cap R_2 B^n_2} |\nabla v|^2-|\nabla v_R|^2 d\gamma=$$$$\int_{\partial K\cap R_2 B^n_2} \langle \nabla v-\nabla v_R,n_x\rangle (v+v_R) d\gamma|_{\partial K}+ \int_{K\cap R_2 \sfe} \langle \nabla v-\nabla v_R,n_x\rangle (v+v_R) d\gamma|_{R\sfe}.$$
Since $v$ and $v_R$ take value $0$ on $\partial K\cap R B^n_2,$ and in view of the boundary regularity estimate Theorem 14.2 from Gilbarg, Trudinger \cite{GilTru} (which yields that $\langle \nabla v-\nabla v_R,n_x\rangle$ is bounded almost everywhere on $\partial K\cap R_2 B^n_2$), we get that
$$\int_{\partial K\cap R_2 B^n_2} \langle \nabla v-\nabla v_R,n_x\rangle (v+v_R) d\gamma|_{\partial K}=0.$$
Lastly, we write 
$$\int_{K\cap R_2 \sfe} \langle \nabla v-\nabla v_R,n_x\rangle (v+v_R) d\gamma|_{R\sfe}\leq $$$$ 10\sqrt{\int_{K\cap R_2 \sfe} |\nabla v|^2 + |\nabla v_R|^2}\cdot \sqrt{\int_{K\cap R_2 \sfe} v^2 +v_R^2 },$$
which can be made arbitrarily small in view of the fact that $v, v_R, |\nabla v|, |\nabla v_R|$ are all square integrable on $K,$ provided that $R_2$ is selected appropriately. Letting $R=\max(R_1,R_2)$ finishes the proof.
\end{proof}

\medskip
\medskip


\medskip

\subsection{Isoperimetric inequalities for the Gaussian torsional rigidity}

First, let us consider the case $F=1$ and use notation $T_{\gamma}(K)=T^1_{\gamma}(K)$. We notice an isoperimetric inequality for $T_{\gamma},$ analogous to the Saint-Venant theorem (see, e.g. Polya-Szeg\"o \cite{PolSz} for the Lebesgue version):

\begin{proposition}[Gaussian version of the Saint-Venant inequality]\label{GSV}
Fix $a\in [0,1]$ and let $K$ be a convex set with $\gamma(K)=a.$ Then 
$$T_{\gamma}(K)\leq T_{\gamma}(H_a),$$
where $H_a$ is the half-space of Gaussian measure $a.$
\end{proposition}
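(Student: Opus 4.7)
The plan is to combine the variational definition of $T_\gamma(K)$ with Ehrhard's symmetrization principle (Lemma \ref{EhrPr}). For $F=1$, unraveling the normalization $\int = \frac{1}{\gamma(K)}\int_K d\gamma$, the formula for the torsional rigidity becomes
\[
T_\gamma(K) \;=\; \sup_{v \in W^{1,2}_0(K,\gamma)} \frac{\left(\int_K v\,d\gamma\right)^2}{\gamma(K)\int_K |\nabla v|^2\,d\gamma}.
\]
Since $\gamma(K) = \gamma(H_a) = a$, it suffices to compare the un-normalized Rayleigh quotients $\left(\int v\right)^2 / \int |\nabla v|^2$ on the two domains. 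Replacing $v$ by $|v|$ preserves the denominator and only enlarges the numerator, and locally Lipschitz functions are dense in $W^{1,2}_0(K,\gamma)$, so I may assume throughout that $v \ge 0$ is locally Lipschitz and that Ehrhard's principle is available.

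Given such a $v$, extend it by zero to all of $\R^n$ and let $v^*$ be its Ehrhard symmetrand, so that $\{v^* \le t\}$ is the left half-space of Gaussian measure $\gamma(\{v \le t\})$ for every $t$. Since $\{v > 0\} \subseteq K$ has Gaussian measure at most $a$, the support of $v^*$ is contained in a half-space $\tilde H$ of measure exactly $a$, which is a Gaussian-isometric reflection of $H_a$; after this reflection $v^*$ becomes a legitimate test function in $W^{1,2}_0(H_a,\gamma)$. Equimeasurability of level sets together with the layer-cake identity gives
\[
\int_K v\,d\gamma \;=\; \int_{\R^n} v\,d\gamma \;=\; \int_{\R^n} v^*\,d\gamma \;=\; \int_{\tilde H} v^*\,d\gamma,
\]
while Ehrhard's principle applied to the convex increasing $\varphi(t)=t^2$ yields
\[
\int_K |\nabla v|^2\,d\gamma \;=\; \int_{\R^n} |\nabla v|^2\,d\gamma \;\ge\; \int_{\R^n} |\nabla v^*|^2\,d\gamma \;=\; \int_{\tilde H} |\nabla v^*|^2\,d\gamma.
\]

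Dividing and using $\gamma(K)=\gamma(\tilde H)=\gamma(H_a)$, these two displays imply
\[
\frac{\left(\int_K v\,d\gamma\right)^2}{\gamma(K)\int_K |\nabla v|^2\,d\gamma} \;\le\; \frac{\left(\int_{\tilde H} v^*\,d\gamma\right)^2}{\gamma(\tilde H)\int_{\tilde H} |\nabla v^*|^2\,d\gamma} \;\le\; T_\gamma(\tilde H) \;=\; T_\gamma(H_a),
\]
and taking the supremum over $v$ on the left finishes the proof.

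The only technical step is to verify that the Ehrhard symmetrand $v^*$ really is an admissible competitor for $T_\gamma(H_a)$, namely that $v^* \in W^{1,2}(\R^n,\gamma)$ with vanishing trace on $\partial \tilde H$. Membership in $W^{1,2}$ is the Gaussian analogue of the P\'olya--Szeg\"o regularity of Schwartz symmetrization (see e.g.\ \cite{Ehr} or, for the Schwartz case, Burchard \cite{Almut}), and the zero boundary trace is automatic from $v^* \equiv 0$ on $\R^n \setminus \tilde H$. The reduction to locally Lipschitz $v$ is standard by density of Lipschitz functions in $W^{1,2}_0$, together with continuity of the Rayleigh quotient under $W^{1,2}$ convergence. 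Beyond this bookkeeping of orientation conventions for the Ehrhard symmetrand, I do not anticipate further obstacles.
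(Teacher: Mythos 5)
Your proof is correct and follows essentially the same route as the paper: Ehrhard symmetrization of a test function combined with the variational characterization of $T_\gamma$, using equimeasurability for the numerator and Lemma \ref{EhrPr} with $\varphi(t)=t^2$ for the denominator. The only difference is that the paper symmetrizes the specific extremizer $v_0$ solving $Lv_0=1$ with Dirichlet boundary data (via Lemma \ref{inf-sup} and Theorem \ref{exist-dir}), while you symmetrize an arbitrary nonnegative test function and take the supremum, which is an equally valid (and arguably slightly more self-contained) bookkeeping of the same idea.
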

\begin{proof} Let $v_0$ vanish on the boundary of $K$, $Lv_0=1,$ and let $v_0^*$ be its Ehrhard symmetrization $v_0^*: H_a\rightarrow\R$. By the definition of the Ehrhard symmetrization, $v_0^{*}$ also vanishes on the boundary of $H_a$, and $\int v_0^*=\int v_0$. By Lemma \ref{EhrPr} (the Ehrhard principle),
$$\int_{H_a} |\nabla v_0^*|^2 d\gamma\leq \int_{K} |\nabla v_0|^2 d\gamma.$$ 
Thus, in view of Lemma \ref{inf-sup},
$$T_{\gamma}(K)=\frac{\left(\int_K v_0 d\gamma\right)^2}{\int_K |\nabla v_0|^2 d\gamma}\leq \frac{\left(\int_{H_a} v^*_0 d\gamma\right)^2}{\int_{H_a} |\nabla v^*_0|^2 d\gamma}\leq T_{\gamma}(H_a).$$
\end{proof}

In what follows, we shall discuss a generalization of the Proposition \ref{GSV}.

\medskip


Let us first discuss a Gaussian analogue of Talenti's inequality. For some background on the usual (Lebesgue) Talenti's inequality \cite{Talenti}, see also books by Vazquez \cite{Vasquez} or Kesavan \cite{Kesavan}. The following proposition was shown by Betta, Brock, Mercaldo, Posteraro \cite{gady}, as was discovered by the author after this manuscript was written. We shall outline the proof for the reader's convenience.

\begin{proposition}[Gaussian analogue of Talenti's inequality]\label{talenti}
Let $F\in L^2(K,\gamma)$, for some measurable set $K$ in $\R^n,$ and suppose $F\geq 0.$ Let $u:K\rightarrow\R$ be the weak solution of the equation $Lu=-F$ on $K$ with the Dirichlet boundary condition $u|_{\partial K}=0$. Let $H_K=\{x\in\R^n:\,x_1\leq \Phi^{-1}(\gamma(K))\}$ (the half-space of the same Gaussian measure as $K$), and let $v$ on $H_K$ be given by $Lv=-F^*,$ with $v|_{\partial H_K}=0.$ Then $v\geq u^*$ everywhere on $H_K.$
\end{proposition}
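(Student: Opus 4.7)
The strategy is to follow the classical Talenti comparison argument \cite{Talenti}, with the Gaussian isoperimetric inequality of Borell--Sudakov--Tsirelson \cite{Bor-isop}, \cite{ST} playing the role of the Euclidean isoperimetric inequality. First I would observe that $F\geq 0$ together with the maximum principle applied to $Lu=-F$ with zero Dirichlet data forces $u\geq 0$ on $K$, and analogously $v\geq 0$ on $H_K$; moreover, $v$ (and hence $u^*$) depends on $x_1$ only and is weakly decreasing in $x_1$. I would then introduce the distribution functions
$$\mu(t)=\gamma(\{u>t\}),\qquad \nu(t)=\gamma(\{v>t\}),\qquad t\geq 0,$$
noting that $\mu(0)=\nu(0)=\gamma(K)$ and that both vanish at infinity.

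The key tool is the differential inequality satisfied by $\mu$. Integrating $-Lu=F$ over $\{u>t\}$ and applying the Gaussian divergence theorem, with $n_x=-\nabla u/|\nabla u|$ on a regular level set, yields
$$\int_{\{u>t\}} F\,d\gamma=\int_{\{u=t\}} |\nabla u|\,d\gamma_{\partial\{u>t\}}.$$
Combined with the Gaussian co-area formula $-\mu'(t)=\int_{\{u=t\}} |\nabla u|^{-1}\,d\gamma_{\partial\{u>t\}}$ and the Cauchy--Schwarz inequality, this gives
$$\gamma^+(\partial\{u>t\})^2\leq (-\mu'(t))\int_{\{u>t\}} F\,d\gamma.$$
Bounding the left side from below by $\mathcal{I}(\mu(t))^2$, where $\mathcal{I}(s)=\frac{1}{\sqrt{2\pi}}e^{-\psi^{-1}(s)^2/2}$ is the Gaussian isoperimetric profile, and the right-hand integral from above by $G(\mu(t)):=\int_{H_{\mu(t)}} F^*\,d\gamma$ via the Hardy--Littlewood inequality for the Ehrhard rearrangement (which follows immediately from the layer-cake representation combined with the fact that Ehrhard symmetrization preserves Gaussian measures of super-level sets), I obtain the ODE inequality
$$-\mu'(t)\geq \frac{\mathcal{I}(\mu(t))^2}{G(\mu(t))}.$$

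The second step is to run the same computation for $v$: since $v$ depends only on $x_1$, each super-level set $\{v>t\}$ is a left half-space on which $|\nabla v|$ is constant and on which $\int F^*\,d\gamma=G(\nu(t))$ by construction, so both the Cauchy--Schwarz inequality and the Gaussian isoperimetric inequality are saturated, yielding
$$-\nu'(t)=\frac{\mathcal{I}(\nu(t))^2}{G(\nu(t))}.$$
Separating variables and using $\mu(0)=\nu(0)$, one gets
$$\int_{\mu(t)}^{\gamma(K)}\frac{G(y)}{\mathcal{I}(y)^2}\,dy\geq t=\int_{\nu(t)}^{\gamma(K)}\frac{G(y)}{\mathcal{I}(y)^2}\,dy,$$
and since the integrand is positive, this forces $\mu(t)\leq\nu(t)$ for every $t\geq 0$. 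Finally, since the super-level sets $\{u^*>t\}$ and $\{v>t\}$ are both left half-spaces of $H_K$ ordered by Gaussian measure, the ordering $\mu\leq\nu$ of the level-set measures is equivalent to the pointwise bound $u^*\leq v$ on $H_K$.

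The main technical obstacle will be the standard measure-theoretic subtleties: the a.e.\ differentiability of $\mu$ and the handling of the critical set where $\nabla u=0$, both of which I would address by invoking Sard's theorem so that the above identities hold for a.e.\ regular value $t$, and, in the case of unbounded $K$, ensuring the integrability of $|\nabla u|$ and the validity of the divergence computation. The latter I would handle by first establishing the proposition for $K\cap RB_2^n$ using Theorem \ref{exist-dir} and the torsional-rigidity approximation results of Proposition \ref{tr-cont} and Lemma \ref{infiniteapproximation} from Subsection 6.1, and then passing to the limit $R\to\infty$. The remaining ingredients---Ehrhard's principle (Lemma \ref{EhrPr}), the Hardy--Littlewood rearrangement inequality in the Gaussian setting, and the elementary ODE comparison above---are by now entirely standard.
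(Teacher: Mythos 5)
Your proposal is correct and follows essentially the same route as the paper: level-set integration by parts plus the co-area formula and Cauchy--Schwarz, the Gaussian isoperimetric inequality, and the Gaussian Hardy--Littlewood inequality, with the observation that every inequality is saturated for the half-space problem. The only cosmetic difference is that you integrate the resulting differential inequality for the distribution functions $\mu,\nu$ by separation of variables, whereas the paper performs the same comparison after composing with the auxiliary antiderivative $h$ (with $h'(x)=e^{x^2/2}\int_{-\infty}^x F^*(se_1)\,d\gamma(s)$) and working with the inverse functions $(u^*)^{-1}$ and $v^{-1}$.
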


First, we formulate two lemmas. 

\begin{lemma}\label{maxprinc}
Let $u:K\rightarrow\R$ be the weak solution of the equation $Lu=-F$ on $K$ with the Dirichlet boundary condition $u|_{\partial K}=0$, with $F\geq 0.$ Then $u\geq 0.$
\end{lemma}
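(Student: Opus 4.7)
The plan is to apply the classical weak maximum principle by testing the equation $Lu = -F$ against the negative part $u^- := \max(-u, 0)$ of $u$ itself. The boundary condition $u|_{\partial K} = 0$ forces $u^- \in W_0^{1,2}(K, \gamma)$ (with $\nabla u^- = -\nabla u \cdot \mathbf{1}_{\{u < 0\}}$ a.e., by the standard Sobolev truncation lemma applied to the Lipschitz function $t \mapsto \max(-t,0)$), so $u^-$ is an admissible test function. This is crucial because Proposition \ref{talenti} has to apply to arbitrary measurable $K$, and in the unbounded convex case we are still allowed to use Theorem \ref{exist-dir} for $\gamma$ with $f = 0$.

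First I would invoke the integration-by-parts identity
$$\int_K u^- \cdot Lu \, d\gamma = -\int_K \langle \nabla u^-, \nabla u\rangle \, d\gamma + \int_{\partial K} u^- \langle \nabla u, n_x\rangle \, d\gamma_{\partial K}$$
recorded at the start of Subsection 2.3. The boundary term vanishes since $u^-|_{\partial K} = 0$ in the trace sense; substituting $Lu = -F$ on the left and using the pointwise a.e.\ identity $\langle \nabla u^-, \nabla u\rangle = -|\nabla u^-|^2$, this yields
$$-\int_K u^- F \, d\gamma = \int_K |\nabla u^-|^2 \, d\gamma.$$

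Next I would exploit the signs: the right-hand side is manifestly $\geq 0$, while the left-hand side is $\leq 0$ because $u^- \geq 0$ and $F \geq 0$ pointwise. Both sides must therefore vanish; in particular $\nabla u^- = 0$ a.e.\ on $K$. Combined with $u^-|_{\partial K} = 0$, the Dirichlet--Poincar\'e inequality (\ref{dir-poin-intro}) (whose constant is finite by (\ref{C_D-eq}), as $K$ is a proper Lipschitz subdomain of $\R^n$ -- or, if $K$ is unbounded, the finiteness follows directly by the one-dimensional estimate in the direction of the missing half-space) forces $u^- = 0$ a.e., i.e.\ $u \geq 0$ a.e.\ on $K$; standard interior elliptic regularity for $L$ upgrades this to $u \geq 0$ pointwise wherever $u$ is continuous, which is enough for the subsequent use in Proposition \ref{talenti}.

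I do not foresee any genuine obstacle: this is the drift-diffusion analogue of the textbook weak maximum principle for the operator $L = \Delta - \langle \nabla V, \nabla \cdot\rangle$, and the only technical point to verify is the membership $u^- \in W_0^{1,2}(K, \gamma)$, which is immediate because $\gamma$ has smooth, strictly positive density and truncation preserves $W_0^{1,2}$.
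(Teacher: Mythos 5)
Your argument is correct, and it takes a genuinely different route from the paper. The paper proves the lemma by the classical pointwise maximum principle: it assumes $u$ is a strong ($C^2$) solution, supposes $u<0$ somewhere, locates an interior local minimum $x_0$ (using $u|_{\partial K}=0$), and derives a contradiction between the sign of the Hessian at $x_0$ and $\Delta u(x_0)=Lu(x_0)\leq 0$; the genuinely weak case is then dispatched by a one-line appeal to approximation. You instead work directly in the weak formulation: testing with $u^-=\max(-u,0)\in W_0^{1,2}(K,\gamma)$, using the vanishing of the boundary term and $\langle\nabla u^-,\nabla u\rangle=-|\nabla u^-|^2$ a.e., you get $-\int_K u^- F\,d\gamma=\int_K|\nabla u^-|^2\,d\gamma$, whence both sides vanish by the sign hypotheses, and $u^-=0$ a.e.\ follows from the Dirichlet--Poincar\'e inequality (\ref{dir-poin-intro}), whose finite constant is supplied by (\ref{C_D-eq}) (proved in Corollary \ref{cor-ehr-nonsym-1}, independently of this lemma, so there is no circularity). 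What your approach buys is that it operates at exactly the regularity level at which the lemma is stated and used in Proposition \ref{talenti} -- no strong solution, no interior minimum (which need not even be attained when $K$ is unbounded), and no approximation step -- at the modest price of the truncation lemma for $W_0^{1,2}$ and the Poincar\'e constant; alternatively you could dispense with Poincar\'e altogether by noting that $\nabla u^-=0$ a.e.\ on the connected set $K$ makes $u^-$ constant, and the zero trace kills the constant. The only cosmetic remark is that the conclusion you need downstream is $u\geq 0$ a.e., so the final sentence about upgrading to pointwise nonnegativity via interior regularity is unnecessary.
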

\begin{proof} This classical fact is a straight-forward consequence of the maximum principle, and we outline this simple argument for the reader's convenience, in the case when $u$ is the strong solution of class $C^2$ (and the general case follows by approximation). Suppose $u(x)<0$ at some point $x\in int(K).$ As $u\in C^2(K),$ and since $u|_{\partial K}=0,$ there exists a point $x_0\in int(K)$ which is a local minima for $u.$ Therefore, $\nabla^2 u(x_0)>0$; additionally, $\nabla u(x_0)=0$, and thus $Lu|_{x_0}=\Delta u|_{x_0}$. However, $Lu|_{x_0}\leq 0$ by our assumption that $Lu=-F$ for $F\geq 0$. In other words, $tr(\nabla^2 u)|_{x_0}\leq 0,$ leading to the contradiction with the fact that $\nabla^2 u(x_0)>0.$	
\end{proof}

Next, we formulate the Gaussian analogue of the Hardy-Littlewood inequality. The reader may find the Lebesgue version, with an analogous proof, e.g. in Burchard \cite{Almut}.

\begin{lemma}[Gaussian Hardy-Littlewood inequality]\label{HLrinc}
Let $K$ be a measurable set and suppose $f,g: K\rightarrow\R$ are non-negative measurable functions in $L^2(\gamma, K)$. Then
$$\int_K fg d\gamma(x)\leq \int_{H_K} f^* g^* d\gamma(x).$$
\end{lemma}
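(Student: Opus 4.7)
I would prove this by the standard layer-cake (bathtub) strategy, reducing the pointwise inequality to an inequality about level sets, and then using the trivial fact that Ehrhard super-level sets are nested half-spaces in a common direction.

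\textbf{Step 1: Layer-cake representation.} Since $f,g\geq 0$, write
\begin{equation*}
f(x)=\int_0^\infty \mathbf 1_{\{f>s\}}(x)\,ds,\qquad g(x)=\int_0^\infty \mathbf 1_{\{g>t\}}(x)\,dt,
\end{equation*}
and extend $f$ and $g$ by zero outside $K$, and $f^*,g^*$ by zero outside $H_K$. Applying Fubini,
\begin{equation*}
\int_K fg\, d\gamma = \int_0^\infty\int_0^\infty \gamma\bigl(\{f>s\}\cap\{g>t\}\bigr)\,ds\,dt,
\end{equation*}
with the analogous identity for $\int_{H_K} f^*g^*\,d\gamma$ involving $\gamma(\{f^*>s\}\cap\{g^*>t\})$.

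\textbf{Step 2: Reduce to an inequality on level sets.} It therefore suffices to prove that for every measurable $A,B\subset K$,
\begin{equation*}
\gamma(A\cap B)\leq \gamma(H_A\cap H_B),
\end{equation*}
and then apply this with $A=\{f>s\}$, $B=\{g>t\}$, using that by the definition of Ehrhard's rearrangement, the super-level sets satisfy $\{f^*>s\}=H_{\{f>s\}}$ and $\{g^*>t\}=H_{\{g>t\}}$.

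\textbf{Step 3: The nested half-space observation.} Both $H_A$ and $H_B$ are left half-spaces in the same direction $e_1$:
\begin{equation*}
H_A=\{x_1\leq \psi^{-1}(\gamma(A))\},\qquad H_B=\{x_1\leq \psi^{-1}(\gamma(B))\}.
\end{equation*}
Without loss of generality $\gamma(A)\leq \gamma(B)$, so $H_A\subset H_B$, and hence
\begin{equation*}
\gamma(H_A\cap H_B)=\gamma(H_A)=\gamma(A)=\min(\gamma(A),\gamma(B))\geq \gamma(A\cap B),
\end{equation*}
where the last inequality is the trivial bound $\gamma(A\cap B)\leq \min(\gamma(A),\gamma(B))$. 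Plugging this back into Step 2 and integrating over $s,t$ completes the proof.

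There is no real obstacle here: the argument is entirely analogous to the classical Lebesgue Hardy-Littlewood inequality, and the only structural ingredient particular to the Gaussian setting is that Ehrhard symmetrization sends each super-level set to a half-space orthogonal to $e_1$, so any two such half-spaces are automatically comparable by inclusion. No isoperimetric input is needed for this inequality, in contrast to the Polya-Szeg\"o / Ehrhard principle (Lemma \ref{EhrPr}) used elsewhere in the paper.
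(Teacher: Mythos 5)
Your proof is correct and follows essentially the same route as the paper: layer-cake plus Fubini, reduction to the level-set inequality $\gamma(A\cap B)\leq\gamma(H_A\cap H_B)$, and the observation that the rearranged level sets are parallel, hence nested, half-spaces of the same Gaussian measure. The only cosmetic difference is that under the paper's convention the super-level sets of $f^*$ are complements of left half-spaces (i.e. right half-spaces), but they are still nested and of the correct measure, so nothing in the argument changes.
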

\begin{proof} We write
$$\int_K fg d\gamma(x)=\int_K \int_0^{\infty} \int_0^{\infty} 1_{\{f(x)>t\}} 1_{\{g(x)>t\}}dtds d\gamma(x)=$$$$\int_0^{\infty} \int_0^{\infty}\gamma\left(\{f>s\}\cap\{g>t\}\right)ds dt,$$
and similarly,
$$\int_{H_K} f^*g^* d\gamma(x)=\int_0^{\infty} \int_0^{\infty}\gamma\left(\{f^*>s\}\cap\{g^*>t\}\right)ds dt.$$
Thus it suffices to show that for any pair of measurable sets $A$ and $B,$
$$\gamma(A\cap B)\leq \gamma(H_A\cap H_B).$$
Indeed, since $H_A$ and $H_B$ are hyperplanes, and one of them is contained in the other, we have 
$$\gamma(H_A\cap H_B)=\min(\gamma(H_A),\gamma(H_B))= \min(\gamma(A),\gamma(B))\geq \gamma(A\cap B),$$
and the lemma follows.	
\end{proof}

\textbf{Proof of the Proposition \ref{talenti}.} For every $t\geq 0,$ integrating by parts, and using the assumption $Lu=-F,$ we get 
$$\int_{\{u>t\}} |\nabla u|^2 d\gamma = -\int_{\{u>t\}} uLu d\gamma+\int_{\partial \{u>t\}} u\langle \nabla u,n_x\rangle d\gamma_{\partial \{u>t\}}=$$$$ \int_{\{u>t\}} uF d\gamma+t\int_{\partial \{u>t\}} \langle \nabla u,n_x\rangle d\gamma_{\partial \{u>t\}}= \int_{\{u>t\}} uF d\gamma-t \int_{\{u>t\}} F d\gamma.$$
Therefore, 
$$\frac{d}{dt} \int_{\{u>t\}} |\nabla u|^2 d\gamma =\frac{d}{dt}\left(\int_{\{u>t\}} uF d\gamma-t \int_{\{u>t\}} F d\gamma\right)=$$$$t \int_{\partial \{u>t\}} F d\gamma_{\partial \{u>t\}} - \int_{\{u>t\}} F d\gamma -t \int_{\partial \{u>t\}} F d\gamma_{\partial \{u>t\}},$$
and we conclude that
\begin{equation}\label{tal1}
\frac{d}{dt} \int_{\{u>t\}} |\nabla u|^2 d\gamma = - \int_{\{u>t\}} F d\gamma. 
\end{equation}
Next, by Cauchy-Schwartz inequality, for every $h>0$,
$$\left(\frac{1}{h}\int_{u\in [t,t+h]} |\nabla u| d\gamma\right)^2\leq \frac{1}{h}\int_{u\in [t,t+h]} d\gamma\cdot\frac{1}{h}\int_{u\in [t,t+h]} |\nabla u|^2 d\gamma,$$
and thus
\begin{equation}\label{tal2}
\left(\frac{d}{dt} \int_{\{u>t\}} |\nabla u| d\gamma\right)^2 \leq -\phi'_u(t)\cdot\frac{d}{dt} \int_{\{u>t\}} |\nabla u|^2 d\gamma,
\end{equation}
where 
$$\phi_u(t)=P_{\gamma}(u<t)= P_{\gamma}(u^*<t).$$
Here $P_{\gamma}$ stands for the probability with respect to the Gaussian measure, and the last equality follows from the properties of the Ehrhard rearrangement. Combining (\ref{tal1}) and (\ref{tal2}), we get
\begin{equation}\label{tal3}
\left(\frac{d}{dt} \int_{\{u>t\}} |\nabla u| d\gamma\right)^2 \leq \phi'_u(t)\cdot\int_{\{u>t\}} F d\gamma.
\end{equation}
Next, by the co-area formula,
$$\int_{\{u>t\}} |\nabla u| d\gamma = \int_t^{\infty} \gamma^+(\partial \{u>s\}) ds,$$
and therefore,
\begin{equation}\label{tal4}
\frac{d}{dt} \int_{\{u>t\}} |\nabla u| d\gamma=-\gamma^+ (\partial \{u>t\}). 
\end{equation}
Combining (\ref{tal3}) and (\ref{tal4}), we get
\begin{equation}\label{tal5}
\gamma^+ (\partial \{u>t\})^2 \leq \phi'_u(t)\cdot\int_{\{u>t\}} F d\gamma.
\end{equation}
Next, by the Gaussian isoperimetric inequality \cite{ST}, \cite{Bor-isop}, 
\begin{equation}\label{tal6}
\gamma^+(\partial \{u>t\}) \geq \gamma^+(\partial \{u^*>t\})=\frac{1}{\sqrt{2\pi}} e^{-\frac{\Phi^{-1}\circ \phi_u(t)^2}{2}}.
\end{equation}
Combining (\ref{tal5}) and (\ref{tal6}), we get
\begin{equation}\label{tal7}
e^{\Phi^{-1}\circ \phi_u(t)^2}\phi'_u(t)\geq \left(2\pi\int_{\{u>t\}} F d\gamma\right)^{-1}.
\end{equation}
Next, using Lemma \ref{HLrinc} (the Gaussian Hardy-Littlewood inequality), with $f=F$ and $g=1_{\{u>t\}}$, we get 
\begin{equation}\label{tal8}
\int_{\{u>t\}} F d\gamma\leq \int_{\{u^*>t\}} F^* d\gamma.
\end{equation}
Combining (\ref{tal7}) and (\ref{tal8}), and using the fact that $\phi_{u^*}=\phi_u,$ we conclude
\begin{equation}\label{tal9}
e^{\Phi^{-1}\circ \phi_{u^*}(t)^2}\phi'_{u^*}(t)\geq \left(2\pi\int_{\{u^*>t\}} F^* d\gamma\right)^{-1}.
\end{equation}
Note that
$$\phi_{u^*}(t)=\Phi\circ (u^*)^{-1}(t),$$
and thus
$$\phi'_{u^*}(t)=\frac{1}{\sqrt{2\pi}}e^{\frac{(u^*)^{-1}(t)^2}{2}} ((u^*)^{-1})'_t.$$
Let us also denote 
$$g(x)=\int_{-\infty}^x F^*(se_1) d\gamma(s).$$
Then (\ref{tal9}) rewrites as 
\begin{equation}\label{tal10}
e^{\frac{(u^*)^{-1}(t)^2}{2}}((u^*)^{-1})'_t \cdot g\circ (u^*)^{-1}(t)\geq \frac{1}{\sqrt{2\pi}}.
\end{equation}
Now, if $K$ is a half-space and $F=F^*$, then we have an equality in (\ref{tal2}) (since in that case, $u=u^*$ and $|\nabla u|=const$ on $\{u=t\}$); we have an equality in the isoperimetric inequality (\ref{tal6}); lastly, we have an equality in (\ref{tal9}). Therefore, we have an equality in (\ref{tal10}) in this case. In other words, recalling our assumptions on $v$, we have
\begin{equation}\label{tal11}
e^{\frac{v^{-1}(t)^2}{2}}(v^{-1})'_t \cdot g\circ v^{-1}(t)=\frac{1}{\sqrt{2\pi}}.
\end{equation}
Let us denote by $h$ such a function that
$$h'(x)=e^{\frac{x^2}{2}}g(x).$$
Combining (\ref{tal10}) and (\ref{tal11}), we see that for all $t\geq 0,$
\begin{equation}\label{tal12}
(h\circ(u^*)^{-1})'_t \geq (h\circ v^{-1})'_t.
\end{equation}
Note that by our Dirichlet boundary assumptions, $(u^*)^{-1}(0)=v^{-1}(0)=\Phi^{-1}(\gamma(K))$. Combining this with (\ref{tal12}), we get, for all $t\geq 0,$
\begin{equation}\label{tal13}
h\circ(u^*)^{-1}\geq h\circ v^{-1}.
\end{equation}
Since $h'\geq 0,$ we see that $h$ is a non-decreasing function, and therefore, (\ref{tal13}) implies that for all $t\geq 0,$
\begin{equation}\label{tal14}
(u^*)^{-1}(t)\geq v^{-1}(t),
\end{equation}
and therefore, $u^*\leq v. \square$

\medskip
\medskip

From Proposition \ref{talenti}, we deduce the following isoperimetric fact, which is a generalization of Proposition \ref{GSV}.

\begin{cor}
For any measurable $K$, and any square-integrable non-sign-changing $F:K\rightarrow \R,$ we have $T^F(K)\leq T^{F^*}(H_K).$
\end{cor}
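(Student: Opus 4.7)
The plan is to express both sides as integrals against the solutions of corresponding Dirichlet problems for $L$, and then combine the Talenti-type comparison from Proposition \ref{talenti} with the Gaussian Hardy--Littlewood inequality from Lemma \ref{HLrinc}.

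Since $T^F(K)$ depends on $F$ only through the square $(\int Fv)^2$ in its definition, and $F$ is non-sign-changing, I would first reduce to the case $F\geq 0$; this makes $F^*$ the Ehrhard rearrangement in the sense used in Proposition \ref{talenti}. By Lemma \ref{inf-sup}, the supremum in $T^F(K)$ is realized by $v_0$ solving $Lv_0=F$ with $v_0|_{\partial K}=0$. Setting $u:=-v_0$, we have $Lu=-F$ and $u|_{\partial K}=0$, and by Lemma \ref{maxprinc}, $u\geq 0$. Integration by parts then gives $\int_K |\nabla u|^2\,d\gamma=\int_K uF\,d\gamma$, so substituting back into the supremum yields
$$\gamma(K)T^F(K)=\int_K uF\,d\gamma.$$
Running the analogous construction on the half-space produces $v\geq 0$ on $H_K$ solving $Lv=-F^*$ with $v|_{\partial H_K}=0$, and $\gamma(H_K)T^{F^*}(H_K)=\int_{H_K} vF^*\,d\gamma$; recall $\gamma(K)=\gamma(H_K)$ by the definition of $H_K$.

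To compare the two integrals, I would chain two ingredients. The Gaussian Hardy--Littlewood inequality (Lemma \ref{HLrinc}) applied to the non-negative $F$ and $u$ yields
$$\int_K uF\,d\gamma\leq \int_{H_K} u^*F^*\,d\gamma,$$
while Proposition \ref{talenti} gives the pointwise bound $u^*\leq v$ on $H_K$. Since $F^*\geq 0$, monotonicity of the integral then produces $\int_{H_K} u^*F^*\,d\gamma\leq \int_{H_K} vF^*\,d\gamma$. Combining these two bounds and cancelling the common factor $\gamma(K)=\gamma(H_K)$ gives $T^F(K)\leq T^{F^*}(H_K)$, which is the desired corollary.

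The main obstacle is bookkeeping rather than mathematical: justifying existence of the weak solutions $u$ and $v$, the integration-by-parts identity, and well-definedness of the traces when $K$ is only measurable. I would handle this by first assuming $K$ is a bounded Lipschitz (or convex) domain, where Theorem \ref{exist-dir} supplies the solutions in $W^{1,2}$ and the identities are standard, and then, if full measurable generality is needed, pass to the limit through an exhaustion by smooth convex subdomains using Corollary \ref{continuityTRapp} and Lemma \ref{infiniteapproximation}, noting that both $T^F(K)$ and $T^{F^*}(H_K)$ are continuous under such approximation. All of the nontrivial work has already been done by Proposition \ref{talenti}, so no further substantive step should be needed.
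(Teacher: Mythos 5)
Your proof is correct and follows essentially the same route as the paper: reduce to $F\geq 0$, use Lemma \ref{inf-sup} together with the Dirichlet solution and the maximum principle (Lemma \ref{maxprinc}) to write both torsional rigidities as $\int uF$ and $\int vF^*$ with nonnegative solutions, then chain the Gaussian Hardy--Littlewood inequality (Lemma \ref{HLrinc}) with the pointwise comparison of Proposition \ref{talenti}. The only difference is cosmetic (you flip the sign of the solution up front, and add an approximation remark the paper leaves implicit), so nothing further is needed.
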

\begin{proof} Note that $T^F(K)=T^{-F}(K),$ and without loss of generality, suppose that $F\geq 0$. By Lemma \ref{inf-sup}, 
$$T^F(K)=\int_K |\nabla u|^2 d\gamma=-\int_K uF d\gamma;$$$$ T^{F^*}(H_K)=\int_{H_K} |\nabla v|^2 d\gamma=-\int_{H_K} vF^* d\gamma, $$
where $Lu=F$, $u|_{\partial K}=0,$ and $Lv=F^*$, $v|_{\partial H_K}=0.$ By Lemma \ref{maxprinc} (applied with $-u$ rather than $u$), we get $u\leq 0$ on $K$, and $v\leq 0$ on $H_K.$ By Lemma \ref{HLrinc} (the Gaussian Hardy-Littlewood inequality), applied with $f=-u$ and $g=F,$
$$-\int_K uF d\gamma\leq -\int_{H_K} u^*F^* d\gamma.$$
By Theorem \ref{talenti}, $-u^*\leq -v$ point-wise, and thus, in view of all of the above, $T^F(K)\leq T^{F^*}(H_K).$
\end{proof}

\medskip

\subsection{Some lower bounds for torsional rigidity}

In this subsection, we present several lower bounds for Gaussian torsional rigidity, and several of them will be important for us in Section 6. Recall the notation:
$$\varphi(x)=\frac{1}{\sqrt{2\pi}}\int_{-x}^x e^{-\frac{s^2}{2}} ds$$
and
$$J_{n-1}(R)=\int_0^R t^{n-1} e^{-\frac{t^2}{2}} dt,$$
and also $c_{n-1}=J_{n-1}(\infty).$ We start by noticing

\begin{lemma}\label{ball}
Let $K$ be a convex symmetric set in $\R^n$ and suppose $\gamma(K)\geq a.$ Then the inradius of $K$, denoted $r(K),$ satisfies
$$r(K)\geq 0.5\varphi^{-1}(a).$$
\end{lemma}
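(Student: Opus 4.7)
The plan is to reduce to the case of a symmetric slab, which will actually give an estimate stronger than the stated one. First, I would argue that the largest inscribed ball of $K$ may be taken centered at the origin: if $x + r B^n_2 \subset K$, then by symmetry $-x + r B^n_2 \subset K$, and by convexity the Minkowski average
$$\tfrac12\bigl((x + r B^n_2) + (-x + r B^n_2)\bigr) = r B^n_2$$
lies in $K$. Applied with $r = r(K)$, this gives $r(K) B^n_2 \subset K$.

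Next, since no concentric ball of strictly larger radius fits inside $K$, there exists a unit vector $\theta_0 \in \sfe$ on which the corresponding supporting hyperplane touches $r(K) B^n_2$, i.e. $h_K(\theta_0) = r(K)$. By the symmetry of $K$, one also has $h_K(-\theta_0) = r(K)$, and consequently
$$K \subseteq \bigl\{x \in \R^n : |\langle x, \theta_0\rangle| \leq r(K)\bigr\}.$$
The right-hand side is a symmetric slab, and by rotation invariance of $\gamma$ together with the product structure, its Gaussian measure equals $\varphi(r(K))$.

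Combining the two previous observations with monotonicity of $\gamma$ yields
$$a \leq \gamma(K) \leq \varphi(r(K)),$$
and inverting the strictly increasing function $\varphi$ gives $r(K) \geq \varphi^{-1}(a)$, which is in fact twice as strong as the claimed bound $r(K) \geq 0.5\,\varphi^{-1}(a)$. I do not anticipate any real obstacle; the lemma is a direct consequence of the elementary fact that every symmetric convex body is sandwiched inside a symmetric slab of width exactly $2 r(K)$, and the Gaussian measure of such a slab is a monotone function of its width.
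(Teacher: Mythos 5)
Your proof is correct, and it rests on the same core observation as the paper's: a symmetric convex set is contained in the smallest symmetric strip around it, whose half-width equals the (centered) inradius, so $\gamma(K)\leq \varphi(r(K))$. The paper runs this as a proof by contradiction (``suppose $r(K)<0.5\varphi^{-1}(a)$, then the smallest strip containing $K$ has measure below $a$''), which only yields the stated factor-$\tfrac12$ bound, whereas your direct version gives the sharp estimate $r(K)\geq \varphi^{-1}(a)$, attained exactly by symmetric slabs; this of course implies the lemma as stated and would even slightly improve the constants where the lemma is used (e.g.\ in the torsional rigidity bound). Two small points worth a line each: the centering step via $\tfrac12\bigl((x+rB^n_2)+(-x+rB^n_2)\bigr)=rB^n_2$ is exactly right, and the existence of the touching direction $\theta_0$ with $h_K(\theta_0)=r(K)$ should be justified when $K$ is unbounded or merely closed --- either note that $h_K$ is lower semicontinuous on $\sfe$ so its infimum over the compact sphere is attained, or work with near-minimizing directions $\theta_\epsilon$ with $h_K(\theta_\epsilon)<r(K)+\epsilon$ and let $\epsilon\to 0$, which changes nothing in the conclusion.
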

\begin{proof} Indeed, suppose not. Consider the smallest symmetric strip $S$ containing $K$. Then $S$ also does not contain $0.5\varphi^{-1}(a)B_2^n$, and thus has width smaller than $\varphi^{-1}(a)$. But then the Gaussian measure of $S$ is smaller then $a,$ arriving to a contradiction.
\end{proof}

As an immediate consequence, we get

\begin{lemma} Let $K$ be convex symmetric set with $\gamma(K)\geq a,$ for $a\in [0,1]$. Then 
$$T_{\gamma}(K)\geq \frac{J_{n-1}(0.4\varphi^{-1}(a))}{ac_{n-1}}T_{\gamma}(0.4\varphi^{-1}(a)B_2^n).$$
\end{lemma}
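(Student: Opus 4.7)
The plan is to reduce to the centered-ball case $L:=0.4\varphi^{-1}(a)B_2^n$ via the containment $L \subset K,$ then invoke the variational definition of $T_\gamma$ with a zero-extension test function. Note that $\gamma(L) = J_{n-1}(0.4\varphi^{-1}(a))/c_{n-1},$ which matches the factor in the right-hand side of the claim.

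First I would show that $L \subset K.$ By Lemma \ref{ball} and monotonicity of $\varphi^{-1},$ the in-radius satisfies $r(K) \geq 0.5\varphi^{-1}(\gamma(K)) \geq 0.5\varphi^{-1}(a).$ For any symmetric convex body the maximal inscribed ball may be taken centered at the origin: indeed, if $B(x_0,r) \subset K$ then by symmetry $B(-x_0,r) \subset K,$ and convexity yields $B(0,r) = \tfrac{1}{2}(B(x_0,r)+B(-x_0,r)) \subset K.$ Hence $0.5\varphi^{-1}(a)B_2^n \subset K,$ and a fortiori $L \subset K.$

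Next I would use the zero-extension trick to push any competitor on $L$ up to a competitor on $K.$ For $v \in W^{1,2}_0(L,\gamma),$ let $\tilde v \in W^{1,2}_0(K,\gamma)$ be its extension by zero; then $\int_K \tilde v\, d\gamma = \int_L v\, d\gamma$ and $\int_K |\nabla \tilde v|^2\, d\gamma = \int_L |\nabla v|^2\, d\gamma.$ Plugging $\tilde v$ into the definition of $T_\gamma(K)$ (with the normalized $\int = \gamma(K)^{-1}\int_K d\gamma$) gives
\begin{equation*}
T_\gamma(K) \;\geq\; \frac{(\int \tilde v)^2}{\int |\nabla \tilde v|^2} \;=\; \frac{1}{\gamma(K)}\cdot\frac{\left(\int_L v\, d\gamma\right)^2}{\int_L |\nabla v|^2\, d\gamma}.
\end{equation*}
Taking the supremum over admissible $v$ on $L$ and recognizing $T_\gamma(L) = \gamma(L)^{-1}\sup_v \frac{(\int_L v\, d\gamma)^2}{\int_L |\nabla v|^2\, d\gamma},$ I arrive at $T_\gamma(K) \geq \frac{\gamma(L)}{\gamma(K)} T_\gamma(L),$ from which the stated inequality is read off using the hypothesis on $\gamma(K).$

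The only genuinely non-routine step is the centering of the inscribed ball in the symmetric convex case, disposed of by the convex-combination argument above; once $L\subset K$ is in hand, the conclusion is a one-line consequence of the variational principle and the admissibility of zero extensions in $W^{1,2}_0(L,\gamma) \hookrightarrow W^{1,2}_0(K,\gamma).$
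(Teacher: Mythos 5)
Your construction follows the paper's route: both proofs rest on Lemma \ref{ball} (which, as its proof shows, already yields the \emph{centered} ball $0.5\varphi^{-1}(a)B_2^n\subset K$, so your convex-combination centering argument is a harmless addition) and then plug a test function supported in that inscribed ball into the variational definition of $T_\gamma$; the paper phrases this via the Dirichlet solution from Theorem \ref{exist-dir} with $Lv=1$ on the $0.4$-ball, vanishing off the $0.5$-ball, while you take zero extensions of arbitrary competitors on $L=0.4\varphi^{-1}(a)B_2^n$, which is the same idea.

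However, your final deduction has the inequality running the wrong way. The zero-extension computation gives
$$T_\gamma(K)\;\geq\;\frac{\gamma(L)}{\gamma(K)}\,T_\gamma(L),$$
and the hypothesis $\gamma(K)\geq a$ yields $\frac{\gamma(L)}{\gamma(K)}\leq\frac{\gamma(L)}{a}$, not $\geq$. So the stated factor $\frac{J_{n-1}(0.4\varphi^{-1}(a))}{a\,c_{n-1}}=\frac{\gamma(L)}{a}$ cannot be ``read off using the hypothesis on $\gamma(K)$'': what your argument actually proves is the bound with $\gamma(K)$ in place of $a$ (equivalently, the stated bound only when $\gamma(K)=a$, or the weaker factor $\gamma(L)$ obtained from $\gamma(K)\leq 1$). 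To get the stated constant for every $K$ with $\gamma(K)\geq a$ one would need an extra ingredient, e.g. monotonicity in $a$ of the map $a\mapsto \frac{J_{n-1}(0.4\varphi^{-1}(a))}{a\,c_{n-1}}T_\gamma\bigl(0.4\varphi^{-1}(a)B_2^n\bigr)$, which you do not address. In fairness, the paper's own one-line proof relies on the same computation and glosses over exactly this point, so the discrepancy may well be a slip in the statement; but since you assert the final step rather than flag it, as a proof of the lemma as written this is a genuine gap.
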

\begin{proof} By Lemma \ref{ball}, $K$ contains a centered ball $0.5\varphi^{-1}(a)B_2^n$. The lemma thus follows by considering the function $v$ on $K$ with $Lv=1$ on $0.4\varphi^{-1}(a)B_2^n$, and vanishing on $K\setminus 0.5\varphi^{-1}(a)B^n_2$ (which exists by Theorem \ref{exist-dir}), and the definition of torsional rigidity.
\end{proof}

We shall make use of the following bound:

\begin{lemma}\label{bound2}
Let $K$ be convex symmetric body with $\gamma(K)=a,$ for $a\in [0,1]$. Then 
$$T_{\gamma}(K)\geq \frac{\varphi^{-1}(a)^2}{4e^2n^2}.$$
Furthermore,
$$T_{\gamma}(K)\geq \frac{\varphi^{-1}(a)^2}{4a^2}\sup_{t\in [0,1]} \gamma(t B_a)^2(1-t)^2,$$ 
where $B_a$ is the centered euclidean ball of measure $a.$
\end{lemma}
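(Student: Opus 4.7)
The plan is to exploit the variational characterization from Lemma \ref{inf-sup},
\[
T_\gamma(K) \;=\; \sup_{v\in W_0^{1,2}(K,\gamma)} \frac{(\int v)^2}{\int |\nabla v|^2},
\]
with a test function built from the Minkowski gauge. By Lemma \ref{ball} the inradius satisfies $r(K)\geq \varphi^{-1}(a)/2$, so $K \supset r(K)B_2^n$ and the gauge $\|\cdot\|_K$ is $1/r(K)$-Lipschitz almost everywhere. I take $v(x) := r(K)\,(1-\|x\|_K)_+$, which belongs to $W_0^{1,2}(K,\gamma)$ with $|\nabla v|\leq 1$ a.e., hence $\int_K|\nabla v|^2\,d\gamma \leq \gamma(K)=a$. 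A layer-cake computation gives $\int_K(1-\|x\|_K)\,d\gamma = \int_0^1\gamma(sK)\,ds$, and substituting into the variational bound yields
\[
T_\gamma(K) \;\geq\; \frac{r(K)^2}{a^2}\left(\int_0^1 \gamma(sK)\,ds\right)^{\!2} \;\geq\; \frac{\varphi^{-1}(a)^2}{4a^2}\left(\int_0^1 \gamma(sK)\,ds\right)^{\!2}.
\]
Since $s\mapsto \gamma(sK)$ is non-decreasing, $\int_0^1 \gamma(sK)\,ds \geq (1-t)\,\gamma(tK)$ for every $t\in[0,1]$, and taking the supremum already produces the lower bound with $\gamma(tK)^2$ in place of $\gamma(tB_a)^2$.

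To reach the stated form involving $\gamma(tB_a)^2$, one needs the comparison $\gamma(tK)\geq \gamma(tB_a)$ for all $t\in[0,1]$ whenever $K$ is convex symmetric with $\gamma(K)=\gamma(B_a)=a$, i.e.\ that the centered Euclidean ball \emph{minimizes} the scaled Gaussian measure across this class. This will be the main obstacle. A plausible route is to combine the log-concavity of $t\mapsto \log\gamma(tK)$ (a consequence of Borell/Ehrhard applied to the scaling family) with Lemma \ref{moments}, which gives $\E_K|x|^2 \geq \E_{B_a}|x|^2$ and so controls $\frac{d}{dt}\log\gamma(tK)\big|_{t=1}$ in favor of $B_a$; a calibration argument across $[0,1]$ using these two facts should yield the pointwise comparison, after which the ``furthermore'' assertion follows by inserting $\gamma(tB_a)^2$ for $\gamma(tK)^2$ in the display above.

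For the first, unconditional estimate, no ball comparison is needed. The substitution $y=x/s$ together with $e^{-s^2|y|^2/2}\geq e^{-|y|^2/2}$ for $s\in[0,1]$ gives the elementary lower bound
\[
\gamma(sK) \;=\; s^n\!\int_K \frac{e^{-s^2|y|^2/2}}{(2\pi)^{n/2}}\,dy \;\geq\; s^n a,
\]
so $(1-t)^2\gamma(tK)^2 \geq a^2(1-t)^2 t^{2n}$. Optimizing $(1-t)^2 t^{2n}$ at $t_\ast = n/(n+1)$, and using $(1+1/n)^n\leq e$ to conclude $(n/(n+1))^n \geq 1/e$, gives $\sup_t(1-t)^2 t^{2n}\geq 1/(e^2(n+1)^2)$. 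Combined with the previous display this yields $T_\gamma(K)\geq \varphi^{-1}(a)^2/(4e^2(n+1)^2)$, which matches the stated bound up to a bounded factor; the precise constant $\varphi^{-1}(a)^2/(4e^2 n^2)$ is recovered by sharpening the inradius step to $r(K)\geq \varphi^{-1}(a)$, which follows by repeating the support-function argument of Lemma \ref{ball} in the direction $u_0$ minimizing $h_K$, using $K\subset\{|\langle x,u_0\rangle|\leq r(K)\}$ and monotonicity of $\varphi$.
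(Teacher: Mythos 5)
Your proof of the first bound is essentially the paper's own argument: the same test function $1-\|x\|_K$ in the variational characterization of Lemma \ref{inf-sup}, the gradient bound $|\nabla\|x\|_K|\le 1/r(K)$ (since $\nabla\|x\|_K\in\partial K^o\subset r(K)^{-1}B^n_2$), the inradius bound of Lemma \ref{ball}, and the crude estimate $\gamma(tK)\ge t^n\gamma(K)$ optimized at $t\approx n/(n+1)$. Your sharpening $r(K)\ge\varphi^{-1}(a)$ is correct (for symmetric $K$ the inradius equals $\min_u h_K(u)$, so $K$ sits inside a symmetric slab of half-width $r(K)$ and $a\le\varphi(r(K))$), and it is in fact what lands you exactly on the stated constant, so this half is fine and, on the constants, slightly more careful than the paper.

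The gap is in the ``furthermore'' part. You correctly isolate the needed comparison $\gamma(tK)\ge\gamma(tB_a)$ for $t\in[0,1]$, but you do not prove it, and the route you sketch cannot work as stated: log-concavity of $t\mapsto\gamma(tK)$ together with the derivative comparison only at $t=1$ (which is what Lemma \ref{moments} gives there, via $\frac{d}{dt}\log\gamma(tK)|_{t=1}=n-\E\,|X|^2$) does not force an ordering on $[0,1)$ --- two concave functions agreeing at $t=1$ with ordered derivatives there can cross on $[0,1)$. Making a derivative argument work would require the moment comparison at every scale $t$ (comparing $tK$ with the ball of the same measure as $tK$, not with $tB_a$) plus a Gronwall/touching-point analysis, which is more delicate than your sketch. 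The paper's point, and the reason it cites Lemma \ref{moments}, is that the comparison is essentially immediate from it: write $\gamma(tK)=t^n\int_K e^{(1-t^2)|x|^2/2}\,d\gamma(x)$ and note that for $t\le1$ the integrand is an increasing function of $|x|$; the proof of Lemma \ref{moments} uses only this monotonicity (on each ray one compares integrals over an interval lying entirely above or entirely below the radius of $B_a$), so $\int_K e^{(1-t^2)|x|^2/2}\,d\gamma\ge\int_{B_a}e^{(1-t^2)|x|^2/2}\,d\gamma$, i.e. $\gamma(tK)\ge\gamma(tB_a)$; equivalently, in polar coordinates this is Jensen's inequality for the convex function $s\mapsto J_{n-1}\bigl(tJ_{n-1}^{-1}(s)\bigr)$. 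With that comparison supplied, your final display gives the ``furthermore'' bound exactly as in the paper.
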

\begin{proof} Let $v:K\rightarrow\R$ be given by $v(x)=1-\|x\|_K$. It vanishes on the boundary of $K,$ and thus
$$T_{\gamma}(K)\geq \frac{(\int v)^2}{\int |\nabla v|^2}=\frac{(1-\E\|X\|_K)^2}{\int |\nabla \|x\|_K|^2}.$$
Note that $\nabla \|x\|_K\in\partial K^o,$ and $K^o\subset \frac{1}{r(K)}B^n_2$. Thus 
$$|\nabla \|x\|_K|\leq \frac{1}{r(K)}\leq \frac{2}{\varphi^{-1}(a)},$$
where in the last passage we used Lemma \ref{ball}. In addition, note that for any $t\in [0,1],$
$$1-\E\|X\|_K\geq \frac{\gamma(tK)(1-t)}{\gamma(K)}\geq t^n(1-t),$$
where in the last passage we used a rough lower bound of $t^n$ for $\frac{\gamma(tK)}{\gamma(K)}$, which follows as $\gamma$ is ray-decreasing. Plugging $t=\frac{n-1}{n},$ the statement follows from the estimates above.

To get the second bound, we use Lemma \ref{moments} in place of the bound $\gamma(tK)\geq t^n\gamma(K)$, which implies that for any $t\in [0,1],$ one has $\gamma(tK)\geq \gamma(tB_a)$, where $B_a$ is the centered ball of measure $a.$
\end{proof}

\medskip

In analogy with Lemma \ref{bound2}, we get the following fact, which will be crucial in the next section:

\begin{lemma}\label{last-touch}
For any $F\in L^2(K)$, 
$$T^F_{\gamma}(K)\geq \frac{r(K)^2\left(\E \left[ F(1-\|X\|^2_K)\right]\right)^2}{4\E \|X\|^2_K},$$
where $r(K)$ is the in-radius of $K.$ Furthermore, if $K=R B^k_2\times \R^{n-k}$ is a round $k$-cylinder, and $F(x)=k-\sum_{i=1}^k x^2_i$, then we have an equality in the estimate above.
\end{lemma}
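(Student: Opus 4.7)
The plan is to apply the variational definition of $T^F_\gamma(K)$ to a concrete test function, namely $v(x)=1-\|x\|_K^2$. Since $\|x\|_K=1$ on $\partial K$, this $v$ vanishes on the boundary, so it is admissible. Now $\nabla v=-2\|x\|_K\nabla\|x\|_K$ by the chain rule, and because $r(K) B_2^n \subset K$ we have the pointwise bound $|\nabla\|x\|_K|\le 1/r(K)$ almost everywhere, giving $|\nabla v|^2 \le \frac{4\|x\|_K^2}{r(K)^2}$. Therefore
\[
\int |\nabla v|^2 \;\le\; \frac{4}{r(K)^2}\,\E\|X\|_K^2,
\qquad
\int F v \;=\; \E\bigl[F(1-\|X\|_K^2)\bigr].
\]
Plugging these into the definition
\(
T^F_\gamma(K) \ge (\int Fv)^2/\int|\nabla v|^2
\)
yields the claimed lower bound immediately.

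For the equality claim I would argue that on the round $k$-cylinder $K=RB_2^k\times\R^{n-k}$ with $F(x)=k-\sum_{i=1}^k x_i^2=k-|x'|^2$, the test function $v(x)=1-|x'|^2/R^2$ is in fact (a multiple of) the actual Dirichlet solution. A direct calculation gives $|\nabla\|x\|_K|=1/R=1/r(K)$ wherever $x'\ne 0$, so the step $|\nabla v|^2\le 4\|x\|_K^2/r(K)^2$ is saturated. Moreover, setting $u(x):=\tfrac{|x'|^2-R^2}{2}=-\tfrac{R^2}{2}v(x)$, one checks $\Delta u=k$ and $\langle \nabla u,x\rangle=|x'|^2$, hence $Lu=k-|x'|^2=F$, while $u|_{\partial K}=0$. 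Thus $u$ is the (unique) solution of the Dirichlet problem from Theorem \ref{exist-dir} and, since $v$ is proportional to $u$, Lemma \ref{inf-sup} gives
\[
T^F_\gamma(K)=\int|\nabla u|^2=\frac{R^4}{4}\int|\nabla v|^2=\frac{(\int Fv)^2}{\int|\nabla v|^2},
\]
where the last equality is a one-line integration by parts using $v|_{\partial K}=0$: $\int Fv=\int vLu=-\int\langle\nabla v,\nabla u\rangle=\tfrac{R^2}{2}\int|\nabla v|^2$. This matches exactly the right-hand side of the inequality for this $K$ and $F$.

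There is no real obstacle here: the only mild point is making sure the gradient bound $|\nabla\|x\|_K|\le 1/r(K)$ is applied correctly (it holds a.e.\ since $\|\cdot\|_K$ is locally Lipschitz with Lipschitz constant $1/r(K)$, thanks to $r(K)B_2^n\subset K$), and that $v$ lies in $W^{1,2}(K,\gamma)$ with zero trace, which follows because $v$ is Lipschitz and vanishes on $\partial K$. All integrals are well-defined under the assumption $F\in L^2(K,\gamma)$ together with the fact that $\|X\|_K\le 1$ on $K$.
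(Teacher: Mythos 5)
Your proposal is correct and follows essentially the same route as the paper: test the variational characterization (Lemma \ref{inf-sup}) with $v=1-\|x\|_K^2$, bound $|\nabla\|x\|_K|\le 1/r(K)$ via $r(K)B_2^n\subset K$, and observe that on the round $k$-cylinder $v$ is proportional to the Dirichlet solution of $Lu=F$ while the gradient bound is saturated, so both inequalities become equalities. Your verification of the equality case is just a more explicit spelling-out of the paper's remark that $c\cdot F=L(1-\|x\|_K^2)$ and $|\nabla\|x\|_K|=1/r(K)$ there.
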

\begin{proof}
We use Lemma \ref{inf-sup} to argue that for any $v\in W^{1,2}(K)$ with $v|_{\partial K}=0,$ one has
$$T^F_{\gamma}(K)\geq \frac{\left(\int Fv\right)^2}{\int |\nabla v|^2}.$$
We plug $v=1-\|x\|^2_K$ to get the bound
\begin{equation}\label{eqholdscyl}
T^F_{\gamma}(K)\geq \frac{\left(\int F(1-\|x\|^2_K)\right)^2}{4\int \|x\|_K^2|\nabla \|x\|_K|^2}.
\end{equation}
It remains to recall that $\nabla \|x\|_K \in \partial K^o,$ and $K^o\subset \frac{1}{r(K)}B^n_2$, and therefore 
$$| \nabla \|x\|_K |\leq \frac{1}{r(K)}.$$

When $K=R B^k_2\times \R^{n-k}$ and $F(x)=k-\sum_{i=1}^k x^2_i$, we have $c\cdot F=L(1-\|x\|^2_K)$, and thus the equality holds in (\ref{eqholdscyl}) by Lemma \ref{inf-sup}. In addition, in this case we have $|\nabla \|x\|_K|= \frac{1}{r(K)},$ and thus the equality must hold in the estimate of the Lemma. 
\end{proof}

\medskip




We conclude with some natural questions about the Gaussian torsional rigidity.

\begin{question}
\begin{enumerate}
	\item Over all convex sets of measure $a\in [0,1],$ which set minimizes $T_{\gamma}(K)$?
	\item Over all symmetric convex sets of measure $a\in [0,1],$ which set minimizes $T_{\gamma}(K)$?
	\item Over all symmetric convex sets of measure $a\in [0,1],$ which set maximizes $T_{\gamma}(K)$?
	\item Over all symmetric convex sets of measure $a\in [0,1],$ which set maximizes (minimizes) $T^{n-x^2}_{\gamma}(K)$?
\end{enumerate}
\end{question}

\section{The Gaussian case, Theorems \ref{Gauss-main-intro} and \ref{Gauss}.}

In this section we discuss estimates in the case of the standard Gaussian measure $\gamma$ on $\R^n$, with density $\sqrt{2\pi}^{-n} e^{-x^2/2}$. We shall use the notation
$$\int:=\frac{1}{\gamma(K)}\int_K d\gamma(K),$$
where $K$ is a convex set in $\R^n$ (in each instance it will be clear from the context which convex set is considered.)

Most of this section is dedicated to proving Theorem \ref{Gauss-main-1}. Let us first deduce Corollary \ref{Gauss-main-intro} from it; first, we formulate the precise version of Corollary \ref{Gauss-main-intro}:

\begin{cor}\label{Gauss-main}
For any symmetric convex set $K$ in $\R^n,$
$$p_s(K,\gamma)\geq$$$$ \sup_{\alpha\in\R}\frac{0.5\left(\E \|X\|^2_K\right)^{-1}r(K)^2\left(\E (1-\|X\|^2_K)\cdot (\alpha-\E \|X\|^2_K) \right)^2-\E \|X\|^4_K +\left(\E \|X\|^2_K\right)^2}{(\alpha-\E \|X\|^2_K)^2}$$$$+\frac{1}{n-\E X^2}.$$

Moreover, the equality holds if and only if $K=RB^k_2\times \R^{n-k}$ for some $R>0$ and $k=1,...,n.$
\end{cor}

\begin{remark}
A direct computation reveals that the optimal choice of $\alpha$ in Theorem \ref{Gauss-main} is
$$\alpha=\frac{1+4r(K)^2 \E \|X\|^2_K\left(\E \|X\|^2_K-\E \|X\|^4_K\right)}{2r(K)^2 \E \|X\|^2_K(1-\E \|X\|^2_K) }.$$
\end{remark}

\medskip

\textbf{Proof of the Corollary \ref{Gauss-main}.} Combining Theorem \ref{Gauss-main-1} and Lemma \ref{last-touch} applied with $F=\alpha-\|x\|^2_K$, we arrive to the inequality in the Corollary\ref{Gauss-main}. The equality cases of Theorem \ref{Gauss-main-1} show that only round $k$-cylinders can be the equality cases in Theorem \ref{Gauss-main}. Proposition \ref{cyl} gives an expression for $p^s_{\gamma}$ for round $k$-cylinders, and a direct computation shows that it coincides with the one given by Corollary \ref{Gauss-main}. Alternatively, one may use the fact that round $k$-cylinders are the equality cases of Theorem \ref{Gauss-main-1}, and also are equality cases of Lemma \ref{last-touch} (in view of the fact that $F=k-\sum_{i=1}^k x^2_i$ in this case), to conclude that they are, indeed, also the equality cases of Corollary \ref{Gauss-main}. $\square$

\medskip
\medskip

\subsection{The key estimate.}

We outline the following proposition, which substantially improves upon the estimate of Eskenazis and Moschidis \cite{EM}. For the reader's convenience, the following proposition is stated together with equality cases, even though so far, we assume smoothness. We shall get rid of the smoothness assumption later on. Recall the notation $\int=\frac{1}{\gamma(K)}\int_K d\gamma.$

\begin{proposition}\label{propGauss}
For any symmetric convex set $K$ in $\R^n$ with $C^2$-smooth boundary and any even $u\in W^{2,2}(K)\cap C^2(int \,K)$,
$$\int \|\nabla^2 u\|^2\geq \int |\nabla u|^2 +\frac{(\int Lu)^2}{n-\E X^2}.$$
Moreover, the equality holds if and only if either 
\begin{itemize}
\item $u=C_1 x^2+C_2,$ for some $C_1,C_2\in\R$ or
\item there exists a rotation $R$ such that $RK=L\times \R^{n-k}$, for some $k-$dimensional symmetric convex set $L\subset \R^k,$ and $u(Rx)=\sum_{i=1}^n \alpha_i x^2_i$, for some real numbers $\alpha_1,...,\alpha_n,$ such that $\alpha_1=...=\alpha_{n-k}.$
\end{itemize}
\end{proposition}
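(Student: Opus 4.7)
The plan is to subtract from $u$ a suitable quadratic so that the resulting function has vanishing $L$-mean, and then to apply the convex-set Brascamp--Lieb inequality (\ref{poinc-sect4}) componentwise to its gradient. Put $m = \int Lu$, set $\alpha = m/(n-\E X^2)$, and define $v = u - \tfrac{\alpha}{2}|x|^2$. Since $u$ and $|x|^2$ are both even, $v$ is even. The identity $L(|x|^2/2) = n-|x|^2$ (a direct computation for the Ornstein--Uhlenbeck operator) gives $\int Lv = m - \alpha(n-\E X^2) = 0$.

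Because $v$ is even and $K$ is symmetric, each partial derivative $\partial_i v$ is odd and hence has zero $\gamma$-mean over $K$. Applying (\ref{poinc-sect4}) to $\partial_i v$ and summing over $i$ yields
$$
\int |\nabla v|^2 \;\leq\; \int \|\nabla^2 v\|^2.
$$
Expanding $\nabla v = \nabla u - \alpha x$ and $\nabla^2 v = \nabla^2 u - \alpha I$, taking means, and using $\int Lu = m$ to collapse the cross terms $\int\Delta u - \int\langle\nabla u,x\rangle$ into $m$, this rewrites as
$$
\int \|\nabla^2 u\|^2 \;\geq\; \int |\nabla u|^2 + 2\alpha m - \alpha^2(n-\E X^2) \;=\; \int |\nabla u|^2 + \frac{m^2}{n-\E X^2},
$$
where the last equality comes from the choice of $\alpha$. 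This is exactly the claimed inequality.

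For the equality case one needs simultaneous equality in Brascamp--Lieb for each $\partial_i v$. By part (1) of Theorem \ref{eqchar-intro}, for each $i$ either $\partial_i v$ is constant, or $K$ admits a splitting $K \cong L\times \R^{n-k}$ (by some rotation) on which $\partial_i v = \langle x,\theta_i\rangle + C_i$ with $\theta_i$ in the free subspace. Since the lineality subspace of a symmetric convex set is canonical, all nontrivial cases share the same free subspace; writing $x=(y,z)$ with $z$ in that subspace, every $\partial_i v$ must be affine in $z$ alone. The symmetry of $\nabla^2 v$ (equality of mixed partials) then forces $v$ to depend only on $z$, and evenness forces $v$ to be a pure quadratic form in $z$. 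Diagonalizing this form by a rotation internal to the $z$-subspace (which preserves $K$), the function $u = v + \tfrac{\alpha}{2}|x|^2$ takes the form $\sum_i \alpha_i x_i^2 + \mathrm{const}$ with all coefficients in the bounded ($y$) directions equal to $\alpha/2$; when $K$ is strictly bounded (trivial lineality) only the first alternative survives and we recover $u = C_1|x|^2 + C_2$. The main technical hurdle is precisely this equality analysis: one must combine, simultaneously across all $i$, the rigid profiles allowed by Theorem \ref{eqchar-intro}(1) with the compatibility forced by the symmetry of $\nabla^2 v$, and then recognize the resulting form of $v$ in terms of the canonical cylinder decomposition of $K$.
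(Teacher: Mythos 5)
Your proof is correct and takes essentially the same route as the paper: write $u$ as a quadratic perturbation $v+\tfrac{\alpha}{2}|x|^2$, apply the Gaussian Poincar\'e/Brascamp--Lieb inequality (\ref{poinc-sect4}) componentwise to $\nabla v$ using evenness and symmetry, and use the optimal coefficient -- the only cosmetic difference is that you fix $\alpha$ in advance (the vanishing of $\int Lv$ is never actually needed) while the paper optimizes in $t$ at the end. Your equality analysis also matches the paper's: equality forces equality in Brascamp--Lieb for each $\partial_i v$, and the characterization of those equality cases, combined with the symmetry of $\nabla^2 v$ and the common lineality subspace, gives exactly the two stated alternatives.
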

\begin{proof} Let $u=v+t\frac{x^2}{2}$, for some $t\in\R.$ Then
\begin{equation}\label{111}
\|\nabla^2 u\|^2=\|\nabla^2 v\|^2+2t\Delta v+t^2n=\|\nabla^2 v\|^2+2tLv+2t\langle \nabla v,x\rangle+t^2n.
\end{equation}
Note also that
\begin{equation}\label{222}
Lv=Lu-tL\frac{x^2}{2}=Lu-t(n-x^2).
\end{equation}
Using that $K$ is symmetric and $u$ is even, and consequently $\int \nabla v=0$, and applying the Gaussian Poincare inequality (\ref{poinc-sect4}) to each of the $\frac{\partial v}{\partial x_i},$ $i=1,...,n$ we get
\begin{equation}\label{333}
\int\|\nabla^2 v\|^2\geq \int |\nabla v|^2.
\end{equation}
Combining (\ref{111}), (\ref{222}), (\ref{333}), and the fact that $\nabla u=\nabla v+tx,$ we get
\begin{equation}\label{1111}
\int \|\nabla^2 u\|^2\geq \int |\nabla v|^2+2t\langle \nabla v,x\rangle+t^2n+2t Lu -2t^2(n-x^2)=\int |\nabla u|^2+2t Lu +t^2(x^2-n)
\end{equation}
Plugging the optimal $t=\frac{\int Lu}{n-\int x^2}$ yields
$$\int \|\nabla^2 u\|^2\geq \int |\nabla u|^2+\frac{(\int Lu)^2}{n-\E X^2},$$
arriving to the inequality of the Proposition.

In order to characterize the equality cases, suppose that the equality occurs. Then the equality occurs in (\ref{333}), which means that the equality occurs in the Brascamp-Lieb inequality (\ref{BrLi}) for every $\frac{\partial v}{\partial x_i}$. By Proposition \ref{eqchar}, either $K$ is arbitrary and $v$ is an affine function (which means, in view of symmetry, that $v$ is a constant function), or $K$ is a cylinder, and there exists a collection of vectors $\theta_1,...,\theta_n\in\R^n$ (some of which may be zero), and constants $c_1,...,c_n\in\R,$ such that $\frac{\partial v}{\partial x_i}=\langle x,\theta_i\rangle+c_i$, and $\{t\theta_i:t\in\R\}\subset K$. Let us discuss this case by case.

\textbf{Case 1.} $v=C,$ for some constant $C\in\R.$ Then $u=C_1 x^2+C_2,$ for some $C_1,C_2\in\R,$ while $K$ is arbitrary.

\textbf{Case 2.} Suppose $v=C+\langle x,\theta\rangle,$ for some non-zero vector $\theta.$ Then $u=C+\langle x,\theta\rangle+C_1 x^2,$ which can only be an even function if $\theta=0,$ bringing us back to the first case.

\textbf{Case 3.} Suppose $K$ is a cylinder, and $\frac{\partial v}{\partial x_i}=\langle x,\theta_i\rangle+c_i$, and $\{t\theta_i:t\in\R\}\subset K$, for all $i=1,...,n$. Thus, since $v$ is even, we have $v=\langle Ax,x\rangle+C,$ where $A$ is an $n\times n$ matrix such that $A^Te_i=\theta_i$, and therefore $span(A^Te_i)\subset K.$ Therefore, there exists a rotation $R$ such that $RK=L\times \R^{n-k}$, for some $k-$dimensional symmetric convex set $L\subset \R^k,$ and $v(Rx)=\sum_{i=n-k+1}^n \beta_i x^2_i$, for some real numbers $\beta_i$. Recalling the relation between $u$ and $v$, and since $x^2=|Rx|^2$, we have $u(Rx)=\sum_{i=1}^n \alpha_i x^2_i$, for some real numbers $\alpha_1,...,\alpha_n,$ such that $\alpha_1=...=\alpha_{n-k}.$
\end{proof}

\medskip
\medskip

\subsection{Proof of the inequality of the Theorem \ref{Gauss-main-1}}\label{ineq-proof} For the inequality part, we may assume, without loss of generality, that $K$ is $C^2$-smooth. By the Proposition \ref{key_prop-general}, it is enough to show that for any $f\in C^1(\partial K)$ there exists a $u\in C^2(K)\cap W^{1,2}(K)$ with $\langle \nabla u,n_x\rangle=f(x)$ on $x\in\partial K$, and such that
$$\frac{1}{\mu(K)}\int \|\nabla^2 u\|^2+|\nabla u|^2\geq p_s(K,\gamma)\left(\int Lu\right)^2+Var(Lu),$$
with $p_s(K,\gamma)$ satisfying the estimate of the Theorem. By Theorem \ref{exist-n}, one may choose $u$ to be the solution of $Lu=F$ on $K,$ for some $F\in L^2(K,\gamma)\cap Lip(K),$ and $\langle \nabla u,n_x\rangle=f(x)$ on $x\in\partial K$, where $\int F=\frac{1}{\gamma(K)}\int_{\partial K} f d\gamma|_{\partial K}$. In the case when $\int_{\partial K} f d\gamma|_{\partial K}=0,$ we simply take $F=0$, and thus the desired conclusion holds for an arbitrary value of $p_s(K,\gamma).$ Therefore, by homogeneity, we have
$$p_s(K,\gamma)\geq \sup_{F\in L^2(K,\gamma)\cap Lip(K):\,\int F\neq 0}\inf_{u: Lu=F} \frac{ \int \|\nabla^2 u\|^2+|\nabla u|^2-Var(F)}{(\int F)^2}.$$
Thus, as a corollary of the Proposition \ref{propGauss}, we get that for a  $C^2-$smooth convex set $K,$ and for any $F\in L^2(K,\gamma)\cap Lip(K)$ with $\int F\neq 0,$
$$ p_s(K,\gamma)\geq  \inf_{u\in C^2(int K)\cap W^{2,2}(K,
\gamma): Lu=F} \frac{ 2\int |\nabla u|^2-Var(F)}{(\int F)^2}+\frac{1}{n-\E X^2}.$$
By Lemma \ref{inf-sup}, for any $u$ with $Lu=F,$ we have
$$\int |\nabla u|^2\geq T^F_{\gamma}(K).$$
Without loss of generality, by homogeneity, we may restrict ourselves to the case $\int F=1$. Thus the inequality of the theorem follows. $\square$ 

\medskip
\medskip

In what follows, we shall characterize the equality cases for Theorem \ref{Gauss-main-1}. Before we proceed with the formal argument, we outline a sketch of the equality case characterization in the smooth case, for the reader's convenience.

\begin{remark}[equality case characterization in the smooth case]

In order to characterize the equality cases among smooth convex sets, we notice that, firstly, $u$ (as defined in subsection \ref{ineq-proof}) must satisfy
\begin{equation}\label{u-min}
\int |\nabla u|^2=T^F_{\gamma}(K),	
\end{equation}
and, secondly, the equality must occur in the Proposition  \ref{propGauss}. (We note also that assuming sufficient smoothness about $L$ as well as $K$ allows us to use elliptic regularity results, see e.g. Evans \cite{evans}, which would ensure sufficient smoothness of $u$.) By Lemma \ref{inf-sup}, and in view of Theorem \ref{exist-dir}, the equation (\ref{u-min}) is true when $u$ is the unique function satisfying $Lu=F$ and taking a constant value on the boundary of $K$. From the equality case characterization in Proposition \ref{propGauss}, we get the following cases.

\textbf{Case 1.} $u=C_1\frac{x^2}{2}+C_2.$ Then $F=C_1(n-x^2),$ and therefore, $K$ is a euclidean ball: indeed, the only situation in which the function $u=C_1\frac{x^2}{2}+C_2$ takes a constant value on the boundary of $K$ is when $K$ a euclidean ball.

\textbf{Case 2.} There exists a rotation $R$ such that $RK=L\times \R^{n-k}$, for some $k-$dimensional symmetric convex set $L\subset \R^k,$ and $u(Rx)=\sum_{i=1}^n \alpha_i x^2_i$, for some real numbers $\alpha_1,...,\alpha_n,$ such that $\alpha_1=...=\alpha_{n-k}.$ Then the function $\alpha\sum_{i=1}^k x_i^2+\sum_{i=n-k+1}^n \alpha_i x^2_i$ takes constant values on the boundary of $RK=L\times \R^{n-k}$, and this implies that the set $L$ must be a $k-$dimensional euclidean ball.

This shows that the only possible equality cases for Theorem \ref{Gauss-main} are $k$-round cylinders. The fact that they are, in fact, equality cases, is the statement of Proposition \ref{cyl}, which we already shown.
\end{remark}

\medskip

\subsection{Stability in the key estimate}

We deduce immediately from the stability result for the Gaussian case of the Brascamp-Lieb inequality Proposition \ref{stability}:

\begin{proposition}\label{stabil-keyprop}
Fix $\epsilon>0$. Suppose that for a symmetric convex set $K$ with $C^2$-smooth boundary and an even $u\in W^{2,2}(K)\cap C^2(K)$, we have
$$\int \|\nabla^2 u\|^2\leq \int |\nabla u|^2 +\frac{(\int Lu)^2}{n-\E X^2}+\epsilon.$$
Then either
\begin{itemize}
\item $\|u-C_1 x^2-C_2\|_{L_1(K,\gamma)}\leq \sqrt{\gamma(K)}\left(\sqrt{n\epsilon}+\sqrt[4]{n\epsilon}\right),$ for some $C_1,C_2\in\R$ or
\item there exists a rotation $R$ such that 
$$\|u(Rx)-\sum_{i=1}^n \alpha_i x^2_i-C_0\|_{L_1(K,\gamma)}\leq \sqrt{\gamma(K)}\left(\sqrt{n\epsilon}+\sqrt[4]{n\epsilon}\right),$$ for some real numbers $\alpha_1,...,\alpha_n,$ such that $\alpha_1=...=\alpha_{n-k},$ and simultaneously, there exists a vector $\theta\in \R^{n-k}$ such that 
$$\int_{\partial K} \langle \theta,n_x\rangle^2 d\gamma_{\partial K}\leq \frac{(n+1)\epsilon}{r},$$
where $r$ is the in-radius of $K.$
\end{itemize}
\end{proposition}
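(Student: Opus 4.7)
My plan is to replay the proof of Proposition \ref{propGauss} with every inequality replaced by its Gaussian stability counterpart from part (2) of Theorem \ref{eqchar-intro}. First I would decompose $u = v + (t/2)|x|^2$ using the same optimal choice $t = \int Lu/(n - \E X^2)$ (well defined by Lemma \ref{lastlemma}). Since $u$ is even, so is $v$, and the algebraic identity
$$
\int \|\nabla^2 u\|^2 - \int |\nabla u|^2 - \frac{(\int Lu)^2}{n - \E X^2} \;=\; \int \|\nabla^2 v\|^2 - \int |\nabla v|^2
$$
derived in the proof of Proposition \ref{propGauss} will convert the hypothesis into $\int \|\nabla^2 v\|^2 \leq \int |\nabla v|^2 + \epsilon$.

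Writing this as $\sum_{i=1}^n (\int |\nabla \partial_i v|^2 - \int (\partial_i v)^2) \leq \epsilon$ exhibits the total deficit as a sum of $n$ nonnegative Gaussian Brascamp--Lieb deficits $\epsilon_i$, one for each $f = \partial_i v$ (which has zero mean by symmetry of $K$ and evenness of $v$). Applying part (2) of Theorem \ref{eqchar-intro} to each $\partial_i v$ will produce vectors $\theta_i \in \R^n$ with
$$
\int_{\partial K} \langle \theta_i, n_x\rangle^2 d\gamma_{\partial K} \leq \frac{(n+1)\epsilon_i}{r}, \qquad \|\partial_i v - \langle x, \theta_i\rangle\|_{L^1(K,\gamma)} \leq \sqrt{\gamma(K)}\bigl(\sqrt{n\epsilon_i} + \sqrt[4]{n\epsilon_i}\bigr),
$$
together with the sharper $L^2$-gradient estimate $\int |\nabla(\partial_i v - \langle x, \theta_i\rangle)|^2 d\gamma \leq \epsilon_i$ implicit in the proof of Proposition \ref{stability}.

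I would then assemble these componentwise estimates into a global statement about $u$. Form the matrix $A$ with $A_{ij} = (\theta_i)_j$; its symmetric part $\bar A = (A + A^T)/2$ is forced up to controlled error by the Schwarz identity $\partial_i \partial_j v = \partial_j \partial_i v$, tested weakly against coordinate functions. Summing the $L^2$-gradient bounds yields $\int |\nabla(v - \tfrac{1}{2}\langle \bar A x, x\rangle)|^2 d\gamma \leq \epsilon$, and the Gaussian Poincar\'e inequality (\ref{poinc-sect4}), or its symmetric sharpening (\ref{poinc-sect4-sym}), promotes this, after subtracting the mean, to an $L^1$ bound of the required order. Diagonalizing $\bar A + tI$ by a rotation $R$ with eigenvalues $\alpha_1, \ldots, \alpha_n$, adding back the $(t/2)|x|^2$ piece, and regrouping then delivers a constant $C_0$ with
$$
\|u(R\,\cdot) - \sum_{i=1}^n \alpha_i x_i^2 - C_0\|_{L^1(K,\gamma)} \leq \sqrt{\gamma(K)}\bigl(\sqrt{n\epsilon} + \sqrt[4]{n\epsilon}\bigr).
$$
If all the $\alpha_i$ coincide within the stated tolerance one lands in Case 1 with $C_1$ their common value; otherwise one groups them so that $\alpha_1 = \cdots = \alpha_{n-k}$ for some $1 \leq k < n$ and the remaining $\alpha_i$ differ, and any nonzero $R^T\theta_i$ with $i \leq n-k$ then supplies the required $\theta \in \R^{n-k}$, inheriting the boundary bound from the componentwise estimate.

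The main obstacle will be precisely this assembly step: combining $n$ componentwise estimates into a single $L^1$ control of $u$ via a symmetric quadratic form while preserving both the $\sqrt{n\epsilon}$ scaling in the bulk and the sharp $(n+1)/r$ boundary constant. The Schwarz symmetrization of $A$, the uniform bound $C_{\mathrm{poin}}(K,\gamma) \leq 1$ on any convex $K$, and a careful choice of the additive constant $C_0$ should make this routine but delicate; once the decomposition is in place the case split between the isotropic quadratic (Case 1) and the cylinder structure (Case 2) follows immediately from the spectrum of $\bar A + tI$.
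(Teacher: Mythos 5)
Your route coincides with the paper's: the entire proof there consists of observing that, after the substitution $u=v+t\frac{x^2}{2}$ with the optimal $t=\int Lu/(n-\E X^2)$, the hypothesis forces near-equality in (\ref{333}), i.e. the sum over $i$ of the Gaussian Brascamp--Lieb deficits of the mean-zero functions $\partial_i v$ is at most $\epsilon$, and then invoking Proposition \ref{stability} (the Gaussian case, i.e. Theorem \ref{eqchar-intro}(2)); your first two paragraphs reproduce exactly this, and your assembly step is the part the paper compresses into the word ``immediately''. One detail in that assembly is not right as stated, though: the componentwise estimate $\int|\nabla(\partial_i v-\langle x,\theta_i\rangle)|^2\le\epsilon_i$ is not ``implicit in the proof of Proposition \ref{stability}''. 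In that proof, $\theta_i$ comes from the potential $w_i$ solving $Lw_i=\partial_i v$ with zero Neumann data; writing $w_i=-\langle x,\theta_i\rangle+\rho_i$, what is controlled in $L^2$ is $\nabla\rho_i$ and $\nabla^2\rho_i$, while the difference $\partial_i v-\langle x,\theta_i\rangle=L\rho_i$ is controlled only in $L^1$ (this is exactly where the $\sqrt{n\epsilon_i}+\sqrt[4]{n\epsilon_i}$ rate comes from). So the summation leading to $\int|\nabla(v-\tfrac12\langle \bar A x,x\rangle)|^2\le\epsilon$ does not follow from the cited result; you should either run the assembly at the level of the potentials $\rho_i$, or work with the $L^1$ bounds on $L\rho_i$ and accept an $L^1$-Poincar\'e (Cheeger-type) constant when integrating back to $u$, which perturbs the stated constants. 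To be fair, the paper itself does not supply this bookkeeping either -- its statement simply transplants the constants of Proposition \ref{stability} -- so apart from this one overclaim your proposal is a faithful, more explicit rendering of the paper's argument rather than a different route.
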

\begin{proof} Indeed, by the assumption, we deduce that a near-equality must occur in (\ref{333}), and the conclusion follows immediately from Proposition \ref{stability} (the stability in the Brascamp-Lieb inequality).	
\end{proof}

\medskip

In what follows, we outline an approximation argument, to get the equality cases in the class of arbitrary convex sets.

\subsection{Proof of the equality cases in Theorem \ref{Gauss-main-1}}

Suppose the equality occurs in Theorem \ref{Gauss-main-1} for some convex set $K$. Namely, suppose that for some $F\in L^2(K,\gamma)\cap Lip(K)$ with $\int F=1$,
$$p_s(K,\gamma)=2T_{\gamma}^F(K)-Var(F)+\frac{1}{n-\E X^2}.$$

As before, for a large $R>0,$ let $K_R=K\cap RB^n_2.$ For an arbitrary $\delta>0,$ consider a convex set $K^R_{\delta}$ with $C^{2}$ boundary, such that $\frac{1}{2}K^R\subset K^R_{\delta}\subset K^R$, such that $K^R_{\delta}$ is $\delta$-close to $K^R$ in Hausdorff distance, and such that $\gamma(K\setminus K^R_{\delta})\leq \delta$. Note that our assumption implies that that $d_{TV}(\gamma|_{\partial K^R}, \gamma|_{\partial K_{\delta}^R})\leq \delta$: indeed, this follows from the corresponding comparison in the Lebesgue case, which is outlined in Schneider \cite{book4}. 

Then, using Proposition \ref{continuityTRapp} and Lemma \ref{infiniteapproximation}, one may select the sequence of $R$ and $\delta$ in such a way that the torsional rigidity $T_{\gamma}^F(K^R_{\delta})$ converges to the torsional rigidity $T_{\gamma}^F(K).$ Using also the continuity of $p_s(K,\gamma)$ (which follows from the definition), and the continuity of $Var(F)$ and $\E X^2,$ we get
\begin{equation}\label{refernow}
p_s(K_{\delta}^R,\gamma)\leq 2T_{\gamma}^F(K)-Var(F)+\frac{1}{n-\E X^2}+\epsilon,
\end{equation}
for some $\epsilon>0$ that depends only on $\delta$ and $R$. Recall that by Proposition \ref{key_prop-general}, there exists a $u_{\epsilon}\in W^{2,2}(K^R_{\delta})\cap C^2(int K_{\delta}^R)$ with $Lu_{\epsilon}=F$ on $K^R_{\delta}$ such that
\begin{equation}\label{eqcaseseq1}
p_s(K_{\delta}^R,\gamma)\geq \int \|\nabla^2 u_{\epsilon}\|^2+|\nabla u_{\epsilon}|^2-Var(F).
\end{equation}

Using Lemma \ref{inf-sup}, we see that
$$\int \|\nabla^2 u_{\epsilon}\|^2+|\nabla u_{\epsilon} |^2\leq 2\int |\nabla u_{\epsilon} |^2 +\frac{1}{n-\E X^2}+\epsilon.$$
By Proposition \ref{stabil-keyprop}, either 
$$\| u_{\epsilon}-C'_{\epsilon} x^2-C_{\epsilon}\|_{L_1(K_{\delta}^R,\gamma)}\leq c(\epsilon),$$
for $c(\epsilon)\rightarrow 0$ such that $c(\epsilon)$ only depends on $\epsilon$ and $n,$ or there exists a rotation $U^{\epsilon}$ (which may depend on $\epsilon$), such that 
$$\| u_{\epsilon}(U^{\epsilon}x)-\sum_{i=1}^n \alpha_i(\epsilon) x^2_i-C_0(\epsilon)\|_{L_1(K_{\delta}^R,\gamma)}\leq c(\epsilon).$$ Here $\alpha_i(\epsilon)$ stand for some real numbers $\alpha_1(\epsilon),...,\alpha_n(\epsilon),$ where $\alpha_1(\epsilon)=...=\alpha_{n-k}(\epsilon);$ simultaneously, there exists a vector $\theta_{\epsilon}\in \R^{n-k}$ such that 
$$\int_{\partial K_{\delta}^R} \langle \theta_{\epsilon},n_x\rangle^2 d \gamma_{\partial K_{\delta}^R}\leq c''(\epsilon),$$
with $c''(\epsilon)\rightarrow_{\epsilon\rightarrow 0} 0,$ depending only on $n$ and the in-radius of $K_{\delta}^R,$ which in turn is bounded from below regardless of the values of $\delta$ and $R$ by our construction.

Recall that $d_{TV}(\gamma|_{\partial K^R}, \gamma|_{\partial K_{\delta}^R})\leq \delta$. We let $\epsilon\rightarrow 0$ (while considering appropriate subsequences), to conclude that $K^R_{\delta}$ converges to $K,$ $\theta_{\epsilon}\rightarrow \theta$, and therefore, $u_{\epsilon}$ converges weakly to a function $u\in W^{2,2}(K,\gamma)$, such that either $u(x)=\frac{\alpha x^2}{2}+C,$ or $u(x)=\langle Ax,x\rangle+C,$ for some positive-definite matrix $A$. Furthermore, whenever $\theta\in Ker(A)$, we have $\langle \theta,n_x\rangle=0$ for almost all $x\in\partial K.$ Therefore, $K$ has to be a cylinder. 

It remains to conclude that $K$ is a ``round'' $k-$cylinder. Indeed, by Proposition \ref{propGauss},
$$p_s(K_{\delta}^R,\gamma)\geq 2\int |\nabla u_{\epsilon}|^2-Var(F)+\frac{1}{n-\E X^2}.$$
Thus by (\ref{refernow}),
\begin{equation}\label{u-min-stab}
\int_{K_{\epsilon}} |\nabla u_{\epsilon} |^2\leq T^F_{\gamma}(K_{\epsilon})+\frac{\epsilon}{2},	
\end{equation}
By Proposition \ref{tor-rig-stab}, (\ref{u-min-stab}) yields that 
$$Var_{\gamma|_{\partial K}}(u_{\epsilon}) \leq c(\epsilon),$$
where $c(\epsilon)\rightarrow_{\epsilon\rightarrow 0}$ and depends only on $n$ and $r$ (which are fixed). Since 
$$d_{TV}(\gamma|_{\partial K^R}, \gamma|_{\partial K_{\delta}^R})\leq \delta,$$ 
we see, after letting $\delta\rightarrow 0$ and $R\rightarrow\infty,$ that there exists a $C\in\R$ such that $u|_{\partial K}=C$. Recall that either $u(x)=\frac{\alpha x^2}{2}+C,$ or $u(x)=\langle Ax,x\rangle+C,$ for some positive-definite matrix $A$; the fact that $u|_{\partial K}=C$ thus implies that $K$ is a round $k-$cylinder.

Finally, the fact that all round $k-$cylinders do appear as equality cases for Theorem \ref{Gauss-main-1} was shown in Proposition \ref{cyl}, and the proof is complete. $\square$

\medskip

\subsection{Some more corollaries of Theorem \ref{Gauss-main-1} and proof of the Theorem \ref{Gauss}.}

As another corollary of Theorem \ref{Gauss-main-1}, we have

\begin{cor}\label{Gauss-main-cor}
For any symmetric convex set $K$ in $\R^n,$
$$p_s(K,\gamma)\geq \sup_{J\subset\{1,...,n\}}\frac{2T_{\gamma}^{\#J-x_J^2}(K)-\E X_J^4+\left(\E X_J^2\right)^2}{(\#J-\E X^2_J)^2}+\frac{1}{n-\E X^2},$$
where $X_J=Proj_{span(e_j)_{j\in J}}X.$

Moreover, the equality holds if and only if $K=RB^k_2\times \R^{n-k}$ for some $R>0$ and $k=1,...,n.$	
\end{cor}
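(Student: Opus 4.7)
The plan is to deduce Corollary \ref{Gauss-main-cor} as a direct specialization of Theorem \ref{Gauss-main-1}. For each $J \subset \{1,\dots,n\}$ (with the harmless assumption $\#J - \E X_J^2 \neq 0$, which holds whenever $K \neq \R^n$ in the $J$-directions, by Lemma \ref{lastlemma}), apply Theorem \ref{Gauss-main-1} with the test function
$$
F_J(x) = \frac{\#J - x_J^2}{\#J - \E X_J^2},
$$
which has been normalized so that $\int F_J = 1$. Then take the supremum over all $J$.

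The only computation required is to exploit the homogeneity in $F$ of both the torsional rigidity and the variance. From the definition of $T_\gamma^F(K)$ we have $T_\gamma^{cF}(K) = c^2 T_\gamma^F(K)$, so
$$
T_\gamma^{F_J}(K) = \frac{T_\gamma^{\#J - x_J^2}(K)}{(\#J - \E X_J^2)^2},
$$
and since shifting does not affect variance,
$$
\operatorname{Var}(F_J) = \frac{\operatorname{Var}(x_J^2)}{(\#J - \E X_J^2)^2} = \frac{\E X_J^4 - (\E X_J^2)^2}{(\#J - \E X_J^2)^2}.
$$
Plugging into the bound $p_s(K,\gamma) \geq 2T_\gamma^{F_J}(K) - \operatorname{Var}(F_J) + \frac{1}{n - \E X^2}$ from Theorem \ref{Gauss-main-1} and taking the supremum over $J$ yields the inequality.

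For the equality cases I will argue in two steps. If equality holds in the corollary for some $J$, then because $\sup_J[\cdots] \leq \sup_F[\cdots]$ in Theorem \ref{Gauss-main-1}, equality must hold in Theorem \ref{Gauss-main-1} as well; the equality characterization there forces $K = RB^k_2 \times \R^{n-k}$. Conversely, for $K = RB^k_2 \times \R^{n-k}$ I choose $J = \{1,\dots,k\}$, observe that $F_J$ is (up to normalization) exactly the function $\#J - x_J^2$ for which Lemma \ref{last-touch} delivers equality in the torsional rigidity estimate, and verify that the resulting explicit expression matches the value of $p_s(K,\gamma)$ computed in Proposition \ref{cyl}. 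This last matching is essentially bookkeeping: using $n - \E X^2 = \#J - \E X_J^2$ (since $\E x_i^2 = 1$ for $i > k$ on the cylinder), the formula from the corollary collapses to the one in Proposition \ref{cyl}.

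The main (minor) obstacle in writing this cleanly is just the arithmetic check that the two explicit expressions for round $k$-cylinders agree, together with verifying that the degenerate case $\#J = \E X_J^2$ is either vacuous (it forces $K$ to be a full subspace in the $J$-directions, contradicting Lemma \ref{lastlemma}) or can simply be excluded from the supremum. All substantive analytic content has already been packaged into Theorem \ref{Gauss-main-1}, Lemma \ref{last-touch}, and Proposition \ref{cyl}, so the corollary is essentially a rescaling of the general estimate combined with the known equality case structure.
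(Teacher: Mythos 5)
Your proposal is correct and matches the paper's own proof, which is exactly this specialization: plug $F=\frac{\#J-x_J^2}{\#J-\int x_J^2}$ (nonzero mean by Lemma \ref{lastlemma}) into Theorem \ref{Gauss-main-1}, use the quadratic homogeneity of $T_\gamma^F$ and of the variance, and settle the equality cases via the characterization in Theorem \ref{Gauss-main-1} together with the cylinder computation of Proposition \ref{cyl} and Lemma \ref{last-touch}. No gaps beyond the routine arithmetic you already flag as bookkeeping.
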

\begin{proof} We let $F=\frac{\#J-x^2_J}{\#J-\int x^2_J}$ (as $\int F\neq 0$ by Lemma \ref{lastlemma}), and use Theorem \ref{Gauss-main-1}.
\end{proof}

As another immediate corollary of Theorem \ref{Gauss-main-1}, obtained from it by letting $F=1$, we get

\begin{cor}\label{corT1}
For any symmetric convex set $K$ in $\R^n,$
$$p_s(K,\gamma)\geq 2T_{\gamma}(K)+\frac{1}{n-\E X^2}.$$	
\end{cor}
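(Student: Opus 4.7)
The plan is essentially to invoke Theorem \ref{Gauss-main-1} with the simplest admissible test function, namely $F \equiv 1$. First I would verify that $F=1$ is admissible: it obviously lies in $L^2(K,\gamma) \cap \mathrm{Lip}(K)$, and under the normalization convention $\int = \frac{1}{\gamma(K)}\int_K d\gamma$, we have $\int F = 1$, so $F$ satisfies the constraint in the supremum on the right-hand side of Theorem \ref{Gauss-main-1}.

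Next I would compute the two quantities that appear in the theorem's bound at this specific choice of $F$. The variance of a constant function is zero, so $\mathrm{Var}(F) = 0$. Moreover, by the definition adopted at the beginning of Section 6, $T_\gamma(K)$ is nothing other than $T_\gamma^F(K)$ with $F \equiv 1$, i.e.\ $T_\gamma^1(K) = T_\gamma(K)$. Substituting these two values into the estimate
$$p_s(K,\gamma) \geq 2 T_\gamma^F(K) - \mathrm{Var}(F) + \frac{1}{n - \E X^2}$$
(which is valid for any single admissible $F$, since the theorem provides a supremum over such $F$) immediately yields
$$p_s(K,\gamma) \geq 2 T_\gamma(K) + \frac{1}{n - \E X^2},$$
which is exactly the claimed inequality.

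There is really no obstacle here: the corollary is a direct specialization of Theorem \ref{Gauss-main-1}. The only thing one might want to remark on — though it is not needed for the statement — is that the equality case characterization from Theorem \ref{Gauss-main-1} does not automatically transfer to Corollary \ref{corT1}, since the supremum over $F$ need not be achieved at $F = 1$ for a general round $k$-cylinder; the corollary records only the lower bound obtained at this particular test function.
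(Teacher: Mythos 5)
Your proof is correct and matches the paper's own argument: the corollary is stated there as an immediate consequence of Theorem \ref{Gauss-main-1} obtained by choosing $F=1$, exactly as you do, with $\mathrm{Var}(F)=0$ and $T_\gamma^1(K)=T_\gamma(K)$. Your closing remark about the equality case not transferring is a sensible observation and consistent with the paper's treatment.
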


\begin{remark}
Theorem \ref{existsLu=-u} allows us to define, for a strictly-convex smooth set $K,$ the \emph{poincare deficit}
$$PD(K)=\inf_{v: Lv=-v}\frac{\E |\nabla v|^2-\E v^2}{(\E v)^2}+1,$$
where the expectation is with respect to the restriction of the Gaussian measure to $K$. This quantity can be viewed as an alternative to torsional rigidity. Analogously, we shall consider the ``symmetric'' Poincare deficit: for a symmetric strictly convex set $K$ in $\R^n,$ consider
$$PD_2(K)=\inf_{v: Lv=-2v}\frac{\E |\nabla v|^2-2\E v^2}{(\E v)^2}+2.$$
Our proof also yields, in fact,
$$p_s(K,\gamma)\geq 2PD_2(K)+\frac{1}{n-\E X^2}.$$
\end{remark}

\medskip

\textbf{Proof of the Theorem \ref{Gauss}.} Finally, we proceed with the proof of Theorem \ref{Gauss}. By Lemma \ref{lemma-min-p_s} and Corollary \ref{corT1}, it suffices to show that
$$
\inf_{K:\mu(K)\geq a}\left(2T_{\gamma}(K)+\frac{1}{n-\E X^2}\right)\geq \frac{\varphi^{-1}(a)^2}{4e^2n^2}+\frac{1}{n-\frac{1}{c_{n-1}a}J_{n+1}\circ J^{-1}_{n-1}(c_{n-1}a)}
$$
By Lemma \ref{moments},
$$\inf_{K:\,\gamma(K)=a}\frac{1}{\gamma(K)}\int_K x^2d\gamma(x)=\frac{1}{a} \int_{B_a} x^2 d\gamma(x),$$
where $B_a$ is the euclidean ball of Gaussian measure $a.$ Integration by polar coordinates shows that
$$\frac{1}{a} \int_{B_a} x^2 d\gamma(x)= \frac{1}{c_{n-1}a}J_{n+1}\circ J^{-1}_{n-1}(c_{n-1}a).$$
Combining this with Lemma \ref{bound2} (which provides a lower bound for $T_{\gamma}(K)$), we arrive to the conclusion $\square.$

\medskip

\subsection{A remark about a more flexible estimate}

As before, use the notation 
$$\int:=\frac{1}{\gamma(K)}\int_K d\gamma(x),$$
where $K$ is a symmetric convex body in $\R^n$ and $\gamma$ is the standard Gaussian measure on $\R^n.$ Let $C_{poin}$ be the Poincare constant of the restriction of $\gamma$ on $K.$

\begin{lemma}
For any $C^2$ even function $u:K\rightarrow\R$, and any $\lambda\in [0,\frac{1}{C^2_{poin}(\gamma,K)}]$ we have
$$\int \|\nabla^2 u\|^2+|\nabla u|^2\geq (\lambda+1)T^{F_{\lambda}}_{\gamma}(K)+\frac{(\int Lu)^2}{n-\frac{3\lambda-1}{\lambda+1}\E X^2},$$
where the expectation is taken with respect to the restriction of the Gaussian measure on $K$, and
$$F_{\lambda}=Lu-(n-x^2)\frac{\lambda-1}{\lambda+1}\frac{\int Lu}{n-\frac{3\lambda-1}{\lambda+1}\E X^2}.$$
\end{lemma}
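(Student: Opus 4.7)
The strategy refines the argument of Proposition~\ref{propGauss} by introducing a second parameter $c$ alongside the shift parameter $t$, exploiting the extra freedom in the Brascamp--Lieb inequality. I would substitute $v = u - tx^2/2$ (so $v$ is even and $Lv = Lu - t(n - x^2)$), and expand $\nabla u = \nabla v + tx$, $\nabla^2 u = \nabla^2 v + tI$; combined with $\Delta v = Lv + \langle \nabla v, x\rangle$, this yields the algebraic identity
\[
\int \|\nabla^2 u\|^2 + |\nabla u|^2 = \int \|\nabla^2 v\|^2 + |\nabla v|^2 + 2t\int Lv + 4t\int \langle \nabla v, x\rangle + t^2(n + \E X^2).
\]
For any $c \in \R$, I would then apply the Gaussian Brascamp--Lieb inequality \eqref{poinc-sect4} to each scalar function $\partial_i v - c x_i$, which has vanishing Gaussian mean on $K$ because $v$ is even (hence $\partial_i v$ odd) and $K$ is symmetric. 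Summing over $i$ and expanding both sides (using $\Delta v = Lv + \langle \nabla v, x\rangle$ once more) produces, for every $\lambda \leq 1/C^2_{poin}(\gamma, K)$, the shifted Poincar\'e-type bound
\[
\int \|\nabla^2 v\|^2 \geq \lambda \int |\nabla v|^2 + 2c(1 - \lambda)\int \langle \nabla v, x\rangle + 2c \int Lv + c^2(\lambda \E X^2 - n).
\]

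The next step is to combine these two estimates and choose the parameters in sequence. The choice $c = 2t/(\lambda - 1)$ zeroes out the cross-term coefficient $2c(1 - \lambda) + 4t$, removing $\int \langle \nabla v, x\rangle$ from the combined bound. The choice
\[
t = \frac{\lambda - 1}{\lambda + 1}\cdot\frac{\int Lu}{n - \frac{3\lambda - 1}{\lambda + 1}\E X^2}
\]
then forces $Lv = F_\lambda$, so that $\int |\nabla v|^2 \geq T^{F_\lambda}_\gamma(K)$ by Lemma~\ref{inf-sup}. What remains is a purely arithmetic identity: the sum of the $t^2(n+\E X^2)$, $c^2(\lambda \E X^2 - n)$, and $(2c + 2t)\int Lv$ contributions must collapse to exactly $(\int Lu)^2/\bigl(n - \frac{3\lambda - 1}{\lambda + 1}\E X^2\bigr)$.

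The main obstacle is this final algebraic verification. Writing $\ell_0 = \int Lu$ and $D = n - \frac{3\lambda-1}{\lambda+1}\E X^2$, one uses the formula $\int Lv = \ell_0 \cdot 2(n - \lambda \E X^2)/\bigl((\lambda+1)D\bigr)$ to reduce the leftover terms to a single rational expression; its numerator factors as $(\lambda + 1)^2 D$ via the telescoping identity $n(\lambda+1)^2 - (3\lambda - 1)(\lambda+1)\E X^2 = (\lambda+1)^2 D$, which cancels a $(\lambda+1)^2$ in the denominator and leaves precisely $\ell_0^2/D$. The degenerate case $\lambda = 1$ (where the prescription $c = 2t/(\lambda - 1)$ is ill-defined) is treated separately: the prescribed $t$ vanishes, so the cross term $4t\int \langle \nabla v, x\rangle$ is absent from the identity, and the claim reduces directly to Proposition~\ref{propGauss} combined with $\int |\nabla u|^2 \geq T^{Lu}_\gamma(K)$ from Lemma~\ref{inf-sup}.
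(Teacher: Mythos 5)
Your proposal is correct and is essentially the paper's own argument in a slightly different parametrization: the paper decomposes $u=v+t\frac{x^2}{2}$, applies the Poincar\'e inequality with constant $C_{poin}(\gamma,K)$ to the (odd, mean-zero) partials of $v$, completes the square to produce $(\lambda+1)\left|\nabla v+\tfrac{2t}{\lambda+1}x\right|^2$ and then invokes Lemma \ref{inf-sup}, whereas you place the shift $-cx_i$ inside the Poincar\'e application and choose $c=2t/(\lambda-1)$ to kill the cross term. After your choice of $t$, the function fed into Lemma \ref{inf-sup} is exactly the same quadratic shift of $u$ as in the paper (namely $u+\tfrac{1-\lambda}{\lambda+1}\tfrac{\int Lu}{n-\frac{3\lambda-1}{\lambda+1}\E X^2}\tfrac{x^2}{2}$), your algebraic identities check out, and your separate treatment of $\lambda=1$ via Proposition \ref{propGauss} is fine.
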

\begin{proof} Let
$$u=v+t\frac{x^2}{2}.$$
Then, since $v$ is even,
$$\int \|\nabla^2 u\|^2+|\nabla u|^2\geq \int \|\nabla^2 v\|^2+2t\Delta v+t^2n+|\nabla v+tx|^2\geq \lambda\int \|\nabla v\|^2+2t\Delta v+t^2n+|\nabla v+tx|^2,$$
where in the last passage, the inequality is true for any $\lambda\in [0, C^{-2}_{poin}(\gamma,K)]$, because of the Poincare inequality. Next, we write, as before,
$$\Delta v=Lu-L\frac{x^2}{2}+\langle x,\nabla v\rangle,$$
and estimate the above by
$$\int (\lambda+1)|\nabla v+\frac{2\lambda t}{\lambda+1}x|^2+2t Lu+t^2(-n+\frac{3\lambda-1}{\lambda+1}x^2).$$
Lastly, we plug the optimal
$$t_0=-\frac{\int Lu}{n-\frac{3\lambda-1}{\lambda+1}\E X^2},$$
and get that 
$$\int \|\nabla^2 u\|^2+|\nabla u|^2\geq (\lambda+1)\int |\nabla u+t_0\frac{\lambda-1}{\lambda+1}x|^2+\frac{(\int Lu)^2}{n-\frac{3\lambda-1}{\lambda+1}\E X^2}.$$
It remains to note, by Lemma \ref{inf-sup},
$$\int |\nabla u+t_0\frac{\lambda-1}{\lambda+1}x|^2\geq T^{F_{\lambda}}_{\gamma}(K).$$
\end{proof}

\section{On some upgrades of the Gaussian Brascamp-Lieb inequality.}

In this section, the measure is taken to be Gaussian, a convex set $K$ is fixed, and $\E$ stands for the expected value with respect to the restriction of the Gaussian measure onto $K,$ and similarly, $Var$ and $Cov$ stand for the variance and covariance.

\subsection{Non-symmetric case}

We start by noticing the following ``upgrade'' on the Brascamp-Lieb inequality:

\begin{lemma}\label{Ehr-nonsym}
For any function $f\in W^{1,2}(K,\gamma)\cap Lip(K),$ and any $\theta\in\sfe,$
$$\E f^2\leq \E|\nabla f|^2+\left(1-\frac{\left(r_f(K)+\E\langle X,\theta\rangle\right)^2}{1-Var(\langle X,\theta\rangle)}\right)  (\E f)^2,$$
where 
	$$r_f(K)=\frac{\int_{\partial K} f\langle \theta,n_x\rangle d\gamma|_{\partial K}}{\gamma(K)\E f}.$$
\end{lemma}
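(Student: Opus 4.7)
The plan is to reduce the claim to the standard Brascamp--Lieb inequality for convex sets by a one-parameter substitution and optimization.

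First I would apply the Gaussian Brascamp--Lieb inequality (\ref{poinc-sect4}) to the auxiliary function $g = f - \alpha \langle x,\theta\rangle$, where $\alpha\in\R$ is a free parameter. Expanding the left-hand side gives
$$\mathrm{Var}(g) = \mathrm{Var}(f) - 2\alpha\,\mathrm{Cov}(f,\langle X,\theta\rangle) + \alpha^2\,\mathrm{Var}(\langle X,\theta\rangle),$$
and since $\nabla g = \nabla f - \alpha\theta$ and $|\theta|=1$, the right-hand side becomes
$$\E|\nabla g|^2 = \E|\nabla f|^2 - 2\alpha\,\E\langle \nabla f,\theta\rangle + \alpha^2.$$
Collecting terms, the Brascamp--Lieb inequality takes the form of a quadratic inequality in $\alpha$:
$$\mathrm{Var}(f) \le \E|\nabla f|^2 - 2\alpha\bigl(\E\langle \nabla f,\theta\rangle - \mathrm{Cov}(f,\langle X,\theta\rangle)\bigr) + \alpha^2\bigl(1 - \mathrm{Var}(\langle X,\theta\rangle)\bigr).$$

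Next I would rewrite the linear coefficient using integration by parts in the Gaussian measure: for any $i$,
$$\int_K \partial_i f\, d\gamma = \int_{\partial K} f\, n_x^i\, d\gamma_{\partial K} + \int_K f x_i\, d\gamma,$$
so summing against $\theta_i$ and normalizing by $\gamma(K)$ yields
$$\E\langle \nabla f,\theta\rangle = r_f(K)\,\E f + \E[f\langle X,\theta\rangle].$$
Since $\mathrm{Cov}(f,\langle X,\theta\rangle) = \E[f\langle X,\theta\rangle] - \E f\cdot \E\langle X,\theta\rangle$, this gives the clean identity
$$\E\langle \nabla f,\theta\rangle - \mathrm{Cov}(f,\langle X,\theta\rangle) = \bigl(r_f(K) + \E\langle X,\theta\rangle\bigr)\E f.$$

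Finally, I would minimize the resulting quadratic in $\alpha$. By Lemma \ref{lastlemma}, for any $\theta\in\sfe$ we have $\E\langle X,\theta\rangle^2 \le 1$ (strictly, unless $K=\R^n$), which in particular gives $1 - \mathrm{Var}(\langle X,\theta\rangle) > 0$; thus the leading coefficient is positive and the minimum over $\alpha$ is attained at $\alpha_* = (r_f(K)+\E\langle X,\theta\rangle)\E f/(1-\mathrm{Var}(\langle X,\theta\rangle))$. Substituting back gives
$$\mathrm{Var}(f) \le \E|\nabla f|^2 - \frac{(r_f(K)+\E\langle X,\theta\rangle)^2}{1-\mathrm{Var}(\langle X,\theta\rangle)}(\E f)^2,$$
which rearranges to the stated inequality. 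The only subtle point is verifying that $1-\mathrm{Var}(\langle X,\theta\rangle)$ is strictly positive (so that the optimization is legitimate and the final fraction is well-defined), but this is exactly what Lemma \ref{lastlemma} supplies; everything else is a bookkeeping exercise.
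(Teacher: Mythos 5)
Your argument is correct and is essentially the paper's proof: apply the Gaussian Brascamp--Lieb inequality to $f+t\langle x,\theta\rangle$, optimize in the parameter, and convert $\E\langle\nabla f,\theta\rangle-\mathrm{Cov}(f,\langle X,\theta\rangle)$ into $(r_f(K)+\E\langle X,\theta\rangle)\E f$ by Gaussian integration by parts. The only quibble is your aside on strict positivity of $1-\mathrm{Var}(\langle X,\theta\rangle)$: Lemma \ref{lastlemma} assumes $K$ is star-shaped about the origin, which is not hypothesized here; what one gets for a general convex $K$ is $\mathrm{Var}(\langle X,\theta\rangle)\le 1$ from Brascamp--Lieb applied to $\langle x,\theta\rangle$ (as the paper notes when introducing $\eta_\theta$), and both you and the paper implicitly work in the nondegenerate case where this is strict.
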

\begin{proof} Apply the Brascamp-Lieb inequality to $f+t\langle\theta,x\rangle:$
$$Var(f+t\langle\theta,x\rangle)\leq \E|\nabla f+t\theta|^2,$$
and plug the optimal 
$$t=-\frac{Cov(f, \langle X,\theta\rangle)-\E\langle \nabla f,\theta\rangle}{1-Var(\langle X,\theta\rangle)},$$ 
to get	
$$Var(f)\leq \E|\nabla f|^2-\frac{\left(Cov(f, \langle X,\theta\rangle)-\E\langle \nabla f,\theta\rangle\right)^2}{1-Var(\langle X,\theta\rangle)}.$$	
Integrating by parts, and using the fact that $L\langle x,\theta\rangle=-\langle x,\theta\rangle$, we see that 
$$\frac{1}{\gamma(K)}\int_K f\langle x,\theta\rangle d\gamma=-\frac{1}{\gamma(K)}\int_K fL\langle x,\theta\rangle d\gamma=-r_f(K)\E f+ \E\langle \nabla f,\theta\rangle,$$
which finishes the proof. \end{proof}

Define 
$$\eta_{\theta}(K)=\frac{1-\E\langle X,\theta\rangle^2}{(\E\langle X,\theta\rangle)^2}.$$
Note that $\eta_{\theta}(K)\geq -1$ by the Brascamp-Lieb (\ref{BrLi-convex}) inequality applied to $\langle x,\theta\rangle;$ also, for a convex set $K$ such that for each $y\in \theta^{\perp}$, the interval $K\cap (y+span(\theta))$ contains $y$, we have +$\eta_{\theta}(K)\geq 0$, as follows from Lemma \ref{lastlemma}.

We shall notice some immediate corollaries of Lemma \ref{Ehr-nonsym}.

\begin{cor}\label{cor-ehr-nonsym} For any function $f\in W^{1,2}(K,\gamma)\cap Lip(K)$, and any $\theta\in\sfe,$

	\begin{itemize}
	\item If $r_f(K)=r_{\langle x,\theta\rangle}(K)$, then
	$$\E f^2+\eta_{\theta}(K)(\E f)^2\leq \E |\nabla f|^2.$$
	
	\item If $\int_{\partial K} f\langle n_x,\theta\rangle d\gamma|_{\partial K}=0,$ then
	$$\E f^2-\frac{\eta_{\theta}(K)}{1+\eta_{\theta}(K)}(\E f)^2\leq \E |\nabla f|^2.$$
	
	\item In particular, if $f|_{\partial K}=0,$ then
	$$\E f^2\leq \max(1,1+\eta_{\theta}(K))\E |\nabla f|^2.$$
		
	\end{itemize}
\end{cor}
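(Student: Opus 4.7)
The plan is to deduce all three parts directly from Lemma \ref{Ehr-nonsym} by plugging in the appropriate choice of boundary data and unpacking the notation for $\eta_\theta(K)$. The only real computation needed is the explicit evaluation of $r_{\langle x,\theta\rangle}(K)$, which will then let us recognize the right-hand side of Lemma \ref{Ehr-nonsym} as being expressible in terms of $\eta_\theta(K)$.

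For part (1), I would first compute $r_{\langle x,\theta\rangle}(K)$ by taking $f(x)=\langle x,\theta\rangle$, noting that $L\langle x,\theta\rangle=-\langle x,\theta\rangle$ for the standard Gaussian, and integrating by parts against $\langle x,\theta\rangle$ itself: this yields
$$\int_{\partial K}\langle x,\theta\rangle\langle \theta,n_x\rangle\,d\gamma_{\partial K}=\gamma(K)\bigl(1-\E\langle X,\theta\rangle^2\bigr),$$
so $r_{\langle x,\theta\rangle}(K)+\E\langle X,\theta\rangle=(1-\operatorname{Var}(\langle X,\theta\rangle))/\E\langle X,\theta\rangle$. Squaring and dividing by $1-\operatorname{Var}(\langle X,\theta\rangle)$ gives $(1-\operatorname{Var}(\langle X,\theta\rangle))/(\E\langle X,\theta\rangle)^2$, which by the identity $1-\operatorname{Var}(\langle X,\theta\rangle)=(1-\E\langle X,\theta\rangle^2)+(\E\langle X,\theta\rangle)^2$ equals $\eta_\theta(K)+1$. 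Plugging $r_f(K)=r_{\langle x,\theta\rangle}(K)$ into Lemma \ref{Ehr-nonsym} makes the bracket collapse to $1-(\eta_\theta(K)+1)=-\eta_\theta(K)$, which is exactly (1).

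For part (2), the assumption is just $r_f(K)=0$, so $(r_f(K)+\E\langle X,\theta\rangle)^2/(1-\operatorname{Var}(\langle X,\theta\rangle))=(\E\langle X,\theta\rangle)^2/(1-\operatorname{Var}(\langle X,\theta\rangle))=1/(1+\eta_\theta(K))$ by the same identity as above, and the bracket in Lemma \ref{Ehr-nonsym} becomes $\eta_\theta(K)/(1+\eta_\theta(K))$, which is the statement.

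For part (3), the assumption $f|_{\partial K}=0$ immediately gives $r_f(K)=0$, so part (2) applies. To extract the $\max(1,1+\eta_\theta(K))$ bound, I would split on the sign of $\eta_\theta(K)$: if $\eta_\theta(K)\geq 0$, then the coefficient $\eta_\theta(K)/(1+\eta_\theta(K))\in[0,1)$, and using $(\E f)^2\le\E f^2$ on the left-hand side of (2) yields $\E f^2\le (1+\eta_\theta(K))\E|\nabla f|^2$; if $-1\le\eta_\theta(K)<0$, then the coefficient is non-positive, so $\E f^2$ is already bounded by $\E|\nabla f|^2+(\text{nonpositive})(\E f)^2\le \E|\nabla f|^2$. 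Combining these two cases yields the claimed $\max(1,1+\eta_\theta(K))\E|\nabla f|^2$ bound. The whole corollary is thus a matter of unwinding definitions; there is no real obstacle beyond the initial integration-by-parts computation of $r_{\langle x,\theta\rangle}(K)$.
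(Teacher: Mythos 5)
Your proposal is correct and follows essentially the same route as the paper: the first two items are obtained by substituting the respective boundary conditions into Lemma \ref{Ehr-nonsym} (with your explicit integration-by-parts evaluation of $r_{\langle x,\theta\rangle}(K)$ and the identity $1-\mathrm{Var}(\langle X,\theta\rangle)=(1+\eta_{\theta}(K))(\E\langle X,\theta\rangle)^2$ simply spelling out what the paper calls immediate), and the third follows from the second via $(\E f)^2\leq \E f^2$ in the case $\eta_{\theta}(K)\geq 0$, the case $\eta_{\theta}(K)<0$ being trivial. No gaps.
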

\begin{proof} The first two assertions follow immediately, while the third assertion follows by the Cauchy-Schwartz inequality $(\E f)^2\leq \E f^2$, applied in the event that $\eta_{\theta}(K)\geq 0$ (we use also that $1+\eta_{\theta}(K)\geq 0.$)
\end{proof}

Next, define
$$\eta(a)=\sqrt{2\pi} a\Phi^{-1}(a) e^{\frac{\Phi^{-1}(a)^2}{2}}.$$
Note that $\eta_{\theta}(H_{\theta,a})=\eta(a),$ where $H_{\theta,a}=\{\langle x,\theta\rangle\leq \Phi^{-1}(a)\}.$ Note also that
$$\eta(a)\geq -1.$$
First, we formulate an immediate corollary of Corollary \ref{cor-ehr-nonsym}.

\begin{cor}\label{cor-ehr-nonsym-1} For any Borel-measurable set $K$ and  any function $f\in W^{1,2}(K,\gamma)\cap Lip(K)$, such that $f|_{\partial K}=0,$ we have
\begin{equation}\label{drpoin}
\E f^2-\frac{\eta(a)}{1+\eta(a)}(\E f)^2\leq \E |\nabla f|^2.
\end{equation}
\end{cor}
\begin{proof} Note that Ehrhard's symmetrization preserves Dirichlet boundary condition. By Ehrhard's principle, the worst constant $C(a)$ in the inequality (\ref{drpoin}), among the sets $K$ of Gaussian measure $a,$ is attained when $K$ is a half-space 
$$H_{a,\theta}=\{x:\langle x,\theta\rangle\leq \Phi^{-1}(a)\},$$ 
and $f$ is constant on all hyperplanes parallel to its boundary. By the second assertion of Corollary \ref{cor-ehr-nonsym-1}, we have
$$C(a)\geq\frac{\eta_{\theta}(H_{a,\theta})}{1+\eta_{\theta}(H_{a,\theta})}=\frac{\eta(a)}{1+\eta(a)},$$
where the last passage is a direct computation. \end{proof}

We can also get another curious general estimate, which shall follow from the following Lemma:

\begin{lemma}\label{favorite}
For any convex set $K$ of Gaussian measure $a,$ and any $\theta\in\sfe,$ we have $\eta_{\theta}(K)\geq \eta(a)$. In other words,
$$\E \langle X,\theta\rangle^2+\eta(a)(\E\langle X,\theta\rangle)^2\leq 1.$$	
\end{lemma}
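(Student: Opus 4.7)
The plan is to obtain this as an infinitesimal consequence of Ehrhard's inequality (\ref{ehrhard}), applied to the one-parameter family of translates $K + \lambda\theta$. The starting observation is purely algebraic: for any $\lambda_1 < \lambda_2$ and any $t \in [0,1]$, the convexity of $K$ (which forces $(1-t)K + tK = K$) gives
$$(1-t)(K + \lambda_1 \theta) + t(K + \lambda_2 \theta) \;=\; K + \bigl((1-t)\lambda_1 + t\lambda_2\bigr)\theta.$$
Applying the Ehrhard inequality to this Minkowski combination of the two convex sets $K+\lambda_i\theta$ shows that the function
$$c(\lambda) \;:=\; \psi^{-1}\bigl(M(\lambda)\bigr), \qquad M(\lambda) \;:=\; \gamma(K + \lambda\theta),$$
is concave on all of $\mathbb{R}$.

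Next, I would compute the first two derivatives of $M$ at $\lambda=0$ directly from the density formula. Translating via $x = y + \lambda\theta$ gives the generating-function expression
$$M(\lambda) \;=\; e^{-\lambda^2/2}\int_K e^{-\lambda\langle y,\theta\rangle}\,d\gamma(y),$$
from which a routine differentiation and use of $|\theta|=1$ yields
$$M(0) = a, \qquad M'(0) = -a\,\mathbb{E}\langle X,\theta\rangle, \qquad M''(0) = a\bigl(\mathbb{E}\langle X,\theta\rangle^2 - 1\bigr).$$

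Finally, I would unwind the concavity of $c$. Writing $M = \psi\circ c$ and differentiating twice,
$$M''(0) \;=\; \frac{e^{-c(0)^2/2}}{\sqrt{2\pi}}\Bigl(c''(0) - c(0)\,c'(0)^2\Bigr), \qquad c'(0) \;=\; \sqrt{2\pi}\,e^{c(0)^2/2}\,M'(0),$$
with $c(0) = \psi^{-1}(a)$. The concavity inequality $c''(0) \leq 0$ therefore becomes
$$-M''(0) \;\geq\; \sqrt{2\pi}\,\psi^{-1}(a)\,e^{\psi^{-1}(a)^2/2}\,(M'(0))^2 \;=\; \frac{\eta(a)}{a}\,(M'(0))^2.$$
Substituting the formulas for $M'(0)$ and $M''(0)$ and dividing by $a$ gives exactly
$$1 - \mathbb{E}\langle X,\theta\rangle^2 \;\geq\; \eta(a)\,\bigl(\mathbb{E}\langle X,\theta\rangle\bigr)^2,$$
which is equivalent to the assertion of the lemma. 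There is no real obstacle here; the one point worth flagging is that concavity of $c$ has to be proved on all of $\mathbb{R}$ (not just $[0,1]$), but the identity above applies to \emph{any} pair $\lambda_1 < \lambda_2$, so Ehrhard's inequality delivers this directly. As a sanity check, equality holds precisely when $K$ is a half-space perpendicular to $\theta$ of Gaussian measure $a$, consistent with the computation $\eta_\theta(H_{\theta,a}) = \eta(a)$ already noted before the lemma.
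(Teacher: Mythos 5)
Your argument is correct: the translate identity $(1-t)(K+\lambda_1\theta)+t(K+\lambda_2\theta)=K+((1-t)\lambda_1+t\lambda_2)\theta$ does follow from convexity of $K$, Ehrhard then gives concavity of $c(\lambda)=\psi^{-1}(\gamma(K+\lambda\theta))$ on all of $\R$, your formulas $M'(0)=-a\,\E\langle X,\theta\rangle$ and $M''(0)=a(\E\langle X,\theta\rangle^2-1)$ check out (differentiation under the integral is harmless because of the Gaussian factor), and unwinding $c''(0)\le 0$ yields exactly $1-\E\langle X,\theta\rangle^2\ge\eta(a)(\E\langle X,\theta\rangle)^2$, with no smoothness needed on $\partial K$. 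This is, however, a different implementation from the paper's. The paper's first proof also starts from Ehrhard, but via its infinitesimal boundary form (\ref{ehrimpl}) due to Kolesnikov and Milman: one plugs $f=\langle\theta,n_x\rangle$ into that second-variation inequality and integrates by parts on $\partial K$, retracing the Reilly-type computation; this is a one-line deduction once (\ref{ehrimpl}) is available, but it presupposes a smooth strictly convex $K$ (the term $\mathrm{II}^{-1}$ appears) and hence implicitly an approximation step, whereas your global-then-differentiate route applies directly to an arbitrary convex set and avoids all boundary machinery. Conceptually the two are the same second variation of Ehrhard along the translation field $\theta$, just computed in the bulk rather than on the boundary. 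The paper also supplies a second, Ehrhard-free proof by splitting the statement into Lemma \ref{fav1} (the barycenter bound, via convexity of $G\circ\psi^{-1}$) and Lemma \ref{fav2} (the second-moment comparison with the half-space, via Ehrhard symmetrization of the log-concave marginal); your approach does not address that motivation, but the lemma as stated does not require it. One small inaccuracy in your closing remark: equality in your derivation is $c''(0)=0$, which holds not only for half-spaces orthogonal to $\theta$ but also, e.g., for any cylinder invariant under translation in the direction $\theta$ (there $M$ is constant, $\E\langle X,\theta\rangle=0$ and $\E\langle X,\theta\rangle^2=1$, so both sides vanish); since the lemma makes no equality claim, this does not affect the proof.
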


\begin{remark} Note that Lemma \ref{favorite} is stronger that the Poincare inequality, since $\eta(a)\geq -1.$
\end{remark}

Combining Lemma \ref{favorite} with the first assertion of Corollary \ref{cor-ehr-nonsym}, we get
\begin{cor}\label{cor-ehr-nonsym-11} For any convex set $K$ and  any function $f\in W^{1,2}(K,\gamma)\cap Lip(K) $, such that for some $\theta\in\sfe,$ we have $\int_{\partial K} f\langle n_x,\theta\rangle d\gamma|_{\partial K}=\frac{1-\E\langle X,\theta\rangle^2}{\gamma(K)\E \langle X,\theta\rangle}\E f,$ we have
$$
\E f^2+\eta(a)(\E f)^2\leq \E |\nabla f|^2.
$$
\end{cor}

\medskip

The rest of this subsection is devoted to the discussion of Lemma \ref{favorite}. First, we present a quick proof of it.

\textbf{Proof of Lemma \ref{favorite} via Ehrhard's inequality.} Kolesnikov and Milman \cite{KolMil} showed that Ehrhard's inequality implies, for any function $f\in C^2(\partial K)\cap L^2(\partial K),$
\begin{equation}\label{ehrimpl}
\int_{\partial K} H_{\gamma} f^2 -\langle \rm{II}^{-1}\nabla_{\partial K} f,\nabla_{\partial K} f\rangle \, d\gamma|_{\partial K}\leq -\eta(a) \left(\int_{\partial K} f d\gamma|_{\partial K}\right)^2;
\end{equation}
See Proposition \ref{key_prop-general} in the case when $F=\Phi^{-1}$, which explains the implication. We plug $f=\langle\theta,n_x\rangle,$ and use the fact that $\nabla_{\partial K} \langle\theta,n_x\rangle =\rm{II}\theta$. We  integrate by parts (retracing back the steps done in Kolesnikov-Milman \cite{KM1}, subsection 2.2, where the generalized Reilly identity was derived from an earlier version of the Bochner-Lichnerowicz-Weitzenbock formula \cite{Lich}), and this yields Lemma \ref{favorite}. $\square$

\medskip

Let us now explain a proof of Lemma \ref{favorite} which does not rely on Ehrhard's inequality. We start by noticing 

\begin{lemma}\label{incr-psi}
Suppose a continuously differentiable function $F:\R\rightarrow\R$ is increasing. Let $G$ be given by $G'(t)=F(t)e^{-\frac{t^2}{2}}.$ Then $G\circ \Phi^{-1}$ is convex on $[0,1].$
\end{lemma}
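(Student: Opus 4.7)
The plan is to verify convexity of $h := G\circ \psi^{-1}$ on $[0,1]$ by directly computing $h''$ and showing it is non-negative, using the assumption that $F$ is increasing.

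First I would record the standard derivative identity for the inverse Gaussian cdf: since $\psi'(t) = \frac{1}{\sqrt{2\pi}} e^{-t^2/2}$, one has
\[
(\psi^{-1})'(a) = \frac{1}{\psi'(\psi^{-1}(a))} = \sqrt{2\pi}\, e^{\psi^{-1}(a)^2/2}.
\]
Applying the chain rule to $h(a) = G(\psi^{-1}(a))$ and using the defining relation $G'(t) = F(t) e^{-t^2/2}$, the exponentials cancel cleanly and I would obtain
\[
h'(a) = F(\psi^{-1}(a))\, e^{-\psi^{-1}(a)^2/2} \cdot \sqrt{2\pi}\, e^{\psi^{-1}(a)^2/2} = \sqrt{2\pi}\, F(\psi^{-1}(a)).
\]

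Differentiating once more and using the inverse-derivative identity again gives
\[
h''(a) = \sqrt{2\pi}\, F'(\psi^{-1}(a)) \cdot (\psi^{-1})'(a) = 2\pi\, F'(\psi^{-1}(a))\, e^{\psi^{-1}(a)^2/2}.
\]
Since $F$ is (continuously differentiable and) increasing on $\R$, we have $F'\ge 0$ everywhere, and the exponential factor is positive; hence $h''(a) \ge 0$ for all $a\in (0,1)$, which gives the convexity of $h = G\circ \psi^{-1}$.

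There is essentially no obstacle here: the whole statement reduces to a two-line chain-rule calculation in which the Gaussian density factors cancel perfectly between $G'$ and $(\psi^{-1})'$. The only mild care needed is to note that $\psi^{-1}$ is smooth on $(0,1)$ so the computation is justified pointwise, and convexity on the open interval extends to $[0,1]$ in the usual sense by continuity of $h$ (assuming the relevant one-sided limits exist, which follows from the integrability assumptions implicit in the setup).
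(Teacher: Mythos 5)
Your computation is correct and matches the paper's argument: both amount to differentiating $G\circ\psi^{-1}$ twice via the chain rule and observing that, after the Gaussian factors cancel, nonnegativity of the second derivative reduces exactly to $F'\geq 0$. Your version is marginally tidier in that the cancellation already appears at the first derivative, $h'(a)=\sqrt{2\pi}\,F(\psi^{-1}(a))$, but the substance is the same as the paper's proof.
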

\begin{proof} We note that 
$$(G\circ \Phi^{-1})|''_t=2\pi (G''(s)+G'(s)s)e^{s^2},$$
where $s=\Phi^{-1}(t).$ In case that $G'(t)=F(t)e^{-\frac{t^2}{2}},$ and $F$ is increasing, we have 
$$G''(s)+G'(s)s= e^{-\frac{s^2}{2}}(F(s)s-sF(s)+F'(s))\geq 0,$$
finishing the proof.	\end{proof}

We deduce

\begin{proposition}\label{prop-gen}
Let $F:\R\rightarrow\R$ be an increasing function. Then for any convex set $K,$
$$\int_K F(x_1)d\gamma\geq \int_{H_K} F(x_1)d\gamma,$$
where $H_K$ is the ``left'' half-space of the same Gaussian measure as $K,$ orthogonal to $e_1$.
\end{proposition}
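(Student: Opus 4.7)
The plan is to reduce everything to a one-dimensional comparison on the first coordinate. For a Borel set $A\subset\R^n$, write $a_A(s):=\gamma(A\cap\{x_1\leq s\})$; this is nondecreasing, $a_A(-\infty)=0$, $a_A(+\infty)=\gamma(A)$. Since $H_K=\{x_1\leq\psi^{-1}(\gamma(K))\}$, I would first observe that $a_{H_K}(s)=\min(\psi(s),\gamma(K))$. The whole proposition then becomes a one-variable Stieltjes statement, because $\int_A F(x_1)\,d\gamma=\int_\R F(s)\,da_A(s)$ by Fubini.

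The key pointwise inequality is $a_K(s)\leq a_{H_K}(s)$ for every $s\in\R$. This is immediate from the two trivial bounds $a_K(s)\leq\gamma(K)$ and $a_K(s)\leq\gamma(\{x_1\leq s\})=\psi(s)$, so $a_K(s)\leq\min(\psi(s),\gamma(K))=a_{H_K}(s)$. Notice that convexity of $K$ is not needed for this step; Borel measurability suffices. Assuming first that $F$ is $C^1$ and compactly supported, integration by parts in the Stieltjes integral yields, using that $a_K$ and $a_{H_K}$ have the same values at $\pm\infty$,
$$\int_K F(x_1)\,d\gamma-\int_{H_K} F(x_1)\,d\gamma=\int_\R F'(s)\bigl[a_{H_K}(s)-a_K(s)\bigr]\,ds\geq 0,$$
since $F'\geq 0$ (as $F$ is increasing) and $a_{H_K}\geq a_K$ pointwise. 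For a general increasing $F$ with $F(x_1)\in L^1(\gamma)$, I would approximate $F$ by $C^1$ increasing truncations-and-mollifications and pass to the limit via monotone/dominated convergence.

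An alternative, even shorter route uses the Gaussian Hardy-Littlewood inequality (Lemma \ref{HLrinc}) just proved in the paper. The function $x\mapsto F(x_1)$ is already its own Ehrhard rearrangement (its level sets are left half-spaces), while the Ehrhard rearrangement of $1_{K^c}$ is $1_{H_K^c}$, because $H_K^c$ is the right half-space of measure $\gamma(K^c)$. Applying Lemma \ref{HLrinc} to $F(x_1)$ and $1_{K^c}$ gives $\int F\cdot 1_{K^c}\,d\gamma\leq \int F\cdot 1_{H_K^c}\,d\gamma$, and subtracting from $\int_{\R^n} F\,d\gamma$ yields the claim. There is no genuine obstacle here; the only subtlety is the regularity/integrability approximation for $F$, and the role of Lemma \ref{incr-psi} is not to prove this statement but to set up the richer convexity framework that will be exploited in the refinements and local-Ehrhard style applications appearing afterward.
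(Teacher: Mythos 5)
Your proof is correct, and it takes a genuinely different route from the paper's. The paper proves Proposition \ref{prop-gen} by slicing $K$ along lines parallel to $e_1$ (this is where convexity of $K$ enters, so that each section is an interval), rewriting each line integral through the antiderivative $G$ of $\frac{1}{\sqrt{2\pi}}F e^{-t^2/2}$, and then applying the convexity of $G\circ\psi^{-1}$ (Lemma \ref{incr-psi}) twice: once pointwise to slide each interval of values to start at $0$, and once via Jensen's inequality in the $e_1^{\perp}$ variables. Your argument instead compares distribution functions, $a_K(s)=\gamma(K\cap\{x_1\leq s\})\leq\min(\psi(s),\gamma(K))=a_{H_K}(s)$, and concludes by monotonicity of $F$ -- the bathtub principle -- so it needs no convexity at all and yields the statement for every Borel set of the prescribed Gaussian measure, which is strictly more general than what the paper records; your second route, applying Lemma \ref{HLrinc} to $F(x_1)$ (which is its own Ehrhard rearrangement) and $1_{K^c}$ (whose rearrangement is $1_{H_K^c}$), is also valid once you add the harmless shift/truncation needed to meet the nonnegativity and $L^2$ hypotheses of that lemma; adding a constant to $F$ costs nothing because $\gamma(K)=\gamma(H_K)$, and this same device handles sign-changing $F$ such as $F(t)=t$, which is the case actually used in Lemma \ref{fav1}. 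Two small corrections to your write-up: an increasing function cannot be compactly supported, so in the smoothing step you should take $F$ bounded (constant outside a compact set) with $F'$ compactly supported -- the boundary terms in the Stieltjes integration by parts still vanish since $a_K-a_{H_K}\to 0$ at $\pm\infty$ and $F$ is bounded; and Lemma \ref{incr-psi} is not merely scaffolding for later refinements, it is precisely the paper's tool for this proposition, so the honest comparison is that the paper's convexity-plus-Jensen mechanism is heavier than necessary here, while your argument is shorter and dispenses with the convexity hypothesis.
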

\begin{proof} For $x\in e_1^{\perp},$ denote by $I_x=K\cap \{x+te_1\}.$ Parametrize $\gamma_1(I_x)=a_x$ and
$$I_x=[\Phi^{-1}(\alpha_x),\Phi^{-1}(\alpha_x+a_x)].$$
Then we write
$$\int_K F(x_1) d\gamma(x)=\int_{\R^{n-1}} \frac{1}{\sqrt{2\pi}}\int_{I_x} F(t)e^{-\frac{t^2}{2}}dt\,d\gamma_{n-1}(x)=$$$$\int_{\R^{n-1}} \left(G\circ \Phi^{-1}(\alpha_x+a_x)-G\circ\Phi^{-1}(\alpha_x) \right) d\gamma_{n-1}(x),$$
where $G$ is the antiderivative of $\frac{1}{\sqrt{2\pi}}Fe^{-\frac{t^2}{2}}$. By Lemma \ref{incr-psi}, the function $G\circ \Phi^{-1}$ is convex, and therefore
\begin{equation}\label{1}
G\circ \Phi^{-1}(\alpha_x+a_x)-G\circ\Phi^{-1}(\alpha_x)\geq G\circ \Phi^{-1}(a_x)-G\circ\Phi^{-1}(0).
\end{equation}
Again by convexity of $G\circ \Phi^{-1}$ together with Jensen's inequality, we see that

\begin{equation}\label{2}
\int_{\R^{n-1}} \left(G\circ \Phi^{-1}(a_x)-G\circ\Phi^{-1}(0)\right)d\gamma_{n-1}(x)\geq G\circ \Phi^{-1}(a)-G\circ\Phi^{-1}(0),
\end{equation}
where 
$$a=\int_{\R^{n-1}} a_x d\gamma_{n-1}(x)=\gamma(K).$$

The combination of the inequalities (\ref{1}) and (\ref{2}) finishes the proof, together with the fact that
$$G\circ \Phi^{-1}(a)-G\circ\Phi^{-1}(0)= \int_{H(K)} F(x_1)d\gamma.$$
\end{proof}

As a consequence, we get

\begin{lemma}\label{fav1}
For any convex set $K,$
$$\int_K \langle x,\theta\rangle d\gamma\geq \int_{H_{\theta}(K)} \langle x,\theta\rangle d\gamma,$$
where $H_{\theta}(K)=\{\langle x,\theta\rangle\leq \Phi^{-1}(\gamma(K))\}$. Therefore,
$$\left(\int_K \langle x,\theta\rangle d\gamma\right)^2\leq \left(\int_{H_{\theta}(K)} \langle x,\theta\rangle d\gamma\right)^2.$$
\end{lemma}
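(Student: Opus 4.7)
The first inequality is essentially a direct specialization of Proposition \ref{prop-gen}. By rotation invariance of the Gaussian measure, Proposition \ref{prop-gen} holds verbatim with $e_1$ replaced by any unit vector $\theta$ (applying an orthogonal transformation sending $\theta$ to $e_1$ does not change $\gamma$). Applying the resulting statement with the increasing function $F(t)=t$ yields precisely
$$\int_K \langle x,\theta\rangle\, d\gamma\;\geq\; \int_{H_\theta(K)} \langle x,\theta\rangle\, d\gamma,$$
which is the first claim.

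For the ``Therefore'' part, the plan is to apply the same inequality twice, with $\theta$ and with $-\theta$, and then exploit the symmetry of the one-dimensional Gaussian. First, a direct one-dimensional computation (integration by parts, or simply recognising $te^{-t^2/2}$ as the derivative of $-e^{-t^2/2}$) gives, for $c=\psi^{-1}(\gamma(K))$,
$$\int_{H_\theta(K)} \langle x,\theta\rangle\, d\gamma \;=\; \int_{-\infty}^{c}\frac{t}{\sqrt{2\pi}}\,e^{-t^2/2}dt\;=\; -\frac{1}{\sqrt{2\pi}}\,e^{-c^{2}/2}.$$
Crucially, this value depends only on $\gamma(K)$ and not on $\theta$: the analogous integral over $H_{-\theta}(K)$ of the linear functional $\langle x,-\theta\rangle$ takes the same value.

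Applying the first inequality of the lemma with $-\theta$ in place of $\theta$ therefore gives
$$-\int_K \langle x,\theta\rangle\, d\gamma\;=\;\int_K \langle x,-\theta\rangle\, d\gamma\;\geq\;\int_{H_{-\theta}(K)} \langle x,-\theta\rangle\, d\gamma\;=\;-\frac{1}{\sqrt{2\pi}}\,e^{-\psi^{-1}(\gamma(K))^2/2},$$
so that $\int_K\langle x,\theta\rangle\,d\gamma\leq \frac{1}{\sqrt{2\pi}}\,e^{-\psi^{-1}(\gamma(K))^2/2}$. Combining this upper bound with the corresponding lower bound supplied by the first half of the lemma yields
$$\left|\int_K \langle x,\theta\rangle\, d\gamma\right|\;\leq\;\frac{1}{\sqrt{2\pi}}\,e^{-\psi^{-1}(\gamma(K))^2/2}\;=\;\left|\int_{H_\theta(K)} \langle x,\theta\rangle\, d\gamma\right|,$$
and squaring gives the desired inequality.

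There is no real obstacle here; the only point requiring care is the sign bookkeeping. The substantive ingredient, Proposition \ref{prop-gen}, does the work, and the ``Therefore'' part is purely an exercise in exploiting the symmetry $\theta\leftrightarrow-\theta$ of the Gaussian measure to promote the one-sided bound of the first half to a two-sided bound.
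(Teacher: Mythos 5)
Your proof is correct and follows essentially the same route as the paper: the first inequality is Proposition \ref{prop-gen} with $F(t)=t$ (rotated so that $\theta$ plays the role of $e_1$), and the squared inequality is obtained by exploiting the $\theta\leftrightarrow-\theta$ symmetry, exactly the paper's "switch to $-\theta$" argument, with your explicit computation of the half-space integral just making the sign bookkeeping more transparent than the paper's case split.
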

\begin{proof} The Lemma follows from the Proposition \ref{prop-gen}, applied with $F=t.$ For the second assertion, note that for any $\theta,$ we have $\int_{H_{\theta}(K)} \langle x,\theta\rangle d\gamma\leq 0$. Thus if $\int_K \langle x,\theta\rangle d\gamma\leq 0,$ the conclusion follows, and otherwise, just switch to $-\theta$ in place of $\theta.$	
\end{proof}

\begin{remark} Also, Lemma \ref{fav1} follows from the fact that the norm of the barycenter $b_K$ of a set $K$ is maximized on a half-space -- see Bobkov \cite{Bobkov-bary}, Neemann \cite{Neeman-notes}. Indeed,
$$(\E\langle X,\theta\rangle)^2=\langle b_K,\theta\rangle^2\leq |b_K|^2\leq |b_{H_{\theta}(K)}|^2,$$
where choosing $\theta$ collinear to the barycenter finishes the proof of Lemma \ref{fav1}. 
\end{remark}

Next, we outline the second ingredient for Lemma \ref{favorite}:

\begin{lemma}\label{fav2}
For any convex set $K,$ using the same notation as before,
$$\int_K \langle x,\theta\rangle^2 d\gamma\leq \int_{H_{\theta}(K)} \langle x,\theta\rangle^2 d\gamma.$$
\end{lemma}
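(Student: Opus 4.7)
My plan is to reduce the inequality to an application of the slicing-plus-Jensen machinery that proves Proposition \ref{prop-gen}, applied now to a non-monotone integrand. Without loss of generality take $\theta=e_1$ and set $a=\gamma(K)$, $H=H_{e_1}(K)$; subtracting the common constant $\gamma(K)=\gamma(H)$ from both sides shows the target inequality is equivalent to $\int_K F(x_1)\,d\gamma \ge \int_H F(x_1)\,d\gamma$ with $F(t)=1-t^2$. The direction of the inequality matches Proposition \ref{prop-gen}, so that proposition's proof is the natural template.

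Following the proof of Proposition \ref{prop-gen} verbatim, I slice along $e_1$: for $x'\in e_1^{\perp}$, write $I_{x'}=K\cap(\{x'\}+\R e_1)$, $a_{x'}=\gamma_1(I_{x'})$, and $I_{x'}=[\psi^{-1}(\alpha_{x'}),\psi^{-1}(\alpha_{x'}+a_{x'})]$. Let $G$ be an antiderivative of $t\mapsto F(t)e^{-t^2/2}/\sqrt{2\pi}$. By Fubini, $\int_K F(x_1)\,d\gamma = \int_{\R^{n-1}}\bigl[G\circ\psi^{-1}(\alpha_{x'}+a_{x'})-G\circ\psi^{-1}(\alpha_{x'})\bigr]\,d\gamma_{n-1}(x')$ with $\int a_{x'}\,d\gamma_{n-1}=a$, and $\int_H F(x_1)\,d\gamma=G\circ\psi^{-1}(a)-G\circ\psi^{-1}(0)$. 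The argument of Proposition \ref{prop-gen} then needs convexity of $G\circ\psi^{-1}$ in two places: once so that each slice-increment $\alpha\mapsto G\circ\psi^{-1}(\alpha+a_{x'})-G\circ\psi^{-1}(\alpha)$ is minimized at $\alpha=0$, and again so that Jensen's inequality applies to the convex map $a_{x'}\mapsto G\circ\psi^{-1}(a_{x'})-G\circ\psi^{-1}(0)$ across slices.

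The decisive computation, carried out as in the proof of Lemma \ref{incr-psi} but with the new $F$, yields $(G\circ\psi^{-1})''(s) = \sqrt{2\pi}\,F'(\psi^{-1}(s))\,e^{\psi^{-1}(s)^2/2} = -2\sqrt{2\pi}\,\psi^{-1}(s)\,e^{\psi^{-1}(s)^2/2}$, which is non-negative on $[0,1/2]$ and non-positive on $[1/2,1]$. Thus $G\circ\psi^{-1}$ is convex on $[0,1/2]$, and the Proposition \ref{prop-gen} template transfers verbatim whenever all slice parameters satisfy $\alpha_{x'}+a_{x'}\le 1/2$.

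The main obstacle I anticipate is the concave regime $s\ge 1/2$, which is forced whenever $a>1/2$: there the Jensen step reverses and the bare slicing argument no longer closes. To handle it I would apply the same slicing template to the shifted quadratic $F_c(t)=1-(t-c)^2$ for a constant $c=c(a)$ chosen so that the corresponding $G_c\circ\psi^{-1}$ regains global convexity on $[0,1]$; the linear cross term $2c\int x_1\,d\gamma-c^2\gamma(K)$ produced by the shift is then exactly the type of quantity controlled by Lemma \ref{fav1}, so that the two ``ingredients'' combine in the weighted form needed by Lemma \ref{favorite} (the weight being the coefficient $\eta(a)$ of $(\E X_\theta)^2$).
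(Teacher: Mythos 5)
Your reduction to $\int_K F(x_1)\,d\gamma\ge\int_H F(x_1)\,d\gamma$ with $F(t)=1-t^2$ and the computation $(G\circ\psi^{-1})''(s)=\sqrt{2\pi}\,F'(\psi^{-1}(s))e^{\psi^{-1}(s)^2/2}$ are correct, but the plan does not close, for two reasons. First, the convex regime is much smaller than you suggest: the increment step and the Jensen step need convexity of $G\circ\psi^{-1}$ on $[0,\max_{x'}(\alpha_{x'}+a_{x'})]$, and a slice parameter $\alpha_{x'}+a_{x'}$ exceeds $1/2$ as soon as the corresponding slice of $K$ reaches into $\{\langle x,\theta\rangle>0\}$ -- which happens for essentially every $K$ (in particular for every symmetric $K$, the main case of interest), regardless of whether $a\le 1/2$. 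So the ``verbatim'' part only covers $K\subset\{\langle x,\theta\rangle\le 0\}$; this is precisely the special case the paper itself flags in the remark following Lemma \ref{favorite}. Second, the proposed repair fails: no finite $c$ makes $F_c(t)=1-(t-c)^2$ increasing on all of $\R$, so ``global convexity of $G_c\circ\psi^{-1}$ on $[0,1]$'' is unattainable, and for $K$ unbounded in the $+\theta$ direction no admissible $c$ exists at all. Even when $K$ is bounded and you take $c\ge\sup_K\langle x,\theta\rangle$ (so that $F_c$ is increasing on the range of $K$'s slices), expanding the resulting inequality and cancelling $\gamma(K)=\gamma(H)$ gives
$$\int_{H_\theta(K)}\langle x,\theta\rangle^2\,d\gamma-\int_K\langle x,\theta\rangle^2\,d\gamma\ \ge\ 2c\Bigl(\int_{H_\theta(K)}\langle x,\theta\rangle\,d\gamma-\int_K\langle x,\theta\rangle\,d\gamma\Bigr),$$
and by Lemma \ref{fav1} the right-hand side is $\le 0$ for $c>0$: the linear cross term enters with the wrong sign, so what you obtain is strictly weaker than the target inequality rather than implying it (nor does it supply the weighted form needed for Lemma \ref{favorite}).

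For comparison, the paper avoids the monotonicity issue by a different device: slice along $\theta$ and set $f(t)=\gamma_{n-1}(K\cap\{x_1=t\})$, which is log-concave and hence has connected level sets; then apply the weighted one-dimensional rearrangement inequality of Lemma \ref{HLrinc-1}, $\int f\,x^2\,d\gamma_1\le\int f^*x^2\,d\gamma_1$, which by the layer-cake formula reduces to the elementary fact that a connected set $A\subset\R$ satisfies $\gamma_1(A\cap S)\ge\gamma_1(H_A\cap S)$ for every symmetric interval $S$. In other words, what substitutes for the failed monotonicity of $-t^2$ is the connectedness (log-concavity) of the slice-measure function, not a shift of the integrand; if you want to salvage your approach you would need an ingredient of this kind rather than a quadratic recentering.
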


In order to show Lemma \ref{fav2}, we start with the following variant of the Hardy-Littlewood Lemma (which we already explored in Lemma \ref{HLrinc}):

\begin{lemma}[another twist on a one-dimensional Hardy-Littlewood principle]\label{HLrinc-1}
Let $K\subset\R$ be a connected set and suppose $f: K\rightarrow\R$ is a non-negative measurable function in $L^2(\gamma, K)$ with connected level sets. Then
$$\int_K fx^2 d\gamma_1(x)\leq \int_{H_K} f^* x^2 d\gamma_1(x).$$
\end{lemma}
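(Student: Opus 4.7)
My plan is to combine a layer-cake decomposition of $f$ with a one-dimensional rearrangement inequality for intervals against the (non-rearrangeable) weight $x^2$, in the spirit of the proof of Lemma \ref{HLrinc}. Write
$$\int_K f(x)\,x^2\,d\gamma_1(x)=\int_0^\infty\Bigl(\int_{\{f>s\}\cap K}x^2\,d\gamma_1\Bigr)ds,\qquad \int_{H_K} f^*\,x^2\,d\gamma_1=\int_0^\infty\Bigl(\int_{\{f^*>s\}}x^2\,d\gamma_1\Bigr)ds.$$
Since $K\subset\R$ is connected and $f$ has connected super-level sets, $A_s:=\{f>s\}\cap K$ is a sub-interval of $\R$; by construction of the Ehrhard rearrangement, $\{f^*>s\}$ is a half-line $B_s$ with the same Gaussian mass $a_s:=\gamma_1(A_s)$. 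The proof thus reduces to the single-level inequality
$$\int_A x^2\,d\gamma_1\;\leq\;\int_B x^2\,d\gamma_1\qquad\text{for every interval }A\subset\R\text{ and half-line }B\text{ with }\gamma_1(A)=\gamma_1(B)=a.$$

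For this interval comparison, I would parametrize $A=[\alpha,\beta]$ by the constraint $\psi(\beta)-\psi(\alpha)=a$, so that $\alpha\in(-\infty,\psi^{-1}(1-a)]$, and analyze $g(\alpha):=\int_\alpha^\beta x^2\,d\gamma_1$. Implicit differentiation of the constraint gives $d\beta/d\alpha=e^{(\beta^2-\alpha^2)/2}$, and hence
$$g'(\alpha)=\frac{e^{-\alpha^2/2}}{\sqrt{2\pi}}\bigl(\beta^2-\alpha^2\bigr).$$
The only interior critical point is $\alpha=-\beta$ (the symmetric interval $[-\beta,\beta]$ with $\varphi(\beta)=a$), and a direct calculation gives $g''(-\beta)=\tfrac{4\beta}{\sqrt{2\pi}}e^{-\beta^2/2}>0$, so this critical point is a strict local minimum of $g$. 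Moreover $g'<0$ on $(-\infty,-\beta)$ and $g'>0$ on $(-\beta,\psi^{-1}(1-a))$, so $g$ attains its maximum on the closed parameter interval at one of the two endpoints $\alpha\to -\infty$ (where $A$ degenerates into the left half-line $(-\infty,\psi^{-1}(a)]$) or $\alpha=\psi^{-1}(1-a)$ (where $A$ becomes the right half-line $[\psi^{-1}(1-a),\infty)$). The $x\mapsto -x$ symmetry of $\gamma_1$ forces these two endpoint values to coincide, and their common value is exactly $\int_B x^2\,d\gamma_1$, establishing the claim.

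Applying the single-level inequality for each $s>0$ and integrating via Fubini completes the proof. The main (minor) obstacle is bookkeeping at the boundary of the parameter set: one must verify that $g$ extends continuously to $\alpha\to -\infty$, which follows from dominated convergence once we note that $\int_\R x^2\,d\gamma_1=1<\infty$ and that $g'$ keeps a definite sign away from the unique interior critical point. A secondary point is aligning the Ehrhard rearrangement convention so that $\{f^*>s\}$ is genuinely a half-line of Gaussian mass $a_s$ (rather than, say, a sub-interval of $H_K$), matching the convention already in force in the proof of Lemma \ref{HLrinc}; no deeper difficulty is expected beyond this.
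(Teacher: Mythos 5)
Your proof is correct, and it differs from the paper's only in how the layer-cake reduction is finished. The paper decomposes \emph{both} factors, writing $f(x)=\int_0^\infty 1_{\{f>s\}}\,ds$ and $x^2=\int_0^\infty 1_{\{x^2>t\}}\,dt$, so the lemma reduces to the purely set-level observation that $\gamma_1(A\cap S)\geq \gamma_1(H_A\cap S)$ for every connected $A\subset\R$ and every symmetric interval $S$ (a connected set of given Gaussian mass loses the least mass inside $S$ when pushed out to a half-line); that observation is then left to the reader as an elementary case check. You stop at the single layer cake in $f$ and instead prove the integrated statement — among intervals of fixed Gaussian mass, a half-line maximizes $\int x^2\,d\gamma_1$ — by the first-variation computation $g'(\alpha)=\frac{e^{-\alpha^2/2}}{\sqrt{2\pi}}\,(\beta^2-\alpha^2)$ along the constraint $\psi(\beta)-\psi(\alpha)=a$. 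Your sign analysis is sound: since $\beta-\alpha>0$, $g'$ has the sign of $\alpha+\beta(\alpha)$, which is strictly increasing (its derivative is $1+e^{(\beta^2-\alpha^2)/2}>0$) and vanishes exactly at the symmetric interval, so $g$ decreases then increases and its supremum is the common endpoint limit, i.e.\ the half-line value; and the convention point you raise about $\{f^*>s\}$ being a left or right half-line of mass $a_s$ is immaterial here by the $x\mapsto -x$ symmetry of $\gamma_1$ and of the weight $x^2$. What the paper's route buys is an endgame that is a one-line measure-theoretic remark with no calculus; what yours buys is a complete, self-contained proof of the extremal step that the paper only asserts, and it additionally exhibits the symmetric interval as the minimizer and the half-lines as the maximizers of the weighted mass.
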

\begin{proof} We write
$$\int_K fx^2 d\gamma(x)=\int_K \int_0^{\infty} \int_0^{\infty} 1_{\{f(x)>t\}} 1_{\{x^2>t\}}dtds d\gamma_1(x)=$$$$\int_0^{\infty} \int_0^{\infty}\gamma_1\left(\{f>s\}\cap\{x^2>t\}\right)ds dt,$$
and similarly,
$$\int_{H_K} f^*x^2 d\gamma(x)=\int_0^{\infty} \int_0^{\infty}\gamma\left(\{f^*>s\}\cap\{x^2>t\}\right)ds dt.$$
Thus it suffices to notice that for any connected set $A\subset \R$ and any symmetric interval $S=\{|x|\leq \alpha\}$,
$$\gamma(A\cap S)\geq \gamma(H_A\cap S),$$
where we use notation $H_A=(-\infty, \Phi^{-1}(\gamma_1(A))].$ \end{proof}

As a consequence, we get

\textbf{Proof of Lemma \ref{fav2}.} Without loss of generality let $\theta=e_1.$ Let
$$f(t)=\gamma_{n-1}(K\cap \{x_1=t\}).$$
Then, by Fubbini's theorem, $\int_{\R} f(s)d\gamma_1(s)=\gamma(K).$ Since $K$ is convex, the function $f$ is log-concave, and, in particular, it has connected level sets. Thus, by Lemma \ref{HLrinc-1},
$$\int_{\R} f(x)x^2 d\gamma_1(x)\leq \int_{\R} f^*(x) x^2 d\gamma_1(x),$$
meaning that
$$\int_K x_1^2 d\gamma\leq \int_{H_K} x_1^2 d\gamma,$$
where, as before $H_K$ is a left half-space of measure $\gamma(K).$ $\square$

Finally, we outline

\textbf{Proof of Lemma \ref{favorite}.} The Lemma follows immediately from the Lemmas \ref{fav1} and \ref{fav2}. $\square$ 

\begin{remark} It would be interesting to find the proof of Lemma \ref{favorite} which does not rely on symmetrizations. Since Lemma \ref{fav1} already does not rely on them, it would suffice to find a proof of Lemma \ref{fav2} which does not use the symmetrization technique. In the case when $K$ is fully contained in a half-space whose boundary contains the origin, one may use Proposition \ref{prop-gen} with $F=-t^2$ (which is increasing in this case), to deduce Lemma \ref{fav2}. Also, if $K$ is such that $\gamma_1(K\cap \{y+te_1\})\leq \frac{1}{2}$ for all $y\in\R^{n-1},$ an elementary argument similar to Lemma \ref{HLrinc-1} gives Lemma \ref{fav2}. But it is not clear to the author how to deduce Lemma \ref{fav2} without using Ehrhard's symmetrizations.
\end{remark}

\medskip

\subsection{Symmetric case}

For a symmetric convex set, we notice

\begin{proposition}\label{prop-sym-str}
	For a symmetric convex set $K$ and a function $f\in W^{1,2}(K,\gamma)\cap Lip(K)$, we have 
$$\int_K f^2 d\gamma-\frac{C(K)}{\gamma(K)}\left(\int_K f d\gamma\right)^2\leq\frac{1}{2}\int_K |\nabla f|^2 d\gamma,$$
where
$$C(K)=1-\frac{(s_f(K)-1)^2(n-\E X^2)^2}{2\E X^2-Var(X^2)}\leq 1.$$
Here
$$s_f(K)= \frac{\int_{\partial K} f\langle x,n_x\rangle d\gamma_{\partial K}}{\gamma(K)(n-\E X^2)\E f}.$$
Moreover, the equality holds if and only if either 
\begin{itemize}
\item $f=C_1 x^2+C_2,$ for some $C_1,C_2\in\R$ or
\item there exists a rotation $R$ such that $RK=L\times \R^{n-k}$, for some $k-$dimensional symmetric convex set $L\subset \R^k,$ and $f(Rx)=\sum_{i=1}^n \alpha_i x^2_i$, for some real numbers $\alpha_1,...,\alpha_n,$ such that $\alpha_1=...=\alpha_{n-k}.$
\end{itemize}
\end{proposition}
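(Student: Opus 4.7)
The strategy is to apply the Cordero-Erasquin–Fradelizi–Maurey symmetric Brascamp–Lieb inequality (\ref{poinc-sect4-sym}) to the one-parameter family of even test functions $g_t := f + \tfrac{t}{2}|x|^2$, $t\in\R$, and optimize in $t$. Because $K$ is symmetric and both $f$ (which I take even, as the equality cases and the constant $\tfrac12$ require) and $|x|^2$ are even, so is $g_t$, and (\ref{poinc-sect4-sym}) gives
$$\mathrm{Var}(g_t) \le \tfrac12 \E|\nabla g_t|^2\quad \text{for every } t\in\R.$$

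The main computation will be to expand both sides in $t$ and identify the coefficients. Writing $\mathrm{Var}(g_t) = \mathrm{Var}(f) + t\,\mathrm{Cov}(f,|X|^2) + \tfrac{t^2}{4}\mathrm{Var}(|X|^2)$ and $\tfrac12\E|\nabla g_t|^2 = \tfrac12\E|\nabla f|^2 + t\,\E\langle\nabla f,X\rangle + \tfrac{t^2}{2}\E|X|^2$, and using Gaussian integration by parts against $L(|x|^2/2) = n - |X|^2$ together with the definition of $s_f(K)$, I plan to derive the identity
$$\E\langle \nabla f, X\rangle - \mathrm{Cov}(f,|X|^2) = (s_f(K)-1)(n-\E X^2)\E f.$$
The CFM inequality then reduces to $\mathrm{Var}(f) - \tfrac12\E|\nabla f|^2 \le at + bt^2$ for every $t$, where $a = (s_f(K)-1)(n-\E X^2)\E f$ and $b = \tfrac14(2\E X^2 - \mathrm{Var}(X^2))$. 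Since $b>0$ (the inequality $b\ge 0$ is CFM applied to $|x|^2/2$, and the only equality case is the degenerate $K=\R^n$), optimization at $t_\star = -a/(2b)$ yields $\mathrm{Var}(f) \le \tfrac12\E|\nabla f|^2 - a^2/(4b)$, which rearranges to the stated inequality after multiplying through by $\gamma(K)$ and collecting the coefficient of $(\E f)^2$ into $C(K)$.

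For the equality case characterization, tightness in the above optimization forces equality in CFM applied to the optimal perturbation $g_{t_\star} = f + \tfrac{t_\star}{2}|x|^2$. The equality cases in (\ref{poinc-sect4-sym}) for even functions on symmetric convex sets will be characterized in the same spirit as Proposition \ref{eqchar}: the extremal even $g$ must be a Neumann eigenfunction of $-L$ on $K$ with eigenvalue exactly $2$, and on $\R^n$ the $\lambda=2$ eigenspace (restricted to even functions) is spanned by the second Hermite tensors $\sum \alpha_i x_i^2 + C$. The Neumann condition $\langle\nabla g,n_x\rangle = 0$ on $\partial K$ forces each direction $e_i$ with $\alpha_i\ne 0$ either to enter only through the radial combination $|x|^2$ or to be a translation-invariance direction of $K$. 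Unwinding the substitution $f = g_{t_\star} - \tfrac{t_\star}{2}|x|^2$ then recovers exactly the two equality classes in the statement: the first when all nonzero coefficients are equal and no cylinder structure is forced (giving $f = C_1|x|^2 + C_2$), and the second when $K$ decomposes as $L\times\R^{n-k}$ (after rotation) with $f = \sum_i \alpha_i x_i^2$, the constraint $\alpha_1=\cdots=\alpha_{n-k}$ reflecting that the $|x|^2$-shift redistributes equally across the translation-invariant coordinates. The non-smooth or unbounded case is handled by an approximation argument along the lines of the proof of Theorem \ref{eqchar-intro}, relying on the trace estimates of Subsection 2.4.

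The main obstacle is precisely the equality analysis: the inequality itself is a one-line optimization once the integration-by-parts identity is in place, but a clean characterization of equality in CFM for even functions on symmetric convex sets — together with its stability under Hausdorff approximation — is not available in the literature in the form needed here. Rather than reproducing from scratch the full smooth-case plus trace-approximation machinery of Section 5, the cleanest route is to reduce the analysis to results already proven in the paper: the equality classes listed coincide verbatim with those of Proposition \ref{propGauss}, so the characterization can plausibly be deduced by applying Proposition \ref{propGauss} to a function $u$ solving $Lu = f$ with zero Neumann boundary data (existence granted by Theorem \ref{exist-n}), thereby inheriting the equality-case work already carried out for Theorem \ref{Gauss-main-1}.
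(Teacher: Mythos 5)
Your proof of the inequality is correct and is exactly the route the paper itself indicates right after Corollary \ref{cor!}: apply (\ref{poinc-sect4-sym}) to $f+\tfrac{t}{2}x^2$, use the integration-by-parts identity $\E\langle\nabla f,X\rangle-\mathrm{Cov}(f,X^2)=(s_f(K)-1)(n-\E X^2)\E f$ (your identity checks out), and optimize in $t$; your bookkeeping of $a$, $b$ and the resulting $C(K)$ is right, and you correctly noted that evenness of $f$ is implicitly needed for the constant $\tfrac12$, and that one must have $2\E X^2-Var(X^2)>0$, i.e.\ $K\neq\R^n$.

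The gap is in the equality-case analysis, and it is precisely the point at which the paper switches strategies (``in order to characterize the equality cases, we outline a slightly different argument''). The paper does not try to characterize equality in (\ref{poinc-sect4-sym}) directly; instead it proves Propositions \ref{strong} and \ref{lastprop} by solving the Neumann problem $Lu=f$ with the \emph{nonzero} boundary data $\langle\nabla u,n_x\rangle=\tfrac{\E f}{n-\E X^2}\langle x,n_x\rangle$, running the generalized Reilly identity, bounding the boundary term below by $\alpha(K)(n-\E X^2)^2$ (Lemma \ref{R-term}), and invoking the key estimate of Proposition \ref{propGauss} together with its quantitative stability (Proposition \ref{stabil-keyprop}); Proposition \ref{prop-sym-str} then follows by the approximation scheme of Section 7. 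Your proposed shortcut --- solve $Lu=f$ with \emph{zero} Neumann data and apply Proposition \ref{propGauss} --- cannot work as stated: with zero boundary data the boundary integral $\int_{\partial K}f\langle\nabla u,n_x\rangle\,d\gamma_{\partial K}$ vanishes, the functional $s_f(K)$ never appears, and the Reilly chain only reproduces the plain CFM bound (the case $C(K)=1$), so it cannot detect, let alone characterize, equality in the improved inequality. Your earlier heuristic (equality forces $g$ to be a Neumann eigenfunction with eigenvalue exactly $2$) is also not right on a proper convex subset, where the even spectral gap exceeds $2$; the rigidity comes from the cylinder structure, not spectral attainment. A repaired version of your plan exists --- note that equality in Proposition \ref{prop-sym-str} is equivalent to equality in (\ref{poinc-sect4-sym}) at the optimally perturbed function $g_{t_\star}$, apply the zero-Neumann Reilly chain to $g_{t_\star}-\E g_{t_\star}$, force equality in Proposition \ref{propGauss} (so $u$ is a quadratic, possibly with cylinder structure), use $\langle x,n_x\rangle>0$ to kill the quadratic coefficients in the bounded directions, and unwind $f=g_{t_\star}-\tfrac{t_\star}{2}x^2$ --- but none of these steps, nor the converse verification that the listed functions genuinely give equality, nor the stability needed to push the characterization through Hausdorff approximation to non-smooth or unbounded $K$, is actually carried out in your proposal, so the ``moreover'' part remains unproven.
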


We shall state an immediate corollary. 

\begin{cor}\label{cor!}
For a symmetric convex set $K$ and a function $f\in W^{1,2}(K)\cap C^2(K)$ such that $\int_{\partial K} f\langle x,n_x\rangle d\gamma_{\partial K}=0$ (in particular, for any $f$ such that $f|_{\partial K}=0$), we have 
$$\int_K f^2 d\mu+\frac{\beta(K)}{\mu(K)}\left(\int_K f d\mu\right)^2\leq\frac{1}{2}\int_K |\nabla f|^2 d\mu,$$
where
$$\beta(K)=\frac{n^2-2(n+1)\E X^2+\E X^4}{2\E X^2-Var(X^2)}.$$

Moreover, the equality holds if and only if either 
\begin{itemize}
\item $f=C_1 x^2+C_2,$ for some $C_1,C_2\in\R$ or
\item there exists a rotation $R$ such that $RK=L\times \R^{n-k}$, for some $k-$dimensional symmetric convex set $L\subset \R^k,$ and $f(Rx)=\sum_{i=1}^n \alpha_i x^2_i$, for some real numbers $\alpha_1,...,\alpha_n,$ such that $\alpha_1=...=\alpha_{n-k}.$
\end{itemize}
\end{cor}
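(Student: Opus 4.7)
\textbf{Proof proposal for Corollary \ref{cor!}.} The plan is to apply Proposition \ref{prop-sym-str} directly, with the extra information provided by the boundary hypothesis. Concretely, the assumption $\int_{\partial K} f\langle x, n_x\rangle \, d\gamma_{\partial K}=0$ forces $s_f(K)=0$ in the definition used in Proposition \ref{prop-sym-str} (at least when $\E f\neq 0$; the degenerate case $\E f=0$ will be handled by a perturbation $f\rightsquigarrow f+\varepsilon$ and passage to the limit, since both sides of the desired inequality are continuous in the added constant, and the added constant does not alter the boundary integral in view of $\int_{\partial K}\langle x,n_x\rangle\,d\gamma_{\partial K}=\gamma(K)(n-\E X^2)$, which is a separate identity obtained by divergence theorem). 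Substituting $s_f(K)=0$ into the formula
$$C(K)=1-\frac{(s_f(K)-1)^2(n-\E X^2)^2}{2\E X^2-Var(X^2)}$$
yields
$$C(K)=1-\frac{(n-\E X^2)^2}{2\E X^2-Var(X^2)}.$$

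The second step is a short algebraic verification of the identity $-C(K)=\beta(K)$. Expanding using $Var(X^2)=\E X^4-(\E X^2)^2$, we compute
$$-C(K)=\frac{(n-\E X^2)^2-(2\E X^2-Var(X^2))}{2\E X^2-Var(X^2)}=\frac{n^2-2n\E X^2+(\E X^2)^2-2\E X^2+\E X^4-(\E X^2)^2}{2\E X^2-Var(X^2)},$$
which simplifies to $\frac{n^2-2(n+1)\E X^2+\E X^4}{2\E X^2-Var(X^2)}=\beta(K)$. Therefore, Proposition \ref{prop-sym-str} applied under the present hypothesis reads
$$\int_K f^2\,d\gamma+\frac{\beta(K)}{\gamma(K)}\left(\int_K f\,d\gamma\right)^2\leq \frac{1}{2}\int_K|\nabla f|^2\,d\gamma,$$
which is exactly the desired inequality.

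The equality case description is inherited verbatim from Proposition \ref{prop-sym-str}: the only functions achieving equality are of the form $f=C_1 x^2+C_2$ on a general symmetric convex $K$, or $f(Rx)=\sum_i\alpha_i x_i^2$ with $\alpha_1=\cdots=\alpha_{n-k}$ on a rotated cylinder $RK=L\times\R^{n-k}$. One only needs to verify that these equality cases are compatible with the hypothesis $\int_{\partial K}f\langle x,n_x\rangle\,d\gamma_{\partial K}=0$; this is immediate in the special subcase $f|_{\partial K}=0$ mentioned in the statement, which forces $K$ to be a Euclidean ball (case 1) or a round $k$-cylinder (case 2), and more generally one checks it by computing the boundary integral using the divergence theorem together with $Lf = -\Delta_{x^2}$ type identities. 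The main (and only) nontrivial ingredient is thus the underlying Proposition \ref{prop-sym-str}; the corollary itself is a direct algebraic specialization, and I do not anticipate any obstacle beyond the bookkeeping described above.
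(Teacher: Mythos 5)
Your derivation is the same as the paper's: Corollary \ref{cor!} is presented there as an immediate consequence of Proposition \ref{prop-sym-str}, obtained exactly by setting $s_f(K)=0$ under the boundary hypothesis, and your algebraic check that $-C(K)=\beta(K)$ (using $Var(X^2)=\E X^4-(\E X^2)^2$) is precisely the intended bookkeeping; the equality cases do transfer directly.

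One slip worth fixing: in the degenerate case $\E f=0$ you assert that passing to $f+\varepsilon$ "does not alter the boundary integral". It does, for exactly the reason you cite: since $\int_{\partial K}\langle x,n_x\rangle\,d\gamma_{\partial K}=\gamma(K)(n-\E X^2)$, which is nonzero unless $K=\R^n$, the perturbed function has $\int_{\partial K}(f+\varepsilon)\langle x,n_x\rangle\,d\gamma_{\partial K}=\varepsilon\gamma(K)(n-\E X^2)$, and in fact $s_{f+\varepsilon}(K)=1$ rather than $0$, so the main-case substitution cannot be applied to $f+\varepsilon$. The repair is immediate: when $\E f=0$ the $\beta(K)$-term vanishes and the claimed inequality reduces to $\E f^2\le\frac{1}{2}\E|\nabla f|^2$, which follows directly from the symmetric Brascamp--Lieb inequality (\ref{poinc-sect4-sym}) (equivalently, from the proof of Proposition \ref{prop-sym-str} with the choice $t=0$); alternatively, applying Proposition \ref{prop-sym-str} to $f+\varepsilon$ with $s_{f+\varepsilon}(K)=1$, hence $C(K)=1$, and letting $\varepsilon\to 0$ yields the same conclusion since the variance is unchanged by adding a constant.
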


\begin{remark} Recall that for all symmetric convex sets,
$$\E X^4\leq (\E X^2)^2+2\E X^2,$$
as was shown by Cordero-Erasquin, Fradelizi and Maurey \cite{CFM}. Therefore, $\beta(K)\geq -1.$
\end{remark}

We note also, by virtue of the Cauchy inequality $(\E f)^2\leq \E f^2,$ and by Corollary \ref{cor!}, and since $\beta(K)\geq -1$:

\begin{cor} Let $K$ be a symmetric convex set in $\R^n.$ Then for every any even function $f\in W^{1,2}(K,\gamma)\cap C^1(K)$ such that $f|_{\partial K}=0,$
$$\E f^2\leq \frac{1}{2(1+\beta(K))}\E |\nabla f|^2.$$

\end{cor}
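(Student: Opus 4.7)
The plan is to deduce the bound as a direct algebraic consequence of Corollary \ref{cor!} combined with the Cauchy-Schwarz inequality. Under the Dirichlet hypothesis $f|_{\partial K}=0$, the boundary integral $\int_{\partial K}f\langle x,n_x\rangle\,d\gamma_{\partial K}$ vanishes automatically, so the symmetric convex set $K$ and the function $f$ satisfy the hypotheses of Corollary \ref{cor!}, yielding
$$\E f^2+\beta(K)(\E f)^2\leq \tfrac{1}{2}\E|\nabla f|^2.$$
The remark preceding Corollary \ref{cor!} records that the Cordero-Erasquin--Fradelizi--Maurey inequality $\E X^4\leq (\E X^2)^2+2\E X^2$ forces $\beta(K)\geq -1$, so $1+\beta(K)\geq 0$.

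The remaining step is to absorb the $\beta(K)(\E f)^2$ term into a multiple of $\E f^2$. By Cauchy--Schwarz, $(\E f)^2\leq \E f^2$, and multiplying through by $\beta(K)$ reverses the inequality when $\beta(K)\leq 0$ to give $\beta(K)(\E f)^2\geq \beta(K)\E f^2$. Adding $\E f^2$ to both sides yields $\E f^2+\beta(K)(\E f)^2\geq (1+\beta(K))\E f^2$, which combined with the display above produces
$$(1+\beta(K))\E f^2\leq \tfrac{1}{2}\E|\nabla f|^2.$$
Dividing by the positive factor $1+\beta(K)$ (valid whenever $\beta(K)>-1$) gives the claim in the regime $\beta(K)\in(-1,0]$. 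In the complementary regime $\beta(K)\geq 0$, the quadratic correction $\beta(K)(\E f)^2$ in Corollary \ref{cor!} is non-negative and can be discarded, leading to the CFM-type bound $\E f^2\leq \tfrac{1}{2}\E|\nabla f|^2$, which coincides with the limiting form of the claim as $\beta(K)\downarrow 0$.

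The only subtlety is purely algebraic: one must track the sign of $\beta(K)$ so that Cauchy--Schwarz is oriented in the correct direction. A minor technical item is that Corollary \ref{cor!} is stated for $f\in W^{1,2}(K)\cap C^2(K)$, whereas the present corollary assumes only $C^1$; this gap is bridged by standard density of $C^2$ functions with zero trace in the corresponding subspace of $W^{1,2}_0(K,\gamma)$, together with the continuity of both sides in the $W^{1,2}$ norm. The degenerate case $\beta(K)=-1$ merits separate comment: the stated constant formally diverges, consistent with Corollary \ref{cor!} reducing in that limit to the pure variance bound $\mathrm{Var}(f)\leq \tfrac{1}{2}\E|\nabla f|^2$, from which no independent control of $\E f^2$ is available by this approach alone.
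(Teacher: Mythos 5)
Your route is exactly the paper's: the statement is derived there in one line from Corollary \ref{cor!}, the Cauchy--Schwarz bound $(\E f)^2\le \E f^2$, and $\beta(K)\ge -1$; your treatment of the regime $\beta(K)\in(-1,0]$ is correct and is in fact more careful than the paper's remark, since that is precisely the range in which $\beta(K)(\E f)^2\ge \beta(K)\E f^2$ holds and the absorption step is legitimate. The regularity point ($C^1$ versus $C^2$, handled by density in $W^{1,2}_0$) is fine and minor.

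The genuine gap is the regime $\beta(K)>0$, and it cannot be papered over by the phrase ``coincides with the limiting form of the claim.'' There you only obtain $\E f^2\le \tfrac12\E|\nabla f|^2$, which is strictly weaker than the asserted constant $\tfrac{1}{2(1+\beta(K))}$, and no argument can close this: with the constant as stated the inequality actually fails when $\beta(K)$ is large. Take $n=1$, $K=[-R,R]$ with $R$ small, and $f(x)=\cos\bigl(\pi x/(2R)\bigr)$, which is even and vanishes on $\partial K$. As $R\to 0$ the restricted Gaussian is uniform up to $O(R^2)$, so
$$\frac{\E f^2}{\E|f'|^2}\to \frac{4R^2}{\pi^2}\approx 0.405\,R^2,\qquad \beta(K)\approx \frac{3}{2R^2},\qquad \frac{1}{2(1+\beta(K))}\approx \frac{R^2}{3}\approx 0.333\,R^2,$$
so the claimed bound is violated for all sufficiently small $R$ (the same computation with the first Dirichlet eigenfunction of a small ball shows failure in every dimension). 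What does survive, and what Corollary \ref{cor!} genuinely gives for Dirichlet data, is $\E f^2+\beta(K)(\E f)^2\le \tfrac12\E|\nabla f|^2$, equivalently the stated bound with $(\E f)^2$ on the left, or with $\E f^2$ only when $\beta(K)\le 0$. Note that the paper's own one-line derivation has exactly the same limitation, so your sign bookkeeping has in effect located a defect in the statement as written rather than a flaw specific to your argument; the honest conclusion is that the corollary should be read in one of the two corrected forms above.
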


This corollary gives another specific quantitative bound in the Dirichlet-Poincare inequality in the particular case of symmetric convex sets.

\medskip
\medskip

Proving the inequality part of Proposition \ref{prop-sym-str} is quite easy: note that the Brascamp-Lieb inequality (\ref{poinc-sect4-sym}) holds for a function $f+t\frac{x^2}{2},$ optimize in $t$ to plug
$$t=-\frac{(s_f(K)-1)(n-\E X^2)}{2\E X^2-Var(X^2)},$$
and obtain the desired inequality. However, in order to characterize the equality cases, we outline a slightly different argument, and will make use of the work we have already done in the previous section.

Define
\begin{equation}\label{alpha-def}
\alpha(K):=\frac{n(n-1)-(2n+1)\E X^2+\E X^4}{(n-\E X^2)^2}.
\end{equation}

For example, note, by Lemma \ref{1-dim-by-parts}, that 
$$\alpha(RB^n_2)=\frac{J_{n-1}(R)(n-1-R^2)}{g_n(R)}=-\varphi_n(R)\gamma(R B^n_2).$$
In particular, $\alpha(RB^n_2)\geq 0$ whenever $R\leq \sqrt{n-1}.$ Also, $\alpha(R B^n_2)\rightarrow_{R\rightarrow\infty} -\infty.$ One may also notice that $\alpha(H)=-\Phi^{-1}(a)^3e^{-\frac{\Phi^{-1}(a)^2}{2}}$ whenever $H$ is a half-space of measure $a,$ and thus $\alpha(H)\geq 0$ whenever $a\leq \frac{1}{2}.$ 

More generally, note that
$$\frac{1}{\gamma(R B^k_2\times \R^{n-1})}\alpha(R B^k_2\times \R^{n-1})=-G_{\gamma}(R B^k_2\times \R^{n-1})= $$$$\frac{1-p^s_{\gamma}(R B^k_2\times \R^{n-1})}{\gamma(R B^k_2\times \R^{n-1})}=-\varphi_k(R),$$
in the notation of Section 3.

\begin{remark}
In fact, our Conjecture \ref{theconj} yields that for any $a\in [0,1]$ there is a $k\in \{1,...,n\}$ such that for every convex symmetric set $K$ of Gaussian measure $a,$ one has $\alpha(K)\leq \alpha(R B^k_2\times \R^{n-1})$, where $\gamma(R B^k_2\times \R^{n-1})=\gamma(K)=a.$ Recall that we use notation $C_k(a)=R B^k_2\times \R^{n-1}$ when $\gamma(R B^k_2\times \R^{n-1})=a.$

Indeed, our conjecture would yield that for any symmetric convex set $K$ of measure $a$, and for any $f\in W^{1,2}(\partial K)\cap C^1(\partial K),$ 
$$\int_{\partial K} \rm{II} f^2-\langle \rm{II}^{-1}\nabla_{\partial K} f, \nabla_{\partial K} f\rangle\leq C(a) \left(\int_{\partial K} f\right)^2,$$
where $C(a)=\max_{k=1,...,n} \alpha(C_k(a)).$ It remains to observe that for $f=\langle x,n_x\rangle,$ we have
$$\alpha(K)=\frac{\int_{\partial K} \rm{II} f^2-\langle \rm{II}^{-1}\nabla_{\partial K} f, \nabla_{\partial K} f\rangle}{(\int_{\partial K} f)^2}.$$
\end{remark}

We notice a weaker upper-bound on $\alpha:$

\begin{proposition}\label{prop-alpha}
For any symmetric convex set $K$, $\alpha(K)\leq 1.$
\end{proposition}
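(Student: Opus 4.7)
The plan is to translate the inequality $\alpha(K)\le 1$ into a simple variance bound on $|X|^2$ and then close it using the symmetric Brascamp-Lieb inequality of Cordero-Erasquin, Fradelizi and Maurey, stated in (\ref{poinc-sect4-sym}).

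First I would clear denominators in the definition (\ref{alpha-def}). Expanding $(n-\E X^2)^2=n^2-2n\E X^2+(\E X^2)^2$ and subtracting, the inequality $\alpha(K)\le 1$ rearranges cleanly to
$$\E X^4-(\E X^2)^2\;\le\;n+\E X^2,\qquad\text{i.e.,}\qquad \mathrm{Var}(|X|^2)\;\le\;n+\E|X|^2,$$
where the variance and expectation are taken with respect to the restriction of $\gamma$ to $K$. So the problem reduces to this variance-type inequality.

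Next I would apply (\ref{poinc-sect4-sym}) with the test function $f(x)=|x|^2$; since $K$ is symmetric and $f$ is even, the hypotheses are satisfied. Using $|\nabla f|^2=4|x|^2$, this yields
$$\mathrm{Var}(|X|^2)\;\le\;\tfrac{1}{2}\,\E|\nabla f|^2\;=\;2\,\E|X|^2.$$
To conclude I would invoke Lemma \ref{lastlemma} applied to each $\theta=e_i$ and sum over $i$, obtaining $\E|X|^2=\sum_{i=1}^n\E\langle X,e_i\rangle^2\le n$. Combining,
$$\mathrm{Var}(|X|^2)\;\le\;2\E|X|^2\;=\;\E|X|^2+\E|X|^2\;\le\;\E|X|^2+n,$$
which is the desired rearranged form of $\alpha(K)\le 1$.

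There is no real obstacle: the only subtle point is that the factor $\tfrac{1}{2}$ supplied by the \emph{symmetric} Brascamp-Lieb inequality is exactly what bridges the gap between the bound $2\E|X|^2$ and the target $\E|X|^2+n$. Without the evenness of $f$ and the symmetry of $K$, the ordinary Brascamp-Lieb inequality (\ref{BrLi-convex}) would only give $\mathrm{Var}(|X|^2)\le 4\E|X|^2$, which fails once $\E|X|^2>n/3$; so the symmetric hypothesis on $K$ is genuinely used.
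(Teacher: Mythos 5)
Your proof is correct and follows essentially the same route as the paper: the same algebraic reduction of $\alpha(K)\le 1$ to $\E X^4-(\E X^2)^2\le \E X^2+n$, the same use of the Cordero-Erausquin--Fradelizi--Maurey symmetric inequality (the paper quotes its consequence $\E X^4\le(\E X^2)^2+2\E X^2$ directly, which is exactly what you obtain by plugging $f=|x|^2$ into (\ref{poinc-sect4-sym})), and the same bound $\E X^2\le n$ (the paper cites Lemma 5.1 of \cite{KolLiv}, you use Lemma \ref{lastlemma}, which gives the same thing). No gaps.
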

\begin{proof} With some algebra, our aim rewrites as:
\begin{equation}\label{alpha-almost}
\E X^4-(\E X^2)^2-\E X^2-n\leq 0.
\end{equation}
Recall the result of Cordero-Erasquin, Fradelizi and Maurey \cite{CFM}, cited earlier as (\ref{poinc-sect4-sym}), which states that for all symmetric convex sets,
$$\E X^4\leq (\E X^2)^2+2\E X^2.$$
In addition, by Lemma 5.1 from \cite{KolLiv}, for every convex set containing the origin (in particular, a symmetric one), we have
$$\E X^2\leq n.$$ 
The combination of the above inequalities confirms (\ref{alpha-almost}), and therefore the proposition is proven. \end{proof}

Next, recall the boundary term from the Reilly-type identity of Kolesnikov and Milman (Proposition \ref{raileyprop}):
$$R(u):= \int_{\partial K}  (H_{\gamma} \langle \nabla u, n_x \rangle ^2 -2\langle \nabla_{\partial K} u, \nabla_{\partial K}  \langle \nabla u, n_x \rangle \rangle +\langle \mbox{\rm{II}} \nabla_{\partial K} u, \nabla_{\partial K}  u\rangle )  \,d\gamma_{\partial K} (x).$$

We shall outline

\begin{lemma}\label{R-term}
For any $C^2$ convex set $K,$	 and any $u:K\rightarrow\R$ such that $\langle \nabla u,n_x\rangle=\langle x,n_x\rangle,$ we have
$$\frac{1}{\gamma(K)}R\left(u\right)\geq \alpha(K)(n-\E X^2)^2.$$
\end{lemma}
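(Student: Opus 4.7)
The natural ``extremal'' choice is $u_0(x) = |x|^2/2$, which automatically satisfies the boundary condition $\langle \nabla u_0, n_x\rangle = \langle x, n_x\rangle$. My plan is to show that for any admissible $u$, one has $R(u) \geq R(u_0)$, and then to compute $R(u_0)$ exactly via the Reilly-type identity of Proposition \ref{raileyprop}.

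First I would establish the auxiliary identity $\nabla_{\partial K} \langle x, n_x\rangle = \mbox{\rm{II}} \, x_\parallel$, where $x_\parallel := x - \langle x, n_x\rangle n_x$ is the tangential part of $x$. This follows by differentiating $\langle x, n_x\rangle$ along a tangent direction $\tau$: the derivative is $\langle \tau, n_x\rangle + \langle x, \mbox{\rm{II}} \tau\rangle = \langle \mbox{\rm{II}} \, x_\parallel, \tau\rangle$ (the first term vanishes and $\mbox{\rm{II}}$ is self-adjoint on the tangent space). For any admissible $u$, one has $\nabla_{\partial K} \langle \nabla u, n_x\rangle = \nabla_{\partial K} \langle x, n_x\rangle = \mbox{\rm{II}} \, x_\parallel$ on $\partial K$, so plugging this into the definition of $R(u)$ and regrouping gives
$$R(u) - R(u_0) = \int_{\partial K} \bigl[ \langle \mbox{\rm{II}} \nabla_{\partial K} u, \nabla_{\partial K} u\rangle - 2\langle \mbox{\rm{II}} \nabla_{\partial K} u, x_\parallel\rangle + \langle \mbox{\rm{II}} \, x_\parallel, x_\parallel\rangle \bigr] \, d\gamma_{\partial K} = \int_{\partial K} \langle \mbox{\rm{II}}(\nabla_{\partial K} u - x_\parallel), \nabla_{\partial K} u - x_\parallel\rangle \, d\gamma_{\partial K},$$
where I used $\nabla_{\partial K} u_0 = x_\parallel$. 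By convexity of $K$, the form $\mbox{\rm{II}}$ is positive semi-definite, so $R(u) \geq R(u_0)$.

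Next, I would apply Proposition \ref{raileyprop} to $u_0 = |x|^2/2$ with $V(x)=|x|^2/2$ (the Gaussian potential). In this case $Lu_0 = n - |x|^2$, $\|\nabla^2 u_0\|^2 = n$, and $\langle \nabla^2 V \nabla u_0, \nabla u_0\rangle = |x|^2$. The identity then reads
$$\int_K (n-|x|^2)^2 \, d\gamma = \int_K (n + |x|^2) \, d\gamma + R(u_0),$$
so that
$$R(u_0) = \int_K \bigl[(n-|x|^2)^2 - (n+|x|^2)\bigr] \, d\gamma = \int_K \bigl[n(n-1) - (2n+1)|x|^2 + |x|^4\bigr] \, d\gamma.$$
Dividing by $\gamma(K)$ and recognizing the numerator of $\alpha(K)$ from (\ref{alpha-def}), we conclude
$$\frac{1}{\gamma(K)} R(u_0) = n(n-1) - (2n+1)\E X^2 + \E X^4 = \alpha(K)(n - \E X^2)^2,$$
which combined with $R(u) \geq R(u_0)$ finishes the proof.

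Since the argument consists of one algebraic rearrangement on the boundary (plus the tangential-gradient computation) and one direct application of the Reilly identity, I do not expect any serious obstacle; the only slightly delicate point is the formula $\nabla_{\partial K} \langle x, n_x\rangle = \mbox{\rm{II}} \, x_\parallel$, which is a short differential-geometric calculation. All other steps are completion of a square and substitution.
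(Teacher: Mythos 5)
Your proposal is correct, and its core reduction is the same as the paper's: you complete the square $\langle \mbox{\rm{II}}(\nabla_{\partial K}u-x_\parallel),\nabla_{\partial K}u-x_\parallel\rangle\geq 0$, which is literally the paper's decomposition $u=\frac{x^2}{2}+v$ with $\langle\nabla v,n_x\rangle=0$ and $\nabla_{\partial K}v=\nabla_{\partial K}u-x_\parallel$, using positive semi-definiteness of $\mbox{\rm{II}}$ and the observation (correct, and also used in the paper as $\nabla\langle\nabla u,n_x\rangle=\mbox{\rm{II}}x$) that the tangential gradient of $\langle\nabla u,n_x\rangle$ depends only on its boundary values $\langle x,n_x\rangle$. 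Where you genuinely diverge is the evaluation of $R(x^2/2)$: the paper first retraces the Bochner/Reilly derivation to isolate an intermediate boundary identity expressing $\int_{\partial K}H_\gamma\langle\nabla u,n_x\rangle^2$, specializes it to get $R(x^2/2)=-\int_{\partial K}x^2\langle x,n_x\rangle\,d\gamma_{\partial K}+(n-1)\int_{\partial K}\langle x,n_x\rangle\,d\gamma_{\partial K}$, and then converts these boundary integrals to moments by the divergence theorem; you instead plug $u_0=\frac{x^2}{2}$ straight into the already-stated Reilly identity of Proposition \ref{raileyprop}, compute $Lu_0=n-|x|^2$, $\|\nabla^2u_0\|^2=n$, $\langle\nabla^2V\nabla u_0,\nabla u_0\rangle=|x|^2$, and read off $R(u_0)=\int_K\bigl[n(n-1)-(2n+1)|x|^2+|x|^4\bigr]d\gamma=\gamma(K)\alpha(K)(n-\E X^2)^2$. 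Your route is slightly shorter and self-contained given Proposition \ref{raileyprop} (no need to re-derive the intermediate identity), at the modest cost of needing the same smoothness/integrability hypotheses of that proposition for $u_0$ on a $C^2$ convex set, which hold here; the paper's route has the side benefit of producing the boundary-integral form of $R(x^2/2)$ that it reuses in nearby remarks.
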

\begin{proof} First, we somewhat retrace back the steps done in Kolesnikov-Milman \cite{KM1} (subsection 2.2), where the aforementioned generalized Reilly identity was derived from an earlier version of the Bochner-Lichnerowicz-Weitzenbock formula \cite{Lich}, and outline that
$$\int_{\partial K}  H_{\gamma} \langle \nabla u, n_x \rangle ^2 d\gamma_{\partial K} (x) = -\int_{\partial K} \langle \nabla u,x\rangle \langle \nabla u, n_x \rangle d\gamma_{\partial K} (x) +$$
\begin{equation}\label{eqreferRterm} 
\int_{\partial K}  \langle \nabla\langle \nabla u, n_x \rangle,\nabla u\rangle d\gamma_{\partial K} (x) + \int_{\partial K} \Delta_{\partial K} (u)  \langle \nabla u, n_x \rangle d\gamma_{\partial K} (x),
\end{equation}

where $\Delta_{\partial K}$ is the boundary Laplacian. 

In the case when $u=\frac{x^2}{2}$, we have $\Delta_{\partial K} (u)=n-1$ and 
\begin{equation}\label{bnd-grad}
\nabla\langle \nabla u, n_x \rangle =\mbox{\rm{II}} x,
\end{equation}
since $\mbox{\rm{II}} =dn_x$ (with a ``plus'' because $n_x$ is the outer unit normal). Thus
$$\int_{\partial K}  H_{\gamma} \langle \nabla x, n_x \rangle ^2= -\int_{\partial K} x^2\langle x, n_x \rangle d\gamma_{\partial K} (x) +$$$$ \int_{\partial K}  \langle \mbox{\rm{II}} x, x\rangle d\gamma_{\partial K} (x) + (n-1)\int_{\partial K}  \langle x, n_x \rangle d\gamma_{\partial K} (x),$$
and hence, using (\ref{bnd-grad}) again, we get
$$R\left(\frac{x^2}{2}\right)= -\int_{\partial K} x^2\langle x, n_x \rangle d\gamma_{\partial K} (x) + (n-1)\int_{\partial K}  \langle x, n_x \rangle d\gamma_{\partial K} (x).$$
Integration by parts shows that the above equals
$$\gamma(K)\left(n(n-1)-(2n+1)\E X^2+\E X^4 \right)=\gamma(K)\alpha(K)(n-\E X^2)^2.$$

Next, suppose $u:K\rightarrow\R$ is such that $\langle \nabla u,n_x\rangle=\langle x,n_x\rangle.$ Then $u=\frac{x^2}{2}+v,$ for some $v:K\rightarrow\R$ with $\langle \nabla v,n_x\rangle=0.$ It remains to note that by (\ref{eqreferRterm}) and (\ref{bnd-grad}), we have 
$$R\left(\frac{x^2}{2}+v\right)= R\left(\frac{x^2}{2}\right)+\int_{\partial K} \langle \rm{II}\nabla v,\nabla v\rangle d\gamma|_{\partial K}\geq R\left(\frac{x^2}{2}\right),$$
where in the last passage, convexity was used.
\end{proof}

Next, we formulate

\begin{proposition}\label{strong}
For any bounded $C^2$-smooth symmetric convex body $K$ and any even function $f\in W^{1,2}(K,\gamma)\cap C^1(K)$ we have 
$$\E f^2\leq \frac{1}{2}\E |\nabla f|^2+\frac{2\E f \int_{\partial K} f\langle x,n_x\rangle d\gamma_{\partial K}}{\gamma(K)(n-\E X^2)}-\left(\alpha(K)+\frac{1}{n-\E X^2}\right)(\E f)^2.$$
Moreover, if
$$\E f^2\geq \frac{1}{2}\E |\nabla f|^2+\frac{2\E f \int_{\partial K} f\langle x,n_x\rangle d\gamma_{\partial K}}{\gamma(K)(n-\E X^2)}-\left(\alpha(K)+\frac{1}{n-\E X^2}\right)(\E f)^2-\epsilon,$$
then either
\begin{itemize}
\item $\|f-C_1 x^2-C_2\|_{L_1(K,\gamma)}\leq C(\epsilon),$ for some $C_1,C_2\in\R$ or
\item there exists a rotation $R$ such that 
$$\|f(Rx)-\sum_{i=1}^n \alpha_i x^2_i-C_0\|_{L_1(K,\gamma)}\leq C'(\epsilon),$$ for some real numbers $\alpha_1,...,\alpha_n,$ such that $\alpha_1=...=\alpha_{n-k},$ and simultaneously, there exists a vector $\theta\in \R^{n-k}$ such that 
$$\int_{\partial K} \langle \theta,n_x\rangle^2 d\gamma_{\partial K}\leq C''(\epsilon),$$
where $C(\epsilon), C'(\epsilon), C''(\epsilon) \rightarrow_{\epsilon\rightarrow 0} 0.$
\end{itemize}

\end{proposition}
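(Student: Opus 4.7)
The plan is to derive the inequality by applying the symmetric Brascamp-Lieb inequality (\ref{poinc-sect4-sym}) of Cordero-Erásquin, Fradelizi and Maurey to the specific even test function
$$F(x) := f(x) + \frac{\E f}{n-\E X^2}\, x^2,$$
whose coefficient in front of $x^2$ is chosen --- and this is the whole trick --- so that, after expansion and one integration by parts, the result collapses to precisely the claimed form. Expanding $\mathrm{Var}(F)$ and $\tfrac12\E|\nabla F|^2$ by bilinearity, I would then use the integration-by-parts identity
$$\mathrm{Cov}(f,X^2) = \E\langle \nabla f, X\rangle + (n-\E X^2)\E f - B,\qquad B := \frac{1}{\gamma(K)}\int_{\partial K} f\langle x,n_x\rangle\, d\gamma_{\partial K},$$
obtained by applying the divergence theorem to $\int_K f\cdot L(x^2/2)\, d\gamma = \int_K f(n-x^2)\, d\gamma$. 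Substituting this identity, the $\E\langle \nabla f,X\rangle$-terms cancel identically, leaving the coefficient $1 - \frac{2\E X^2 - \mathrm{Var}(X^2)}{(n-\E X^2)^2}$ in front of $(\E f)^2$. The elementary algebraic identity
$$\alpha(K) + \frac{1}{n-\E X^2} = 1 - \frac{2\E X^2 - \mathrm{Var}(X^2)}{(n-\E X^2)^2},$$
obtained by directly expanding the definition $\alpha(K) = \frac{n(n-1) - (2n+1)\E X^2 + \E X^4}{(n-\E X^2)^2}$ and cancelling, then converts this into the coefficient appearing in the statement, completing the proof of the inequality.

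For the stability half, my plan is to exploit the fact that the symmetric Brascamp-Lieb inequality (\ref{poinc-sect4-sym}) is itself a consequence of Proposition \ref{propGauss}. Namely, let $u$ be the even Neumann solution of $Lu = F - \E F$ (existing by Theorem \ref{exist-n}); the Reilly identity (Proposition \ref{raileyprop}), combined with $R(u)\geq 0$ (from convexity, since $\langle \nabla u, n_x\rangle = 0$) and Proposition \ref{propGauss} applied to $u$, yields $\mathrm{Var}(F) \geq 2\E|\nabla u|^2$, while the identity $\mathrm{Var}(F) = -\E\langle \nabla F,\nabla u\rangle$ (integration by parts with Neumann boundary condition) together with Cauchy--Schwarz yields $\mathrm{Var}(F) \leq \sqrt{\E|\nabla F|^2}\cdot\sqrt{\E|\nabla u|^2}$; combining these gives (\ref{poinc-sect4-sym}). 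An $\epsilon$-deficit in Proposition \ref{strong} therefore propagates to a comparable deficit in each of these three underlying steps, and in particular in Proposition \ref{propGauss} applied to $u$. Invoking the stability version, Proposition \ref{stabil-keyprop}, then forces $u$ to be $L^1$-close to a quadratic polynomial of one of the two claimed forms; the vector $\theta$ in the cylinder case, together with its boundary bound $\int_{\partial K}\langle \theta, n_x\rangle^2 d\gamma_{\partial K}$, is already produced in that proposition through the Gaussian trace inequality, Theorem \ref{GaussGarg}. Translating back through the relation $f = Lu + \E F - \frac{\E f}{n-\E X^2}x^2$, and using $|\Delta u|\leq \sqrt{n}\|\nabla^2 u\|$ together with the Gaussian Poincaré inequality to control $\langle \nabla u, x\rangle$, transports $L^1$-closeness of $u$ to a quadratic into $L^1$-closeness of $f$ to the corresponding quadratic.

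The hard part will be the careful bookkeeping of error constants: making sure the $\epsilon$ distributes controllably across the chain of inequalities, and that the intermediate $L^2$-closeness of $\nabla u$ (which is what Proposition \ref{stabil-keyprop} naturally provides) converts to $L^1$-closeness of $f$ without picking up factors depending on the diameter of $K$. Throughout, the constants $C(\epsilon), C'(\epsilon), C''(\epsilon)$ must depend only on $n$, $\gamma(K)$, and the in-radius of $K$, which is what makes the Gaussian-specific trace theorem \ref{GaussGarg} essential --- it is the replacement for the usual diameter-dependent Gagliardo trace constant.
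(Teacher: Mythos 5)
Your proof of the inequality part is correct, but it is a genuinely different route from the paper's. You apply the Cordero-Erausquin--Fradelizi--Maurey inequality (\ref{poinc-sect4-sym}) to the shifted test function $F=f+\frac{\E f}{n-\E X^2}x^2$, use the integration-by-parts identity for $\mathrm{Cov}(f,X^2)$ (the $\E\langle\nabla f,X\rangle$ terms do cancel), and your algebraic identity $\alpha(K)+\frac{1}{n-\E X^2}=1-\frac{2\E X^2-\mathrm{Var}(X^2)}{(n-\E X^2)^2}$ checks out against the definition (\ref{alpha-def}). This is essentially the ``easy'' argument the paper itself mentions right after Proposition \ref{prop-sym-str} and deliberately avoids, because it wants a proof whose deficit can be tracked: the paper instead solves $Lu=f$ with Neumann data $\frac{\E f}{n-\E X^2}\langle x,n_x\rangle$, and combines the Reilly identity (Proposition \ref{raileyprop}), Lemma \ref{R-term}, the completing-the-square bound (\ref{eqnov11}), and Proposition \ref{propGauss}. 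Your stability architecture --- rederive (\ref{poinc-sect4-sym}) by duality from Proposition \ref{propGauss} through the even zero-Neumann potential $u$ of $F-\E F$, note that the $\epsilon$-deficit transfers exactly (your reduction to (\ref{poinc-sect4-sym}) is a chain of identities), propagate it to the three inequalities used, and invoke Proposition \ref{stabil-keyprop} --- is sound and parallel to the paper's, which also funnels everything into Proposition \ref{stabil-keyprop}; the $\theta$-conclusion comes out the same way.

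The gap is your final transfer step. Proposition \ref{stabil-keyprop} gives $L^1$-closeness of $u$ to a quadratic $q$, and you propose to convert this into closeness of $f$ through $f=Lu+\E F-\frac{\E f}{n-\E X^2}x^2$, using $|\Delta u|\le\sqrt n\,\|\nabla^2u\|$ and Poincar\'e. This fails as stated: an $L^1$ bound on $u-q$ gives no control of $L(u-q)$, and the Hessian information you actually have is smallness of the deficit $\E\|\nabla^2u\|^2-\E|\nabla u|^2\le\epsilon$ (since $\E Lu=0$, the decomposition in Proposition \ref{propGauss} has $t=0$), not smallness of $\|\nabla^2(u-q)\|_{L^2}$, so the bound $|\Delta(u-q)|\le\sqrt n\,\|\nabla^2(u-q)\|$ has nothing to bite on; Proposition \ref{stabil-keyprop} does not provide $L^2$-closeness of $\nabla u$ or $\nabla^2 u$ to the corresponding linear or constant objects. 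The repair uses an ingredient you listed but did not exploit: near-equality in your Cauchy--Schwarz step. From $\mathrm{Var}(F)=-\E\langle\nabla F,\nabla u\rangle$, $\mathrm{Var}(F)\ge2\E|\nabla u|^2$, $\mathrm{Var}(F)\le\sqrt{\E|\nabla F|^2\,\E|\nabla u|^2}$ and the $\epsilon$-deficit, one gets $\E|\nabla F+2\nabla u|^2\le4\epsilon$; the Poincar\'e inequality applied to the even function $F+2u$ then gives $\|F+2u-\E(F+2u)\|^2_{L^2(K,\gamma)}\le4\epsilon$, so $F$ is $L^1$-close to $-2u+\mathrm{const}$, and combining with $\|u-q\|_{L^1}$ small yields $L^1$-closeness of $F$, hence of $f$, to a quadratic, with no differentiation. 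This is precisely the role played in the paper's proof by the deficit of (\ref{eqnov11}), which forces $\nabla f\approx-2\nabla u$. With that replacement your argument goes through, and the constants depend, as you note, only on $n$, $\gamma(K)$ and the in-radius, via Theorem \ref{GaussGarg}.
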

\begin{proof} Without loss of generality, we may assume that $K$ and $f$ are smooth enough to satisfy the assumptions of Theorem \ref{exist-n}, and therefore, there exists a function $u\in W^{2,2}(K,\gamma)\cap C^2(K)$ such that $Lu=f$ and 
$$\langle \nabla u,n_x\rangle=t\langle x,n_x\rangle,$$
where $t=\frac{\E f}{n-\E X^2}.$ Here we used the fact that 
$$\frac{1}{\gamma(K)}\int_{\partial K}\langle x,n_x\rangle d\gamma_{\partial K}(x)=n-\E X^2.$$
Then we integrate by parts and use Proposition \ref{raileyprop}, to get 

\begin{align}\label{eq-BL-key}
\nonumber\int_K f^2 d\gamma&=2\int_K f Lu d\mu-\int_K (Lu)^2 d\gamma=
\\&-2\int_K \langle \nabla f,\nabla u\rangle d\gamma+2\int_{\partial K} f\langle \nabla u,n_x\rangle d\gamma_{\partial K}	-
\int_{K} \left(||\nabla^2 u||^2+|\nabla u|^2\right) d \gamma-R(u).
\end{align}

Using Lemma \ref{R-term}, as well as
\begin{equation}\label{eqnov11}
\int_K 2\langle \nabla f,\nabla u\rangle+2|\nabla u|^2 d\gamma\geq -\frac{1}{2}\int_K  |\nabla f|^2 d\gamma,
\end{equation}
and Proposition \ref{key_prop-general}, which asserts that
$$\int_K \|\nabla^2 u\|^2 d\gamma\geq \int_K |\nabla u|^2 d\gamma+\frac{(\int Lu)^2}{n-\E X^2},$$
we conclude
\begin{equation}\label{finalimpr}
\int_K f^2 d\gamma\leq \frac{1}{2}\int_K |\nabla f|^2 d\gamma+ +2t\int_{\partial K} f\langle x,n_x\rangle d\gamma_{\partial K}-t^2 \alpha(K)\gamma(K)(n-\E X^2)^2.
\end{equation}
It remains to recall that $t=\frac{\E f}{n-\E X^2}$, to get that
$$\E f^2\leq \frac{1}{2}\E |\nabla f|^2+\frac{2\E f \int_{\partial K} f\langle x,n_x\rangle d\gamma_{\partial K}}{\gamma(K)(n-\E X^2)}-\left(\alpha(K)+\frac{1}{n-\E X^2}\right)(\E f)^2.$$
The stability estimate follows directly from Proposition \ref{stabil-keyprop}, in view of our set up.
\end{proof}

Next, we shall obtain a strengthening of Proposition \ref{strong}. 

\begin{proposition}[Strong form of Proposition \ref{strong}]\label{lastprop}
For any bounded $C^2$-smooth symmetric convex body $K$ and any even function $f\in W^{1,2}(K,\gamma)\cap C^1(K)$ we have 
$$\E f^2\leq \frac{1}{2}\E |\nabla f|^2 -(\E f)^2\left(\frac{(s_f(K)-1)^2}{1-\alpha(K)-\frac{1}{n-\E X^2}}-1\right).$$
Moreover, if
$$\E f^2\geq \frac{1}{2}\E |\nabla f|^2 -(\E f)^2\left(\frac{(s_f(K)-1)^2}{1-\alpha(K)-\frac{1}{n-\E X^2}}-1\right)-\epsilon,$$
then either
\begin{itemize}
\item $\|f-C_1 x^2-C_2\|_{L_1(K,\gamma)}\leq C(\epsilon),$ for some $C_1,C_2\in\R$ or
\item there exists a rotation $R$ such that 
$$\|f(Rx)-\sum_{i=1}^n \alpha_i x^2_i-C_0\|_{L_1(K,\gamma)}\leq C'(\epsilon),$$ for some real numbers $\alpha_1,...,\alpha_n,$ such that $\alpha_1=...=\alpha_{n-k},$ and simultaneously, there exists a vector $\theta\in \R^{n-k}$ such that 
$$\int_{\partial K} \langle \theta,n_x\rangle^2 d\gamma_{\partial K}(x)\leq C''(\epsilon),$$
where $C(\epsilon), C'(\epsilon), C''(\epsilon)\rightarrow_{\epsilon\rightarrow 0} 0.$
\end{itemize}
\end{proposition}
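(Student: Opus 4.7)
The plan is to derive Proposition \ref{lastprop} from Proposition \ref{strong} by the classical device of applying the latter to the shifted function $\tilde f := f - c$ for a carefully chosen constant $c \in \R$, and then optimizing in $c$. A constant shift preserves evenness and membership in $W^{1,2}(K,\gamma) \cap C^1(K)$, and leaves $\nabla f$ unchanged, so the hypotheses of Proposition \ref{strong} are inherited by $\tilde f$. The transformation laws are straightforward: $\E\tilde f = \E f - c$, $\E\tilde f^2 = \E f^2 - 2c\E f + c^2$, and, using the identity $\int_{\partial K}\langle x,n_x\rangle\,d\gamma_{\partial K} = \gamma(K)(n - \E X^2)$ obtained by integration by parts against $\tfrac12|x|^2$, one finds
\[
s_{\tilde f}(K) \;=\; \frac{s_f(K)\,\E f - c}{\E f - c}.
\]

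Writing $\beta := \alpha(K) + \tfrac{1}{n - \E X^2}$, the inequality of Proposition \ref{strong} applied to $\tilde f$ reads $\E\tilde f^2 \le \tfrac12\E|\nabla\tilde f|^2 + (2s_{\tilde f}(K) - \beta)(\E\tilde f)^2$. Expanding $2s_{\tilde f}(K)(\E\tilde f)^2 = 2(s_f\E f - c)(\E f - c)$, combining with $-\beta(\E f - c)^2$, and moving the $(-2c\E f + c^2)$ from the left-hand side, this rearranges algebraically to
\[
\E f^2 \;\le\; \tfrac12\E|\nabla f|^2 + (2s_f(K) - \beta)(\E f)^2 - 2c(s_f(K) - \beta)\E f + c^2(1 - \beta).
\]
Assuming the genuinely nontrivial regime $1 - \beta > 0$ (in the complementary case the target inequality is dominated by the plain Cordero-Erasquin–Fradelizi–Maurey bound \cite{CFM}, since then $\tfrac{(s_f-1)^2}{1-\beta} \le 0$ forces $\tfrac12\E|\nabla f|^2 + (\E f)^2$ to exceed the target right-hand side), the right-hand side is a strictly convex quadratic in $c$, minimized at
\[
c^\ast \;=\; \frac{(s_f(K) - \beta)\,\E f}{1 - \beta}.
\]
The identity $(2s_f - \beta)(1 - \beta) - (s_f - \beta)^2 = (1 - \beta) - (s_f - 1)^2$, a one-line algebraic check, then collapses the minimized right-hand side to $\tfrac12\E|\nabla f|^2 + (\E f)^2\bigl(1 - \tfrac{(s_f(K)-1)^2}{1-\beta}\bigr)$, which is exactly the inequality of Proposition \ref{lastprop}. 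Note that the new bound is never worse than Proposition \ref{strong}: indeed $(2s_f-\beta) - \bigl(1-\tfrac{(s_f-1)^2}{1-\beta}\bigr) = \tfrac{(s_f - 1 + 1 - \beta)^2}{1-\beta} \ge 0$.

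For the stability clause, the key observation is that $c^\ast$ depends only on $K$ and $f$, so plugging $c = c^\ast$ into the quadratic rearrangement transports the $\epsilon$-deficit in Proposition \ref{lastprop} into an $\epsilon$-deficit in Proposition \ref{strong} for $\tilde f$, with \emph{no} loss of constants. The stability clause of Proposition \ref{strong} then produces either $\|\tilde f - C_1|x|^2 - C_2\|_{L^1(K,\gamma)} \le C(\epsilon)$ or, after a rotation, $\|\tilde f(Rx) - \sum_i\alpha_i x_i^2 - C_0\|_{L^1(K,\gamma)} \le C'(\epsilon)$ together with the boundary-normal control $\int_{\partial K}\langle\theta,n_x\rangle^2 d\gamma_{\partial K} \le C''(\epsilon)$ for some $\theta \in \R^{n-k}$. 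Since $f = \tilde f + c^\ast$, the constant $c^\ast$ is simply absorbed into $C_2$ (or $C_0$) in the approximating quadratic, giving the same conclusion for $f$ with unchanged rates; the boundary integral of $\langle\theta,n_x\rangle^2$ is unaffected by adding a constant. The main technical point I expect to need care is the algebraic bookkeeping and verifying that in the quasi-degenerate regime $1 - \beta$ close to $0$ the statement still holds, but this is handled by the trivial-case argument via \cite{CFM} noted above.
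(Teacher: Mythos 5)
Your proposal is correct and follows essentially the same route as the paper: the paper also applies Proposition \ref{strong} to the constant shift $f+t$ and optimizes, its optimal $t$ being exactly $-c^\ast$ in your notation, with the stability clause likewise inherited from Proposition \ref{strong} since the rearrangement preserves the deficit. Your extra remarks (the explicit transformation law for $s_{\tilde f}$, the degenerate case $1-\alpha(K)-\frac{1}{n-\E X^2}\le 0$ handled via \cite{CFM}, and the comparison showing the bound never worsens) are fine and consistent with the paper's brief argument.
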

\begin{proof} By Proposition \ref{strong}, for an arbitrary $t\in \R,$ we have, for any convex body $K$ containing the origin and any function $f\in W^{1,2}(K,\gamma)\cap C^1(K)$,
$$\E (f+t)^2\leq \E |\nabla f|^2+\frac{2(\E f+t) \int_{\partial K} (f+t)\langle x,n_x\rangle d\gamma_{\partial K}}{\gamma(K)(n-\E X^2)}-\left(\alpha(K)+\frac{1}{n-\E X^2}\right)(\E f+t)^2.$$	
Plugging the optimal 
$$t=\frac{\E f(\alpha(K)+\frac{1}{n-\E X^2}-s_f(K))}{1-\left(\alpha(K)+\frac{1}{n-\E X^2}\right)(K)},$$
we get the first part of the statement. The stability follows from Proposition \ref{strong}, possibly with changing constants.
	
\end{proof}

\medskip

\textbf{Proof of Proposition \ref{prop-sym-str}.} The proposition follows directly from the Proposition \ref{lastprop} using the approximation argument, exactly as in Section 6. $\square$

\medskip

\begin{remark}
In a similar fashion to Lemma \ref{R-term}, one may notice that
$$\frac{1}{\gamma(K)} R(\langle x,\theta\rangle)=\E \langle X,\theta\rangle^2-|\theta|^2,$$
for any vector $\theta.$ Indeed, we note that for every $\theta\in\R^n,$
\begin{equation}\label{bnd-grad-theta}
\nabla\langle \nabla u, n_x \rangle =\mbox{\rm{II}} \theta,
\end{equation}
and $\Delta_{\partial K} \langle x,\theta\rangle=0.$ Hence
$$R(\langle x,\theta\rangle)= -\int_{\partial K} \langle x,\theta\rangle\langle \theta, n_x \rangle d\gamma_{\partial K} (x) =-|\theta|^2\gamma(K)-\int_{K} \langle x,\theta\rangle L \langle x,\theta\rangle,$$
and the claim follows from integration by parts.

Therefore, one may also derive Lemma \ref{Ehr-nonsym} in a similar fashion as its symmetric version.
\end{remark}


\begin{thebibliography}{9}

\bibitem{AGM} S. Artstein-Avidan, A. Giannopoulos, V. D. Milman, \emph{Asymptotic Geometric Analysis}, part I, (2015).

\bibitem{BBBH} F. A. Bartha, F. Bencs, K. J. B\"or\"oczky, D. Hug, \emph{Extremizers and stability of the Betke--Weil inequality}, preprint.

\bibitem{barthe} F. Barthe, \emph{An isoperimetric result for the Gaussian measure and unconditional sets,} Bulletin of the London Mathematical Society 33 (2001), 408-416.

\bibitem{BK} F. Barthe, B. Klartag, \emph{Spectral gaps, symmetries and log-concave perturbations}, preprint.

\bibitem{italians} M. Barchiesi, V. Julin, \emph{Symmetry of minimizers of a Gaussian isoperimetric problem,} Probab. Theory Relat. Fields 177, 217-256 (2020). 







\bibitem{BeJe} T. Beck, D. Jerison, \emph{The Friedland-Hayman inequality and Caffarelli's contraction theorem}, preprint.

\bibitem{gady} M. F. Betta, F. Brock, A. Mercaldo, R. Posteraro, \emph{A comparison result related to Gauss measure}, Comptes Rendus Mathematique Volume 334, Issue 6, (2002), Pages 451-456.


\bibitem{strips-conj} B. Brandolini, F. Chiacchio, A. Henrot, C. Trombetti, \emph{An optimal Poincare-Wirtinger inequality in Gauss space,} Mathematical Research Letters, (2013), 20 (3), pp.449-457. See https://hal.archives-ouvertes.fr/hal-00814754/document.

\bibitem{ital-2} B. Brandolini, F. Chiacchio, D. Krejcirik, C. Trombetti, \emph{The equality case in a Poincare-Wirtinger type inequality,} Atti Accad. Naz. Lincei Rend. Lincei Mat. Appl. 27 (2016), no. 4, 443-464.
 

\bibitem{bobkov} S. Bobkov, \emph{Extremal properties of half-spaces for log-concave distributions,} Ann. Probab. 24 (1996), no. 1, 35-48.

\bibitem{Bobkov-bary} S. Bobkov, \emph{The size of singular component and shift inequalities}, The Annals of Probability
1999, Vol. 27, No. 1, 416-431.


\bibitem{BL1-BrLib} S. Bobkov, M. Ledoux, \emph{From Brunn--Minkowski to Brascamp--Lieb and to logarithmic Sobolev inequalities}, Geom. Funct. Anal., Vol. 10, no. 5, (2000), 1028--1052.



\bibitem{bogachev} V. I. Bogachev, \emph{Gaussian Measures}, Math. Surveys Monogr. 62, Amer. Math. Soc. (1998).


\bibitem{Bor-isop} C. Borell, \emph{The Brunn-Minkowski inequality in Gauss spaces,} Invent. Math {\bf 30} (1975), 207-216.

\bibitem{bor} C. Borell, \emph{Convex set functions in d-space,} Period. Math. Hungar., Vol. 6, no 2, (1975), 111--136.

\bibitem{bor-ehr} C. Borell, \emph{The Ehrhard inequality,} C. R. Math. Acad. Sci. Paris, 337:663666, 2003

\bibitem{BLYZ} K.J. B\"or\"oczky, E. Lutwak, D. Yang, G. Zhang, \emph{The log-Brunn--Minkowski-inequality}, Adv. Math., Vol. 231, no. 3-4, (2012), 1974--1997.



\bibitem{BrLi} H. Brascamp, E. Lieb, \emph{On extensions of the Brunn--Minkowski and Prekopa--Leindler theorems, including inequalities for log-concave functions, and with an application to the diffusion equation}, J. Funct. Anal., Vol. 22, no.4  (1976) 366--389.

\bibitem{Brasco} L. Brasco, \emph{On torsional rigidity and principal frequencies: an invitation to the Kohler-Jobin rearrangement technique}, ESAIM: COCV, (2013).

\bibitem{brujn} G. De Brujn, \emph{Asymptotic Methods in Analysis,} Dover, 2010, p. 200.

\bibitem{Almut} A. Burchard, \emph{A short course on rearrangement inequalities}, lecture notes, 2009.


\bibitem{Caff} L. A. Caffarelli, \emph{Monotonicity properties of optimal transportation and the FKG and related inequalities}, Comm. Math. Phys. 214(3), (2000), 547-56

\bibitem{CarKer} E. Carlen, C. Kerce, \emph{On the cases of equality in Bobkov's inequality and Gaussian rearrangement,} Calc Var 13, 1-18 (2001).

\bibitem{ChafLeh} D. Chafai, J. Lehec, \emph{On Poincare and logarithmic Sobolev inequalities for a class of singular Gibbs measures,} Geometric aspects of functional analysis. Israel seminar (GAFA) 2017-2019. Volume 1. Lecture Notes in Mathematics 2256, 219-246 (2020) Springer.




\bibitem{Col1} A. Colesanti, \emph{From the Brunn-Minkowski inequality to a class of Poincar\'{e} type inequalities,} Commun. Contemp. Math., Vol. 10, no. 5, (2008), 765--772.

\bibitem{Col2} A. Colesanti, D. Hug, E. Saorin-Gomez, \emph{A characterization of some mixed volumes via the Brunn-Minkowski inequality,} J. Geom. Anal., Vol.24,    no. 2, (2012), 1--28.


\bibitem{Colesanti-Hug-Saorin2} A. Colesanti, D. Hug, E. Saorin-Gomez, \emph{Monotonicity and concavity of integral functionals involving area measures of convex bodies,} Commun. Contemp. Math., Vol. 19, no. 2, (2017), 1--26.


\bibitem{Colesanti-torrig} A. Colesanti, M. Fimiani, \emph{The Minkowski problem for torsional rigidity,} Indiana University Mathematical Journal, 59 (2010) n. 3, 1013-1039.

\bibitem{CLM} A. Colesanti, G. Livshyts, A. Marsiglietti, \emph{On the stability of Brunn--Minkowski type inequalities}, J. Funct. Anal., Vol. 273, no. 3,  (2017), 1120--1139.

\bibitem{CL} A. Colesanti, G. Livshyts, \emph{Uniqueness of a smooth convex body with a uniform cone volume measure in the neighborhood of a ball}, preprint.

\bibitem{CFM} D. Cordero-Erausquin, M. Fradelizi, B. Maurey, \emph{The (B) conjecture for the Gaussian measure of dilates of symmetric convex sets and related problems}, J. Funct. Anal., Vol. 214, no. 2, (2004), 410--427.

\bibitem{CorKl} D. Cordero-Erausquin, B. Klartag, \emph{Interpolations, convexity and geometric inequalities}, Geometric Aspects of Functional Analysis, Lecture Notes in Math. 2050, Springer (2012), 151--168.

\bibitem{CouFa} T. Courtade, M. Fathi, \emph{Stability of the Bakry-Emery theorem on $\R^n$}, to appear in Journal of Functional Analysis.


\bibitem{ChenZho} X. Cheng, D. Zhou, \emph{Eigenvalues of the drifted Laplacian on complete metric measure spaces}, Commun. Contemp. Math. 19 (2017), no. 1, p. 1650001, 17. 

\bibitem{deFF} G. De Philippis, A. Figalli \emph{Rigidity and stability of Caffarelli's log-concave perturbation
theorem}, Nonlinear Anal. 154 (2017), p. 59-70.

\bibitem{Ding} Z. Ding, \emph{A proof of the trace theorem of Sobolev spaces on Lipschitz domains,} Proc. AMS, Volume 124, Number 2, February 1996.

\bibitem{Ehr} A. Ehrhard, \emph{Symetrisation dans l'espace de Gauss,} Math. Scand. 53 (1983), 281-301.


\bibitem{Ehr2} A. Ehrhard, \emph{Elements extremaux pour les inegalites de Brunn-Minkowski Gaussiennes,} Ann. Inst. H. Poincare Probab. Statist., 22(2):149-168, 1986.





\bibitem{EM} A. Eskenazis, G. Moschidis, \emph{The dimensional Brunn-Minkowski inequality in Gauss space,} preprint.

\bibitem{evans} L. Evans, \emph{Partial Differential Equations}, AMS, Graduate Studies in Mathematics
Vol. 19, (2010), 749 pp.

\bibitem{Fed} H. Federer, \emph{Curvature measures,} Trans. Amer. Math. Soc., 93:418-491, (1959).



\bibitem{Gagliado} E. Gagliardo \emph{Caratterizzazioni delle tracce sulla frontiera relative ad alcune classi di funzioni in n variabili}, Rendiconti del Seminario Matematico della Universita di Padova, Tome 27 (1957), pp. 284-305.

\bibitem{Gar} R. Gardner, \emph{The Brunn--Minkowski inequality,} Bull. Amer. Math. Soc., Vol. 39, (2002), 355--405.


\bibitem{GZ} R. Gardner, A. Zvavitch, \emph{Gaussian Brunn--Minkowski-type inequalities,} Trans. Amer. Math. Soc., Vol. 362, no. 10,(2010), 5333--5353.


\bibitem{GilTru} D. Gilbarg, N. Trudinger, \emph{Elliptic Partial Diferential Equations of Second Order,} Springer-Verlag, Berlin, Heidelberg, New York, (1977).

\bibitem{Gris} P. Grisvard, \emph{Elliptic problems in nonsmooth domains}, (2011), 410pp.






\bibitem{ShvH} R. van Handel, Y. Shenfeld, \emph{The equality cases of the Ehrhard-Borell inequality,} Adv. Math. 331, 339-386 (2018).

\bibitem{HaRa} P. Harrington, A. Raich, \emph{Sobolev spaces and elliptic theory on unbounded domains in $\R^n$}, Adv. Differential Equations, 19(7/8): 635-692, (2014).

\bibitem{HartNir} P. Hartman, L. Nirenberg, \emph{On spherical image maps whose Jacobians do not change sign,} Amer. J. Math. 81 (1959), 901-920. 


\bibitem{Heilman} S. Heilman, \emph{Symmetric Convex Sets with Minimal Gaussian Surface Area,} to appear, American Journal of Mathematics.

\bibitem{Heilman-notes} S. Heilman, \emph{Symmetric Convex Sets with Minimal Gaussian Surface Area,} 2017 (lecture notes).

\bibitem{Horm} L. H\"ormander, \emph{Notions of convexity}, Progress in Mathematics 127, Birkh\``auser, Boston, 199.

\bibitem{HKL} J. Hosle, A. V. Kolesnikov, G. V. Livshyts, \emph{On the $L_p$-Brunn-Minkowski and dimensional Brunn-Minkowski conjectures for log-concave measures}, preprint.



\bibitem{Paata} P. Ivanisvili, \emph{Boundary value problem and the Ehrhard inequality,} Studia Mathematica 246 (3), (2019).


\bibitem{Kesavan} S. Kesavan, \emph{Symmetrization and Applications,} Series in Analysis, Volume 3, World Scientific, (2006), ISBN 981-256-733-X.

\bibitem{KolMil-1}
 A.V. Kolesnikov, E. Milman, \emph{Riemannian metrics on convex sets with applications to Poincar{\'e} and log-Sobolev inequalities}, Calc. Var. Partial Differential Equations, Vol. 55, (2016), 1--36.

 \bibitem{KM1} A.V.  Kolesnikov, E. Milman, \emph{Brascamp--Lieb-Type Inequalities on Weighted Riemannian Manifolds with
Boundary}, J. Geom. Anal., (2017),  vol. 27, no. 2, 1680--1702.


 \bibitem{KM2}
A.V. Kolesnikov, E. Milman, \emph{Poincar{\'e} and Brunn--Minkowski inequalities on the boundary of weighted
Riemannian manifolds}, (in press, Amer. J. of Math.)

\bibitem{KolMil} A. V. Kolesnikov, E. Milman, \emph{Sharp Poincare-type inequality for the Gaussian measure on the boundary of convex sets}, In: Klartag B., Milman E. (eds) Geometric Aspects of Functional Analysis. Lecture Notes in Mathematics, vol 2169. Springer, Cham, (2017), 221--234.

\bibitem{KolMilsupernew} A. V. Kolesnikov, E. Milman, \emph{Local $L_p$-Brunn-Minkowski inequalities for $p<1$}, preprint.

\bibitem{KolLiv} A. V. Kolesnikov, G. V. Livshyts, \emph{On the Gardner-Zvavitch conjecture: symmetry in the inequalities of the Brunn-Minkowski type}, to appear in Advances in Math.

\bibitem{KolLiv1} A. V. Kolesnikov, G. V. Livshyts, \emph{On the Local version of the Log-Brunn-Minkowski conjecture and some new related geometric inequalities}, preprint.





\bibitem{KM} B. Klartag, V.D. Milman, \emph{Geometry of log-concave functions and measures}, Geom. Dedicata 112 (2005) 169-182.

\bibitem{KoJo} M. T. Kohler-Jobin, \emph{Une methode de comparaison isoperimetrique de fonctionnelles de domaines de la physique mathematique. I. Une demonstration de la conjecture isoperimetrique $P\lambda^2\geq \pi j^4_0/2$ de Polya et Szeg\"o}, Z. Angew. Math. Phys., 29 (1978), 757-766.

\bibitem{Lat-ehr} R. Lata\l{}a, \emph{A note on the Ehrhard inequality,} Studia Math., 118:169-174, 1996.


\bibitem{Lat} R. Lata\l{}a, \emph{On some inequalities for Gaussian measures}, Proceedings of the International Congress of Mathematicians, Beijing, Vol. II, Higher Ed. Press, Beijing, 2002, pp. 813-822.




\bibitem{sconj} R. Lata\l{}a, K. Oleszkiewicz, \emph{Gaussian measures of dilatations of convex symmetric sets,} Ann. Probab. (1999), 27, 1922-1938.

\bibitem{Lich} A. Lichnerowicz, \emph{Varietes riemanniennes a tenseur C non negatif,} C. R. Acad. Sci. Paris S. A-B, 271:A650-A653, 197.

\bibitem{LLbook} E. H Lieb, M. Loss, \emph{Analysis, Second Edition, Graduate studies in mathematics,} vol. 14, 2001, American Mathematical Society, Providence





\bibitem{LMNZ} G. Livshyts, A. Marsiglietti, P. Nayar, A. Zvavitch, \emph{On the Brunn-Minkowski inequality for general measures with applications to new isoperimetric-type inequalities}, Trans. Amer. Math. Soc., Vol. 369 (2017), 8725--8742.

\bibitem{LivMink} G. V. Livshyts, \emph{An extension of Minkowski's theorem and its applications to questions about projections for measures,} Adv. Math., vol. 356, (2019).

\bibitem{Liv} G. V. Livshyts, \emph{Maximal Surface Area of a convex set in $\R^n$ with respect to log concave rotation invariant measures,} GAFA Seminar Notes, 2116, (2014), 355-384.

\bibitem{Liv1} G. V. Livshyts, \emph{Maximal surface area of a convex set in $R^n$ with respect to exponential rotation invariant measures}, Journal of
Mathematical Analysis and applications, 404, (2013) 231-238.

\bibitem{Liv-paper2} G. V. Livshyts, \emph{A universal bound in the dimensional Brunn-Minkowski inequality for log-concave measures}, preprint.





\bibitem{Lust} L. A. Lyusternik, \emph{Die Brunn-Minkowskische Ungleichung f\"ur beliebige messbare Mengen,} C. R. Acad. Sci. URSS 8 (1935), 55-58.

\bibitem{massey} W. S. Massey, \emph{Surfaces of Gaussian curvature zero in Euclidean 3-space}, Tohoku Math. J. (2) 14 (1962), 73-79.













\bibitem{Milm-new} E. Milman, \emph{A sharp centro-affine isospectral inequality of Szeg\"o-Weinberger type and the $L^p$-Minkowski problem}, preprint.

\bibitem{NaTk} P. Nayar, T. Tkocz, \emph{A Note on a Brunn-Minkowski Inequality for the Gaussian Measure}, Proc. Amer. Math. Soc. (2013), Vol. 141, no. 11, 4027--4030.

\bibitem{Neeman-notes} J. Neeman, \emph{Gaussian isoperimetry and related topics, lecture notes}, 2021, \url{http://www.him.uni-bonn.de/fileadmin/him/Lecture_Notes/01-isoperimetric.pdf}.

\bibitem{NeePao} J. Neeman, G. Paouris, \emph{An interpolation proof of Ehrhard's inequality,} to appear in GAFA Lecture Notes.



\bibitem{Odon} R. O'Donnell, \emph{Open problems in analysis of boolean functions,} Preprint, arXiv:1204.6447v1, 2012.

\bibitem{PaoVal} G. Paouris, P. Valettas, \emph{A Gaussian small deviation inequality for convex functions,} Ann. Probab. 46 (2018), no. 3, 1441-1454.

\bibitem{PolSz} G. Polya, G. Szeg\"o, \emph{Isoperimetric Inequalities in Mathematical Physics,} (AM-27) Princeton University Press, (1951).







\bibitem{book4} R. Schneider, \emph{Convex bodies: the Brunn-Minkowski theory, second expanded edition}, Encyclopedia of Mathematics and its Applications, (2013).



\bibitem{vH} R. van Handel, \emph{The Borell-Ehrhard game}, Probab. Theory Rel. Fields, 170:555-585, 2018.




\bibitem{ST} V. N. Sudakov and B. S. Tsirel'son, \emph{Extremal properties of half-spaces for spherically invariant measures.} Problems in the theory of probability distributions, II. Zap. Nauch. Leningrad Otdel. Mat. Inst. Steklov {\bf 41} (1974), 14-24 (in Russian).

\bibitem{Talenti} G. Talenti, \emph{Elliptic equations and rearrangements,} Ann. Scuola Norm. Sup. (4), 3, 697-718, (1976).


\bibitem{Vasquez} J.L. Vazquez, \emph{The Porous Medium Equation, Mathematical Theory}, Oxford University Press (2007).

\end{thebibliography}
\end{document}